\documentclass[10pt]{amsart}
\usepackage{amssymb, tikz, tikz-cd, xypic, hyperref, stmaryrd, enumerate, stackrel, todonotes, adjustbox}

\usetikzlibrary{cd, fit, calc, backgrounds}

\usepackage[outline]{contour}
\contourlength{5pt}

%%%%For intro theorems%%%%%

\newenvironment{customthm}[1]
  {\innercustomthm}
  {\endinnercustomthm}
%%%%%%%%%%%%%%%%%%

\newtheorem{theorem}{Theorem}[section]
\newtheorem{lemma}[theorem]{Lemma}
\newtheorem{corollary}[theorem]{Corollary}
\newtheorem{proposition}[theorem]{Proposition}
\theoremstyle{definition}
\newtheorem{definition}[theorem]{Definition}
\newtheorem{remark}[theorem]{Remark}
\newtheorem{note}[theorem]{Note}
\newtheorem{notation}[theorem]{Notation}
\newtheorem{example}[theorem]{Example}
\numberwithin{equation}{section}
\numberwithin{figure}{section}

\newtheorem*{ack}{Acknowledgements}

\numberwithin{equation}{section}
\allowdisplaybreaks

\newcommand{\C}{\mathbb{C}}
\newcommand{\N}{\mathbb{N}}

\newcommand{\Z}{\mathbb{Z}}
\newcommand{\F}{{\bf F}}
\newcommand{\G}{{\bf G}}

\newcommand{\al}{\alpha}
\newcommand{\be}{\beta}
\newcommand{\ga}{\gamma}
\newcommand{\wt}{\widetilde}
\newcommand{\wh}{\widehat}
\newcommand{\ov}[1]{\stackrel{{}_\frown}{#1}}
\newcommand{\mc}{\mathcal }

\newcommand{\bu}{\bullet}
\newcommand{\na}{\nabla}
\newcommand{\n}{\nabla}

\newcommand{\Del}{{\bf \Delta}}
\newcommand{\DC}{{\bf \Delta}C}

\newcommand{\CMan}{\mathbb{C}Man}

\newcommand{\sAb}{\mc{A}b^{\Del^{op}}}
\newcommand{\sSet}{s\mc{S}et}
\newcommand{\cSet}{c\mc{S}et}
\newcommand{\Chain}{\mc {C}h}
\newcommand{\DK}{DK}
\newcommand{\DKSet}{\underline{DK}}
\newcommand{\Set}{\mc{S}et}
\newcommand{\Cat}{{\mathcal Cat}}
\newcommand{\dgCat}{dgCat}
\newcommand{\Tot}{{ Tot}}

\newcommand{\Hom}{{Hom}}

\newcommand{\CH}{{\bf{Ch}}}
\newcommand{\Ch}{{\bf Ch}}

\newcommand{\EZ}{{\widehat \Delta}}
\newcommand{\both}{{\widetilde \Delta}}
\newcommand{\Cyc}{{\Delta C}}

\newcommand{\ul}{[u]^{\bu\leq 0}}

\newcommand{\OM}{{\bf \Omega}}
\newcommand{\Ups}{\wh\Upsilon}
\newcommand{\Om}{\Omega}
\newcommand{\Ohol}{\Om_{hol}}
\newcommand{\IVB}{{\bf{IVB}}}

\newcommand{\perf}{{\mathcal{P}\hspace{-.3mm}er\hspace{-.7mm}f}}
\newcommand{\PERF}{{\bf Perf}}
\newcommand{\Perf}{\mathcal Perf^\nabla}

\newcommand{\dgN}{dg\mathcal{N}}

\newcommand{\tg}{\tilde g}
  
\newcommand{\CN}{\mathrm{\check{N}}}   
\newcommand{\CechNerve}{\mathrm{\check{N}}}
\newcommand{\OmXHom}{C_L^\bu(X, \Om(U)\hat\otimes \perf(U))}
\newcommand{\sub}{\subset}

%%Space of Coherent Sheaves
\newcommand{\CohSh}{{\bf{CohSh}}}
%%Cech Hodge Space

%introduced by Cheyne
\newcommand{\CechSh}[1]{{#1}^{\check{\dagger}}} %Cech Sheafification
\newcommand{\nCechSh}[1]{{#1}_{\le n}^{\check{\dagger}}} %Cech Sheafification n-type
\newcommand\restr[2]{{% we make the whole thing an ordinary symbol
  \left.\kern-\nulldelimiterspace % automatically resize the bar with \right
  #1 % the function
  \vphantom{\big|} % pretend it's a little taller at normal size
  \right|_{#2} % this is the delimiter
  }}
\newcommand{\bcosk}[1]{{\bf cosk}_{#1}}
\newcommand{\bsk}[1]{{\bf sk}_{#1}}

\newcommand{\ShgO}[1]{\text{Sh}\mathcal{O}_{#1}^\bu}
\newcommand{\ShO}[1]{\text{Sh}\mathcal{O}_{#1}}
\newcommand{\CohShO}[1]{\text{CohSh}\mathcal{O}_{#1}}

\newcommand{\bmat}[1]{\begin{bmatrix}#1 \end{bmatrix}}

\newcommand{\subsc}[2]{#1; #2}

 \makeatletter
\renewcommand*\env@matrix[1][*\c@MaxMatrixCols c]{%
  \hskip -\arraycolsep
  \let\@ifnextchar\new@ifnextchar
  \array{#1}}
\makeatother

\DeclareMathOperator*{\colim}{colim}
\DeclareMathOperator*{\holim}{holim}
\DeclareMathOperator*{\hocolim}{hocolim}

\author[C. Glass]{Cheyne Glass}
\address{Cheyne Glass, State University of New York at New Paltz, Department of Mathematics, 1 Hawk Dr., New Paltz, NY 12561}
  \email{glassc@newpaltz.edu}

\author[M. Miller]{Micah Miller}
\address{Micah Miller, Borough of Manhattan Community College The City University of New York, Department of Mathematics, 199 Chambers Street, New York, NY 10007}
  \email{mmiller@bmcc.cuny.edu}

\author[T. Tradler]{Thomas Tradler}
  \address{Thomas Tradler, New York City College of Technology The City University of New York, Department of Mathematics, 300 Jay Street, Brooklyn, NY 11201}
  \email{ttradler@citytech.cuny.edu}

\author[M. Zeinalian]{Mahmoud Zeinalian}
  \address{Mahmoud Zeinalian, Lehman College, The City University of New York, Department of Mathematics, 250 Bedford Park Blvd W, Bronx, NY 10468}
  \email{mahmoud.zeinalian@lehman.cuny.edu}

\title{Chern Character for Infinity Vector Bundles}

\keywords{coherent sheaves, simplicial sheaves, Chern-Simons invariant, Atiyah class}
\subjclass[2020]{19L10, 58J28, 14F06, 18F20}

\begin{document}

\begin{abstract}
Coherent sheaves on general complex manifolds do not necessarily have resolutions by finite complexes of vector bundles. However D. Toledo and Y.L.L. Tong showed that one can resolve coherent sheaves by objects analogous to chain complexes of holomorphic vector bundles, whose cocycle relations are governed by a coherent infinite system of homotopies. In the modern language such objects are obtained by the $\infty$-sheafification of the simplicial presheaf of chain complexes of holomorphic vector bundles. We define a Chern character as a map of simplicial presheaves, whereby the connected components of its sheafification recovers the Chern character of Toledo and Tong. As a consequence our construction extends Toledo Tong and O'Brian Toledo Tong's definition of the Chern character to the settings of stacks and in particular the equivariant setting. Even in the classical setting of complex manifolds, the induced maps on higher homotopy groups provide new Chern-Simons, and higher Chern-Simons, invariants for coherent sheaves.
\end{abstract}
\maketitle

\setcounter{tocdepth}{1} 
\tableofcontents

\section{Introduction}
The celebrated Hirzebruch-Riemann-Roch theorem (HRR) \cite{Hirz} is a generalization of the classical Riemann-Roch theorem for holomorphic line bundles on compact Riemann surfaces. In HRR, the setting of a line bundle on a Riemann surface is generalized to an arbitrary holomorphic bundle $E$ on a smooth projective variety $X$ over the complex numbers. The main tool in proving HRR is a resolution of the diagonal $X\to X\times X$, thought of as a coherent sheaf on $X\times X$, by a finite chain complex of vector bundles. 

The Atiyah-Singer index theorem \cite{AS}, and the theory of elliptic and pseudo-elliptic differential operators, can further be thought of as a far-reaching generalization of HRR and other high-powered theorems, such as the Gauss-Bonnet theorem, to a much vaster context. For example, using the Atiyah-Singer index theorem one can readily extend HRR to holomorphic bundles on even compact complex manifolds that are not necessarily algebraic (see for example \cite{Fre} for an exposition). 

Unfortunately, such techniques \cite{ABP, Gi} use differential geometric methods that heavily rely on an axillary choice of a Hermitian metric on the manifold as well as the bundle. For example, one uses the metric to establish a heat flow and smooth out the diagonal de Rham current $X\to X\times X$ into a differential form (the heat kernel). However, generally, in complex geometry choosing a metric can be thought of as unnatural and out of context unless within the very specialized realm of K\"ahler geometry. 

Casting this as deficiency is not only a matter of taste but concerns applications of these ideas to settings where local automorphisms are involved, such as the equivariant as well as the `stacky' discussion. One would therefore desire an intrinsic complex geometric discussion whereby one establishes HRR, and similar theorems, for general complex manifolds and holomorphic vector bundles outside metric geometry. 

In a series of papers \cite{TT76, TT78a, TT78b, OTT1, OTT2, OTT3, TT86} published in and around the 1980's, Toledo and Tong made several remarkable conceptual breakthroughs by providing local Cech cohomological proofs of HRR \cite{OTT2} and Grothendieck Riemann Roch (GRR) \cite{OTT3}. Through the modern lens, one may interpret their work as a hands-on theory of infinity stacks, which only much more recently has been made into a full-fledged mathematical theory. One of the key constructions in \cite{OTT1} is that of the Chern class for a coherent analytic sheaf on a complex manifold. While their construction is the one we focus on in the present paper, there is also another approach to calculating Chern classes for coherent analytic sheaves, as shown in \cite{Gr, TT86} and later formalized by Timothy Hosgood in \cite{Ho1, Ho2, Ho3}.

To get a taste for the type of math Toledo and Tong invented and utilized, consider the question of resolving the diagonal $X\to X\times X$, or more generally an arbitrary coherent sheaf, on a complex manifold, by a finite chain complex of vector bundles. One knows that when the complex manifold admits a positive line bundle such resolutions always exist (see \cite[page 705]{GH}). While in the algebraic setting, the canonical line bundle provides such a line bundle, general complex manifolds may not support them. Toledo and Tong obviated such difficulties by resolving the problems in a homotopical setting in which strict identities are replaced with a coherent infinite system of homotopies. For instance, as a complex vector bundle is a bunch of transition functions satisfying the familiar cocycle conditions, they showed that by requiring the cocycle condition to hold up to an infinite system of homotopies, not only every coherent sheaf on a complex manifold could be resolved by these more general objects, but also all of the necessary complex geometric arguments would remain valid. 

Let us be more specific and start with a coherent sheaf on a complex manifold. Choose a good Stein cover for the manifold on which the coherent sheaf can be locally resolved by a chain complex of vector bundles; such a cover always exists. By restricting these resolutions to double intersections, we obtain two resolutions for the same coherent sheaf on that intersection which, by the uniqueness of resolutions over Stein manifolds, are then related by a quasi-isomorphism. On triple intersections, the three relevant quasi-isomorphisms may not fit to give you a chain complex of vector bundles, but the discrepancy can be killed by a homotopy. These assigned homotopies to triple intersections may not satisfy the required compatibilities on quadruple intersections but the discrepancy can be killed by a higher homotopy. Repeating this pattern ad infinitum gives rise to an infinite system of homotopies. 

Historically, the use of coherent infinite systems of homotopy in a different context was known to some algebraic topologists almost 30 years prior but even there it was considered rather esoteric. In the 1960's Jim Stasheff showed how the based loop space of a pointed space was an $A_\infty$ monoid (\cite{St}). Nowadays these mathematical objects are inescapable and it is common knowledge among a large group of algebraic topologists that $A_\infty$ algebras are just as good as differential associative algebras and have the same homotopy theories (\cite{Le}). Similarly, Toledo and Tong, showed that these generalized objects are just as good as chain complexes of vector bundles as far as coherent cohomologies were concerned. While they did not make a formal claim about their corresponding homotopy theories, they showed how Ext and Tor of such generalized objects can be defined, calculated, and subsequently, be used to prove duality theorems a la Grothendieck and establish HRR and GRR.

Surprisingly, since their work very little has been done to formalize the homotopy theory of these objects. For example, in `Descente Pour Les N-Champs', Carlos Simpson writes:

\begin{quote}Dans les travaux de O'Brian, Toledo et Tong [77], [78], [104], [105], [106] consacres `a une autre question issue de SGA 6, celle des formules de Riemann-Roch, on trouve des calculs de Cech qui sont certainement un exemple de situation de descente pour les complexes. Un meilleur cadre g\'en\'eral pour ces calculs pourrait contribuer \'a notre comprehension des formules de Riemann-Roch.
\end{quote}

This roughly translates to the following: 

\begin{quote}
In the work of O'Brian, Toledo, and Tong [77], [78], [104], [105], [106] devoted to another question arising from SGA 6 regarding the Riemann-Roch formulas, one can find Cech calculations that are examples of a descent for complexes. A better general framework for these calculations could contribute to our understanding of the Riemann-Roch formulas.
\end{quote}

In the current paper, we have taken the first step in providing a homotopy theoretic framework for some of Toledo-Tong's mathematical objects. By simply finding the right homotopy theoretic setting, their constructions, extend far beyond what they had intended and point to new and exciting advances. For example, their construction of a Chern character for coherent sheaves in Hodge cohomology is easily generalized to the equivariant setting, or even to the setting of stacks. In addition, secondary and higher Chern characters are now an inseparable part of the discussion. 

The inherent inclusion of these higher Chern characters points to the possibility of proving a version of GRR as a commutative diagram of spaces such that after applying $\pi_0$ to the diagram one would obtain a diagram of sets which is the O'Brian-Toledo-Tong's GRR. Note that classical objects such as K-groups and cohomology groups are sets with additional algebraic structures.

In section \ref{SEC: IVB OM} we begin by defining the simplicial presheaves  $\IVB$ and $\OM$ which will be the domain and codomain of our Chern map, respectively. For a fixed complex manifold $U \in \CMan$, we first consider the dg-category $\Perf(U)$ of finite chain complexes of holomorphic bundles with connection\footnote{The use of $\Perf$ is meant to allude to the study of perfect complexes.}, where there is no requirement that morphisms are compatible with connections. Taking the maximal Kan complex of the dg-nerve we obtain a simplicial set $\PERF (U)$. Applying this construction objectwise over $\CMan$ and noting that maps $f\in \CMan^{op}(U,V)$ induce maps of Kan complexes $\PERF(U) \xrightarrow{f^*} \PERF(V)$ via pull-backs, we obtain a simplicial presheaf $\PERF$ which is fibrant in the (global) projective model structure. Since the simplices $\PERF(U)_n= \sSet(\Delta^n,\PERF(U))$ lack the cyclic structure we will need later on to construct our trace map, we define a weakly-equivalent (see proposition \ref{PROP:PERF-EZ=PERF-Delta}) simplicial presheaf $\IVB(U)_n := \sSet(\EZ^n,\PERF(U))$ given by mapping the cyclic sets $\EZ^n$ into $\PERF(U)$. Here, $\EZ^n$ is the nerve of the category whose set of objects is $\mathbb{Z} / (n+1) \mathbb{Z}$ and all hom-sets have a single morphism (see example \ref{EXA:EZ}). Next, we define $\OM$ in the same way we did in \cite{GMTZ}; more precisely $\OM(U)$ is the simplicial set whose $k$-simplices are decorations of all $i$-dimensional faces of the standard $k$-simplex with sequences of forms, all even for $i$ even, and all odd for $i$ odd, in such a way that the alternating sum of all forms sitting on the $(i-1)$-dimensional faces of any $i$-dimensional face add up to $0$. 

The Chern map $\Ch : \IVB \to \OM$ is then defined in section \ref{SEC: Chern Map} as follows. An $n$-simplex in $\IVB(U)_n$ consists of $(n+1)$-many dg-bundles with connection, $(\mc E_{i}, d_i, \nabla_i)$, and a collection of maps $g=\{\left( g_{(i_0\dots i_k)}: \mc E_{i_k} \to \mc E_{i_0}\right)\}_{(i_0,\dots, i_k)\in \EZ^n}$ satisfying the Maurer-Cartan condition (see definition \ref{DEF: MC}). First, in definition \ref{DEF:trace_g}, we define a trace map, $Tr_g$ similar to that of \cite[Proposition 3.2]{OTT1}, satisfying the following condition: 
\begin{equation}\tag{\ref{EQU:Trg-chain-map}}
Tr_g \circ (\hat \delta + D + [g,-]) = \delta \circ Tr_g,
\end{equation}
Using this trace map, $\Ch$ is then defined (in definition \ref{DEF:CH:PERF-to-OM}) by assigning to an $n$-simplex in $\IVB(U)_n$ as above decorations of the non-degenerate $k$-faces of $\Delta^n$ given by the following elements in $\OM(U)_n$: 
\begin{align}
Tr_g(A^k)_{\alpha}\cdot \frac{u^k}{k!} &=Tr_g((\nabla (d+g))^k)_{\alpha}\cdot \frac{u^k}{k!} \tag{\ref{EQ: Chern Tr A^k}}\\
&\nonumber = \sum \pm tr (g \cdot \nabla (d+g)\cdot   \nabla (d+g) \cdot \ldots \cdot \nabla (d+g))_\alpha \cdot \frac{u^k}{k!}
\end{align}
for $k>0$ and for $k=0$ we assign the Euler characteristic. Our first main result is that this provides a map of (objectwise Kan) simplicial presheaves:

\begin{customthm}{\ref{THM:CH-is-map-of-simplicial-presheaves}}
The Chern character $\CH:\IVB\to \OM$ defined above is a map of simplicial presheaves.
\end{customthm}

In section \ref{SEC-Sheafify} we construct what we call the \v{C}ech sheafification, $\CechSh{\CH}:\CechSh{\IVB}\to \CechSh{\OM}$ of the Chern map. Given a simplicial presheaf $\F$, the idea is that for each open cover $\left( U_i \to X\right)_{i \in I}$ we can form the \v{C}ech nerve simplicial presheaf, $\CechNerve{U}_{\bullet}$, and then compute the homotopy limit induced by the simplicial mapping space $\underline{sPre}\left( \CechNerve{U}_{\bullet}, \F\right)=  \holim\limits_i  {\prod\limits_{\alpha_0 \dots \alpha_i}}\F(U_{\alpha_0 \dots \alpha_i}) $ by taking the totalization of the induced cosimplicial simplicial set $\F \left( \CechNerve{U}_{\bullet}\right)$ defined in \eqref{EQ: F of CN}. The \v{C}ech sheafification $\CechSh{F}$ is then defined (Definition \ref{DEF: cech sheafify}) by taking the colimit over all covers:
\begin{equation}\tag{\ref{EQ: cech sheafify}}
 \CechSh{{\F}}(X) := \colim_{( {U}_{\bu} \to X) \in \check{S}} Tot\left( {\F}(\CechNerve {U}_{\bu} ) \right).
 \end{equation}
 As the construction is functorial in simplicial presheaves and preserves Kan complexes we obtain a sheafified Chern map, $\CechSh{\CH}:\CechSh{\IVB}\to \CechSh{\OM}$ which is a map of Kan complexes. The rest of the section is devoted to showing how $\CechSh{\CH}$ is related to the Chern character map of \cite{OTT1} which begins with theorem \ref{THM: OTT are vertices} stating that the twisting cochains of \cite{OTT1} include into the vertices of $\CechSh{\IVB}$. The full correspondence is given in theorem \ref{THM:CH(IVB)=CH(TwCoch)} which shows that if we restrict $\IVB$ to the simplicial presheaf $\CohSh$ considering only non-positively graded chain complexes whose homology is concentrated in degree zero, then we fully recover the data from the the Chern map in \cite{OTT1} by the connected components of our sheafified Chern map: 
\begin{customthm}{\ref{THM:CH(IVB)=CH(TwCoch)}}
For a given coherent sheaf, the formula for the Chern character \eqref{EQU:Chern-char-a-la-OTT} from \cite{OTT1} is given by the terms in the formula \eqref{EQ: Tr g nabla g} of the Chern character map 
\begin{equation}\tag{\ref{EQ: pi_0 Chern}}
\left\{\substack{ \text{Iso. classes of}\\  \text{coherent sheaves}}\right\}   \simeq \pi_0\left( \CechSh{\CohSh}\right) \xrightarrow{\pi_0\left(\CechSh{\CH}\right)}  \pi_0\left( \CechSh{\OM}\right)\simeq \bigoplus_{\substack{p,q\\p+q \text{ even}}} H^p\left( \Omega^q\right)
\end{equation}
 applied to the corresponding twisting cochain interpreted (by theorem \ref{THM: OTT are vertices}) as a $0$-simplex in $\CechSh{\CohSh}$.
 \end{customthm}
 
 Section \ref{SEC: Local Proj} upgrades the results from the previous section to statements about (hyper-)sheaves. Recall that a simplicial presheaf is a (hyper-)sheaf if it is object-wise Kan and satisfies descent with respect to all hypercovers. By restricting our attention to simplicial presheaves of finite homotopy type taking values in Kan complexes, we prove in proposition \ref{Prop: Cech Fibrant} that the aforementioned \v{C}ech sheafification construction computes the (hyper-)sheafification. In particular, proposition \ref{Prop: restricted Cech Ch sheaf map} states that if we restrict to complex manifolds of bounded dimension, and restrict the homotopy type of $\IVB$ then  $\CechSh{\Ch}: \nCechSh{\IVB} \to \CechSh{\OM}$ is a map of hypersheaves. If instead we consider again $\CohSh$, we see that its sheafification is a classifying stack for coherent sheaves, $ \mathbb{R} Hom (X, \CohSh) \simeq \CechSh{\CohSh}$:
 \begin{customthm}{\ref{THM:IVBStack}}
The simplicial presheaf $\CohSh$ is a classifying pre-stack for coherent sheaves.
 \end{customthm}
Once again restricting to manifolds of bounded dimension, theorem \ref{THM: OTT Chern Sheaves} states that our sheafified Chern map $ \CechSh{\Ch}: \CechSh{\CohSh} \to \CechSh{\OM} $ is a map of (hyper-)\linebreak sheaves whose connected components yields the Chern map from \cite{OTT1}. Finally, remark \ref{REM: Generalized Chern Ch} describes how our Chern character map generalizes to all stacks, with an eye towards future work in the equivariant setting.

\begin{ack}
T. Tradler was partially supported by a PSC-CUNY grant. C. Glass would like to thank the Max Planck Institute for Mathematics in Bonn, Germany, for their hospitality during his stay. M. Zeinalian would like to thank Julien Grivaux and Tim Hosgood for insightful conversations about Toledo and Tong's work. 
\end{ack}

\begin{notation}\label{NOT:Delta-etc}
The simplicial category is denoted by $\Del$. It has objects $[n]=\{0,\dots, n\}$ for $n\in \N_0$, and morphisms $\phi\in \Del([k],[n])$ non-decreasing maps $\phi:[k]\to [n]$, i.e., $\phi(i)\leq \phi(j)$ for $i\leq j$. The morphisms are generated by face maps $\delta_j:[n]\to [n+1]$ (injection that skips the element $j$ in $[n+1]$) and degeneracies $\sigma_j:[n]\to [n-1]$ (surjection that maps $j$ and $j+1$ to $j$).

Simplicial objects in a category $\mc C$ are functors $\Del^{op}\to \mc C$, where the induced face and degeneracy morphisms are denoted by $d_j$ and $s_j$, respectively. We denote the category of simplicial sets by $\sSet=\Set^{\Del^{op}}$. Cosimplicial objects in a category $\mc C$ are functors $\Del\to \mc C$.

Given an object $X$ in a locally small category, $\mc C$, we can consider its representable presheaf $yX:= \mc C(- , X)$ given by the Yoneda embedding. Further, given a presheaf $F$ on $\mc C$, we can consider its simplicially constant presheaf $c F$ defined by $c F (Y)_n := F(Y)$. When context is clear we may drop the ``$y$'' or ``$c$''. For example, for an object $X$ we might write $X$ for the simplicial presheaf defined by $X(Y)_n:= \mc C \left(Y, X\right)$. 
\end{notation}

\section{The Simplicial Presheaves $\IVB$ and $\OM$}\label{SEC: IVB OM}

We define two simplicial presheaves on the site of complex manifolds; first $\IVB:\CMan^{op}\to \sSet$ is the presheaf which will later give rise to infinity vector bundles (see definition \ref{DEF:inf-vec-bun}), and $\OM:\CMan^{op}\to \sSet$ is the presheaf of holomorphic forms. In the next section we will then define the Chern character map as a map of simplicial presheaves $\CH:\IVB\to \OM$.

Let $\CMan$ be the category whose object consist of complex manifolds, and morphisms are holomorphic maps. Furthermore, denote by $\dgCat$ the category of differential graded categories, i.e., categories $\mc C$ such that for any for any two objects $C_1, C_2$ of $\mc C$, the space of morphisms $\Hom(C_1,C_2)$ is a cochain complex, with the composition being a cochain map, and the identity morphisms being closed. 

\begin{definition}\label{DEF:Perf} 
Denote by $\perf:\CMan^{op}\to \dgCat$ given by setting $\perf(U)$ to be the dg-category, whose objects $\mc E=(E_\bu,d,\nabla)\in \perf(U)$ are finite chain complexes of holomorphic vector bundles $E_\bu\to U$  over $U$ with differential $d:E_\bu\to E_{\bu+1}$, and with a holomorphic connection $\nabla$ on $E_\bu$. Morphisms $\Hom(\mc E,\mc E')$ consist of graded morphisms of vector bundles $f:E_\bu\to E'_\bu$ which \emph{need not have any special compatibility} with respect to the connections $\na$ and $\na'$. The dg structure on $\Hom(\mc E,\mc E')$ is the induced one by the differential and gradings on $\mc E$ and $\mc E'$; in particular, the differential of an $f\in\Hom(\mc E,\mc E')$ is defined to be $D(f):=f\circ d-(-1)^{|f|}d'\circ f$.

A holomorphic map $\varphi: U \to U'$ induces a functor $\perf(\varphi): \perf(U') \to\perf(U)$ by pulling back bundles via $\varphi$. 
\end{definition}

Since $\perf(U)$ is a dg-category, we can apply the dg-nerve $\dgN(\perf(U))$, which gives a simplicial set. 
\begin{note}\label{REM:simplicial-dgN(Perf(U))}
Explicitly, we can describe the simplicial structure of the dg-nerve $\dgN(\mc C)$ of a dg-category $\mc C$ (for us, it will always be $\mc C=\perf(U)$) as follows; cf. \cite[1.3.1.6]{Lurie} and \cite[Definition 2.2.8]{Fa}:
\begin{enumerate}
\item
A $0$-simplex in $\dgN(\mc C)_0$ consists of an object $\mc E$ of $\mc C$.
\item
A $1$-simplex in $\dgN(\mc C)_1$ consists of $(\mc E_1,\mc E_0,g_{01})$, i.e., two objects $\mc E_0$, $\mc E_1$ of $\mc C$ and a morphism $g_{01}:\mc E_1\to \mc E_0$ in $\mc C$ of degree $0$, which is closed, i.e. $Dg_{01}=0$, where we denoted the differential in $Hom_{\mc C}$ by $D$. (In the case of $\mc C=\perf(U)$, the internal differential $D$ is given by the differentials $d$ and $d'$ on $E$ and $E'$, respectively, via $Df=f\circ d-(-1)^{|f|}d'\circ f$, so that $Dg_{01}=0$ means that $g_{01}$ are chain maps of dg-vector bundles.)
\item
A $2$-simplex in $\dgN(\mc C)_2$ consists of $(\mc E_0,\mc E_1,\mc E_2, g_{01}, g_{12}, g_{02},g_{012})$, i.e., three objects $\mc E_0$, $\mc E_1$, $\mc E_2$ of $\mc C$ three morphisms $g_{ij}: \mc E_j\to \mc E_j$ of degree $0$, where $i,j\in \{0,1,2\}$ with $i<j$, and another morphism $g_{012}:\mc E_2\to \mc E_0$ of degree $-1$ satisfying $Dg_{012}=g_{01}\circ g_{12}-g_{02}$.
\item\label{ITEM:n-simplex-dgN(Perf(U))}
An $n$-simplex in $\dgN(\mc C)_n$ consists of $n+1$ dg-vector bundles $\mc E_0,\dots, \mc E_n$ and morphisms $g_{i_0\dots i_k}:\mc E_{i_k}\to \mc E_{i_0}$ of degree $1-k$ for each sequence $i_0,\dots, i_k\in \{0,\dots, n\}$ with $i_0< \dots< i_k$ and $k\geq 1$, such that  
\begin{equation}\label{EQU:dg-i0-...ik-relation}
D(g_{i_0\dots i_k})=\sum_{j=1}^{k-1} (-1)^{j-1} g_{i_0\dots \widehat{i_j}\dots i_k}+\sum_{j=1}^{k-1} (-1)^{k(j-1)+1} g_{i_0\dots i_{j}}\circ g_{i_{j}\dots i_k}.
\end{equation}
\item\label{ITEM:morphism-dgN(Perf(U))}
For a morphism $\phi:[n]\to [m]$ in $\Del$, there is an induced map $\phi^\sharp_{\dgN}:\dgN(\mc C)_m\to \dgN(\mc C)_n$, given by mapping $(\mc E_i, g_{i_0,\dots,i_k})_{\text{all indices}}\in\dgN(\mc C)_m$ to $(\mc E_{\phi(i)}, \wt g_{i_0,\dots,i_k})_{\text{all indices}}\in \dgN(\mc C)_n$, which is defined by either $\wt g_{i_0 \dots i_k}=g_{\phi(i_0) \dots \phi(i_k)}$ if $\phi$ is injective on $\{i_0\dots i_k\}$, or $\wt g_{i_0i_1}=id_{ E_{\phi(i_0)}}$ if $\phi(i_0)=\phi(i_1)$, or $\wt g_{i_0\dots i_k}=0$ in all other cases, i.e., when $k\geq 2$ and $\phi(i_p)=\phi(i_{p+1})$ for some $p=0,\dots, k-1$.
\end{enumerate}
\end{note}

In the later chapters, we will use the dg-nerve of $U$ as \emph{local} building blocks of chain complexes of vector bundles on a complex manifold. To obtain a reasonable gluing we will want the chain maps $g_{i_0 i_1}$ to be \emph{homotopy equivalences}. This can be achieved in a natural way by taking the maximal Kan subcomplex $\dgN(\perf(U))^\circ$ of $\dgN(\perf(U))$; cf. \cite[Cor 1.5]{J}. 
\begin{definition}
Denote by $\PERF:\CMan^{op}\to \sSet$ the simplicial presheaf given by $\PERF(U):=\dgN(\perf(U))^\circ$, i.e., the maximal Kan subcomplex of the dg-nerve of $\perf(U)$.
\end{definition}
We have the following characterization of the simplicies of $\dgN(\perf(U))^\circ$ via By \cite[Theorem 2.2]{J}, for example. 
\begin{lemma}\label{LEM:simplicial-dgN(Perf(U))^circ}
An $n$-simplex in $\dgN(\perf(U))^\circ$ consists precisely of an $n$-simplex in $\dgN(\perf(U))$ as described in note \ref{REM:simplicial-dgN(Perf(U))} \eqref{ITEM:n-simplex-dgN(Perf(U))} above, with the extra condition that all morphisms $g_{i_0 i_1}:\mc E_{i_1}\to \mc E_{i_0}$ are \emph{homotopy equivalences}.
\end{lemma}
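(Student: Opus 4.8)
The plan is to reduce the statement to two standard facts: that the dg-nerve of any dg-category is a quasicategory, and that the maximal Kan subcomplex (the core) of a quasicategory consists precisely of those simplices all of whose edges are equivalences. Both are available in the cited literature --- the first is \cite[1.3.1.6]{Lurie} (see also \cite{Fa}), and the second is Joyal's characterization \cite[Cor 1.5]{J}, \cite[Theorem 2.2]{J}. Granting these, the lemma becomes the assertion that an edge of $\dgN(\perf(U))$ --- that is, a $1$-simplex $(\mc E_1,\mc E_0,g_{01})$, equivalently a closed degree-$0$ morphism $g_{01}\colon\mc E_1\to\mc E_0$ --- is an equivalence in the quasicategory $\dgN(\perf(U))$ if and only if $g_{01}$ is a homotopy equivalence of chain complexes. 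Combined with the observation that for an $n$-simplex the restriction along the map $[1]\to[n]$, $0\mapsto i_0$, $1\mapsto i_1$, extracts exactly the edge carrying $g_{i_0 i_1}$, this yields the claim.

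So next I would recall that an edge $f$ of a quasicategory $X$ is an equivalence precisely when its image in the homotopy category $hX$ is an isomorphism, and identify $h\dgN(\mc C)$ with the homotopy category $H^0(\mc C)$ of the dg-category, whose hom-sets are $H^0(\Hom_{\mc C}(\mc E,\mc E'))$ with composition induced by that of $\mc C$. This identification can be read off directly from the description of $2$-simplices in note \ref{REM:simplicial-dgN(Perf(U))}: the relation $Dg_{012}=g_{01}\circ g_{12}-g_{02}$ says exactly that $[g_{02}]=[g_{01}]\circ[g_{12}]$ in $H^0$. For $\mc C=\perf(U)$ the morphism complex $\Hom(\mc E,\mc E')$ carries the differential $D(f)=f\circ d-(-1)^{|f|}d'\circ f$, so a closed degree-$0$ element is a chain map and two such agree in $H^0$ iff they differ by a chain homotopy; hence an isomorphism in $H^0(\perf(U))$ is by definition a chain homotopy equivalence, and the connections play no role, morphisms not being required to be compatible with them. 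Therefore the equivalences among the edges of $\dgN(\perf(U))$ are exactly the homotopy equivalences $g_{01}$, and Joyal's theorem then says that an $n$-simplex lies in $\dgN(\perf(U))^\circ$ iff every $g_{i_0 i_1}$ (for $i_0<i_1$) is a homotopy equivalence.

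I do not anticipate a serious obstacle: the content is entirely in correctly invoking the two black boxes and in pinning down $h\dgN(\perf(U))\cong H^0(\perf(U))$, which is routine from the explicit simplices in note \ref{REM:simplicial-dgN(Perf(U))} but worth spelling out. I would also add the remark that, using the $k=2$ relation above together with induction on $i_1-i_0$, each $g_{i_0 i_1}$ is chain homotopic to a composite of the adjacent maps $g_{i,i+1}$; by two-out-of-three for equivalences it therefore suffices to require only the $g_{i,i+1}$ to be homotopy equivalences, a convenient reformulation although not needed for the statement as given.
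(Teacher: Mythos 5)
Your proposal is correct and matches the route the paper takes: the paper itself simply cites Joyal's characterization of the core of a quasicategory (\cite[Theorem 2.2]{J}) without further elaboration, and your argument fills in exactly the intermediate steps that citation presupposes — identifying $h\dgN(\perf(U))$ with $H^0(\perf(U))$ and observing that isomorphisms there are chain homotopy equivalences. The concluding remark about reducing to the consecutive edges $g_{i,i+1}$ via two-out-of-three is a nice extra, though as you note it is not needed for the stated lemma.
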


Now, all chain maps $g_{i_0 i_1}$ on the edges of all simplicies of $\dgN(\perf(U))^\circ$ are homotopy equivalences. In order to be able to define the Chern character below, we will need to find homotopy inverses of these together with compatible higher homotopies. This can be achieved as follows. First, using the Yoneda lemma for simplicial sets, we know that the $n$-simplicies of a simplicial set $X_\bu$ are precisely the simplicial set maps from $\Delta^n:=\Del(-,[n])$ into $X_\bu$, i.e., $X_n=X([n])\cong \text{Nat}(\Del(-,[n]),X)=\sSet(\Delta^n,X)$. Thus, we define $\PERF^{\Delta}:\CMan^{op}\to \sSet$ by setting
\begin{equation}\label{EQU:Perf^Delta}
\PERF^\Delta(U)_n:=\dgN(\perf(U))^\circ_n=\sSet(\Delta^n,\dgN(\perf(U))^\circ).
\end{equation}
More generally, we define:
\begin{definition}\label{DEF:PERF^Q}
Let $Q$ be a cosimplicial simplicial set, i.e., $Q:\Del\to \sSet$. In more detail, we denote by $Q^n=Q([n])\in \sSet$ the image of $[n]\in \Del$ under $Q$, which is itself a simplicial set, $Q^n_\bu:\Del^{op}\to \Set$, $Q^n_k:=Q^n([k])\in \Set$. Then, define $\PERF^{Q}:\CMan^{op}\to \sSet$ by setting
\begin{equation}\label{EQU:DEF:PERF^Q}
\PERF^Q(U)_n:=\sSet(Q^n,\dgN(\perf(U))^\circ).
\end{equation}
Since the $\{Q^n\}_n$ are a cosimplicial object in $\sSet$, this induces for each $(f:[n]\to [m])\in \Del$ a map $\PERF^Q(U)_m\to \PERF^Q(U)_n$, making $\PERF^Q(U)$ into a simplicial set.

For a holomorphic map $\varphi:U\to U'$, the induced map $\PERF^Q(U')\to \PERF^Q(U)$ is given by composition with the map $\perf(U')\to \perf(U)$ from definition \ref{DEF:Perf}, i.e., by pulling back via $\varphi$.
\end{definition}
We are mainly interested in the following examples \ref{EXA:Delta} and \ref{EXA:EZ}.
\begin{example}\label{EXA:Delta}
Let $\Delta:\Del\to \sSet$ be given by $\Delta^n:=\Del(-,[n])$ be the standard simplicial $n$-simplex given by morphism of $\Del$ into $[n]$, i.e., its $k$-simplicies $\phi\in \Delta^n_k= \Del([k],[n])$ are non-decresing maps from $[k]$ to $[n]$, i.e., if we set $i_j:=\phi(j)$ these are sequences of indices $(i_0\leq \dots\leq i_k)$ with $i_0,\dots, i_k\in \{0,\dots, n\}$. Face maps are $d_j:\Delta^n_k\to \Delta^n_{k-1}$ remove the $j$th index $i_j$, and degeneracies $s_j:\Delta^n_k\to \Delta^n_{k+1}$ repeat the $j$th index $i_j$. 

By Yoneda, any simplicial set map $\Delta^n\to X$ is completely determined by the image of its non-degenerate $n$-simplex. Thus, by \eqref{EQU:Perf^Delta}, $\PERF^\Delta(U)$ has $n$-simplicies given as described precisely by lemma \ref{LEM:simplicial-dgN(Perf(U))^circ}, i.e., by note \ref{REM:simplicial-dgN(Perf(U))} with homotopy equivalences on edges.
\end{example}
Before we give our second main example for $\PERF^Q$, we record a useful lemma about simplicial set maps into the dg-nerve $\PERF^Q(U)$.
\begin{lemma}\label{LEM:sSet(X,dgN(Perf))}
Let $X_\bu$ be a simplicial set, and let $\mc C$ be a dg-category (for us, $\mc C=\perf(U)$). Then a simplicial set map $X\to \dgN(\mc C)$ is precisely given by the following data:
\begin{enumerate}
\item 
for each $0$-simplex $\alpha\in X_0$, we have an object $\mc E_\alpha$ of $\mc C$
\item \label{ITEM:g-alpha-maps}
for each non-degenerate $k$-simplex $\alpha\in X_k$ with $k\geq 1$, there is a morphism $g_\alpha: \mc E_{\alpha (k)}\to \mc E_{\alpha (0)}$ of degree $1-k$ satisfying the compatibility condition:
\begin{equation}\label{EQU:dg-alpha-relation}
D(g_{\alpha})=\sum_{j=1}^{k-1} (-1)^{j-1} g_{\alpha(0,\dots, \widehat{j},\dots k)}+\sum_{j=1}^{k-1} (-1)^{k(j-1)+1} g_{\alpha(0,\dots, j)}\circ g_{\alpha(j,\dots, k)}
\end{equation}
\end{enumerate}
Here, for a disjoint union decomposition $\{0,\dots, k\}=\{i_0,\dots, i_p\}\sqcup \{j_0,\dots, j_q\}$ with $i_0<i_1<\dots<i_p$ and $j_0<j_1<\dots<j_q$, we denote by $\alpha(i_0,\dots, i_p):=d_{j_0}\circ \dots \circ d_{j_q}(\alpha)\in X_{p}$ the face of $\alpha$ corresponding to indices $\{i_0,\dots, i_p\}\subseteq \{0,\dots, k\}$.

In particular, a simplicial set map $X\to \dgN(\perf(U))^\circ$ has the same data as given above with the extra condition that the maps $g_{\alpha}$ for $\alpha\in X_1$ are homotopy equivalences.
\end{lemma}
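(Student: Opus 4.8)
The plan is to unwind the definition of the dg-nerve together with the universal property that presents an arbitrary simplicial set $X$ as a colimit of its simplices, and then observe that the recursive structure of a simplicial-set map is exactly encoded by the data listed. First I would recall from note \ref{REM:simplicial-dgN(Perf(U))}\eqref{ITEM:n-simplex-dgN(Perf(U))} and \eqref{ITEM:morphism-dgN(Perf(U))} the explicit description of $\dgN(\mc C)_n$ and of the face/degeneracy operators. A simplicial-set map $F\colon X\to \dgN(\mc C)$ is, by definition, a natural family of set maps $F_n\colon X_n\to \dgN(\mc C)_n$ commuting with all the structure maps. I would package the target datum $F_n(\alpha)$ for a simplex $\alpha\in X_n$ as a tuple $(\mc E^\alpha_0,\dots,\mc E^\alpha_n; g^\alpha_{i_0\cdots i_k})$, and then use naturality with respect to the \emph{injective} monotone maps (the coface operators $d_j$) to show that the objects $\mc E^\alpha_i$ and the morphisms $g^\alpha_{i_0\cdots i_k}$ attached to $\alpha$ are forced to agree with those attached to the corresponding faces of $\alpha$. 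Concretely, naturality along $d_j$ says $F_{n-1}(d_j\alpha)=\phi^\sharp_{\dgN}(F_n(\alpha))$, and by the description in \eqref{ITEM:morphism-dgN(Perf(U))} the right-hand side just restricts the tuple to the sub-indices; iterating over all coface maps shows that $F$ is completely determined by the single object $\mc E_\alpha:=\mc E^\alpha_0$ for each $\alpha\in X_0$ and the single top morphism $g_\alpha:=g^\alpha_{0\cdots k}$ for each $\alpha\in X_k$, with the faces recovered via $g_{\alpha(i_0,\dots,i_p)}=g^{\,\alpha(i_0,\dots,i_p)}_{0\cdots p}$.

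Next I would deal with the degeneracy operators: naturality along a codegeneracy $\sigma_j$ forces $F$ to send a degenerate simplex $s_j\beta$ to the image of $F(\beta)$ under $\sigma_j^\sharp_{\dgN}$, which by \eqref{ITEM:morphism-dgN(Perf(U))} inserts an identity morphism (so $g_{s_j\beta}=0$ for a degenerate simplex of dimension $\geq 2$, and $g_{s_0\beta}=\mathrm{id}$ in dimension $1$). Hence no additional data is needed on degenerate simplices: they are determined by the data on non-degenerate ones. This is why it suffices to specify $g_\alpha$ only for \emph{non-degenerate} $\alpha$, as in item \eqref{ITEM:g-alpha-maps} of the statement. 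Having reduced the data of $F$ to the family $\{\mc E_\alpha\}_{\alpha\in X_0}\cup\{g_\alpha\}_{\alpha\text{ non-deg.}}$, I would then check that the single remaining constraint on this family — beyond the bookkeeping already used — is precisely that each tuple $F_k(\alpha)$ actually lies in $\dgN(\mc C)_k$, i.e. satisfies the dg-nerve relation \eqref{EQU:dg-i0-...ik-relation}; rewriting that relation for the top morphism $g^\alpha_{0\cdots k}=g_\alpha$ and substituting the faces $g^\alpha_{i_0\cdots i_p}=g_{\alpha(i_0,\dots,i_p)}$ yields exactly equation \eqref{EQU:dg-alpha-relation}. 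Conversely, given such a family I would \emph{define} $F_n(\alpha)$ for an arbitrary (possibly degenerate) $\alpha\in X_n$ by the formulas forced above — degenerating identities where $\alpha$ is degenerate and reading off $g_{\alpha(\cdots)}$ on faces otherwise — and verify that the result is a well-defined element of $\dgN(\mc C)_n$ and that the assignment is natural; both verifications are formal manipulations with the simplicial identities and the explicit face/degeneracy description in note \ref{REM:simplicial-dgN(Perf(U))}.

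The only subtlety — and the step I expect to be the main obstacle — is the bookkeeping around degeneracies when checking well-definedness of the reconstruction $F$: a general $\alpha\in X_n$ has a unique Eilenberg–Zilber decomposition $\alpha=\sigma^*\beta$ with $\beta$ non-degenerate, and one must confirm that defining $F_n(\alpha):=(\sigma^\sharp_{\dgN})(F_{\dim\beta}(\beta))$ is consistent with \emph{all} the structure maps simultaneously, not just with $\sigma$; equivalently, that the "insert identities" prescription of \eqref{ITEM:morphism-dgN(Perf(U))} is functorial. This is where one uses that in $\dgN(\mc C)$ a morphism $\phi$ with $\phi(i_p)=\phi(i_{p+1})$ kills the higher $g$'s and that identities compose correctly, so the assignment factors through the quotient defining $\sSet$ morphisms. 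For the final clause, the maps $g_\alpha$ with $\alpha\in X_1$ landing in the \emph{maximal Kan subcomplex} $\dgN(\perf(U))^\circ$ is, by Joyal's characterization \cite[Theorem 2.2]{J} quoted before Lemma \ref{LEM:simplicial-dgN(Perf(U))^circ}, exactly the condition that every edge maps to an equivalence in the corresponding $\infty$-category, i.e. that each $g_\alpha$ for $\alpha\in X_1$ is a homotopy equivalence of complexes of bundles; since this is a condition only on $1$-simplices and the rest of the argument is insensitive to it, it simply gets appended to the characterization. I would close by remarking that the content of the lemma is really the statement that $\dgN(\mc C)$ is a particular (weak) Kan complex whose simplices are "strictly unital" twisting-cochain-type data, so that maps out of any $X$ are read off skeleton by skeleton.
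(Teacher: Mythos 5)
Your proposal is correct and is essentially the same argument the paper uses: reduce via naturality along injective coface maps to the assertion $g_{i_0\cdots i_k}^{\alpha}=g_{0\cdots k}^{\alpha(i_0,\dots,i_k)}$ (so the data collapses to $\mc E_\alpha$ on vertices and the top morphism $g_\alpha:=g^\alpha_{0\cdots k}$ on simplices), observe that degeneracies are forced to identities or zero, and note that the dg-nerve relation for the top morphism rewrites as \eqref{EQU:dg-alpha-relation}, with the Kan-subcomplex clause appended afterward. The only difference is one of emphasis: you spell out the converse direction and the Eilenberg--Zilber well-definedness check in detail, whereas the paper compresses this into a brief ``by a similar argument'' remark about degenerate simplices; both are fine.
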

\begin{proof}

Let $\mc F:X\to\dgN(\mc C)$ be a map of simplicial sets, and for $\ell\geq 0$, let $\alpha\in X_\ell$ be an $\ell$-simplex. Thus, $\mc F(\alpha) \in \dgN(\mc C)_\ell$, and by note \ref{REM:simplicial-dgN(Perf(U))}, there are dg-vector spaces $\mc E_0^{\alpha},\dots, \mc E_\ell^{\alpha}$, and for all $i_0,\dots ,i_k\in \{0,\dots, \ell\}$, $k\geq 1$, with $i_0<\dots <i_k$ there are maps $g_{i_0\dots i_k}^{\alpha}:\mc E^{ \alpha}_{i_k}\to \mc E^{\alpha}_{i_0}$ satisfying \eqref{EQU:dg-i0-...ik-relation}. We claim that the data of the highest maps $g_{0\dots p}^{\rho}$, for all non-degenerate $\rho\in X_p$, is sufficient to recover all other maps $g_{i_0\dots i_k}^{\alpha}$: For $\alpha \in X_\ell$ and $i_0,\dots ,i_k\in \{0,\dots, \ell\}$ with $i_0<\dots <i_k$ with $k<\ell$, use the commutative diagram for $\phi:[k]\to [\ell], \phi(p):=i_p$,
\[
\begin{tikzcd}
X_\ell \arrow{r}{\mc F_\ell} \arrow[swap]{d}{\phi^\sharp_{X}} & \dgN(\mc C)_\ell \arrow{d}{\phi^\sharp_{\dgN}} \\
X_k \arrow{r}{\mc F_k}& \dgN(\mc C)_k
\end{tikzcd}
\]
mapping the $i_0<\dots<i_k$-component $g_{i_0\dots i_k}^{\alpha}$ of $\mc F_\ell(\alpha)$ under $\phi^\sharp_{\dgN}$ to $\wt g_{0\dots k}=g_{i_0\dots i_k}^{\alpha}$ (by note \ref{REM:simplicial-dgN(Perf(U))} \eqref{ITEM:morphism-dgN(Perf(U))}, since $\phi$ is injective). Now, $\phi=\delta_{j_q}\circ\dots \circ \delta_{j_0}$ for $\{i_0,\dots, i_k\}\sqcup \{j_0,\dots, j_q\}=\{0,\dots, k\}$ with  $j_0<j_1<\dots<j_q$, so that the left vertical map $\phi^\sharp_{X}$ maps $\phi^\sharp_{X}(\alpha)=d_{j_0}\circ\dots \circ d_{j_q}(\alpha)=\alpha(i_0 ,\dots, i_k)$. Then, $\mc F_k$ maps this to the $0<\dots<k$-component $g_{0\dots k}^{\alpha(i_0,\dots, i_k)}$. By the commutativity of the diagram, we get that $g_{i_0\dots i_k}^{\alpha}=g_{0\dots k}^{\alpha(i_0,\dots, i_k)}$. This shows that the maps $g_{\alpha}:=g^{\alpha}_{0\dots \ell}$ for all $\alpha\in X_\ell$ for $\ell\geq 1$, together with the implicit dg-vector spaces $\mc E_\alpha=\mc F_0(\alpha)$ for all $0$-simplicies $\alpha\in X_0$, gives the complete data of the map of simplicial sets $\mc F:X\to \dgN(\mc C)$. Equation \eqref{EQU:dg-i0-...ik-relation} for $g^\alpha_{0\dots \ell}$ using a fixed $\alpha\in X_\ell$ becomes precisely \eqref{EQU:dg-alpha-relation} via the identifications $g_{\alpha}=g^{\alpha}_{0\dots \ell}$ and $g_{i_0\dots i_k}^{\alpha}=g_{0\dots k}^{\alpha(i_0,\dots, i_k)}$.

Note moreover, by a similar argument, that degenerate simplicies map to either the identity $g_{s_j(\alpha)}=id_{E_\alpha}$ for $\alpha\in X_0$, or $g_{s_j(\alpha)}=0$ for $\alpha\in X_\ell$ with $\ell\geq 1$.

Finally, $\mc F:X\to \dgN(\perf(U))^\circ$ lands in $\dgN(\perf(U))^\circ$ precisely if all maps $g_\alpha$ given by $\mc F(\alpha)$ for $\alpha\in X_1$ are homotopy equivalences by lemma \ref{LEM:simplicial-dgN(Perf(U))^circ}.
\end{proof}

\begin{example}\label{EXA:EZ}
Let $\EZ:\Del\to \sSet$ be given as follows. Let $\EZ^n\in \sSet$ be the nerve of the category $E\Z_{n+1}^{\Cat}$, whose objects are elements of $\Z_{n+1}=\Z/(n+1)\Z$, and which has exactly one morphism between any two objects. More explicitly, $\EZ^n=E\Z_{n+1}=\mathcal N(E\Z_{n+1}^{\Cat})$ has $k$-simplicies given by a sequence of $k$ composable morphisms $\llbracket i_0\rrbracket\to \llbracket i_1\rrbracket\to \dots \to \llbracket i_k\rrbracket$ where $\llbracket i_0\rrbracket,\dots, \llbracket i_k\rrbracket\in \Z_{n+1}$, or, more concisely, the $k$-simplicies $\EZ^n_k$ are sequences $(i_0,\dots, i_k)$ of indices $i_0,\dots, i_k\in \{0,\dots, n\}$, i.e., $\EZ^n_k\cong \{0,\dots, n\}^k$. Face maps $d_j:\EZ^n_k\to \EZ^n_{k-1}$ remove the $j$th index $i_j$, and degeneracies $s_j:\EZ^n_k\to \EZ^n_{k+1}$ repeat the $j$th index $i_j$. 

For example, for the simplicial set $\EZ^1$ a $k$-simplex consists of a sequence $(i_0,\dots,i_k)$ of $0$s and $1$s; a $k$-simplex is degenerate iff any two adjacent indices are equal, $i_j=i_{j+1}$; thus there are exactly two non-degenerate $k$-simplicies: $(0,1,0,1,\dots)$ and $(1,0,1,0,\dots)$ for any $k$. The geometric realization of $\EZ^1$ is thus $S^\infty$.

By lemma \ref{LEM:sSet(X,dgN(Perf))}, any simplicial set map $\EZ^n\to \dgN(\perf(U))$ is given by $n+1$ holomorphic dg-vector bundles with holomorphic connections $\mc E_0,\dots, \mc E_n$ together with maps $g_{i_0\dots i_k}:E_{i_k}\to E_{i_0}$ for a non-degenrate $k$-simplex $\alpha=(i_0,\dots, i_k)\in \EZ^n_k=\{0,\dots, n\}^k$ without directly repeating indices, satisfying equation \eqref{EQU:dg-alpha-relation}:
\begin{multline}\label{EQU:dg-EZ-relation}
g_{i_0\dots i_k}\circ d+(-1)^k \cdot d\circ g_{i_0\dots i_k}=D(g_{i_0\dots i_k})\\
=\sum_{j=1}^{k-1} (-1)^{j-1} g_{i_0\dots \widehat{i_j}\dots i_k}+\sum_{j=1}^{k-1} (-1)^{k(j-1)+1} g_{i_0\dots i_{j}}\circ g_{i_{j}\dots i_k}.
\end{multline}
Note furthermore, that for a \emph{degenerate} simplex $(i_0,\dots, i_k)$ of $\EZ^n$ where the two consecutive indices $i_j=i_{j+1}$ are equal, we also have a map $g_{jj}=id_{E_j}$ or $g_{i_0\dots j j \dots i_k}=0$ when $k\geq 2$, satisfying equation \eqref{EQU:dg-EZ-relation}.

For a morphism $\phi:[n]\to [m]$ in $\Del$ we get an induced map of simplicial sets $\phi_\bu:\EZ^n_\bu\to\EZ^m_\bu$ by mapping $\phi_k:\EZ^n_k\to\EZ^m_k$, $\phi_k(i_0,\dots,i_k)=(\phi(i_0),\dots, \phi(i_k))$. This gives the cosimplicial simplicial set $\EZ$. In particular, we can use definition \ref{DEF:PERF^Q} to get the simplicial set $\PERF^\EZ(U)$, whose $n$-simplicies are precisely $\PERF^{\EZ}(U)_n=\sSet(\EZ^n,\dgN(\perf(U))^\circ)$, i.e., simplicial set maps from $\EZ^n$ to $\dgN (\perf(U))^\circ$, which were described explicitly in the previous paragraph.
\end{example}

We note, that for the simplicial presheaf $\PERF^{\EZ}$ the ``maximal Kan'' condition follows automatically.
\begin{lemma}\label{LEM:perf=perf-Kan}
 Simplicial set maps from $\EZ^n$ to $\dgN(\perf(U))$ take values in its maximal Kan subsimplex, i.e., 
\begin{equation}\label{EQU:EZ-in-Kan}
\PERF^{\EZ}(U)_n=\sSet(\EZ^n,\dgN(\perf(U))^\circ)=\sSet(\EZ^n,\dgN(\perf(U))).
\end{equation}
\end{lemma}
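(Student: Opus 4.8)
The plan is to reduce the statement, via Lemma~\ref{LEM:sSet(X,dgN(Perf))}, to a condition on the \emph{edge}-data of a simplicial map and then exploit the one feature that distinguishes $\EZ^n$ from $\Delta^n$: it carries $2$-simplices that ``go back and forth''. Concretely, by the final sentence of Lemma~\ref{LEM:sSet(X,dgN(Perf))}, a simplicial set map $\mc F\colon \EZ^n\to\dgN(\perf(U))$ factors through $\dgN(\perf(U))^\circ$ exactly when, for every $1$-simplex $\alpha\in\EZ^n_1$, the attached morphism $g_\alpha\colon \mc E_{\alpha(1)}\to\mc E_{\alpha(0)}$ is a homotopy equivalence in $\perf(U)$. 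For a degenerate edge $\alpha=(i,i)=s_0(\llbracket i\rrbracket)$ one has $g_\alpha=\mathrm{id}_{E_i}$, so the only thing I would need to check is that $g_{i_0i_1}$ is a homotopy equivalence for each non-degenerate edge $(i_0,i_1)\in\EZ^n_1$, i.e.\ with $i_0\neq i_1$.

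Here the structure of $\EZ^n$ as the nerve of the chaotic category on $\Z_{n+1}$ is what does the work: for any $i_0\neq i_1$ the triples $\alpha=(i_0,i_1,i_0)$ and $\beta=(i_1,i_0,i_1)$ are (non-degenerate) $2$-simplices of $\EZ^n$, with $\alpha(0,1)=(i_0,i_1)$, $\alpha(1,2)=(i_1,i_0)$ and $\alpha(0,2)=(i_0,i_0)$ degenerate, and symmetrically for $\beta$. I would then evaluate the compatibility relation \eqref{EQU:dg-alpha-relation} of Lemma~\ref{LEM:sSet(X,dgN(Perf))} in the case $k=2$ on $g_\alpha$ and $g_\beta$; using that $g_{(i_0,i_0)}=\mathrm{id}_{E_{i_0}}$ and $g_{(i_1,i_1)}=\mathrm{id}_{E_{i_1}}$ because the $(0,2)$-faces are degenerate, this gives
\[
g_{i_0i_1}\circ g_{i_1i_0}=\mathrm{id}_{E_{i_0}}-D\big(g_{(i_0,i_1,i_0)}\big),\qquad
g_{i_1i_0}\circ g_{i_0i_1}=\mathrm{id}_{E_{i_1}}-D\big(g_{(i_1,i_0,i_1)}\big),
\]
where $g_{(i_0,i_1,i_0)}$ and $g_{(i_1,i_0,i_1)}$ are morphisms of degree $-1$.

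Finally, since the edge-data $g_{i_0i_1}$ and $g_{i_1i_0}$ are closed of degree $0$ (the $k=1$ instance of \eqref{EQU:dg-alpha-relation}, whose right-hand side is empty), the two displayed identities say precisely that $g_{i_0i_1}$ and $g_{i_1i_0}$ are mutually inverse chain maps up to the chain homotopies $g_{(i_0,i_1,i_0)}$ and $g_{(i_1,i_0,i_1)}$; hence $g_{i_0i_1}$ is a homotopy equivalence in $\perf(U)$ with homotopy inverse $g_{i_1i_0}$. By the characterization of the maximal Kan subcomplex recalled in Lemma~\ref{LEM:simplicial-dgN(Perf(U))^circ} (following \cite[Theorem 2.2]{J}), $\mc F$ therefore lands in $\dgN(\perf(U))^\circ$, which is the asserted equality \eqref{EQU:EZ-in-Kan}. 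I do not expect a genuine obstacle here — morally the reason is that $E\Z_{n+1}^{\Cat}$ is a groupoid, so all of its edges are forced to become equivalences under any ``functor-like'' map; the only point that needs a little care is to confirm that the notion of ``homotopy equivalence'' defining the maximal Kan subcomplex is exactly the existence of a two-sided dg-homotopy inverse, which is what the $2$-simplices $\alpha$ and $\beta$ supply.
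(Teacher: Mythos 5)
Your proposal is correct and follows essentially the same argument as the paper: both exploit the $2$-simplices $(i_0,i_1,i_0)$ and $(i_1,i_0,i_1)$ of $\EZ^n$ and the relation \eqref{EQU:dg-alpha-relation} (in the paper's notation, $g_{i_0 i_1 i_0}\circ d+d\circ g_{i_0 i_1 i_0}=\mathrm{id}_{E_{i_0}}-g_{i_0 i_1}\circ g_{i_1 i_0}$ and its mirror image) to exhibit $g_{i_1 i_0}$ as a two-sided homotopy inverse to $g_{i_0 i_1}$, then invoke Lemma~\ref{LEM:simplicial-dgN(Perf(U))^circ}. Your version is merely more verbose about why $g_{(i_0,i_0)}=\mathrm{id}$ from the degenerate face, which the paper takes for granted.
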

\begin{proof}
Any edge $g_{i_0 i_1}$ is automatically a homotopy equivalences with chain homotopy inverse $g_{i_1 i_0}$, since we have the homotopies $g_{i_0 i_1 i_0}\circ d+d\circ g_{i_0 i_1 i_0}=g_{i_0 i_0}-g_{i_0 i_1}\circ g_{i_1 i_0}=id_{ E_{i_0}}-g_{i_0 i_1}\circ g_{i_1 i_0}$ and $g_{i_1 i_0 i_1}\circ d+d\circ g_{i_1 i_0 i_1}=g_{i_1 i_1}-g_{i_1 i_0}\circ g_{i_0 i_1}=id_{ E_{i_1}}-g_{i_1 i_0}\circ g_{i_0 i_1}$. The claim follows from lemma \ref{LEM:simplicial-dgN(Perf(U))^circ}.
\end{proof}

Note that there is a map of cosimplicial simplicial sets $\Delta\to \EZ$, given by $\Delta^n_k\to \EZ^n_k$, $\Delta^n_k=\Del([k],[n])\ni \phi \mapsto (i_0,\dots, i_k):=(\phi(0),\dots,\phi(k))\in \EZ^n_k$. We thus get an induced map of simplicial sets $\PERF^{\EZ}(U)\to \PERF^{\Delta}(U)$.
\begin{proposition}\label{PROP:PERF-EZ=PERF-Delta}
For an object $U\in \CMan$, the map of simplicial sets $\PERF^{\EZ}(U)\to \PERF^\Delta(U)$ is a weak equivalence.
\end{proposition}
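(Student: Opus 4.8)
The plan is to exhibit the map $\PERF^{\EZ}(U)\to \PERF^\Delta(U)$ as a pullback along the cosimplicial map $\Delta\to\EZ$ of something whose fibers are contractible, and more concretely to interpret both sides as Kan complexes whose simplices are described by Lemma~\ref{LEM:sSet(X,dgN(Perf))} and then produce a homotopy inverse by hand. The essential geometric input is that each $\EZ^n$ is contractible (indeed $E\Z_{n+1}^{\Cat}$ has an initial and a terminal object, so its nerve is contractible, as already noted for $\EZ^1$ whose realization is $S^\infty$), so that a map $\EZ^n\to\dgN(\perf(U))^\circ$ should not carry more homotopical information than a map $\Delta^n\to\dgN(\perf(U))^\circ$.

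First I would set up the comparison at the level of mapping spaces. For a fixed simplicial set $Y$ that is a Kan complex, the assignment $X\mapsto \sSet(X,Y)=\underline{sSet}(X,Y)_0$ extends to the internal hom $\underline{sSet}(X,Y)$, and since $\dgN(\perf(U))^\circ$ is a Kan complex (it is a maximal Kan subcomplex), $\underline{sSet}(-,\dgN(\perf(U))^\circ)$ sends weak equivalences between cofibrant (i.e.\ all) simplicial sets to weak equivalences of Kan complexes. So the first step is: observe that the map of simplicial sets $\Delta^n\to\EZ^n$ (the $n$-th level of the cosimplicial map $\Delta\to\EZ$) is a weak equivalence, because both $\Delta^n$ and $\EZ^n$ are contractible — $\Delta^n$ obviously, and $\EZ^n=\mathcal N(E\Z_{n+1}^{\Cat})$ because the indexing category has a zero object up to the unique-morphism condition (any object is both initial and terminal), hence its nerve is contractible. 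Therefore $\underline{sSet}(\EZ^n,\dgN(\perf(U))^\circ)\to\underline{sSet}(\Delta^n,\dgN(\perf(U))^\circ)$ is a weak equivalence of Kan complexes for every $n$. The second step is to assemble this over $n$: I would check that $\PERF^{\EZ}(U)_\bu$ and $\PERF^\Delta(U)_\bu$ are exactly the diagonals (equivalently, the ``evaluate in both coordinates'' simplicial sets) obtained from the bisimplicial sets $([n],[k])\mapsto \sSet(\EZ^n\times\Delta^k,\dgN(\perf(U))^\circ)$ and $([n],[k])\mapsto\sSet(\Delta^n\times\Delta^k,\dgN(\perf(U))^\circ)$ respectively, using that $\sSet(\EZ^n,Y)_k=\sSet(\EZ^n\times\Delta^k,Y)$. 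Since $\Delta^n\times\Delta^k\to\EZ^n\times\Delta^k$ is a weak equivalence for each $(n,k)$ (product of a weak equivalence with $\Delta^k$), mapping into the Kan complex $\dgN(\perf(U))^\circ$ gives a levelwise weak equivalence of bisimplicial sets, and the realization/diagonal lemma for bisimplicial sets then yields that the induced map on diagonals $\PERF^{\EZ}(U)\to\PERF^\Delta(U)$ is a weak equivalence.

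Alternatively — and this may be the cleaner route to actually write down — one can produce an explicit homotopy inverse using the combinatorial description of Lemma~\ref{LEM:sSet(X,dgN(Perf))}: the retraction $E\Z_{n+1}^{\Cat}\to [n]^{\Cat}$ (the poset) collapsing $\llbracket i\rrbracket\mapsto i$, or better a section of $[n]\hookrightarrow E\Z_{n+1}^{\Cat}$ together with the natural transformation to the identity witnessing contractibility, induces by composition a map $\PERF^\Delta(U)\to\PERF^{\EZ}(U)$ and an explicit simplicial homotopy, built from the natural transformation, showing the two composites are homotopic to the identities; concretely, this amounts to extending a twisting-cochain-type datum indexed by strictly increasing tuples to one indexed by arbitrary tuples via the Maurer--Cartan/homotopy-transfer formulas, using the $g_{i_1 i_0}$ and $g_{i_0 i_1 i_0}$ already available inside $\dgN(\perf(U))^\circ$.

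The main obstacle I expect is the bookkeeping in whichever explicit-homotopy argument one chooses: verifying that the extended collection of maps $g_{i_0\dots i_k}$ (now allowing repeats and non-increasing tuples) genuinely satisfies the sign-laden relation \eqref{EQU:dg-alpha-relation}, and that the prescribed simplicial homotopy is compatible with all faces and degeneracies. This is where all the signs in \eqref{EQU:dg-i0-...ik-relation} come back to bite. If one instead runs the soft ``mapping space out of a contractible object'' argument, the only real content is the diagonal lemma for bisimplicial sets plus the contractibility of $\EZ^n$, both of which are standard, so I would lead with that and relegate the explicit homotopy to a remark.
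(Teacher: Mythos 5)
Your first (``soft'') route has a concrete gap at the assembly step. You correctly observe that $\Delta^n\to\EZ^n$ is a trivial cofibration between contractible simplicial sets and that, since $K:=\dgN(\perf(U))^\circ$ is Kan, the induced map of internal homs $\underline{sSet}(\EZ^n,K)\to\underline{sSet}(\Delta^n,K)$ is a weak equivalence for each fixed $n$. But $\PERF^{\EZ}(U)$ is \emph{not} the diagonal of the bisimplicial set $B_{n,k}:=\sSet(\EZ^n\times\Delta^k,K)$: by definition $\PERF^{\EZ}(U)_n=\sSet(\EZ^n,K)=B_{n,0}$ is the $k=0$ row, whereas the diagonal at level $n$ is the (generally much larger) set $\sSet(\EZ^n\times\Delta^n,K)$. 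The diagonal lemma therefore compares the wrong pair of simplicial sets; and there is no levelwise weak equivalence from the constant-in-$k$ bisimplicial set on $\PERF^{\EZ}(U)$ into $B$ either, since $\underline{sSet}(\EZ^n,K)$ is not homotopically discrete in general (take $K$ the nerve of a nontrivial group). Your alternative route stalls at the outset: there is no functor $E\Z_{n+1}^{\Cat}\to[n]$ with $\llbracket i\rrbracket\mapsto i$, because it would have to send the unique arrow $\llbracket 1\rrbracket\to\llbracket 0\rrbracket$ to an arrow $1\to 0$ in the poset $[n]$; in particular there is no simplicial retraction $\EZ^n\to\Delta^n$, and any candidate would additionally have to be natural in $n$, i.e.\ a map of cosimplicial simplicial sets $\EZ\to\Delta$, before it could induce a map $\PERF^\Delta(U)\to\PERF^{\EZ}(U)$.

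The proof in the paper (via proposition~\ref{PROP:X=whX}) keeps exactly your two inputs --- contractibility of $\EZ^n$ and Kan-ness of $K$ --- but deploys them by lifting against subcomplexes of $\EZ^n$, not through internal homs: it first shows $\sSet(\EZ^\bu,K)$ is itself Kan by identifying a horn $\Lambda^n_i\to\sSet(\EZ^\bu,K)$ with a map $\wh\Lambda^n_i\to K$ and filling against the trivial cofibration $\wh\Lambda^n_i\hookrightarrow\EZ^n$, and then shows $F^\sharp:\sSet(\EZ^\bu,K)\to K$ is an isomorphism on all homotopy groups by repeatedly filling against the trivial cofibration $\wh\Theta^n:=\bigl(\bigcup_j\delta_j\EZ^{n-1}\bigr)\cup\Delta^n\hookrightarrow\EZ^n$, whose contractibility is proved by a Mayer--Vietoris/van Kampen induction. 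If you want a genuinely soft argument that avoids this bookkeeping, the correct machine is the Reedy model structure rather than the diagonal lemma: $\Delta^\bu\to\EZ^\bu$ is a levelwise weak equivalence between Reedy cofibrant cosimplicial simplicial sets (both are cosimplicial resolutions of the point), and for fibrant $K$ the contravariant functor $Q\mapsto\sSet(Q^\bu,K)$ sends such maps to weak equivalences of Kan complexes.
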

\begin{proof}
Since  $\dgN(\perf(U))^\circ$ is (by definition) a Kan complex, and $\PERF^\EZ(U):=\sSet(\EZ^\bu,\dgN(\perf(U))^\circ)$ and $\PERF^\Delta(U)=\sSet(\Delta^\bu,\dgN(\perf(U))^\circ)$, the proposition follows from proposition \ref{PROP:X=whX}. 
\end{proof}

In the later chapters we are mainly use $\PERF^Q$ for $Q=\EZ$, and we therefore make the following definition.
\begin{definition}\label{DEF:PERF}
Denote by $\IVB:=\PERF^{\EZ}:\CMan^{op}\to \sSet$, i.e., by \eqref{EQU:DEF:PERF^Q}, 
\begin{equation}\label{EQU:DEF-IVB(U)n}
\IVB(U)_n=\PERF^{\EZ}(U)_n=\sSet(\EZ^n,\dgN(\perf(U))^\circ).
\end{equation}
For a motivation of this notation, see definition \ref{DEF:inf-vec-bun}.
\end{definition}
The reason why we want to consider the cosimplicial simplicial set $\EZ$ is that it has an important additional cyclic structure which $\Delta$ is lacking, as we will explain now.

\begin{definition}
Let $\DC$ be the cyclic category; cf. \cite[6.1.1]{Loday}. More precisely, $\DC$ has the same objects $[n]=\{0,\dots,n\}$ for $n\in \N_0$ as $\Del$, and has morphisms generated by face maps $\delta_j$ and degeneracy maps $\sigma_j$ (as in $\Del$; cf. notation \ref{NOT:Delta-etc}), together with an additional cyclic operator $\tau_n:[n]\to [n]$; see \cite[6.1.1]{Loday} for more details. It is convenient to represent morphisms $\phi\in \DC([k],[n])$ by set maps $\phi:[k]\to [n]$ such that there exists a non-decreasing function $\wt \phi:\{0,\dots, k\}\to \N_0$ with $\wt \phi(k)\leq \wt \phi(0)+n$ so that $\phi(j)\equiv \wt \phi(j) ($mod $\Z_n)$.

Then, a cyclic object in a category $\mc C$ is a functor $X:\DC^{op}\to \mc C$. Since $\DC\cong \DC^{op}$ are isomorphic (\cite[6.1.11]{Loday}), cyclic objects in $\mc C$ are cocyclic objects in $\mc C$ and vice versa. We denote the category of cyclic sets $X:\DC \to \Set$ by $\cSet$. Note that there is functor $\Del\to \DC$, which makes every cyclic object into a simplicial object by precomposition $(\DC^{op}\stackrel X \to \mc C)\mapsto(\Del^{op}\to\DC^{op}\stackrel X \to\mc C)$, and similarly every cocyclic object is a cosimplicial object. In particular, every cosimplicial cyclic set is a cosimplicial simplicial set
\end{definition}

\begin{remark}
Note, that we have the canonical cyclic sets $\Cyc^n:=\DC(-,[n])$, which assemble for various $n$ to a cocyclic cyclic set $\Cyc^\bu:\DC\to \cSet$. In particular, this is also a cosimplicial cyclic set $\Del\to \DC\stackrel{\Cyc^\bu}\to\cSet$, so that we also have a third example of a simplicial presheaf $\PERF^\Cyc$ using our setup from definition \ref{DEF:PERF^Q}. By lemma \ref{LEM:sSet(X,dgN(Perf))}, an $n$-simplex in $\PERF^\Cyc(U)$ is given by maps $g_{i_0\dots i_k}$ for any ``cyclic set of indices'' $i_0=\phi(0), \dots, i_k=\phi(k)$ for some $\phi\in \DC([k],[n])$ (for example, for $n=9$ we would have maps such as $g_{457034}: E_4\to  E_4$). Unfortunately, the analog of proposition \ref{PROP:PERF-EZ=PERF-Delta} does not hold, i.e., $\PERF^\Cyc(U)$ and $\PERF^\Delta(U)$ are in general not weakly equivalent. (For example, the non-degenerate simplicies of $\DC^1$ as sequences of indices are $(0)$, $(1)$, $(0,1)$, $(1,0)$, $(0,1,0)$, $(1,0,1)$ but no higher ones due to cyclicity, so that the geometric realization of $\DC^1$ is the $2$-sphere $S^2$.)
\end{remark}

Now, while $\Delta^n$ is not a cyclic set, $\EZ^n$ is a cyclic set, and we will need to use the additional cyclic structure of $\EZ$ below to define our Chern character map.
\begin{lemma}
The simplicial set $\EZ^n$ as described in the first paragraph of example \ref{EXA:EZ}, together with the operator $t_k:\EZ^n_k\to \EZ^n_k$ given by $t_k(i_0,\dots, i_{k-1},i_k)=(i_k,i_0,\dots, i_{k-1})$ makes $\EZ^n$ into a cyclic set. This, in turn, makes $\EZ$ into a cosimplicial cyclic set.
\end{lemma}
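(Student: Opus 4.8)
The plan is to verify Connes' cyclic identities directly from the combinatorial description of $\EZ^n$ given in example \ref{EXA:EZ}. Recall from \cite[6.1.1]{Loday} that a cyclic set is a simplicial set $X_\bu$ together with maps $t_k\colon X_k\to X_k$ with $t_k^{\,k+1}=\mathrm{id}_{X_k}$ and
\begin{align*}
d_j\, t_k &= t_{k-1}\, d_{j-1}\ \ (1\le j\le k), & d_0\, t_k &= d_k,\\
s_j\, t_k &= t_{k+1}\, s_{j-1}\ \ (1\le j\le k), & s_0\, t_k &= t_{k+1}^{\,2}\, s_k.
\end{align*}
Since the $k$-simplices of $\EZ^n$ are the $(k+1)$-tuples $(i_0,\dots,i_k)$ of elements of $\{0,\dots,n\}$, with $d_j$ deleting the $j$-th entry and $s_j$ doubling it, and $t_k$ the cyclic shift $(i_0,\dots,i_{k-1},i_k)\mapsto(i_k,i_0,\dots,i_{k-1})$, all of these identities reduce to elementary bookkeeping of indices.

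First, $t_k^{\,k+1}=\mathrm{id}$ is immediate, as $k+1$ iterations of a cyclic shift of a $(k+1)$-tuple is the identity. For the face identities, write $t_k(i_0,\dots,i_k)=(a_0,\dots,a_k)$ with $a_0=i_k$ and $a_p=i_{p-1}$ for $1\le p\le k$. Deleting $a_0$ gives $(i_0,\dots,i_{k-1})=d_k(i_0,\dots,i_k)$, which is the relation $d_0 t_k=d_k$; deleting $a_j$ for $1\le j\le k$ gives $(i_k,i_0,\dots,i_{j-2},i_j,\dots,i_{k-1})$, which is exactly $t_{k-1}$ applied to $d_{j-1}(i_0,\dots,i_k)=(i_0,\dots,i_{j-2},i_j,\dots,i_k)$. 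The degeneracy identities are handled the same way: doubling $a_j$ for $1\le j\le k$ yields $(i_k,i_0,\dots,i_{j-1},i_{j-1},i_j,\dots,i_{k-1})=t_{k+1}s_{j-1}(i_0,\dots,i_k)$, while $s_0 t_k(i_0,\dots,i_k)=(i_k,i_k,i_0,\dots,i_{k-1})$, which equals $t_{k+1}^{\,2}s_k(i_0,\dots,i_k)$ by applying the cyclic shift twice to $s_k(i_0,\dots,i_k)=(i_0,\dots,i_k,i_k)$.

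It remains to upgrade $\EZ$ from a cosimplicial simplicial set to a cosimplicial cyclic set, for which it suffices to check that each cosimplicial structure map is a morphism of cyclic sets. For $\phi\in\Del([n],[m])$ the induced map $\phi_\bu\colon\EZ^n_\bu\to\EZ^m_\bu$ acts entrywise, $\phi_k(i_0,\dots,i_k)=(\phi(i_0),\dots,\phi(i_k))$, and applying a fixed function to each coordinate of a tuple visibly commutes with cyclically rotating the tuple; hence $\phi_k t_k=t_k\phi_k$, so each $\phi_\bu$ is a morphism of cyclic sets, as required.

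There is no real obstacle here — the proof is a routine index check — and the only point requiring attention is the choice of convention for the cyclic category: we use that of \cite[6.1.1]{Loday}, and with the opposite convention one would instead take $t_k$ to be the inverse cyclic shift. One could alternatively observe that $\EZ^n$ is the cyclic nerve of the indiscrete groupoid on $\Z/(n+1)\Z$ and invoke the general fact that cyclic nerves of groupoids are cyclic sets, but the direct check above is self-contained and just as short.
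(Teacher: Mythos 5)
Your proof is correct and follows the same approach as the paper, which simply states that ``one checks'' the compatibility of $t_k$ with the simplicial structure maps and that $\phi_k$ commutes with $t_k$; you have filled in those routine index verifications explicitly and accurately.
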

\begin{proof}
Once checks that $t_k$ has the correct compatibility (see \cite[6.1.2 (b), (c)]{Loday}) with the face and degeneracy maps $d_j$ and $s_j$. For a morphism $\phi:[n]\to [m]$ in $\Del$ the induced map of simplicial sets $\phi_\bu:\EZ^n_\bu\to\EZ^m_\bu$, $\phi_k:\EZ^n_k\to\EZ^m_k$, $\phi_k(i_0,\dots,i_k)=(\phi(i_0),\dots, \phi(i_k))$ respects not only the face and degeneracy maps, but also the $t_k$ operator, i.e., $\EZ:\Del\to \cSet$ is a cosimplicial cyclic set.
\end{proof}

We have thus defined the simplicial presheaf $\IVB=\PERF^{\EZ}$, which will be the domain of our Chern character map for holomorphic dg-vector bundles over $U$ with connection. As for the range of the Chern character map, we use the same presheaf $\OM$ that we used in our previous paper \cite[Definition 2.3]{GMTZ} (for the Chern character map of holomorphic vector bundles that were not differential graded). For completeness sake, we will briefly review the definition of $\OM:\CMan^{op}\to \sSet$.

\begin{definition}\label{DEF:OM}
For an object $U\in \CMan$, consider the (non-negatively graded) cochain complex of holomorphic forms $\Ohol^\bu(U)$ on $U$ with zero differential $d=0$. Let $u$ be a formal variable of degree $|u|=-2$, denote by $\Ohol^\bu(U)[u]$ polynomials in $u$, and by $\Ohol^\bu(U)\ul$ its quotient by its positive degree part $\Ohol^\bu(U)[u]^{\bu>0}$. Applying the Dold-Kan functor to this chain complex gives a simplicial abelian group $DK(\Ohol^\bu(U)\ul)$, for which we consider its underlying simplicial set, denoted by an underline, i.e., $\OM(U)= \DKSet(\Ohol^\bu(U)\ul)$.
\[
\OM\quad :\quad \CMan^{op} \quad\stackrel{\Ohol^\bu(-)\ul}{\longrightarrow}\quad\Chain^- \quad \stackrel{\DKSet}{\longrightarrow} \quad\sSet
\]
Since holomorphic forms pull back via a holomorphic map $\varphi:U\to U'$, this assignment defines a simplicial presheaf $\OM:\CMan^{op}\to \sSet$ by $\OM:=\DKSet(\Ohol^\bu(\cdot)\ul):\CMan^{op}\to \sSet$.
\end{definition}

\begin{note}\label{REM:DK-def}
If $C=(C^{\bu\leq 0},d_C)$ is a non-positively graded chain complex, then the Dold-Kan functor $\DK(C)\in \sAb$, which is a simplicial abelian group, can be described as follows; cf. \cite[Appendix B]{GMTZ}. For $n\geq 0$, we may define $\DK(C)_n$ to be the abelian group (under addition) of cochain maps from the normalized cells of the standard simplex $\Delta^{n}$ to $C$, i.e., we may set
\begin{equation}\label{EQU:DK-def}
DK(C)_n:={\mc Chain}(N(\Z\Delta^{n}),C).
\end{equation}
Thus, this means that an element of $\DK(C)_n$ is given by a labeling of the non-degenerate cells of the standard simplex $\Delta^n$ by elements of $C$, in such a way that for a $k$-cell $\alpha$ of $\Delta^n$, whose boundary $(k-1)$-cells are $d_j(\alpha)$, we have 
\begin{equation}\label{EQU:DK-condition}
d_C(\alpha)=\sum_{j=0}^k (-1)^j\cdot d_j(\alpha).
\end{equation}
In the situation of definition \ref{DEF:OM}, the chain complex $C=\Ohol^\bu(U)\ul$ has a zero internal differential, i.e., $d_C=0$.
\end{note}

\section{Chern Character $\CH:\IVB\to \OM$}\label{SEC: Chern Map}

We now define a map of simplicial presheaves $\CH:\IVB\to \OM$, where $\IVB=\PERF^{\EZ}$ from definition \ref{DEF:PERF} and $\OM$ is from definition \ref{DEF:OM}. We start by defining cochains on a simplicial set $X$ with values in a dg category $\mc C$ (for us $\mc C=\perf(U)$), and, in the case when $X$ is a cyclic set, its trace map. The main example to keep in mind for the following definitions is the cyclic set $X=\EZ^n$.

\begin{definition}
A \emph{labelling} of a simplicial set $X$ by a dg-category $\mc C$ is a set map from the vertices of $X$ to the objects of $\mc C$, $L:X_0 \to Obj(\mc C)$, i.e., a choice of a objects $\mc E_{\alpha}:=L(\alpha)$ of $\mc C$ for each $\alpha \in X_0$.
\end{definition}

\begin{definition}\label{DEF:CLXHom}
Let $X$ be a simplicial set, let $\mc C$ be dg category, and let $L:X_0 \to Obj(\mc C)$ be a labelling so that we have a choice of objects $\mc E_\alpha$ for each $\alpha \in X_0$. We define the \emph{cochains on $X$ with values in $\mc C$} to be
\begin{equation}\label{EQU:DEF:CL(X,C)}
C_L^\bu(X,  \mc C):=\prod_{p\geq 1}\prod_{\alpha\in X_p} Hom^\bu_{\mc C}(\mc E_{\alpha(p)},\mc E_{\alpha(0)}),
\end{equation}
where we used notation from lemma \ref{LEM:sSet(X,dgN(Perf))} to denote the first and last vertices of $\alpha\in X_p$ by $\alpha(0)$ and $\alpha(p)$, respectively. In components, we will write $f\in C^\bu_L(X,\mc C)$ as $f=\{f_\alpha\}_{\alpha\in X}$, where, for $\alpha\in X_p$ and $p\geq 1$, we have $f_\alpha\in Hom^\bu_{\mc C}(\mc E_{\alpha(p)},\mc E_{\alpha(0)})$. 

Note, that $C_L^\bu(X,  \mc C)$ is a dg-algebra:
\begin{enumerate}
\item
a cochain  $f$ of bidegree $(p,q)$ assigns to a $p$-cell $\alpha\in X_p$ a degree $q$ map $f_\alpha \in Hom_{\mc C}^q(\mc E_{\alpha(p)}, \mc E_{\alpha(0)}) $, and is zero elsewhere; in this case the total degree of $f$ is $|f|=p+q$
\item\label{ITEM:CLXHom-differential}
a differential $\hat \delta:C_L^p(X, \mc C) \to C_L^{p+1} (X, \mc C)$ is induced by the face maps $d_i:X^{p+1} \rightarrow X^p$, so that if $\alpha\in X_{p+1}$ is a $p+1$ simplex of $X$, then the deleted \v{C}ech differential of $f$, denoted by $\hat \delta f$, is defined by 
\begin{equation}
(\hat \delta f)_\alpha := \sum_{i=1}^p (-1)^i f_{d_i (\alpha)} = \sum_{i=1}^p (-1)^i f_{\alpha( 0, \dots ,\widehat i, \dots, p+1)}
\end{equation}
Note that $d_0$ and $d_{p+1}$ are not used in the differential, which ensures the terms in the sum are all maps in $Hom^q_{\mc C}(\mc E_{\alpha(p+1)}, \mc E_{\alpha(0)})$.
\item\label{ITEM:CLXHom-internal-differential}
an internal differential $D: C_L^\bullet(X, \mc C) \to C_L^\bullet(X, \mc C)$ is induced by the dg structure on $\mc C$, so that if $\alpha\in X_p$ is a $p$-simplex and $f_\alpha \in Hom^q_{\mc C}(\mc E_{\alpha(p)},\mc E_{\alpha(0)})$, then $(Df)_\alpha:=(-1)^{p+q+1} \cdot D(f_\al)=(-1)^p\cdot (d\circ f_\al-(-1)^q\cdot f_\al\circ d)$ as a homomorphism in $Hom^{q+1}(\mc E_{\alpha(p)},\mc E_{\alpha(0)})$
\item\label{ITEM:CLXHom-product}
a product ``$f\cdot g$'' on $C_L^\bullet(X, \mc C)$, which, for $\alpha\in X_{p+r}$ is the extension of the maps $Hom^q_{\mc C}(\mc E_{\alpha(p)},\mc E_{\alpha(0)})\times Hom^s_{\mc C}(\mc E_{\alpha(p+r)},\mc E_{\alpha(p)})\to Hom^q_{\mc C}(\mc E_{\alpha(p+r)},\mc E_{\alpha(0)})$
\begin{equation}\label{EQU:CXHom-product}
(f_{\al(0,\dots, p)},g_{\al(p,\dots, p+r)}) \mapsto (f\cdot g)_{\al(0,\dots,p+r)}:= (-1)^{q\cdot r}\cdot f_{\alpha(0,\dots, p)}\circ g_{\alpha(p,\dots, p+r)}
\end{equation}
on the components of $C_L^\bullet(X, \mc C)$, to all of $C_L^\bullet(X, \mc C)$
\end{enumerate}
We note that in particular, $Df=d\cdot f-(-1)^{|f|}f\cdot d=:[d,f]$. It is well-known (and straightforward to check) that with these definitions the cochains on $X$ with values in $\mc C$, $C_L^\bu(X,  \mc C)$, becomes a dg-algebra.
\end{definition}

\begin{definition}\label{DEF: MC}
Given a simplicial set $X$, a dg category $\mc C$, and a labeling $L$. Then, we say an element $g\in C_L^\bullet(X, \mc C)$ is a \emph{Maurer Cartan element}, if
\begin{equation}\label{EQU:MC-condition}
\hat \delta g + D g+  g \cdot g =0
\end{equation}
We denote the set of all Maurer Cartan elements by  $MC(C_L^\bullet (X, \mc C))$.
\end{definition}

\begin{definition}\label{DEF:g-from-F:X->dgNerve(C)}
Let $X_\bu$ be a simplicial set, and let $\mc C$ be a dg-category. Then, by lemma \ref{LEM:sSet(X,dgN(Perf))}, a simplicial set map $\mc F:X\to \dgN(\mc C)$ induces objects $\mc E_\al$ for each $0$-simplex $\al\in X_0$, and maps $g_\alpha:\mc E_{\alpha(p)}\to \mc E_{\alpha(0)}$ for every $\alpha\in X_p$ with $p\geq 1$ (for degenerate simplicies, we take $g_{\alpha}=id_{\mc E_{\alpha(0)}}$ when $\alpha\in X_1$, and $g_{\alpha}=0$ when $\alpha\in X_p$ for $p\geq 2$). Thus, we can define a labeling $L:=\mc F_0:X_0\to \dgN(\mc C)_0=Obj(\mc C)$ of $X$ by $\mc C$ via $L(\alpha):=\mc E_\al$ for $\al\in X_0$. Moreover the $g_\al$ for $\al\in X_p$ for $p\geq 1$, assemble to an element $g=\{g_\al\}_{\al\in X}\in C_L^\bu(X,\mc C)$.
\end{definition}

\begin{corollary}\label{COR:F:X-to-dgN(C)}
The element $g\in C_L^\bu(X,\mc C)$ from definition \ref{DEF:g-from-F:X->dgNerve(C)} is a Maurer Cartan element, i.e., $g$ satisfies \eqref{EQU:MC-condition}. Moreover, $g$ has components of bidegree $(p,1-p)$ for $p\geq 1$, so that $g$ is of total degree $1$.
\end{corollary}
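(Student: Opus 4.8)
The plan is to unwind the definitions and check that the Maurer--Cartan identity \eqref{EQU:MC-condition} for $g$ is exactly a repackaging of the defining relation \eqref{EQU:dg-alpha-relation} of a simplicial set map $X \to \dgN(\mc C)$ from lemma \ref{LEM:sSet(X,dgN(Perf))}. First I would record the bidegrees: by lemma \ref{LEM:sSet(X,dgN(Perf))}\eqref{ITEM:g-alpha-maps}, for $\alpha \in X_p$ with $p \geq 1$ the map $g_\alpha$ has degree $1-p$ in $\mc C$, so as a cochain its component on $X_p$ lives in $Hom_{\mc C}^{1-p}$, i.e.\ it has bidegree $(p, 1-p)$ and total degree $p + (1-p) = 1$. (The degenerate cases $g_\alpha = id$ for $\alpha \in X_1$ and $g_\alpha = 0$ for $\alpha \in X_p$, $p \geq 2$, are consistent with this and cause no trouble, since in the cochain complex $C_L^\bu(X,\mc C)$ degenerate simplices are handled by the normalization implicit in the product and differential.) This settles the second assertion immediately.

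For the Maurer--Cartan equation, I would fix $\alpha \in X_{p+1}$ with $p \geq 1$ and compute the $\alpha$-component of $\hat\delta g + Dg + g\cdot g$ term by term using definition \ref{DEF:CLXHom}. The term $(\hat\delta g)_\alpha = \sum_{i=1}^{p}(-1)^i g_{\alpha(0,\dots,\widehat{i},\dots,p+1)}$ reproduces the first sum on the right of \eqref{EQU:dg-alpha-relation} (with the index shift $k = p+1$, so the sum runs $i = 1,\dots,k-1$). The term $(Dg)_\alpha = (-1)^{p+1}\bigl(d\circ g_\alpha - (-1)^{1-(p+1)} g_\alpha\circ d\bigr)$, by item \eqref{ITEM:CLXHom-internal-differential} with $q = 1-(p+1) = -p$, rearranges to $\pm(g_\alpha d - (-1)^{p+1} d\, g_\alpha) = \pm D(g_\alpha)$ in the sense of lemma \ref{LEM:sSet(X,dgN(Perf))}, i.e.\ it is (up to the overall sign bookkeeping) the left-hand side $D(g_\alpha)$ of \eqref{EQU:dg-alpha-relation}. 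Finally $(g\cdot g)_\alpha = \sum_{j=1}^{k-1}(-1)^{(1-j)(k-j)} g_{\alpha(0,\dots,j)}\circ g_{\alpha(j,\dots,k)}$, where the sign comes from the Koszul rule \eqref{EQU:CXHom-product} applied to a factor of degree $q = 1-j$ and a face of simplicial degree $r = k-j$; this should match the second sum $\sum_{j=1}^{k-1}(-1)^{k(j-1)+1} g_{\alpha(0,\dots,j)}\circ g_{\alpha(j,\dots,k)}$ in \eqref{EQU:dg-alpha-relation} after reducing both signs mod $2$. Collecting, $(\hat\delta g + Dg + g\cdot g)_\alpha = 0$ for every $\alpha$, which is \eqref{EQU:MC-condition}.

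The main obstacle — really the only nontrivial point — is the sign reconciliation: verifying that $(-1)^{(1-j)(k-j)}$, $(-1)^{k(j-1)+1}$, and the sign $(-1)^i$ in $\hat\delta$ are mutually consistent modulo $2$, and that the internal-differential sign convention $(Df)_\alpha = (-1)^{p+q+1}D(f_\alpha)$ in item \eqref{ITEM:CLXHom-internal-differential} lines up with the left-hand side of \eqref{EQU:dg-alpha-relation}. I would do this by comparing parities directly: e.g.\ $(1-j)(k-j) \equiv (j-1)(k-j) \equiv k(j-1) - j(j-1) \equiv k(j-1) \pmod 2$ since $j(j-1)$ is even, and then tracking the extra global sign, which is absorbed consistently because \eqref{EQU:MC-condition} is a homogeneous equation (every term has total degree $2$ as an element of $C_L^\bu(X,\mc C)$, so a uniform rescaling of $g$'s components by $\pm1$ according to simplicial degree — which is precisely what the sign in item \eqref{ITEM:CLXHom-internal-differential} accomplishes — converts \eqref{EQU:dg-alpha-relation} into \eqref{EQU:MC-condition}). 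Since the signs in definition \ref{DEF:CLXHom} were chosen so that $C_L^\bu(X,\mc C)$ is a genuine dg-algebra and $Df = [d,f]$, this consistency is built in, and the corollary follows.
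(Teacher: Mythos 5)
Your approach is the same as the paper's: unwind the definitions and check that the Maurer--Cartan condition for the cochain $g$ reduces, component by component, to relation \eqref{EQU:dg-alpha-relation}, with the bidegree $(p,1-p)$ observation settling the degree claim. The parity reduction $(-1)^{(1-j)(k-j)}=(-1)^{k(j-1)}$ is correct and is exactly the computation the paper performs (there written as $(-1)^{(1-p)(r-p)}=(-1)^{(p+r)(p-1)}$). However, your explanation of the remaining overall sign is off: you attribute it to the sign $(-1)^{p+q+1}$ in item \eqref{ITEM:CLXHom-internal-differential}, but since $g$ has total degree $p+q=1$ that sign is $(-1)^{2}=+1$ and contributes nothing. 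The extra $(-1)$ separating $(-1)^{i}$ from $(-1)^{i-1}$, and $(-1)^{k(j-1)}$ from $(-1)^{k(j-1)+1}$, comes simply from moving terms across the equation: MC reads $Dg=-\hat\delta g-g\cdot g$, which after the parity computation is precisely \eqref{EQU:dg-alpha-relation}. This is a bookkeeping slip rather than a gap, but the sentence about ``uniform rescaling by $\pm 1$'' should be deleted and replaced by this transposition argument.
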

\begin{proof}
Each $g_\al$ for $\al\in X_p$ is of bidegree $(p,1-p)$; cf. lemma \ref{LEM:sSet(X,dgN(Perf))}\eqref{ITEM:g-alpha-maps}. For $\al\in X_{p+r}$ with $p,r\geq 1$ we have $g_{\al(0,\dots, p)}\cdot g_{\al(p,\dots, p+r)}=(-1)^{(1-p)(r-p)} g_{\al(0,\dots, p)}\circ g_{\al(p,\dots, p+r)}$, and since $(-1)^{(1-p)(r-p)}=(-1)^{(r+p)(p-1)}$, we see that \eqref{EQU:MC-condition} becomes exactly \eqref{EQU:dg-alpha-relation}.
\end{proof}

Now, consider the case $\mc C=\perf(U)$. In this case $C_L^\bullet(X, \mc C)$ becomes a direct product of holomorphic sections, i.e.,
\[
C_L^\bullet(X, \perf(U))=\prod_{p\geq 1}\prod_{\alpha\in X_p}\Gamma_{hol}(U, Hom( E_{\alpha(p)}, E_{\alpha(0)}))
\]
since morphisms $Hom_{\mc C} (\mc E_{1},\mc E_{2})$, which are bundle maps are in correspondence with holomorphic sections of the $Hom(E_{1},E_{2})$-bundle. Since we want to include higher holomorphic forms as well, we will include this dg-algebra in a larger dg-algebra of all holomorphic forms $C_L^\bu(X, \perf(U))\hookrightarrow \OmXHom$ defined as follows.

\begin{definition}\label{DEF:OmXHom}
Let $X$ be a simplicial set, and consider the dg category $\perf(U)$. Let $L:X_0 \to Obj(\perf (U))$ be a labelling as in definition \ref{DEF:CLXHom}, i.e., $\mc E_\alpha=L(\alpha)$. We define the dg-algebra
\begin{equation}\label{EQU:DEF:CL(X,OxP(U))}
\OmXHom:=\prod_{p\geq 0}\prod_{\alpha\in X_p}\Ohol^\bu (U, Hom^\bu(E_{\alpha(p)}, E_{\alpha(0)})).
\end{equation}
where we again denoted the first and last vertices of $\alpha\in X_p$ by $\alpha(0)$ and $\alpha(p)$, respectively. In components, we will write $f\in \OmXHom$ as $f=\{f_\alpha\}_{\alpha\in X}$, where, for $\alpha\in X_p$, we have $f_\alpha\in \Ohol^\bu(U, Hom^\bu( E_{\alpha(p)}, E_{\alpha(0)}))$. Note, that in \eqref{EQU:DEF:CL(X,OxP(U))} we included the $0$-simplicies ($p=0$) when compared to \eqref{EQU:DEF:CL(X,C)}. 

The dg-algebra structure on $\OmXHom$ is defined as follows.
\begin{enumerate}
\item
an $f\in \OmXHom$ has the triple degree $(k,p,q)$, if it assigns to a $p$-cell $\alpha\in X_p$ a holomorphic $k$-form with values in the appropriate Hom-bundle of degree $q$, $f_\alpha\in \Ohol^k(U,Hom^q(E_{\alpha(p)}, E_{\alpha(0)}))$, and vanishes elsewhere; in this case the total degree of $f$ is $|f|=k+p+q$
\item
a differential $\hat \delta:\OmXHom\to \OmXHom$, the deleted \v{C}ech differential, is defined just as in definition \ref{DEF:CLXHom}\eqref{ITEM:CLXHom-differential}, i.e., for $f\in \OmXHom$,
\begin{equation}
(\hat \delta f)_\alpha := \sum_{i=1}^p (-1)^i f_{d_i (\alpha)} = \sum_{i=1}^p (-1)^i f_{\alpha( 0, \dots ,\widehat i, \dots, p+1)}
\end{equation}
\item
a differential $D: \OmXHom\to \OmXHom$, the internal differential, is defined similarly to definition \ref{DEF:CLXHom}\eqref{ITEM:CLXHom-internal-differential}, i.e., if $f_\alpha\in \Ohol^k(U,Hom^q( E_{\alpha(p)}, E_{\alpha(0)}))$, then $(Df)_\al\in \Ohol^k(U, Hom^{q+1}( E_{\alpha(p)}, E_{\alpha(0)}))$,
\[ (Df)_\alpha:=(-1)^ p\cdot (d_{\al(0)}\circ f_\al-(-1)^{k+q}\cdot f_\al\circ d_{\al(p)})
\]
where $d_i$ denotes the differential of $E_i$
\item
a product ``$f\cdot g$'' similar to definition \ref{DEF:CLXHom}\eqref{ITEM:CLXHom-product}.  
More explicitly, consider the maps $\Ohol^k(U, Hom^q(\mc E_{\alpha(p)},\mc E_{\alpha(0)}))\times \Ohol^\ell (U,Hom^s(\mc E_{\alpha(p+r)},\mc E_{\alpha(p)}))\to \Ohol^{k+\ell}(U,Hom^q(\mc E_{\alpha(p+r)},\mc E_{\alpha(0)}))$
\begin{align}
(f_{\al(0,\dots, p)},g_{\al(p,\dots, p+r)}) \mapsto & (f\cdot g)_{\al(0,\dots, p+r)}
\\ & := (-1)^{(k+q)\cdot r}\cdot f_{\alpha(0,\dots, p)}\circ g_{\alpha(p,\dots, p+r)} \nonumber
\end{align}
where ``$\circ$'' denotes wedging forms and composing Hom-spaces, and extend them from the components of $\OmXHom$ to the whole space.
\end{enumerate}
We note that, again, $Df=d\cdot f-(-1)^{|f|}f\cdot d=[d,f]$. Just as in definition \ref{DEF:CLXHom}, $\OmXHom$ becomes a dg-algebra, and the inclusion $C_L^\bu(X,  \perf(U))\hookrightarrow \OmXHom$ is a dg-algebra morphism. Note that this inclusion consists of two separate inclusions of holomorphic functions into holomorphic forms, $\Gamma_{hol}(\dots)\hookrightarrow \Ohol(\dots)$, as well as non-zero simplicies into all simplicies, $\prod_{p\geq1}(\dots)\hookrightarrow \prod_{p\geq 0}(\dots)$. Note further, that $Df=d\cdot f-(-1)^{|f|} f\cdot d$, where $d=\{d_\al\}_{\al\in X}\in \OmXHom$ is given by the differentials $d_\al=d_{E_\al}$ for $\al\in X_0$ and $d_\al=0$ for all other $\al$.

Finally we remark, that  every Maurer Cartan element in $C_L^\bu(X,  \perf(U))$ is also a Maurer Cartan element in the larger dg-algebra $\OmXHom$.
\end{definition}

Now, for a vector bundle $E$, there is a trace map $tr:Hom(E,E)\to \C$. Following ideas from O'Brian-Toledo-Tong, \cite[page 238]{OTT1}, we will define a trace map
\[
\prod_{p\geq 0}\prod_{\alpha\in X_p}\Ohol^\bu (U, Hom^\bu(E_{\alpha(p)}, E_{\alpha(0)}))\to  \prod_{p\geq 0}\prod_{\alpha\in X_p} \Ohol^\bu(U,\C)
\]
Note that the left hand side is $\OmXHom$, and we denote the right hand side by $C^\bu(X,\Ohol(U)):= \prod_{p\geq 0}\prod_{\alpha\in X_p} \Ohol^\bu(U,\C)$. To fit this into our current setting, we need an additional \emph{cyclic} structure on $X$.
\begin{definition}\label{DEF:trace_g}
Let $X$ be a cyclic set. Let $\alpha\in X_p$ be a $p$-simplex, i.e., by our convention $\alpha=\alpha(0,\dots, p)$, then using the additional operator $\tau_p:[p]\to [p]$, we denote the induced map $t_p:X_p\to X_p$ by $\alpha(p,0,\dots, p-1):=t_p(\alpha)$.

Now, let $L:X_0 \to Obj(\perf (U))$ be a labelling, and let $g$ be a Maurer Cartan element of $C_L^\bu(X,  \perf(U))$. Then, we define the \emph{trace} map
\begin{align*}
Tr_g:&\OmXHom\to C^\bu(X,\Ohol(U))
\\
(Tr_g(f))_{\alpha\in X_s} :=& \sum_{0\leq k\leq \ell\leq s}(-1)^{(k+1)\cdot s+\ell-k}\cdot tr\left(g_{\alpha(\ell, \cdots, s, 0, \cdots ,k )} \circ f_{\alpha( k, \cdots,\ell )}  \right) 
\end{align*}
Note, that the trace on the right makes sense, since it is applied to $Hom(E_{\alpha(\ell)},E_{\alpha(\ell)})$.
\end{definition}

The following proposition follows the arguments from \cite[Proposition 3.2]{OTT1}.

\begin{proposition}\label{LEM:tr-is-chain-map}
Let $X$ be a cyclic set with labeling $L$, and let $g$ be a Maurer Cartan element in $C_L^\bu(X,  \perf(U))$. Then, the trace map $Tr_g$ satisfies
\begin{equation}\label{EQU:Trg-chain-map}
Tr_g \circ (\hat \delta + D + [g,-]) = \delta \circ Tr_g,
\end{equation}
where $\delta$ on the right is the (full) \v Cech differential, including first and last term, i.e., $(\delta f)_\alpha := \sum_{j=0}^{p+1} (-1)^j f_{\alpha( 0, \dots ,\hat j, \dots, p+1)}$ for $\alpha \in X_{p+1}$.
\end{proposition}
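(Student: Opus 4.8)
The plan is to verify \eqref{EQU:Trg-chain-map} by a direct computation, expanding both sides on a fixed simplex $\alpha\in X_{s+1}$ and checking that all terms cancel. The strategy mirrors \cite[Proposition 3.2]{OTT1}, but with careful bookkeeping of the three sign conventions in play (the bidegree sign on $\hat\delta$, the internal-differential sign $(-1)^p$ from Definition \ref{DEF:CLXHom}\eqref{ITEM:CLXHom-internal-differential}, and the Koszul sign $(-1)^{qr}$ from the product). The first step is to record, using the Maurer--Cartan equation \eqref{EQU:MC-condition} for $g$, that $\hat\delta g + Dg = -g\cdot g$; this will be substituted whenever a $\hat\delta g$ or $Dg$ term is produced by moving the differential past $g$ inside the trace. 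The second step is to fix notation for the double-indexed sum: write $(Tr_g f)_\beta = \sum_{0\le k\le \ell\le s}\epsilon(k,\ell,s)\,\tr\bigl(g_{\beta(\ell,\dots,s,0,\dots,k)}\circ f_{\beta(k,\dots,\ell)}\bigr)$ with $\epsilon(k,\ell,s) = (-1)^{(k+1)s+\ell-k}$, and observe that the cyclic structure is exactly what makes $g_{\beta(\ell,\dots,s,0,\dots,k)}$ a well-defined endomorphism of $E_{\beta(\ell)}=E_{\beta(k)}$, so the composite lands in $\mathrm{End}(E_{\beta(\ell)})$ and the trace makes sense.

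The core computation is then organized by the type of term. Applying $Tr_g$ to $\hat\delta f$, $Df$, and $[g,f]=g\cdot f-(-1)^{|f|}f\cdot g$ produces, after moving face maps and the internal differential onto $g$, three families of terms: (i) ``$\hat\delta$ on $g$'' terms, (ii) ``$D$ on $g$'' terms, and (iii) ``$g\cdot g$'' terms coming from the products $g\cdot f$ and $f\cdot g$ in the bracket. By the Maurer--Cartan relation the sum of families (i) and (ii) equals the negative of family (iii), so these cancel, and what remains on the left-hand side are precisely the ``boundary'' face-map terms $d_0$ and $d_{s+1}$ that the \emph{deleted} \v Cech differential $\hat\delta$ omits but that the \emph{full} differential $\delta$ on the right-hand side includes. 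The final step is to match these leftover $j=0$ and $j=s+1$ terms against $(\delta\circ Tr_g f)_\alpha$: here one uses the trace identity $\tr(ab)=(-1)^{|a||b|}\tr(ba)$ together with the cyclic operator $t$ to see that the $d_{s+1}$-term of $Tr_g(g\cdot f)$ and the $d_0$-term of $Tr_g(f\cdot g)$ reassemble, with exactly the sign $\epsilon(k,\ell,s)$ shifted to $\epsilon$ on the smaller simplex, into the $j=0$ and $j=s+1$ summands of $\delta\circ Tr_g$.

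The main obstacle I expect is the sign reconciliation: each of the four operations ($\hat\delta$, $D$, left multiplication by $g$, right multiplication by $g$) contributes a sign that depends on the simplicial degree $p$, the form degree $k$, and the internal degree $q$ of the argument, and the trace's graded cyclic sign $(-1)^{|a||b|}$ must be threaded through the re-indexing $(k,\ell)\mapsto$ (shifted pair) induced by $t_p$ and by the face maps $d_0,d_{s+1}$. The safest route is to first do the case $f\in C_L^\bu(X,\perf(U))$ with $f$ homogeneous of pure bidegree $(p,q)$ and zero form-degree — which is literally the setting of \cite[Proposition 3.2]{OTT1} and fixes the combinatorial skeleton — and then note that the extension to forms ($k>0$) changes only Koszul signs in a way that is uniform across all terms, hence does not disturb the cancellations. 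A clean way to present this is to define an auxiliary operator $\nabla_{\mathrm{tot}} := \hat\delta + D + [g,-]$, record that it squares to zero on $\OmXHom$ (a consequence of $g$ being Maurer--Cartan, useful as a sanity check), and then verify \eqref{EQU:Trg-chain-map} termwise against this single operator rather than handling $\hat\delta$, $D$, and $[g,-]$ separately.
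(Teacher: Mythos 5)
Your overall strategy — a direct expansion following \cite[Proposition 3.2]{OTT1}, using the Maurer--Cartan equation, trace cyclicity, and the observation that $D + [g,-]$ bundles into a single bracket against $\tilde g := d + g$ — is the same as the paper's; the direction is simply reversed (you propose to expand the left-hand side of \eqref{EQU:Trg-chain-map}, while the paper expands $\delta(Tr_g f)$). That reversal is cosmetic. However, the \emph{mechanism} you describe is not what the computation actually does, and if you ran it as stated you would get stuck. You claim that "moving face maps and the internal differential onto $g$" yields three families — a "$\hat\delta g$" family, a "$Dg$" family, and a "$g\cdot g$" family — which cancel by Maurer--Cartan, leaving only the boundary $d_0, d_{s+1}$ terms. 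That is not the structure of the identity.

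Here is the correct structure, which you should re-plan around. Writing $\delta(Tr_g f)_\alpha = \sum_j(-1)^j Tr_g(f)_{\alpha(0,\ldots,\widehat j,\ldots,s)}$ and sorting by the position of the removed vertex $j$ relative to the pair $(k,\ell)$ in the $Tr_g$-sum gives a three-way split $A + B + C$: the terms with $k < j < \ell$ ($A$), with $j < k$ ($B$), and with $j > \ell$ ($C$). The interior subsum $A$ is equal to $Tr_g(\hat\delta f)_\alpha$ by direct re-indexing — \emph{no} Maurer--Cartan and no manipulation of $g$ is needed here, contrary to your step (i); $Tr_g(\hat\delta f)$ simply \emph{is} the interior terms. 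The Maurer--Cartan equation is used only to identify the boundary subsum $B+C$ with $Tr_g(Df + [g,f])_\alpha = Tr_g([\tilde g, f])_\alpha$. One shows separately that $B+C$ equals $\sum_{k\le\ell}(-1)^{(k+1)s+1}\tr\bigl((\hat\delta g)_{\alpha(\ell,\ldots,s,0,\ldots,k)}\circ f_{\alpha(k,\ldots,\ell)}\bigr)$, and (via the graded trace cyclicity you correctly anticipate) that $Tr_g([\tilde g,f])_\alpha$ equals $\sum_{k\le\ell}(-1)^{(k+1)s}\tr\bigl((\tilde g\cdot\tilde g)_{\alpha(\ell,\ldots,s,0,\ldots,k)}\circ f_{\alpha(k,\ldots,\ell)}\bigr)$; then $\tilde g\cdot\tilde g = Dg + g\cdot g = -\hat\delta g$ by Maurer--Cartan reconciles the two, sign included. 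So it is an \emph{identification} of two independently computed expressions, not a cancellation of three families leaving a residue. Your auxiliary operator $\nabla_{\mathrm{tot}}$ and the zero-form-degree warm-up are harmless (and the sanity check $\nabla_{\mathrm{tot}}^2 = 0$ is true), but they do not repair this core mis-description; the fix is to adopt the $A$ / $B{+}C$ decomposition of $\delta(Tr_g f)$ and treat $\hat\delta f$ and $[\tilde g, f]$ on separate tracks.
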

\begin{proof}
Let $f \in \OmXHom$, and let $\al\in X_s$. Then 
\[
(\delta (Tr_g(f)))_{\alpha}  = \sum_{j=0}^{s} (-1)^j \cdot Tr_g(f)_{\alpha(0,\dots,\widehat j,\dots, s)}=A+B+C
\]   
equals the sum of the following three terms:
\begin{align*}
A:=\hspace{-1mm}  &\sum_{0\leq k\leq \ell\leq s}\sum_{j=k+1}^{\ell-1}(-1)^{j+(k+1)(s-1)+\ell-k-1}\cdot  tr\left(g_{\alpha(\ell, \cdots, s, 0, \cdots ,k )} \circ f_{\alpha( k, \cdots,\widehat j, \cdots,\ell )}  \right)
\\
B:=\hspace{-2mm} &\sum_{0\leq k\leq \ell\leq s}\sum_{j=0}^{k-1}(-1)^{j+k(s-1)+\ell-k}\cdot  tr\left(g_{\alpha(\ell, \cdots, s, 0, \cdots ,\widehat j, \cdots,k )} \circ f_{\alpha( k, \cdots,\ell )}  \right)
\\
C:=\hspace{-2mm}  &\sum_{0\leq k\leq \ell\leq s}\sum_{j=\ell+1}^{s}(-1)^{j+(k+1)(s-1)+\ell-k}\cdot  tr\left(g_{\alpha(\ell, \cdots ,\widehat j, \cdots, s, 0, \cdots ,k )} \circ f_{\alpha( k, \cdots,\ell )}  \right)
\end{align*}
The first term $A$ in the above sum is equal to
\begin{align*}
A&=
 \sum_{0\leq k\leq \ell\leq s}\sum_{j=k+1}^{\ell-1}(-1)^{j+(k+1)s+\ell}\cdot  tr\left(g_{\alpha(\ell, \cdots, s, 0, \cdots ,k )} \circ f_{\alpha( k, \cdots,\widehat j, \cdots,\ell )}  \right)
\\&=
\sum_{0\leq k\leq \ell\leq s} (-1)^{(k+1)s+\ell-k}\cdot  tr\left( g_{\alpha(\ell, \cdots, s, 0, \cdots ,k )} \circ (\hat \delta f)_{\alpha( k, \cdots,\ell )}  \right) =(Tr_g( \hat \delta(f)))_\alpha.
\end{align*}
To evaluate $B+C$, note that
\begin{align}
\label{EQU:B+C}
&\sum_{0\leq k\leq \ell\leq s} (-1)^{(k+1)s+1}\cdot tr\left( (\hat \delta (g))_{\al(\ell,\dots, s, 0,\dots, k)}\circ f_{\al(k\dots, \ell)} \right)
\\ \nonumber
 =& \sum_{0\leq k\leq \ell\leq s}\sum_{j=\ell+1}^{s}(-1)^{(k+1)s+1+j-\ell}\cdot  tr\left(g_{\alpha(\ell, \cdots ,\widehat j, \cdots, s, 0, \cdots ,k )} \circ f_{\alpha( k, \cdots,\ell )}  \right)
 \\ \nonumber
 &+ \sum_{0\leq k\leq \ell\leq s}\sum_{j=0}^{k-1}(-1)^{(k+1)s+1+s-\ell+1+j}\cdot  tr\left(g_{\alpha(\ell, \cdots, s, 0, \cdots ,\widehat j, \cdots,k )} \circ f_{\alpha( k, \cdots,\ell )}  \right)
\\ \nonumber
=& C+B
\end{align}
We claim that this is equal to $(Tr_g(D(f)+[g,-](f)))_\al$, which we evaluate now. By definition \ref{DEF:OmXHom}, we may write $D(f)=d\cdot f-(-1)^{|f|}f\cdot d=[d,f]$, where $|f|$ denotes the total degree of $f$. Thus, if we define $\tg:=d+g$, i.e., for $\al\in X_0$, $\tg_{\al}=d_\al$, and for $\al\in X_{k}$ with $k\geq 1$, $\tg_{\al}=g_\al$, then $D(f)+[g,-](f)=[d+g,f]=[\tg,f]$. With this, we write $(Tr_g([\tg,f]))_\al=(Tr_g(\tg\cdot f-(-1)^{|\tg|\cdot |f|}f\cdot \tg))_\al=E+F$, which are given as follows. First, 
\begin{multline*}
E:=Tr_g(\tg \cdot f)_\al= \sum_{0\leq j\leq \ell\leq s}  (-1)^{(j+1)s+\ell-j}\cdot tr\left(g_{\al(\ell,\dots, s,0,\dots, j)}\circ (\tg \cdot f)_{\al(j,\dots,\ell)}\right)
\\
= \sum_{0\leq j\leq k\leq \ell\leq s}  (-1)^{(j+1)s+\ell-j+(1-k+j)(\ell-j)}\cdot tr\left(g_{\al(\ell,\dots, s,0,\dots, j)}\circ \tg_{\al(j,\dots, k)} \circ f_{\al(k,\dots,\ell)}\right)
\end{multline*}
where we used that the (De Rham, \v Cech, Hom)-triple degree of $\tg_{\al(j,\dots, k)}$ is $(0,k-j, 1-k+j)$. For the second term, we get
\begin{align*}
&F:=Tr_g(-(-1)^{|\tg|\cdot |f|}f\cdot \tg)_\al
\\ 
=& \sum_{0\leq k\leq j\leq s}  (-1)^{|f|+1+(k+1)s+j-k}\cdot tr\left(g_{\al(j,\dots, s,0,\dots, k)}\circ (f\cdot \tg)_{\al(k,\dots,j)}\right)
\\
=&  \sum_{0\leq k\leq \ell\leq j\leq s}  (-1)^{|f|+1+(k+1)s+j-k+(|f|-\ell+k)(j-\ell)} 
\\
& \hspace{6cm}\cdot tr\left(g_{\al(j,\dots, s,0,\dots, k)}\circ f_{\al(k,\dots, \ell)}\circ \tg_{\al(\ell,\dots,j)}\right)
\\
=&  \sum_{0\leq k\leq \ell\leq j\leq s}  (-1)^{|f|+1+(k+1)s+j-k+(|f|-\ell+k)(j-\ell)+(|f|+1-1-s+j-\ell)(1-j+\ell)} 
\\
& \hspace{6cm} \cdot tr\left(\tg_{\al(\ell,\dots,j)}\circ g_{\al(j,\dots, s,0,\dots, k)}\circ f_{\al(k,\dots, \ell)}\right)
\end{align*}
where we used that $tr(h\circ k)=(-1)^{a\cdot b}\cdot tr(k\circ h)$ when the (Hom-degree)$+$(De Rham-degree)$=$(total degree)$-$(\v Cech-degree) of $h$ and $k$ is $a$ and $b$, respectively, and that the \v Cech-degree of any $h_{\al(j,\dots, s,0,\dots, \ell)}$ is $1+s-j+\ell$. With this, we obtain
\begin{align}
 \label{EQU:E+F} &
 \sum_{0\leq k\leq \ell\leq s} (-1)^{(k+1)s}\cdot tr( (\tg \cdot \tg)_{\al(\ell,\dots, s, 0,\dots, k)}\circ f_{\al(k,\dots, \ell)})
\\ \nonumber
=& \sum_{0\leq k\leq \ell\leq j\leq s}  (-1)^{(k+1)s+(1-j+\ell)(1+s-j+k)} \cdot tr\left(\tg_{\al(\ell,\dots,j)}\circ g_{\al(j,\dots, s,0,\dots, k)}\circ f_{\al(k,\dots, \ell)}\right)
\hspace{-7mm}  
\\ \nonumber
& + \sum_{0\leq j\leq k\leq \ell\leq s}  (-1)^{(k+1)s+(\ell-s-j)(k-j)}\cdot tr\left(g_{\al(\ell,\dots, s,0,\dots, j)}\circ \tg_{\al(j,\dots, k)} \circ f_{\al(k,\dots,\ell)}\right)
\hspace{-7mm} 
\\ \nonumber
=&  F+E
\end{align}
where we used that $\tg=d+g$ and $d\cdot d=0$, and that the (DeRham, \v Cech, Hom)-triple degree of $g_{\al(\ell,\dots,s,0,\dots, j)}$ is $(0,1+s-\ell+j, \ell-s-j)$. Comparing the left-hand-sides of \eqref{EQU:B+C} and \eqref{EQU:E+F}, and using that $g$ is a Maurer Cartan element, so that $\hat \delta g=-(Dg+g\cdot g)=-\tg\cdot \tg$, we obtain that
\[
B+C=\eqref{EQU:B+C}=\eqref{EQU:E+F}=E+F=(Tr_g([\tg,f]))_\al=(Tr_g(D(f)+[g,-](f)))_\al.
\]
This concludes the proof of the proposition.
\end{proof}

We have one further structure on $\OmXHom$ coming from the holomorphic connections $\nabla$ of the objects $\mc E$ of $\perf(U)$. Note that there is an induced connection on the Hom-bundle $Hom^\bu(E,E')$ of two graded bundles $E$ and $E'$ with connections, which we also denote by $\nabla:\Ohol^\bu(U,Hom^\bu(E,E'))\to \Ohol^{\bu+1}(U,Hom^\bu(E,E'))$, and which is a graded derivation with respect to the wedge-composition ``$\circ$'' using the total degree of $\Ohol^{\bu}(U,Hom^\bu(E,E'))$.
\begin{definition}
Define $\nabla:\OmXHom\to \OmXHom$ to be given in components by the maps $(-1)^p\cdot \nabla:\Ohol^k(U, Hom^q(\mc E_{\alpha(p)},\mc E_{\alpha(0)}))\to \Ohol^{k+1}(U, Hom^q(\mc E_{\alpha(p)},\mc E_{\alpha(0)}))$. More explicitly, for $f\in \OmXHom$, $f=\{f_\al\}_{\al\in X}$, we define $\nabla f=\{(\nabla f)_\al\}_{\al\in X}$ to be given by $(\nabla f)_\al:=(-1)^{p}\cdot \nabla(f_\al)$ when $\al\in X_p$. 

One can check that $\nabla\circ \hat \delta=-\hat \delta\circ\nabla$, and that $\nabla(f\cdot g)=\nabla(f)\cdot g+(-1)^{|f|}f\cdot \nabla(g)$, where $|f|$ is the total degree of the triple grading.
\end{definition}

\begin{definition}
Let $X$ be a cyclic set and let $\mc F:X\to \dgN(\perf(U))$ be a simplicial set map. By definition \ref{DEF:g-from-F:X->dgNerve(C)}, we get a labeling $L:X_0\to Obj(\perf(U))$, and a Maurer Cartan element $g\in C_L^\bu(X,  \perf(U))\hookrightarrow \OmXHom$. For a vertex $\alpha\in X_0$, denote by $d_{E_\alpha}$ the internal differential of the chain complex of vector bundles $\mc E_\alpha$, out of which we build the element $d=\{d_\al\}_{\al\in X}\in \OmXHom$, given by $d_\alpha:=d_{E_\alpha}$, and which has triple degree $(0,0,1)$. Then, $d+g\in \OmXHom$, and we call $$A:=\nabla(d+g)\quad \in \OmXHom$$ the \emph{Atiyah class}, which is concentrated in degrees $(1,k,1-k)$ for $k\geq 0$.
\end{definition}

\begin{proposition}\label{PROP:delta(Tr(A^k))=0}
We have $(\hat \delta + D + [g,-])(a)=0$, and thus:
\[
\forall k\geq 0: \quad\quad \delta(Tr_g(A^k))=0
\]
\end{proposition}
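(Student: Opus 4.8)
The plan is to reduce everything to the single identity $(\hat\delta + D + [g,-])(A) = 0$ and then feed it through Proposition \ref{LEM:tr-is-chain-map}. First I would establish the base identity for $A = \nabla(d+g)$ itself. Write $\tg := d+g$, so that the Maurer--Cartan equation for $g$ (Corollary \ref{COR:F:X-to-dgN(C)}), together with $d\cdot d = 0$ and $D = [d,-]$, gives $\hat\delta \tg + \tg\cdot\tg = \hat\delta g + Dg + d\cdot d + g\cdot g = 0$; in other words $\tg$ is itself a Maurer--Cartan element of the big dg-algebra $\OmXHom$, and $\hat\delta\tg = -\tg\cdot\tg$. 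Now apply $\nabla$: using $\nabla\circ\hat\delta = -\hat\delta\circ\nabla$ and the graded Leibniz rule $\nabla(\tg\cdot\tg) = \nabla(\tg)\cdot\tg + (-1)^{|\tg|}\tg\cdot\nabla(\tg) = A\cdot\tg - \tg\cdot A$ (since $|\tg| = 1$), we get $\hat\delta A = \nabla(\tg\cdot\tg) = A\cdot\tg - \tg\cdot A = -[\,\tg, A\,]$. Rearranging and expanding $\tg = d+g$ with $[d,-] = D$ on $\OmXHom$, this says precisely $\hat\delta A + DA + [g,A] = 0$, which is the first claim.

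Next I would promote this from $A$ to $A^k$ for all $k\ge 0$. The operator $\hat\delta + D + [g,-]$ is a degree-one graded derivation of the dg-algebra $\OmXHom$: $\hat\delta$ and $D = [d,-]$ are graded derivations by construction in Definition \ref{DEF:OmXHom}, and $[g,-]$ is a graded derivation because $g$ has odd total degree $1$. Hence $(\hat\delta + D + [g,-])(A^k) = \sum_{i=0}^{k-1}(-1)^{i|A|}A^i\cdot\big((\hat\delta + D + [g,-])A\big)\cdot A^{k-1-i} = 0$ term by term, using the step above. (One should note $A$ has total degree $2$, so the signs are trivial, but the derivation argument works regardless.) For $k=0$ the claim $(\hat\delta + D + [g,-])(\mathbf 1)=0$ is immediate since the identity element is annihilated by each of $\hat\delta$, $D$, and the bracket.

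Finally I would apply Proposition \ref{LEM:tr-is-chain-map}, which states $Tr_g\circ(\hat\delta + D + [g,-]) = \delta\circ Tr_g$ for the cyclic set $X = \EZ^n$ with its labeling $L$ and Maurer--Cartan element $g$ coming from the simplicial map $\mc F: \EZ^n \to \dgN(\perf(U))$. Applying this to $f = A^k$ gives $\delta(Tr_g(A^k)) = Tr_g\big((\hat\delta + D + [g,-])(A^k)\big) = Tr_g(0) = 0$, which is exactly the assertion. So the only genuine content is the first displayed identity $(\hat\delta + D + [g,-])(A) = 0$.

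The main obstacle is bookkeeping the signs in the derivation computation $\hat\delta A = \nabla(\tg\cdot\tg)$: one must be careful that $\nabla$ anticommutes with $\hat\delta$ (stated after the definition of $\nabla$) rather than commutes, that the total degree used in the Leibniz rule for $\nabla$ is the full triple-degree total degree, and that $[d,-]$ really does reproduce the internal differential $D$ on $\OmXHom$ with the sign conventions of Definition \ref{DEF:OmXHom} (this is the remark ``$Df = d\cdot f - (-1)^{|f|}f\cdot d = [d,f]$'' there, with $d = \{d_\al\}$ the element supported on vertices). Once these three compatibilities are lined up, the identity drops out with no further work, and the rest of the proposition is formal.
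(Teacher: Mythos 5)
Your argument is correct and follows essentially the same route as the paper: the paper applies $\nabla$ directly to the Maurer--Cartan equation $\hat\delta g + Dg + g\cdot g = 0$ and expands $\nabla Dg = -D(\nabla g) - [g,\nabla d]$ by hand, which is exactly what your repackaging $\hat\delta\tilde g + \tilde g\cdot\tilde g = 0$ (with $\tilde g := d+g$) followed by $\nabla$ produces in one step. The remaining steps — the derivation argument to pass from $A$ to $A^k$, and then feeding the result through the trace identity $Tr_g\circ(\hat\delta + D + [g,-]) = \delta\circ Tr_g$ — are identical to the paper's proof.
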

\begin{proof}
We apply $\nabla$ to the Maurer Cartan equation \eqref{EQU:MC-condition}, i.e., to $\hat \delta g +Dg + g \cdot g =0$. Using $\nabla \hat \delta g=- \hat \delta \nabla g$, and $\nabla(g\cdot g)=\nabla g \cdot g- g\cdot \nabla g=-[g,\nabla g]$ together with
\[
\n Dg = \n (d\cdot g +g \cdot d)=\n d\cdot g -d\cdot \n g+\n g\cdot d -g\cdot \n d
=-D(\n g)-[g,\n d]
\]
we obtain
\begin{align*}
0=&\n (\hat \delta g +Dg + g \cdot g)=-\hat \delta (\n g)-D(\n g)-[g,\n d]-[g,\n g]
\\
=&-(\hat \delta +D+[g,-])(\n g+\n d)
\end{align*}
In the last equality, we also used that $\hat \delta(\nabla d)=0$ (since the deleted \v Cech differential vanishes on $0$-simplicies), and from $d^2=0$ it follows that $0=\nabla(d\cdot d)=\nabla d\cdot d-d\cdot \nabla d=-D(\nabla d)$. This shows that for $A=\n (d+g)$ we have $(\hat \delta + D + [g,-])(a)=0$.

Since $(\hat \delta + D + [g,-])$ is a derivation on $\OmXHom$, the $k$th powers of $a$ also satisfy $(\hat \delta + D + [g,-])(A^k)=0$. Thus,
\[
\delta(Tr_g(A^k))\stackrel{\eqref{EQU:Trg-chain-map}}=Tr_g((\hat \delta + D + [g,-])(A^k))=0,
\]
which is the claim.
\end{proof}

We are now ready to define our Chern character map $\CH:\IVB\to \OM$, which is a map of simplicial presheaves, as shown in theorem \ref{THM:CH-is-map-of-simplicial-presheaves} below.
\begin{definition}\label{DEF:CH:PERF-to-OM}
We define the \emph{Chern character} as a map $\CH:\IVB\to \OM$, that is, for a complex manifold $U$, and $k\geq 0$, we define a map $\CH(U)_n:\IVB(U)_n\to \OM(U)_n$.

For an $n$-simplex $\mc F\in \IVB(U)_n=\sSet(\EZ^n,\dgN(\perf(U))$, we have (by definition \ref{DEF:PERF^Q} and example \ref{EXA:EZ}) the data of $n+1$ dg-vector bundles $\mc E_0,\dots, \mc E_n$, and maps $g_{i_0\dots i_k}:E_{i_k}\to E_{i_0}$, so that $g=\{g_{(i_0\dots i_k)}\}_{(i_0,\dots, i_k)\in \EZ^n}$ satisfies the Maurer Cartan equation by corollary \ref{COR:F:X-to-dgN(C)}. To this we associate $\CH(U)_n(\mc F)\in \OM(U)_n$, which is a labeling of the non-degenerate cells of $\Delta^n$ by elements in $\Ohol^\bu(U)\ul$ (by definition \ref{DEF:OM} and note \ref{REM:DK-def}). Consider a non-degenerate $k$-cell of $\Delta^n$ given by the vertices $i_0,\dots, i_k$ of $\Delta^n$ with $i_0<\dots<i_k$.

If $k=0$, then we assign the Euler characteristic $\chi( E_{i_0})$ to this cell. If $k>0$, then we use $\alpha=(i_0,\dots, i_k)\in \EZ^n_k$ to assign the following expression to this cell:
\begin{align}\label{EQ: Chern Tr A^k}
Tr_g(A^k)_{\alpha}\cdot \frac{u^k}{k!} &=Tr_g((\nabla (d+g))^k)_{\alpha}\cdot \frac{u^k}{k!} \\
&\nonumber = \sum \pm tr (g \cdot \nabla (d+g)\cdot   \nabla (d+g) \cdot \ldots \cdot \nabla (d+g))_\alpha \cdot \frac{u^k}{k!}
\end{align}

For example, here are the assignments for simplicial degrees $0$, $1$, and $2$.
\begin{enumerate}
\item[$n=0$:]
A $0$-simplex $\mc F\in \IVB(U)_0$ is just the data of one object $\mc E=(E \rightarrow U, \nabla)$ of $\perf(U)$. Then, $\CH(U)_0(\mc F)$ is the labeling of the $\Delta^0$ by Euler characteristic of $\mc E$, denoted $\chi ( E) \in \Ohol^0(U)\ul$.

\item[$n=1$:]
A $1$-simplex $\mc F\in \IVB(U)_1$ consists of bundles $\mc E_0$ and $\mc E_1$ and sequences of morphism $g_{0101\dots}$ and $g_{1010\dots}$.  Then, $\CH(U)_1(\mc F)$ is the labeling of $\Delta^1$ given by $\chi(\mc E_i)$ on the vertices of $\Delta^1$, and on the edge of $\Delta^1$ we place the labeling $Tr_g(\nabla (d+g))_{(0,1)}\cdot u\in \Ohol^1(U)\ul$, where $(0,1)\in \EZ^1$:
\begin{equation*}
\begin{tikzpicture}[scale=0.5]
\node (E0) at (0, 0) {}; \fill (E0) circle (4pt) node[above] {$\chi(E_0)$};
\node (E1) at (12, 0) {}; \fill (E1) circle (4pt) node[above] {$\chi(E_1)$};
\draw [<-] (E0) -- node[above] {$ Tr_g( \nabla (d+g))_{(0,1)}\cdot u$}(E1);
\end{tikzpicture}
\end{equation*}
Explicitly, the trace has the following terms (using $g_i=d_{E_i}$ for the internal differential of $E_i$):
\begin{equation*}
Tr_g( \nabla (d+g))_{(0,1)} = tr( g_{101} \nabla g_1 -  g_{010} \nabla g_0 +  g_{10} \nabla g_{01})
\end{equation*}
  
\item[$n=2$:]
A $2$-simplex $\mc F\in \IVB(U)_2$ consists of bundles $\mc E_0$, $\mc E_1$ and $\mc E_2$ and sequences of morphism $g_{i_0 i_1\dots i_p}$ for $p\geq 1$ and $i_\ell\in \{0,1,2\}$ for any $0\leq \ell\leq p$.  Then, $\CH(U)_2(\mc F)$ is the labeling of $\Delta^2$ given by $\chi(\mc E_i)\in \Ohol^0(U)\ul$ on the vertices, $Tr_g(\nabla (d+g))_{(i,j)}\cdot u\in \Ohol^1(U)\ul$ on the edge of $\Delta^1$ we place the labeling $Tr_g(\nabla (d+g)\cdot \nabla (d+g))_{(0,1,2)}\cdot \frac{u^2}{2!}\in \Ohol^2(U)\ul$ on the non-degenerate $2$-cell, where $(0,1,2)\in \EZ^2$:
\begin{equation*}\hspace{0.5cm}
\begin{tikzpicture}[scale=0.5]
\node (E0) at (0, 0) {}; \fill (E0) circle (4pt) node[below] {$\chi(E_0)$};
\node (E1) at (10, 4) {}; \fill (E1) circle (4pt) node[above] {$\chi(E_1)$};
\node (E2) at (20, 0) {}; \fill (E2) circle (4pt) node[below] {$\chi(E_2)$};
\draw [<-] (E0) -- node[above left] {$Tr_g(\nabla (d+g))_{(0,1)}\cdot u$} (E1);
\draw [<-] (E1) -- node[above right] {$Tr_g(\nabla (d+g))_{(1,2)}\cdot u$} (E2);
\draw [<-] (E0) -- node[below] {$Tr_g(\nabla (d+g))_{(0,2)}\cdot u$} (E2);
\node (C) at (10,1) {$ Tr_g(\nabla (d+g)\cdot \nabla (d+g))_{(0,1,2)}\cdot \frac{u^2}{2!}$};
\end{tikzpicture}
\end{equation*}
 Explicitly, we have (again using $g_i=d_{E_i}$ for the internal differential of $E_i$):
 \begin{align*}
Tr_g(\nabla (d+g)\cdot \nabla (d+g))_{(0,1,2)} =& tr( g_{20} \nabla g_0 \nabla g_{012}+  g_{20} \nabla g _{01}\nabla g_{12}+ g_{20} \nabla g_{012} \nabla g_2   ) 
\\
 &-  tr(g_{201} \nabla g_1 \nabla g_{12} + g_{201}\nabla g_{12}  \nabla g_2 )  
\\
 &-  tr(g_{120} \nabla g_0 \nabla g_{01} +  g_{120}\nabla g_{01}  \nabla g_1 )  
 \\
 &+ tr(g_{2012}\nabla g_2 \nabla g_2 +  g_{1201} \nabla g _1\nabla g_1 +  g_{0120}\nabla g_0 \nabla g_0 )
\end{align*}
\end{enumerate}
\end{definition}

\begin{theorem}\label{THM:CH-is-map-of-simplicial-presheaves}
The Chern character $\CH:\IVB\to \OM$ defined above is a map of simplicial presheaves.
\end{theorem}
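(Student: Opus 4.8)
The plan is to check three things: (i) that the recipe of Definition \ref{DEF:CH:PERF-to-OM} really produces an element of $\OM(U)_n$; (ii) that, for a fixed complex manifold $U$, the maps $\CH(U)_n$ commute with the simplicial structure maps in $n$, so that $\CH(U)\colon\IVB(U)\to\OM(U)$ is a map of simplicial sets; and (iii) that the family $\{\CH(U)\}_{U}$ is natural in $U$. For (i) I would recall from Note \ref{REM:DK-def} that an element of $\OM(U)_n$ is a labelling of the non-degenerate cells of $\Delta^n$ by elements of $\Ohol^\bu(U)\ul$, of the degree matching the cell's dimension, subject to the cocycle relation $\sum_{j=0}^{k}(-1)^j(\text{label of }d_j\alpha)=0$ for every $k$-cell $\alpha$ (the internal differential being zero). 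The degree condition holds for the labels produced by $\CH$ --- namely $Tr_g(A^k)_\alpha\cdot u^k/k!$ on a $k$-cell and $\chi(E_{i_0})$ on a vertex --- because each factor of $A=\nabla(d+g)$ raises de Rham degree by exactly one, so $Tr_g(A^k)_\alpha$ is a holomorphic $k$-form. For the cocycle relation at a $k$-cell with $k\ge 2$, the boundary cells are labelled by $Tr_g(A^{k-1})_{d_j\alpha}\cdot u^{k-1}/(k-1)!$, so the relation reduces, after cancelling $u^{k-1}/(k-1)!$, to $\delta(Tr_g(A^{k-1}))_\alpha=0$, which is precisely Proposition \ref{PROP:delta(Tr(A^k))=0} applied with $k-1$ in place of $k$. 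For $k=1$, the relation at an edge $(i_0<i_1)$ reads $\chi(E_{i_1})=\chi(E_{i_0})$, which I would deduce from the fact that the edge map $g_{i_0i_1}$ is a chain homotopy equivalence (Lemma \ref{LEM:perf=perf-Kan}), so $E_{i_0}$ and $E_{i_1}$ are homotopy equivalent finite complexes of holomorphic bundles over $U$ and hence have equal (locally constant) Euler characteristic.

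For (ii) the key observation is that every simplicial face and degeneracy of $\IVB(U)$ is induced by a map of \emph{cyclic} sets: for $\phi\colon[m]\to[n]$ in $\Del$ the map $\IVB(U)_n\to\IVB(U)_m$ is precomposition with $\phi_*\colon\EZ^m\to\EZ^n$, and $\phi_*$ respects the cyclic operators since $\EZ$ is a cosimplicial cyclic set. I would therefore first establish a small naturality statement: pullback along such a $\phi_*$ commutes with $\hat\delta$, with the internal differential, with the product, with $\nabla$ (the cell-dimension sign being preserved), with the element $d$ (which is supported on vertices), and --- because $\phi_*$ is cyclic --- with the trace map of Definition \ref{DEF:trace_g}; moreover it carries the Maurer--Cartan element of $\mc F$ to that of $\phi^\sharp\mc F$ (Lemma \ref{LEM:sSet(X,dgN(Perf))}). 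Writing primes for the data of $\phi^\sharp\mc F$, this yields $Tr_{g'}((A')^k)_{(j_0,\dots,j_k)}=Tr_g(A^k)_{(\phi(j_0),\dots,\phi(j_k))}$ for every cell $(j_0<\dots<j_k)$ of $\Delta^m$. Since the corresponding structure map on $\OM(U)$ is pullback on normalized chains --- which returns the label of $(\phi(j_0),\dots,\phi(j_k))$ when that sequence is strictly increasing and $0$ otherwise --- I would then split into two cases. When $\phi$ is injective on $\{j_0,\dots,j_k\}$ the two sides agree by the displayed identity. When it is not, say $\phi(j_\ell)=\phi(j_{\ell+1})$, the target is a degenerate simplex of $\EZ^n$; here I would use that $\nabla(\mathrm{id})=0$ and that $g$ vanishes on degenerate simplices of dimension $\ge 2$ to see that the Atiyah class, hence each of its powers, hence $Tr_g$ of its powers, vanishes on degenerate simplices of positive dimension, so $Tr_g(A^k)_{(\phi(j_0),\dots,\phi(j_k))}=0$, matching the output of the normalized-chains pullback. (The cases $k=0,1$ are checked directly in the same manner.) As faces and degeneracies generate $\Del$, this completes (ii).

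For (iii), pullback of a finite complex of holomorphic bundles with connection along a holomorphic map $\varphi\colon U\to U'$ pulls back the differential, the connection and the structure maps $g$, and the fibrewise trace is compatible with pullback, so $Tr_g(A^k)$ and the Euler characteristic are carried to the corresponding data of $\varphi^*\mc F$; hence $\CH(U)\circ\varphi^*=\varphi^*\circ\CH(U')$ and $\CH$ is natural. The substance of the theorem lies in step (i), which rests entirely on Propositions \ref{LEM:tr-is-chain-map} and \ref{PROP:delta(Tr(A^k))=0}, both already available; the rest is bookkeeping, and I expect the fiddliest part to be the case analysis in step (ii) --- pinning down exactly which cells of $\Delta^n$ are collapsed by an arbitrary $\phi\in\Del$ and checking that the matching Chern-character terms vanish --- together with confirming that the sign and degree conventions are aligned so that the displayed equalities hold on the nose.
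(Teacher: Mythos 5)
Your proposal is correct and follows essentially the same route as the paper: well-definedness reduces to the cocycle identity $\delta(Tr_g(A^{k-1}))=0$ of Proposition \ref{PROP:delta(Tr(A^k))=0}, compatibility with the simplicial operators comes down to the case split on whether the image under $\phi\in\Del$ stays nondegenerate (with degenerate images killed because $\nabla$ annihilates identities and zeros), and naturality is pullback-compatibility of $Tr_g$, $\nabla$, and the trace. The one place you are more careful than the published argument is the edge case $k=1$, where the boundary labels are Euler characteristics rather than trace terms; the paper derives everything from Proposition \ref{PROP:delta(Tr(A^k))=0} without addressing this explicitly, whereas your direct appeal to the homotopy equivalence $g_{i_0i_1}$ (so that $\chi(E_{i_0})=\chi(E_{i_1})$) closes that small gap.
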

\begin{proof}
We use the notation from definition \ref{DEF:CH:PERF-to-OM}. First, we note that $\CH(U)_n(\mc F)$ is a well-defined element of  $\OM(U)_n$, i.e., we still need to show that the labeling satisfies \eqref{EQU:DK-condition}. Since the internal differential vanishes for $\Ohol^\bu(\cdot)\ul$, this amounts to showing that for each $p$-cell given by $\alpha=(i_0,\dots,i_p)$, the sum of the labelings on the boundary cells vanishes. This follows from proposition \ref{PROP:delta(Tr(A^k))=0}:
\begin{align*}
\sum_{j=0}^k (-1)^j \cdot d_j\Big((Tr_g(A^k))_\alpha\cdot \frac{u^k}{k!}\Big)
&=\sum_{j=0}^k (-1)^j \cdot (Tr_g (A^k))_{\alpha(0,\dots,\widehat j, \dots, k)}\cdot \frac{u^k}{k!}
\\&=\delta\Big((Tr_g(A^k))_\alpha\Big)\cdot \frac{u^k}{k!}\stackrel{\ref{PROP:delta(Tr(A^k))=0}}=0
\end{align*}

Next, we show that $\CH(U):\IVB(U)\to \OM(U)$ is a map of simplicial sets, i.e., that it respects the face and degeneracy maps. If $\delta_j:[n]\to [n+1]$ is the $j$th face map, then $d_j:\IVB(U)_{n+1}\to \IVB(U)_{n}$ is given by pre-composition with $\EZ^{n}\to \EZ^{n+1}$, $ \{0,\dots,n\}^k\ni (i_0,\dots, i_{k})\mapsto(\delta_j(i_0),\dots, \delta_j( i_{k}))\in \{0,\dots,n+1\}^k$. Thus, for $\mc F\in \IVB(U)_{n+1}$ with corresponding Maurer Cartan element $g$, we have $\CH(U)_n\circ d_j(\mc F)|_{\alpha=(i_0<\dots <i_k)}=Tr_g(A^k)_{(\delta_j(i_0)<\dots< \delta_j(i_k))}\cdot \frac{u^k}{k!}$. This is equal to taking $\CH(U)_{n+1}(\mc F)\in \DK(C)_{n+1}={\mc Chain}(N(\Z\Delta^{n+1}),C)$ where $C=\Ohol^\bu(U)\ul$, after applying $d_j:\DK(C)_{n+1}\to \DK(C)_n$ to it, and looking at the labeling of the cell $i_0<\dots<i_k$ of $\Delta^n$. Similarly, if $\sigma_j:[n]\to [n-1]$ is the $j$th degeneracy, and $s_j:\IVB(U)_{n-1}\to \IVB(U)_{n}$ is the induced map, then, for $\mc F\in \IVB(U)_{n-1}$ with corresponding Maurer Cartan element $g$, we get $\CH(U)_n\circ s_j(\mc F)|_{\alpha=(i_0<\dots <i_k)}=Tr_g(A^k)_{ (\sigma_j  (i_0)\leq \dots\leq \sigma_j(i_k))}\cdot \frac{u^k}{k!}$. Now, if $\sigma$ is injective on $\{i_0,\dots,i_k\}$, then, by note \ref{REM:simplicial-dgN(Perf(U))}\eqref{ITEM:morphism-dgN(Perf(U))}, this is equal to $Tr_g(A^k)_{ (\sigma_j  (i_0)< \dots< \sigma_j(i_k))}\cdot \frac{u^k}{k!}$, which is the labeling of $s_j\circ \CH(U)_{n-1}(\mc F)$ at $i_0<\dots <i_k$. In the case where $\sigma_j$ is not injective on $\{i_0,\dots, i_k\}$, we get that $g_{\sigma_j(i_0)\dots \sigma_j(i_k)}$ is either the identity or zero, so that in either case $\nabla g_{\sigma_j(i_0)\dots \sigma_j(i_k)}=0$, and thus $\CH(U)_n\circ s_j(\mc F)|_{\alpha=(i_0<\dots <i_k)}=0$, which is equal to the degeneracy $s_j:\DK(C)_{n-1}\to \DK(C)_n$ applied to $\CH(U)_{n-1}(\mc F)$ at the cell $i_0<\dots<i_k$.

Finally, we show that $\CH:\IVB\to \OM$ is a map of simplicial presheaves, i.e., that under a holomorphic map $\varphi:U\to U'$, the following diagram commutes
\[
\begin{tikzcd}
\IVB(U') \arrow{r}{\CH(U')} \arrow[swap]{d}{\IVB(\varphi)} & \OM(U')\arrow{d}{\OM(\varphi)} \\
\IVB(U) \arrow{r}{\CH(U)}& \OM(U)
\end{tikzcd}
\]
This follows, since both compositions are given by pullback via $\varphi$, i.e., for $\mc F'\in \IVB(U')$ with induced Maurer Cartan element $g'$ and induced differential $d'$ on $E'_\al$, we have
\begin{multline*}
\CH(U)_n\circ  \IVB_n(\varphi)(\mc F')|_{\alpha=(i_0<\dots< i_k)}=
Tr_{\varphi^*g'}(((\varphi^*\nabla) (\varphi^*(d'+g')))^k)_{\alpha}\cdot \frac{u^k}{k!} \\
=\varphi^*(Tr_{g'}((\nabla (d'+g'))^k)_{\alpha})\cdot \frac{u^k}{k!}
=\OM(\varphi)_n\circ \CH(U')_n(\mc F')|_{\alpha=(i_0<\dots< i_k)}.
\end{multline*}

This completes the proof of the theorem.
\end{proof}

\section{A Higher Chern Character for Coherent Sheaves}\label{SEC-Sheafify}

In this section, we apply a construction, which we will call \v{C}ech sheafification, to the Chern character map $\CH:\IVB\to \OM$ from Definition \ref{DEF:CH:PERF-to-OM}. More precisely, an endofunctor on simplicial presheves $\F \mapsto \CechSh{\F}$ is defined as the colimit over all \v{C}ech covers of the totalization of the presheaf applied to the cover (see Definition \ref{DEF: cech sheafify}), and then an explicit interpretation is offered for the induced map $\CechSh{\CH}:\CechSh{\IVB}\to \CechSh{\OM}$. Theorem \ref{THM: OTT are vertices} states that $0$-simplicies of $\CechSh{\IVB}$ are twisting cochains (up to equivalence) in the sense of O'Brian, Toledo, and Tong in \cite{OTT1}, and, in theorem \ref{THM:CH(IVB)=CH(TwCoch)},  that the induced Chern character $\CechSh{\CH}$ recovers the Chern character from \cite{OTT1}.

To fix some notation, let $\left( U_i \to X\right)_{i \in I}$ be an open cover, which is a particular diagram in $\CMan$. To this cover, we associate the augmented simplicial presheaf $\CechNerve{U}_{\bullet} \to X$ whose $p$-simplices are coproducts of representable presheaves given by $(p+1)$-fold intersections of the cover, 
\[ \CechNerve{U}_{p} = \coprod\limits_{i_0, \dots, i_p\in I} y U_{i_0, \dots, i_p},\]
where $yU$ denotes the Yoneda functor applied to $U$, i.e., $yU:\CMan^{op} \to \Set, V\mapsto \CMan(V,U)$, interpreted as a constant simplicial set.  
Given another simplicial presheaf $\F$ we abuse notation by writing $\F \left( \CechNerve{U}_{\bullet}\right)$ for the cosimplicial simplicial set with
\begin{equation}\label{EQ: F of CN}
 \F \left( \CechNerve{U}_{\bullet}\right)^{\ell}_p := \prod\limits_{i_0 \dots i_\ell} \F\left( U_{i_0, \dots , i_\ell}\right)_p.
 \end{equation}

\begin{definition}\label{DEF: cech sheafify}
Given a simplicial presheaf $\F :\CMan^{op}\to \sSet$, define its \emph{\v{C}ech sheafification} on a test manifold $X \in \CMan$ to be the simplicial set given by
\begin{equation}\label{EQ: cech sheafify}
 \CechSh{{\F}}(X) := \colim_{( {U}_{\bu} \to X) \in \check{S}} Tot\left( {\F}(\CechNerve {U}_{\bu} ) \right),
 \end{equation}
where $\check{S}$ is the category of all \v{C}ech covers, and $Tot$ is the totalization, which is reviewed in appendix \ref{SEC: explicit tot}.  (For further details about the totalization, see \cite[Appendix D.1]{GMTZ} as well as \cite[Definition 18.6.3]{Hirs}; specific examples of $Tot$ are worked out in note \ref{EX: PERF Tot description} below, as well as in \cite[Proof of Proposition 3.16]{GMTZ}.)
\end{definition}

\begin{proposition}\label{PROP: Cech Sh F Kan}
If $\F$ is a simplicial presheaf which takes values in Kan complxes then its \v{C}ech sheafification is a Kan complex.
\begin{proof}
By proposition \ref{PROP: tot cech kan}, for an open cover $U_{\bu}$ of $X$, $Tot \left(\IVB \left( \CN U_{\bu}\right) \right)$ is a Kan complex. Now, since our colimit over \v{C}ech covers is directed once we pass to simplicial presheaves $\CN U_{\bu}$, then one can check by hand that the colimit in $\CechSh{\IVB}(X)$ sends a diagram of projectively fibrant objects to a projectively fibrant object (i.e. $\CechSh{\IVB}$ takes values in Kan complexes). 
\end{proof}
\end{proposition}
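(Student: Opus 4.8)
The goal is to show that $\CechSh{\F}(X)$ is a Kan complex whenever $\F$ takes values in Kan complexes. The plan is to decompose the construction into two steps — first the totalization over a fixed cover, then the filtered colimit over all covers — and verify that each step preserves the property of being a Kan complex.

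\medskip

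\textbf{Step 1: The totalization over a fixed cover is Kan.} For a fixed \v{C}ech cover $(U_\bu \to X)$, the cosimplicial simplicial set $\F(\CN U_\bu)$ has, in cosimplicial degree $\ell$, the value $\prod_{i_0\dots i_\ell} \F(U_{i_0\dots i_\ell})$, which is a product of Kan complexes and hence Kan. The totalization of a cosimplicial simplicial set that is \emph{Reedy fibrant} is a Kan complex; and a cosimplicial object that is objectwise Kan is automatically Reedy fibrant here because the relevant matching maps are built out of products of the structure maps of $\F$ applied to inclusions of intersections, and one checks these are fibrations (indeed, the matching object is again a product of values of $\F$, and the matching map is a product of projections, which are fibrations of Kan complexes). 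This is exactly the content of the cited Proposition~\ref{PROP: tot cech kan}, so I would simply invoke it: for each cover $U_\bu$, $Tot(\F(\CN U_\bu))$ is a Kan complex.

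\medskip

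\textbf{Step 2: The filtered colimit is Kan.} The category $\check{S}$ of \v{C}ech covers of $X$, with refinements as morphisms, is cofiltered as a diagram category; but once we pass to the associated simplicial presheaves $\CN U_\bu$ (or equivalently index by the opposite), the colimit $\colim_{(U_\bu \to X)} Tot(\F(\CN U_\bu))$ is a \emph{filtered} colimit of simplicial sets. The key fact is that a filtered colimit of Kan complexes, along maps that are themselves morphisms of simplicial sets, is again a Kan complex: given a horn $\Lambda^n_k \to \colim_i Y_i$ with $Y_i$ Kan, since $\Lambda^n_k$ has only finitely many nondegenerate simplices and the colimit is filtered, the horn factors through some $Y_{i_0}$, where it admits a filler $\Delta^n \to Y_{i_0}$; composing with $Y_{i_0}\to \colim_i Y_i$ gives the required filler. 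I would spell out this small diagram-chase, noting that it uses finiteness of the horn and filteredness of the index category but nothing about the particular transition maps.

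\medskip

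The main obstacle — and the only place where some care is genuinely needed — is making Step 1 rigorous: verifying that the cosimplicial simplicial set $\F(\CN U_\bu)$ is Reedy fibrant so that its totalization is Kan. This requires understanding the latching/matching maps of the cosimplicial object and confirming they are fibrations, which is where the explicit description of $Tot$ from Appendix~\ref{SEC: explicit tot} and Proposition~\ref{PROP: tot cech kan} do the work; since that proposition is already available, the proof reduces to citing it and then supplying the short filtered-colimit argument of Step 2. One should also remark that $\CechSh{\F}(X)$ is nonempty/well-defined as a colimit (the index category is nonempty, containing e.g. the trivial cover), so the colimit is a genuine simplicial set to which the horn-filling argument applies.
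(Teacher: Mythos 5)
Your proof takes essentially the same approach as the paper's: invoke Proposition~\ref{PROP: tot cech kan} to get that each $Tot(\F(\CN U_\bu))$ is Kan, then use that a filtered colimit of Kan complexes is Kan via the standard finiteness-of-horns argument. The paper compresses this second step into ``one can check by hand''; your explicit horn-filling diagram chase is exactly the check intended, and your remark that the key facts are filteredness of the index category plus compactness of $\Lambda^n_k$ is correct.
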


\begin{definition}\label{DEF: sheafified Chern}
The \emph{\v{C}ech sheafified Chern character map} $\CechSh{\CH}:\CechSh{\IVB} \to\CechSh{\OM}$ is the map obtained by applying \v{C}ech sheafifications to the Chern character map from Definition \ref{DEF:CH:PERF-to-OM}. 
\end{definition}

\subsection{\v{C}ech Sheafification of $\IVB$ as Twisting Cochains}\label{SEC:Cech sheafifi of IVB}

In this subsection, the vertices of the simplicial presheaf, $\CechSh{\IVB}$, are examined and shown in theorem \ref{THM: OTT are vertices} to be precisely the twisting cochains from \cite{OTT1} up to equivalence. We thus define:

\begin{definition}\label{DEF:inf-vec-bun}
An \emph{infinity vector bundle} over a complex manifold $X$ is a $0$-simplex of $\CechSh{\IVB}(X)$.
\end{definition}

The following note looks at the $k$-simplices of $\CechSh{\IVB}(X)$ in general, before focusing more specifically on the $0$-simplicies.

\begin{note}\label{EX: PERF Tot description}
Fix a complex manifold $X$. Definition \ref{DEF: cech sheafify} applied to $F = \CechSh{\IVB}$ yields 
\begin{equation}\label{EQ: perf sheafify example 1}
\CechSh{\IVB}(X)=\colim_{(U_\bu \to X) \in \check{S}} Tot\left( \IVB (\CechNerve \mathcal{U}_{\bu}) \right). 
\end{equation}
Now fix a \v{C}ech cover, $U_\bu\to X$, and denote by $K^\bu_\bu$ the cosimpicial simplicial set whose $\ell$-cosimplicies are given by
\begin{equation}
K^{\ell}:= \IVB(\CechNerve \mathcal{U}_{\ell}) =  \PERF^{\EZ}(\CechNerve \mathcal U_\ell).
\end{equation}
Following \eqref{EQU:k-simplex-in-Tot}, a $k$-simplex in $Tot(K)$ consists of a collection $\{x^{(k,\ell)}\}_{\ell\geq 0}$ with
\begin{align*}
x^{(k,\ell)}\in &\sSet(\Delta^k\times\Delta^\ell,  K^\ell) =\sSet(\Delta^k\times\Delta^\ell,  \PERF^{\EZ}(\CechNerve \mathcal U_\ell)) \\
&\stackrel{\eqref{EQU:DEF:PERF^Q}}{=}\sSet( \Delta^k\times\Delta^\ell  , \sSet(\EZ,\dgN(\perf(\CechNerve \mathcal{U}_{\ell}))^\circ) ) 
\\
&=  \sSet\left(  \colim\limits_{\Delta^p \to  \Delta^k\times\Delta^\ell  } \EZ^{p}, \dgN(\perf(\CechNerve \mathcal{U}_{\ell}))^\circ \right),
\end{align*}
where in the last equality the calculation from equation \eqref{EQU:Tot-for-Deltahat} is used. Thus, according to appendix \ref{SEC: explicit tot} page \pageref{PAGE:Tot-for-Deltahat-map} these are given by $x_{\scalebox{0.6}{$\bmat{[c|c|c]\al_0& \dots & \al_p  \\  \be_0 &  \dots & \be_p}$}}^{(k, \ell)} \in  \dgN(\perf(\CechNerve \mathcal{U}_{\ell}))^\circ_p$ for certain paths in the $(k+1)\times (\ell+1)$ grid, (i.e., for any path within the indices of a non-decreasing path). Now, before taking into account any simplicial or coherence conditions, the $p$-cell $x_{\scalebox{0.6}{$\bmat{[c|c|c]\al_0& \dots & \al_p  \\  \be_0 &  \dots & \be_p}$}}^{(k, \ell)} \in  \dgN(\perf(\CechNerve \mathcal{U}_{\ell}))^\circ_p$ is itself (by example \ref{EXA:EZ} and lemma \ref{LEM:perf=perf-Kan}) given by the following collection of data:
\begin{multline}\label{EQU:x^(k,l)-for-Tot-Delta-full}
x^{(k,\ell)}=\{x_{\scalebox{0.6}{$\bmat{[c|c|c]\al_0& \dots & \al_p  \\  \be_0 &  \dots & \be_p}$}}^{(k, \ell)}\}, \text{ where}
\\
x_{\scalebox{0.6}{$\bmat{[c|c|c]\al_0& \dots & \al_p  \\  \be_0 &  \dots & \be_p}$}}^{(k, \ell)}=
\Bigg(
E^{(k,\ell)}_{\scalebox{0.6}{$\bmat{[c|c|c]\al_0& \dots & \al_p  \\  \be_0 &  \dots & \be_p}$}; \scalebox{0.6}{$\bmat{[c] \al_j  \\  \be_j };i_0,\dots,i_\ell$}}\to U_{i_0,\dots,i_\ell},
 \nabla^{(k,\ell)}_{\scalebox{0.6}{$\bmat{[c|c|c]\al_0& \dots & \al_p  \\  \be_0 &  \dots & \be_p}$}; \scalebox{0.6}{$\bmat{[c] \al_j  \\  \be_j  };i_0,\dots,i_\ell$}}, \\
 g^{(k,\ell)}_{\scalebox{0.6}{$\bmat{[c|c|c]\al_0& \dots & \al_p  \\  \be_0 &  \dots & \be_p}$}; \scalebox{0.6}{$\bmat{[c|c|c]\tilde\al_0& \dots & \tilde\al_q\\  \tilde\be_0 &  \dots & \tilde\be_q  };i_0,\dots,i_\ell$}}:E^{(k,\ell)}_{\scalebox{0.6}{$\bmat{[c|c|c]\al_0& \dots & \al_p  \\  \be_0 &  \dots & \be_p}$}; \scalebox{0.6}{$\bmat{[c]  \tilde\al_q  \\  \tilde\be_q };i_0,\dots,i_\ell$}}\to E^{(k,\ell)}_{\scalebox{0.6}{$\bmat{[c|c|c]\al_0& \dots & \al_p  \\  \be_0 &  \dots & \be_p}$}; \scalebox{0.6}{$\bmat{[c] \tilde\al_0 \\  \tilde\be_0 };i_0,\dots,i_\ell$}}\Bigg),
\end{multline}
where the ``$g$''s are associated to any sequence of indices \scalebox{0.8}{$\bmat{[c|c|c] \tilde\al_0& \dots & \tilde\al_q \\ \tilde\be_0 &  \dots & \tilde\be_q  }$} for $q\geq 0$, given by indices from \scalebox{0.8}{$\bmat{[c|c|c]\al_0& \dots & \al_p  \\  \be_0 &  \dots & \be_p}$}. Moreover, these ``$g$''s  satisfy the relations from \eqref{EQU:dg-EZ-relation}.  
Since the simplicies of $x^{(k,\ell)}$ fit together via the simplicial set relations, the above data \eqref{EQU:x^(k,l)-for-Tot-Delta-full} does not depend on the chosen $p$-cell determined by \scalebox{0.8}{$\bmat{[c|c|c]\al_0& \dots & \al_p  \\  \be_0 &  \dots & \be_p}$}, and thus $x^{(k,\ell)}$ is given by the data:
\begin{align}\label{EQU:x^(k,l)-for-Tot-Delta-within-k-ell}
x^{(k, \ell)}=
\Bigg(&
E^{(k,\ell)}_{\scalebox{0.6}{$\bmat{[c] \al   \\  \be};i_0,\dots,i_\ell$}}\to U_{i_0,\dots,i_\ell},
 \nabla^{(k,\ell)}_{\scalebox{0.6}{$\bmat{[c] \al  \\  \be };i_0,\dots,i_\ell$}}, \\
 \nonumber
& g^{(k,\ell)}_{\scalebox{0.6}{$\bmat{[c|c|c] \tilde\al_0& \dots & \tilde\al_q \\  \tilde\be_0 &  \dots & \tilde\be_q  };i_0,\dots,i_\ell$}}:E^{(k,\ell)}_{ \scalebox{0.6}{$\bmat{[c]  \tilde\al_q  \\  \tilde\be_q };i_0,\dots,i_\ell$}}\to E^{(k,\ell)}_{ \scalebox{0.6}{$\bmat{[c] \tilde\al_0 \\  \tilde\be_0 };i_0,\dots,i_\ell$}}
 \Bigg).
\end{align}
For example for $k=2$ and $\ell = 0,1$ some of this data is visualized below, the ``$\nabla$''s are omitted and also suppressed the open set indicies ``$i_0,..., i_\ell$'' are suppressed for better readability:
\begin{equation*}
 \resizebox{12cm}{!}{\begin{tikzpicture}[baseline={([yshift=-.5ex]current bounding box.center)},vertex/.style={anchor=base,
     circle,fill=black!25,minimum size=18pt,inner sep=2pt}]   
 \def\top{6};
 \def\bot{0};
 \def\mid{3};
 \def\dep{2.5};
   \foreach \t in {2,10}
    {
        \path[shorten >=0.2cm,shorten <=0.2cm,<-]  (\t,\top)       edge (\t,\bot);   
        \fill (\t, \bot)circle (1pt);
        \fill (\t, \top)circle (1pt);
        \path[shorten >=0.2cm,shorten <=0.2cm,<-]  (\t,\top)       edge (\t- \dep,\mid);   
        \path[shorten >=0.2cm,shorten <=0.2cm,<-]  (\t- \dep,\mid)       edge (\t,\bot); 
        \fill (\t- \dep,\mid)circle (1pt);
        }
   \foreach \t in {6}
    {
                \path[gray!60, shorten >=0.2cm,shorten <=0.2cm,<-]  (\t,\top)       edge (\t,\bot);   
        \fill (\t, \bot)circle (1pt);
        \fill (\t, \top)circle (1pt);
        \path[shorten >=0.2cm,shorten <=0.2cm,<-]  (\t,\top)       edge (\t- \dep,\mid);   
        \path[shorten >=0.2cm,shorten <=0.2cm,<-]  (\t- \dep,\mid)       edge (\t,\bot); 
        \fill (\t- \dep,\mid)circle (1pt);
        }
        
                \fill[white] (6, \mid)circle (3 pt);
     \path[shorten >=0.2cm,shorten <=0.2cm,<-]  (6,\top)       edge (10,\top);   
     \path[shorten >=0.2cm,shorten <=0.2cm,<-]  (6-\dep,\mid)       edge (10-\dep,\mid);   
     \path[shorten >=0.2cm,shorten <=0.2cm,<-]  (6,\bot)       edge (10,\bot);       
     
\node[above right] at (-1,7.2) {on $U_{i_0}$ for each fixed $i_0$:};
\node[above right] at (5,7.2) {on $U_{i_0,i_1}$ for each fixed $i_0$ and $i_1$:};
     
\node[above right] at (2,\top) {$E_{\scalebox{0.5}{$\bmat{[c] 0\\ 0   }$}}^{(2, 0)}$};
\node[above left, xshift=0.3cm] at (2- \dep,\mid) {$E_{\scalebox{0.5}{$\bmat{[c] 1\\ 0   }$}}^{(2, 0)}$};
\node[below right] at (2,\bot) {$E_{\scalebox{0.5}{$\bmat{[c] 2\\ 0   }$}}^{(2, 0)}$};

\node at (1.25,4.8)  {\contour{white}{$g_{\scalebox{0.5}{$\bmat{[c|c] 1&0\\ 0&0   }$}}^{(2, 0)}$}};
\node at (1.25,1.2)  {\contour{white}{$g_{\scalebox{0.5}{$\bmat{[c|c] 1&2\\ 0&0   }$}}^{(2, 0)}$}};
\node at (2.2,3)  {\contour{white}{$g_{\scalebox{0.5}{$\bmat{[c|c] 0&2\\ 0&0   }$}}^{(2, 0)}$}};

\node at (1,3)  {\contour{white}{$g_{\scalebox{0.5}{$\bmat{[c|c|c] 0& 1&2\\ 0 & 0&0   }$}}^{(2, 0)}$}};

     \path[shorten >=0.2cm,shorten <=0.2cm,<-]  (6,\top)       edge (10 - \dep,\mid);   
     \path[shorten >=0.2cm,shorten <=0.3cm,<-]  (6 - \dep,\mid)       edge (10,\bot);

\node[above right] at (6,\top) {$E_{\scalebox{0.5}{$\bmat{[c] 0\\ 0   }$}}^{(2, 1)}$};
\node[above left, xshift=0.3cm] at (6- \dep,\mid) {$E_{\scalebox{0.5}{$\bmat{[c] 1\\ 0   }$}}^{(2, 1)}$};
\node[below right] at (6,\bot) {$E_{\scalebox{0.5}{$\bmat{[c] 2\\ 0   }$}}^{(2, 1)}$};

\node[above right] at (10,\top) {$E_{\scalebox{0.5}{$\bmat{[c] 0\\ 1   }$}}^{(2, 1)}$};
\node[right] at (10- \dep,\mid) {$E_{\scalebox{0.5}{$\bmat{[c] 1\\ 1   }$}}^{(2, 1)}$};
\node[below right] at (10,\bot) {$E_{\scalebox{0.5}{$\bmat{[c] 2\\ 1   }$}}^{(2, 1)}$};

\node[gray!60] at (9,\top -0.5) {$g_{\scalebox{0.5}{$\bmat{[c|c|c] 0 & 0 & 2\\0 & 1&  1   }$}}^{(2, 1)}$};
\node[gray!60] at (5,\mid) {$g_{\scalebox{0.5}{$\bmat{[c|c|c] 0 & 1 & 2\\0 & 0&  0   }$}}^{(2, 1)}$};
\node[gray!60] at (6,\top-1.5)  {\contour{white}{$g_{\scalebox{0.5}{$\bmat{[c|c] 0&2\\  0&0   }$}}^{(2, 1)}$}};

\node at (6.7,\mid-2.25)  {$g_{\scalebox{0.5}{$\bmat{[c|c|c] 1&2&2\\  0&0&1   }$}}^{(2, 1)}$};
\node at (7,\mid-1)  {$g_{\scalebox{0.5}{$\bmat{[c|c|c] 1&1&2\\  0&1&1   }$}}^{(2, 1)}$};
\node at (7.5,\top-1)  {$g_{\scalebox{0.5}{$\bmat{[c|c|c] 0&0&1\\  0&1&1   }$}}^{(2, 1)}$};
\node at (5.5,\mid+0.75)  {$g_{\scalebox{0.5}{$\bmat{[c|c|c] 0&1&1\\  0&0&1   }$}}^{(2, 1)}$};
\node at (9.25,\mid+0.5)  {$g_{\scalebox{0.5}{$\bmat{[c|c|c] 0&1&2\\  1&1&1   }$}}^{(2, 1)}$};

\node at (6- \dep + 1,\top-1.5)  {\contour{white}{$g_{\scalebox{0.5}{$\bmat{[c|c] 0&1\\  0&0   }$}}^{(2, 1)}$}};
\node at (6- \dep + 1,\bot+1.5)  {\contour{white}{$g_{\scalebox{0.5}{$\bmat{[c|c] 1&2\\  0&0   }$}}^{(2, 1)}$}};
\node at (8,\top)  {\contour{white}{$g_{\scalebox{0.5}{$\bmat{[c|c] 0&0\\  0&1   }$}}^{(2, 1)}$}};
\node at (8.5,\bot)  {\contour{white}{$g_{\scalebox{0.5}{$\bmat{[c|c] 2&2\\  0&1   }$}}^{(2, 1)}$}};
\node[gray!60] at (8,\bot +0.5) {$g_{\scalebox{0.5}{$\bmat{[c|c|c] 0& 2 & 2\\ 0 & 0 & 1   }$}}^{(2, 1)}$};

\node at (10,\mid-1)  {\contour{white}{$g_{\scalebox{0.5}{$\bmat{[c|c] 0&2\\  1&1   }$}}^{(2, 1)}$}};
\node at (5.45,\mid-1)  {\contour{white}{$g_{\scalebox{0.5}{$\bmat{[c|c] 1&2\\  0&1   }$}}^{(2, 1)}$}};
\node at (6.25,\mid)  {\contour{white}{$g_{\scalebox{0.5}{$\bmat{[c|c] 1&1\\  0&1   }$}}^{(2, 1)}$}};
\node at (7,\mid+1)  {\contour{white}{$g_{\scalebox{0.5}{$\bmat{[c|c] 0&1\\  0&1   }$}}^{(2, 1)}$}};
\node at (9,\mid+1.5)  {\contour{white}{$g_{\scalebox{0.5}{$\bmat{[c|c] 0&1\\  1&1   }$}}^{(2, 1)}$}};
\node at (9,\mid-1.5)  {\contour{white}{$g_{\scalebox{0.5}{$\bmat{[c|c] 1&2\\  1&1   }$}}^{(2, 1)}$}};
 \path[gray!60, shorten >=0.2cm,shorten <=0.3cm,<-]  (6, \top)       edge (10,\bot);   
\node[gray!60, xshift = 0.1cm] at (8.5,\mid-0.5)  {\contour{white}{$g_{\scalebox{0.5}{$\bmat{[c|c] 0&2\\  0&1   }$}}^{(2, 1)}$}};
\end{tikzpicture}}
\end{equation*}

Now, by the compatibility relations \eqref{EQ:Tot explicit coherence} in $Tot(K)$, the data given by the right-hand side of \eqref{EQU:x^(k,l)-for-Tot-Delta-within-k-ell} is determined by the lowest $\ell$ for which a given set of indices ${\scalebox{0.8}{$\bmat{[c|c|c] \tilde\al_0& \dots & \tilde\al_q \\\  \tilde\be_0 &  \dots & \tilde\be_q }$}}$ can be obtained via a face map. For example, 
$$E^{(k,\ell+1)}_{\scalebox{0.6}{$\bmat{[c]  \al \\  \delta_j(\be) };i_0,\dots,i_{\ell+1}$}}\stackrel{\eqref{EQ:Tot explicit coherence}}=\left(\text{component of } d^j(x^{(k,\ell)})\right)=E^{(k,\ell)}_{\scalebox{0.6}{$\bmat{[c] \al   \\  \be};{i_0,\dots,\widehat{i_j},\dots,i_{\ell+1}}$}} \Big|_{U_{i_0,\dots,i_{\ell+1}}}$$
where $d^j$ acts by pulling back a bundle to a subset (by definitions \ref{DEF:PERF^Q} and \ref{DEF:Perf}), i.e., by restricting the vector bundle to this subset. In particular, $E^{(k,\ell)}_{\scalebox{0.6}{$\bmat{[c] \al   \\  \be};{i_0,\dots,i_{\ell}}$}} =E^{(k,0)}_{\scalebox{0.6}{$\bmat{[c] \al   \\  0};{i_\be}$}} \Big|_{U_{i_0,\dots,i_{\ell}}}$, and similar statements apply to the ``$g$''s.

Thus, the data of a $k$-simplex in $Tot(K)$ is given by (suppressing the tildes):
\begin{enumerate}
\item\label{ITEM:Es}
chain complexes of holomorphic vector bundles $E_{\subsc{\al}{i}}:=E^{(k,0)}_{\scalebox{0.6}{$\bmat{[c] \al  \\ 0 };i$}}\to U_{i} $ with differential $g_{\scalebox{0.6}{$\bmat{[c] \al  \\ 0 };i$}}=g^{(k,0)}_{\scalebox{0.6}{$\bmat{[c] \al  \\ 0 };i$}}$ for any index \scalebox{0.8}{$\bmat{[c] \al\\ 0 }$} on the $(k+1)\times(0+1)$ grid
\item\label{ITEM:nablas}
connections $\nabla_{\subsc{\al}{i}}:=\nabla^{(k,0)}_{\scalebox{0.6}{$\bmat{[c] \al  \\  0 };i$}}$ on $E_{\subsc{\al}{i}}$
\item\label{ITEM:gs}
maps $g_{\scalebox{0.6}{$\bmat{[c|c|c]  \al_0& \dots & \al_q  \\  \be_0 &  \dots & \be_q };i_0,\dots,i_\ell$}}:=g^{(k,\ell)}_{\scalebox{0.6}{$\bmat{[c|c|c]  \al_0& \dots & \al_q  \\  \be_0 &  \dots & \be_q };i_0,\dots,i_\ell$}}:E_{ {\subsc{\al_q}{i_{\beta_q}}} }|_{U_{i_0,\dots,{i_\ell}}}\to E_{{\subsc{\al_0}{i_{\beta_0}}} }|_{U_{i_0,\dots,{i_\ell}}}$ for $\ell\geq1$ and for any $\be$s which include all the indices from $0$ to $\ell$, i.e., for $\{\be_0,\dots,\be_q\}=\{0,\dots,\ell\}$; (this is because if there was a $j\in\{0,\dots, \ell\}$ with $j\notin \{\be_0,\dots,\be_q\}$, then the map $g^{(k,\ell)}_{\scalebox{0.6}{$\bmat{[c|c|c]  \al_0& \dots & \al_q  \\  \be_0 &  \dots & \be_q };i_0,\dots,i_j,\dots,i_\ell$}}$ would according to \eqref{EQ:Tot explicit coherence} just be a restriction of $g^{(k,\ell-1)}_{\scalebox{0.6}{$\bmat{[c|c|c]  \al_0& \dots & \al_q  \\  \ga_0 &  \dots & \ga_q };i_0,\dots,\widehat{i_j},\dots,i_\ell$}}$ to $U_{i_0,\dots, i_j,\dots, i_\ell}$ where $\be_i=\delta_j(\ga_i)$ for all $i$, and so the data could be recovered from the map $g^{(k,\ell-1)}_{\scalebox{0.6}{$\bmat{[c|c|c]  \al_0& \dots & \al_q  \\  \ga_0 &  \dots & \ga_q };i_0,\dots,\widehat{i_j},\dots,i_\ell$}}$ via restriction). Of course, as before, the sequence of indices \scalebox{0.8}{$\bmat{[c|c|c]  \al_0& \dots & \al_q \\  \be_0 &  \dots & \be_q  \\}$} has to come from a non-decreasing set of indices on a $(k+1)\times(\ell+1)$ grid (see appendix \ref{SUBSEC:Tot(sSet(EZ,Ktilde)}). Sometimes we simply write $g_{\scalebox{0.6}{$\bmat{[c|c|c]  \al_0& \dots & \al_q  \\  \be_0 &  \dots & \be_q }$}}$ when the context of the open set $U_{i_0,\dots,i_\ell}$ is clear.

In particular note that:
\begin{itemize}
\item
Using the fact that we land in the maximal Kan subcomplex $\dgN(\perf(U))^\circ$ of $\dgN(\perf(U))$, then for $q=1$, the maps on $1$-cells $g_{\scalebox{0.6}{$\bmat{[c|c]  \al_0& \al_1  \\  \be_0 & \be_1 };i_0,i_1$}}$ are all quasi-isomorphisms.
\item
Finally, these maps satisfy the relations from \eqref{EQU:dg-EZ-relation} on $U_{i_0,\dots,i_\ell}$:
\begin{multline}\label{EQ: MC for k simplex tot Perf}
g_{\scalebox{0.6}{$\bmat{[c|c|c]  \al_0& \dots & \al_q  \\  \be_0 &  \dots & \be_q }$}}\circ g_{\scalebox{0.6}{$\bmat{[c] \al_q  \\ \be_q }$}}
+(-1)^q \cdot g_{\scalebox{0.6}{$\bmat{[c] \al_0  \\ \be_0 }$}}\circ
g_{\scalebox{0.6}{$\bmat{[c|c|c]  \al_0& \dots & \al_q  \\  \be_0 &  \dots & \be_q }$}}
\\
= \sum_{j=1}^{q-1} (-1)^{j-1} g_{\scalebox{0.6}{$\bmat{[c|c|c|c|c]  \al_0& \dots &  \al_j & \dots & \al_q  \\  \be_0 &  \dots & \be_j &  \dots & \be_q }$}}
 +\sum_{j=1}^{q-1} (-1)^{q(j-1)+1} g_{\scalebox{0.6}{$\bmat{[c|c|c]  \al_0& \dots & \al_j  \\  \be_0 &  \dots & \be_j }$}}\circ g_{\scalebox{0.6}{$\bmat{[c|c|c]  \al_j& \dots & \al_q  \\  \be_j &  \dots & \be_q }$}}.
\end{multline}
\end{itemize}
\end{enumerate}
\end{note}

The above note is applied below for the case of $0$-simplicies, in order to relate them to twisting cochains defined by O'Brian, Toledo, and Tong in \cite[Definition 1.3]{OTT1}, which we now briefly review. 
\begin{note}\label{REM:twisting-chains-a-al-OTT}
 Let $(U_i\to X)_{i\in I}$ be a given cover, and let $E^\bu_i\to U_i$ be graded holomorphic vector bundles over $U_i$. Then, according to \cite[Definition 1.3]{OTT1}, $a$ is a \emph{twisting cochain} if $a=\sum_{j\geq 0}a^{j,1-j}$ with $a^{j,1-j}\in C^j(\mc U, Hom^{1-j}(E,E))$, which is given by a collection of bundle morphisms on intersections of open sets, $a^{j,1-j}=\{a_{i_0,\dots, i_j}:E_{i_j}|_{U_{i_0,\dots, i_j}}\to E_{i_0}|_{U_{i_0,\dots, i_j}}\}_{i_0,\dots, i_j\in I}$ satisfying conditions \cite[(1.5)]{OTT1} on each $U_{i_0,\dots,i_q}$:
\begin{equation}\label{EQU:twisting-cochain-condition}
\sum_{j=1}^{q-1} (-1)^j a_{i_0,\dots,\widehat{i_j},\dots,i_q} + \sum_{j=0}^q (-1)^{(1-j)(q-j)} a_{i_0,\dots,i_j}\circ a_{i_j,\dots,i_q}=0
\end{equation}
\end{note}

Note that compared to the data of a $k$-simplex in $\CechSh{\IVB}(X)$ (see note \ref{EX: PERF Tot description} \eqref{ITEM:Es}, \eqref{ITEM:nablas}, and \eqref{ITEM:gs}), there is a priori no chosen connection. A version of $\CechSh{\IVB}(X)$ is also provided then without connection. Recall from \eqref{EQU:DEF-IVB(U)n} that $\IVB(U)_n=sSet(\EZ^n,\dgN(\perf(U))^\circ)$. 
 \begin{definition}
Define $\wt{\perf}:\CMan^{op}\to \dgCat$ by setting $\wt{\perf}(U)$ to be the dg-category of finite chain complexes of holomorphic vector bundles, just as in definition \ref{DEF:Perf}, but with the difference that we do not choose any connection on $E_\bu$. Analogously to $\IVB$ from definition \ref{DEF:PERF}, define  $\wt\IVB:\CMan^{op}\to \sSet$ by setting $\wt\IVB(U)_n:=\sSet(\EZ^n, \dgN(\wt\perf(U))^\circ)$. 

For a \v{C}ech cover $(U_\bu\to X)$, note \ref{EX: PERF Tot description} can be repeated to obtain an explicit description of $Tot(\wt\IVB(\CechNerve \mathcal{U}_{\bu}))$. Indeed the data of a $k$-simplex of $Tot(\wt\IVB(\CechNerve \mathcal{U}_{\bu}))$ is given by the data of chain complexes of holomorphic vector bundles $E_{\al;i}$ as in \eqref{ITEM:Es} together with maps $g_{\scalebox{0.6}{$\bmat{[c|c|c]  \al_0& \dots & \al_q  \\  \be_0 &  \dots & \be_q };i_0,\dots,i_\ell$}}$ as in \eqref{ITEM:gs}, but \emph{without} any connections as stated in \eqref{ITEM:nablas}.
 \end{definition}
 
The following lemma relates the above definition to the one with connections.
 \begin{lemma}\label{LEM:IVB=IVBtilde}
 The dg-functor $\perf\to \wt\perf$ that forgets the connection induces a map of simplicial presheaves $\IVB\to \wt\IVB$, which after applying the \v{C}ech sheafification (definition \ref{DEF: cech sheafify}) yields an isomorphism of simplicial sets $\CechSh{\IVB}(X)\stackrel{\cong}\to\CechSh{\wt\IVB}(X)$.
 \end{lemma}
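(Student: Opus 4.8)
The plan is as follows. The map of simplicial presheaves $\IVB\to\wt\IVB$ itself needs nothing beyond functoriality: it is obtained by applying $\dgN(-)^\circ$ and then $\sSet(\EZ^\bu,-)$ to the forgetful dg-functor $\perf(U)\to\wt\perf(U)$ of Definitions \ref{DEF:Perf} and \ref{DEF:PERF}, all of which are functorial in $U$, and then applying the functor $\F\mapsto\CechSh{\F}$ of Definition \ref{DEF: cech sheafify}. To analyze the resulting map I would work entirely through the explicit model for simplices in Note \ref{EX: PERF Tot description}: an $n$-simplex of $\CechSh{\IVB}(X)$ is a compatible system, over some \v Cech cover $(U_\bu\to X)$, of holomorphic vector bundles $E_{\al;i}\to U_i$ with differentials, holomorphic connections $\nabla_{\al;i}$, and gluing maps $g$ satisfying \eqref{EQ: MC for k simplex tot Perf}, whereas an $n$-simplex of $\CechSh{\wt\IVB}(X)$ is exactly the same data \emph{with the connections omitted}; the map in question simply forgets the $\nabla_{\al;i}$. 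So the whole content is that (i) every connection-free system can be refined to one admitting connections, and (ii) the choice of connections is immaterial once one passes to the colimit over covers.

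For (i), i.e.\ surjectivity in every simplicial degree: over each member of the cover there sit only finitely many bundles $E_{\al;i}$ (one for each $\al=0,\dots,n$), and holomorphic vector bundles are by definition locally trivial, so one can refine the given cover $\mathcal U$ to a cover $\mathcal V$ over each member of which all of these finitely many bundles are holomorphically trivial. This is a (directed) refinement, hence represents the same simplex of $\CechSh{\wt\IVB}(X)$; equipping each restricted bundle with the trivial connection coming from a trivialization then produces a simplex of $\CechSh{\IVB}(X)$ mapping onto the given one. No compatibility has to be checked here, because in $\perf(U)$ the maps $g$ are by design not required to interact with connections.

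Step (ii), injectivity, is the heart of the matter and the step I expect to be the main obstacle. The conceptual reason it should hold is that the forgetful functor is \emph{fibrewise a dg-equivalence}: for a fixed finite complex of holomorphic bundles $(E,d)$, the full dg-subcategory of $\perf(U)$ on the objects $\{(E,d,\nabla)\}$, with $\nabla$ ranging over all holomorphic connections, has exactly the same morphism complexes as the one-object dg-category $\{(E,d)\}\subset\wt\perf(U)$, so forgetting the connection is a dg-equivalence on these ``spaces of connections''. The work is then to propagate this fibrewise equivalence through $\dgN(-)^\circ$, $\sSet(\EZ^\bu,-)$, the totalization $Tot$, and finally the colimit over \v Cech covers, concluding that two $n$-simplices of $\CechSh{\IVB}(X)$ with the same image — which, after passage to a common refinement, carry the same bundle-and-gluing data but possibly different connections — in fact become equal. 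Concretely, one wants to produce, for any two systems of connections on a fixed system of bundles, a single fine enough refinement together with refinement functions on which the two resulting systems literally coincide, or else to realize the identification through the explicit coherence relations for $Tot$ recalled in Note \ref{EX: PERF Tot description}. Checking that these identifications assemble compatibly over all \v Cech degrees $\ell$ and all internal simplicial degrees $p$ occurring in $Tot$ — so that the a priori freely chosen connection data contributes nothing beyond what $\CechSh{\wt\IVB}(X)$ already records — is the delicate bookkeeping the proof must carry out, and is where I expect nearly all of the effort to lie.
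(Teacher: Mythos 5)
Your surjectivity argument is sound and essentially matches the paper's in spirit: the paper refines to a Stein cover on which connections exist, while you refine to a cover on which the finitely many bundles per open set are trivial and take trivial connections; both work, and yours is somewhat more elementary.

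The injectivity step, however, is where the proposal genuinely stops short, and you acknowledge as much. What you offer is a conceptual heuristic (the forgetful functor is fibrewise a dg-equivalence on ``spaces of connections''), but this cannot be the right tool for the claim as stated: tracking a fibrewise dg-equivalence through $\dgN(-)^\circ$, $\sSet(\EZ^\bullet,-)$, $Tot$, and the filtered colimit would, at best, produce a weak equivalence of Kan complexes, whereas the lemma asserts a literal isomorphism of simplicial sets. Your concrete fallback — refine until the two connection systems ``literally coincide'' — is also not available in general: two different connections on the same bundle over the same open set need not become equal on any refinement, and no refinement collapses the choice. The paper's actual mechanism goes in a different direction. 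Given $x$ on cover $U_\bullet$ and $x'$ on cover $U'_\bullet$ with $\wt x=\wt x'$, it first reduces (by filteredness) to the case that $\wt x$ and $\wt x'$ both come from a single $\wt y$, and then builds an object $\wt z$ on the \emph{union} cover $U_\bullet\sqcup U'_\bullet$ whose bundle and gluing data are taken from $\wt y$ but whose connections are taken from $x$ on the $U_i$'s and from $x'$ on the $U'_j$'s. Since morphisms in $\perf$ carry no compatibility constraint with connections, this hybrid assignment is automatically a valid element of $Tot(\IVB(\CN(U_\bullet\sqcup U'_\bullet)))$, and restriction along the two subcover inclusions recovers $x$ and $x'$ exactly — so they are identified in the colimit. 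This gluing-along-a-union-of-covers maneuver is the missing idea in your outline, and without it the injectivity step does not go through.
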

 \begin{proof}
For a fixed cover $(U_\bu\to X)$, the forgetful map $Tot(\IVB(\CechNerve \mathcal{U}_{\bu}))\to Tot(\wt\IVB(\CechNerve \mathcal{U}_{\bu}))$ forgets the information of the connections as stated in \eqref{ITEM:nablas} in note \ref{EX: PERF Tot description}. Taking colimit over covers, this descends to a well-defined map $\CechSh{\IVB}(X)\to \CechSh{\wt\IVB}(X)$ which is surjective, since every complex manifold has a (Stein) open cover so that for every open set of the cover, there exists a connection on the corresponding bundles.

It remains to check injectivity. Assume that two $k$-simplicies $x,x'\in \CechSh{\IVB}(X)_k$ are mapped, respectively, to $\wt x, \wt{x}'\in\CechSh{\wt\IVB}(X)_k$ by forgetting the connections, and that these are equal, i.e., $\wt x= \wt{x}'$. This means that there is a zig-zag of refinements and extensions with respect to the colimit over covers which connects $\wt x$ and $\wt x'$ in $\CechSh{\wt\IVB}(X)_k$. Since every $k$-simplex in $\CechSh{\wt\IVB}(X)$ has a refinement which is in the image of $\CechSh{\IVB}(X)$ under the forgetful functor, (i.e., it has a choice of connections on the bundles for each open set,) it is enough to consider the case where $\wt x $ and $\wt x'$ are both refinements of $\wt y\in \CechSh{\wt\IVB}(X)_k$, where $\wt y$ may \emph{not} be in the image of the forgetful functor. In order to prove injectivity, it is enough to show that there exists a $\wt z$ for which both $\wt x $ and $\wt x'$ are refinements, and which is in the image of the forgetful functor, so that taking a preimage $z$ of $\wt z$ shows that $x$ and $x'$ are equal in $\CechSh{\IVB}(X)_k$. To this end, note, that, if $x$ and $x'$ are represented on fixed covers $U_\bu$ and $U'_\bu$, respectively. Then we define $\wt z$ represented on the cover $U_\bu\sqcup U'_\bu$ as follows. To define the bundle data \eqref{ITEM:Es} for $\wt z$, if $V$ is an open set in the cover $U_\bu$ or $U'_\bu$ pick the bundle for that open set from $\wt x$ or $\wt x'$, respectively, which we note to be equal to bundles from $\wt y$ appropriately restricted. To define the maps ``$g$'' from \eqref{ITEM:gs} for $\wt z$, if $V_1,\dots, V_\ell$ are open sets from $U_\bu\sqcup U'_\bu$, we have bundles over $V_i$ coming from the data $\wt y$, and so we take the maps of bundles as provided by $\wt y$. Note, that, $\wt x$ and $\wt x'$ both extend $\wt z$, and, moreover, $\wt z$ is in the image of the forgetful functor by the extension $z$ of $x$ and $x'$, since there are connections on each of the bundles coming from the data \eqref{ITEM:nablas} provided by $x$ and $x'$.
 \end{proof}
 
 With this definition, the main theorem of this section is stated below. 
\begin{theorem}\label{THM: OTT are vertices}
The equivalence classes of \cite{OTT1} of twisting cochains inject into the vertices of ${\CechSh{\IVB}}(X)$.
\begin{proof}
By the previous lemma \ref{LEM:IVB=IVBtilde}, we may forget about the connections, and simply inject twisting cochains into vertices of ${\CechSh{\wt\IVB}}(X)$. By note \ref{REM:twisting-chains-a-al-OTT}, a twisting cochain on a cover $(U_i\to X)_{i\in I}$ with holomorphic vector bundles $E^\bu_i\to U_i$ is given by a collection $a=\{a_{i_0,\dots, i_j}\}_{i_0,\dots,i_j\in I, j\geq 0}$ satisfying \eqref{EQU:twisting-cochain-condition}. To this, we assign the data of a $0$-simplex in $\CechSh{\IVB}(X)$ as stated in \eqref{ITEM:Es} and \eqref{ITEM:gs} from page \pageref{ITEM:Es} as follows. First, the $E_{0;i}\to U_i$ from \eqref{ITEM:Es} are just the given $E_i$. As for the ``$g$''s in  \eqref{ITEM:Es} and \eqref{ITEM:gs}, define
\begin{equation}\label{EQU:twist->IVB_0}
g^{(k,\ell)}_{\scalebox{0.6}{$\bmat{[c|c|c]  0& \dots & 0  \\  \be_0 &  \dots & \be_q };i_0,\dots,i_\ell$}}:=a_{i_{\be_0},\dots,i_{\be_q}}.
\end{equation}
Note that the twisting cochain equations \eqref{EQU:twisting-cochain-condition} imply \eqref{EQ: MC for k simplex tot Perf}. Moreover, the equivalence of twisting cochains is generated by refinements and extensions (see \cite[p. 232 above Proposition 1.10]{OTT1}), which identifies the corresponding infinity vector bundles (due to the colimit in \eqref{EQ: perf sheafify example 1}).

To check injectivity, we give a map in the opposite direction, which is a left-inverse to the above map.
Explicitly, for a $0$-simplex in $\CechSh{\wt\IVB}(X)$ represented by a cover $(U_i\to X)_i$ and bundles $E^\bu_i$ with maps ``$g$'' as in \eqref{ITEM:Es} and \ref{ITEM:gs}, we define the twisting cochain
\begin{equation}\label{EQU:IVB_0->twist}
a_{i_{0},\dots,i_{j}}:=g^{(k,\ell)}_{\scalebox{0.6}{$\bmat{[c|c|c|c]  0&0& \dots & 0  \\  0 &1&  \dots & j };i_0,\dots,i_j$}},
\end{equation}
which preserves the twisting cochain equations \eqref{EQU:twisting-cochain-condition} due to \eqref{EQ: MC for k simplex tot Perf}. The colimit construction implies equivalence of twisting cochains. The composition of  these two constructions, which maps twisting cochains to $\CechSh{\wt\IVB}(X)_0$ via \eqref{EQU:twist->IVB_0} and then back to twisting cochains via  \eqref{EQU:IVB_0->twist}, is the identity on twisting cochains.

As a final remark, we note that there are different (non-equivalent) choices for a left-inverse other than \eqref{EQU:IVB_0->twist}. In fact, equation \eqref{EQU:twist->IVB_0} assigns the \emph{same} homotopy $a_{j_0,\dots,j_q}$ to any 
$g_{\scalebox{0.6}{$\bmat{[c|c|c]  \al_0& \dots & \al_q  \\  \be_0 &  \dots & \be_q };i_0,\dots,i_\ell$}}$ with $i_{\be_0}=j_0,\dots, i_{\be_q}=j_q$, while in ${\CechSh{\wt\IVB}}(X)_0$ these $g_{\scalebox{0.6}{$\bmat{[c|c|c]  \al_0& \dots & \al_q  \\  \be_0 &  \dots & \be_q };i_0,\dots,i_\ell$}}$ may generally be different. Therefore, any choice (consistent within the Maurer-Cartan equation \eqref{EQ: MC for k simplex tot Perf}) may thus be used as a left-inverse for \eqref{EQU:twist->IVB_0}.
\end{proof}
\end{theorem}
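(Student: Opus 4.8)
The plan is to first invoke Lemma~\ref{LEM:IVB=IVBtilde}, which gives an isomorphism $\CechSh{\IVB}(X)\cong\CechSh{\wt\IVB}(X)$; this reduces the statement to injecting equivalence classes of twisting cochains into the vertices of $\CechSh{\wt\IVB}(X)$, which is the natural reduction since a twisting cochain in the sense of note~\ref{REM:twisting-chains-a-al-OTT} carries no connection --- exactly the piece of data, item~\eqref{ITEM:nablas} of note~\ref{EX: PERF Tot description}, that one discards. By that note, a $0$-simplex of $\CechSh{\wt\IVB}(X)$ is represented, on some cover $(U_i\to X)_{i\in I}$, by holomorphic chain complexes $E_{0;i}$ over $U_i$ together with morphisms $g$ indexed by admissible strings of indices, subject to the Maurer--Cartan relations~\eqref{EQ: MC for k simplex tot Perf}; a twisting cochain is a collection $\{a_{i_0,\dots,i_j}\}$ of bundle morphisms satisfying~\eqref{EQU:twisting-cochain-condition}.

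Next I would define the map on representatives: send the bundles $E_i$ to $E_{0;i}$ and, over $U_{i_0,\dots,i_\ell}$, set the morphism $g$ attached to a string with all top indices $0$ and bottom indices $\beta_0,\dots,\beta_q$ equal to $a_{i_{\beta_0},\dots,i_{\beta_q}}$, as in~\eqref{EQU:twist->IVB_0}. The first check is that~\eqref{EQU:twisting-cochain-condition} implies~\eqref{EQ: MC for k simplex tot Perf}: every $g$ in the image has de~Rham degree $0$, \v{C}ech degree $q$ and internal degree $1-q$, so the triple-degree signs in~\eqref{EQ: MC for k simplex tot Perf} collapse to those in O'Brian--Toledo--Tong's convention and the two relations coincide. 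The second check is that the assignment descends to equivalence classes: by \cite[p.~232, above Prop.~1.10]{OTT1} the equivalence relation on twisting cochains is generated by refinement and extension of covers, and these are precisely the maps along which the colimit~\eqref{EQ: cech sheafify} defining $\CechSh{\wt\IVB}$ is taken, so equivalent twisting cochains land on the same vertex.

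For injectivity I would construct a left inverse at the level of twisting cochains: given a $0$-simplex represented by $(E_{0;i},g)$, put $a_{i_0,\dots,i_j}$ equal to the value of $g$ on the ``diagonal'' string with all top indices $0$ and bottom indices $0,1,\dots,j$, as in~\eqref{EQU:IVB_0->twist}. Specializing~\eqref{EQ: MC for k simplex tot Perf} to this string returns~\eqref{EQU:twisting-cochain-condition}, so $a$ is a twisting cochain, and the colimit again forces this to respect equivalence. Finally, the composite of~\eqref{EQU:twist->IVB_0} followed by~\eqref{EQU:IVB_0->twist} is literally the identity on twisting cochains --- the first construction places $a_{i_{\beta_0},\dots,i_{\beta_q}}$ on every string and the second reads the diagonal one back --- so the first map is injective on equivalence classes, which is the assertion.

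I expect the main obstacle to be the sign bookkeeping behind ``\eqref{EQU:twisting-cochain-condition} implies \eqref{EQ: MC for k simplex tot Perf}'': the dg-nerve relation and O'Brian--Toledo--Tong's twisting-cochain relation use different sign conventions, so one has to track the de~Rham, \v{C}ech, and internal degrees carefully through the translation and confirm the signs really do collapse. A secondary, more structural point is verifying that the directed colimit over covers --- built on top of the totalization reviewed in appendix~\ref{SEC: explicit tot} --- introduces exactly the refinement/extension identifications and nothing further; for this I would reuse the explicit description of $Tot$ and the directedness already exploited in the proof of Lemma~\ref{LEM:IVB=IVBtilde}.
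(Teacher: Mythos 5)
Your proposal follows exactly the same route as the paper's proof: reduce via Lemma~\ref{LEM:IVB=IVBtilde} to $\CechSh{\wt\IVB}(X)$, define the forward map by equation~\eqref{EQU:twist->IVB_0}, invoke the colimit over covers to handle refinement/extension, and produce a left inverse by reading off the diagonal string as in~\eqref{EQU:IVB_0->twist}, observing the composite is the identity. The only difference is a bit more explicit commentary on the sign bookkeeping, which the paper takes for granted; otherwise the arguments coincide step by step.
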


To end this subsection, consider the restriction of the simplicial presheaf $\IVB$ to the one which only utilizes chain complexes of vector bundles whose homology is concentrated in degree zero. Below we show that the associated simplicial presheaf contains (after sheafification) all of the data of isomorphism classes of coherent sheaves in its vertices. 

\begin{note}\label{RMK: CohSh Homology}
For the reader's convenience, we review here a construction from section 2 of \cite{TT78a}. Let $X \in \CMan$ and $a_{\bullet}$ be a twisting cochain for a cover $(U_\bu\to X)$ with holomorphic vector bundles $E^\bu_\bu$ (see \cite{OTT1} or note \ref{REM:twisting-chains-a-al-OTT} above). Consider the locally defined sheaf of $\mathcal{O}_X$-modules, $\mathcal{H}_i:= H_{\bu}\left( \Gamma\left( E_i\right), a_i\right)$, given by the homology of sections of  $E_i^{\bullet}$ with differential $a_i$ over $U_i$. Since each $a_{i,j}$ gives a quasi-isomorphism on the level of complexes, there is an induced isomorphism of sheaves on homology $a_{i,j} : \restr{\mathcal{H}_j}{U_{i,j}} \xrightarrow{\sim} \restr{\mathcal{H}_i}{U_{i,j}}$. Taking the colimit\footnote{Here we mean the concrete set-theoretic colimit given by a coproduct of $\mathcal{H}_i$ and then mod out by the equivalence generated by $a_{i,j}$ on $U_{i,j}$.} of the $\mathcal{H}_i$ over the diagram induced by these $a_{ij}$ produces a sheaf on $X$ which we will call \emph{the homology sheaf} and denote by $\mathcal{H}$. This construction further produces a map\footnote{Which importantly is \emph{not} coming from a map of complexes or even graded modules.} of simplicial presheaves
\begin{equation}
\CechSh{\IVB} \xrightarrow{\mc H} \mathcal{N} (\text{Sh}\mathcal{O}^\bu)
\end{equation}
where $\mc N$ denotes the nerve, and $\text{Sh}\mathcal{O}^\bu$ is the category of sheaves of graded $\mathcal{O}_X$-modules (without differential) with morphisms given by isomorphisms. The relevance of this construction to coherent sheaves is recorded in the following definition and proposition.
\end{note}

\begin{definition}\label{DEF: CohSh}
The simplicial presheaf $\CohSh \hookrightarrow \IVB$ is the sub-simplicial presheaf defined by considering the full sub-presheaf of dg-categories, $\perf_{\text{coh}} \hookrightarrow \perf$ utilizing only chain complexes of bundles whose homology is concentrated in degree zero and then taking $\CohSh(X)_n:=sSet(\EZ^n,\dgN(\perf_{\text{coh}}(U))^\circ) $.
\end{definition}

\begin{lemma}\label{LEM: CohSh Stein}
Given a manifold $M$ and a coherent sheaf $\mathcal{F}$ there exists an open cover by relatively compact Stein open submanifolds on which $\mathcal{F}$ is locally resolved by a chain complex of vector bundles. 
\begin{proof}
$M$ admits a cover $\{U_{i}\}_{i \in I}$ by Stein open subsets. For each Stein submanifold $U_i$, it admits an open cover by relatively compact open sets $\{V_{i,j}\}_{i\in I, j \in J_i}$. Now for each relatively compact open submanifold $V_{i,j}$ we cover it one final step further by open Stein sets $W_{i,j,k}$. As each $W_{i,j,k}$ is a subset of a relatively compact open Stein manifold $U_i$, then by \cite[Theorem 7.2.6]{Fi} $\mathcal{F}$ admits a resolution by vector bundles on $W_{i,j,k}$. 
\end{proof}
\end{lemma}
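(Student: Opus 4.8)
The plan is to reduce the statement to the classical fact that a coherent analytic sheaf on a Stein manifold admits a finite resolution by (trivial) holomorphic vector bundles over any relatively compact open subset, and then to arrange, by a threefold refinement of covers, that the members of the final cover are \emph{simultaneously} Stein, relatively compact in $M$, and relatively compact inside a Stein manifold on which the resolution theorem can be applied. The subtlety that forces all three refinement steps is that a relatively compact open subset of a Stein manifold need not be Stein, while a Stein open submanifold need not be relatively compact; so neither property can be read off from the other, and one cannot stop after a single choice of cover.

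Concretely, I would first recall that every complex manifold $M$ admits an open cover $\{U_i\}_{i\in I}$ by Stein open submanifolds — e.g. holomorphically convex coordinate neighborhoods biholomorphic to polydiscs in $\C^n$ — this being the only place Steinness of the ambient pieces enters. Next, for each $i$ I would cover $U_i$ by open sets $V_{i,j}$ that are relatively compact in $U_i$; since $\overline{V_{i,j}}$ is then compact and contained in $U_i\subseteq M$, each $V_{i,j}$ is relatively compact in $M$ as well. Then, for each pair $(i,j)$, I would cover $V_{i,j}$ by Stein open submanifolds $W_{i,j,k}$ (again using coordinate polydiscs), shrinking if necessary so that each $W_{i,j,k}$ is relatively compact in $V_{i,j}$. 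Each $W_{i,j,k}$ is then Stein; it is relatively compact in $M$, since $\overline{W_{i,j,k}}^M\subseteq \overline{V_{i,j}}^M$ is a closed subset of a compact set; and it is a relatively compact open subset of the Stein manifold $U_i$. Finally I would invoke the resolution theorem on relatively compact subsets of Stein manifolds, in the form of \cite[Theorem 7.2.6]{Fi}, to conclude that $\mc F|_{W_{i,j,k}}$ admits a resolution by a finite chain complex of holomorphic vector bundles over $W_{i,j,k}$; the cover $\{W_{i,j,k}\}$ then has all the required properties.

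I do not expect a serious obstacle here: the only work is the bookkeeping of the previous paragraph, where one must not conflate ``Stein'' with ``relatively compact'' and must verify that relative compactness propagates through the chain $W_{i,j,k}\subset V_{i,j}\subset U_i\subset M$, so that the hypotheses of the resolution theorem — a relatively compact open subset of a Stein manifold — are genuinely satisfied for every member of the final cover. Everything else is a direct citation of standard results in complex analytic geometry.
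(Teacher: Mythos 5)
Your proposal follows the paper's proof essentially verbatim: a threefold refinement $U_i \supset V_{i,j} \supset W_{i,j,k}$ with Stein, then relatively compact, then Stein again, followed by an appeal to \cite[Theorem 7.2.6]{Fi}. Your version is in fact slightly more careful in spelling out why relative compactness propagates down the chain; otherwise the argument and the citation are the same.
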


\begin{proposition}\label{PROP:CohSh justify}
The set of isomorphism classes of coherent sheaves on $X$ is in bijective correspondence with the connected components of $\CechSh{\CohSh} (X)$. 
\begin{proof}
Recall the map $\mc H : \CechSh{\IVB} (X) \to \mathcal{N}(\ShgO{X})$ from note \ref{RMK: CohSh Homology}. But since $\CohSh$ requires the local chain complex's homology to be concentrated in degree zero, the map's image lands in $\mathcal{N} (\ShO{X}) \hookrightarrow \mathcal{N} (\ShgO{X})$, where $\mc N (\ShO{X})$ is the nerve of the category of sheaves of $\mc O_X$-modules (concentrated in degree $0$). Since the image of our map is precisely an ${\mc O}_X$ which satisfies the properties of a coherent sheaf, then the map factors  through the nerve of the groupoid of coherent sheaves with isomorphisms, $\mc H: \CechSh{\CohSh}(X) \to \mathcal{N}( \CohShO{X})\hookrightarrow \mathcal{N} (\ShO{X})$ which in turn is well defined as a map which sends connected components of $\CechSh{\CohSh}$ to connected components of $\mathcal{N} (\CohShO{X})$, i.e., precisely the isomorphism classes of $\CohShO{X}$. 

To observe injectivity, we consider the image of two vertices $x, y \in \CechSh{\CohSh}(X)_0$, represented by cocycle data on some common refinement by a Stein cover, $( U_{\bu}\to X)$, whose images $\mc H (x) , \mc H(y) \in \mathcal{N} (\CohShO{X})$ are connected by an edge. In particular, this means that the global homology sheaves for $x$ and $y$ are isomorphic as $\mc O$-modules. In order to construct an edge $z \in \CechSh{\CohSh}(X)_1$ connecting $x$ and $y$, we first need local quasi-isomorphisms connecting the local resolutions for the chain complexes of bundles $x$ and $y$, respectively. These maps are given by recalling that these complexes over a Stein space are projective resolutions \cite[Cor. 2.4.5]{Fo} and so maps on homology induce chain maps between the complexes \cite[Theorem 4.1]{HS}. So far, these quasi-isomorphisms produce the edge data for $z$ on $U_i$, and the $1$-skeleton of the edge data for $z$ on higher intersections. To move up to the $2$-skeleton, say on $U_{i,j}$, we see that we now have two quasi-isomorphisms between the complexes for $x$ and $y$: one restricted from the quasi-iso over $U_i$ and the other from $U_j$. Again appealing to \cite[Theorem 4.1]{HS} we now know these two quasi-isomorphisms are chain-homotopic and this provides all of the data for $z$ on $U_i$'s, $U_{i,j}$'s, and the $2$-skeleton of the data on higher intersections. Now by \cite[Lemma 1.6]{OTT1} and the ensuing discussion there, one uses an inductive argument for how our higher homotopies of $z$ would be constructed to satisfy the Maurer-Cartan equation and since their constructions include into ours (see our proof of theorem \ref{THM: OTT are vertices}), one indeed can construct an edge $z$ connecting $x$ and $y$ to prove injectivity.

For surjectivity, applying Lemma \ref{LEM: CohSh Stein} and then following \cite[Propsoition 2.4]{TT78a}, for a coherent sheaf $\mathcal{F}$ there exists a Stein open cover $\left(U_{i} \hookrightarrow X \right)_{i \in I}$ so that we can choose a twisting cochain class in $\CechSh{\CohSh}(X)_0$ by locally/projectively resolving the coherent sheaf by a complex of vector bundles, coherent on intersections $U_{i,j}$ up to quasi-isomorphisms, and further coherent on $U_{i_0, \dots, i_p}$ by higher homotopies which again exist by virtue of \cite[Lemma 1.6]{OTT1} and the discussion which follows it. It follows that the map $\mathcal{H}$ is surjective on connected components since in the proof of theorem \ref{THM: OTT are vertices}, we show how their constructions include into ours.
\end{proof}
\end{proposition}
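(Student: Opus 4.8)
The plan is to exhibit an explicit map $\mc H$ from $\CechSh{\CohSh}(X)$ to the nerve of the groupoid of coherent $\mc O_X$-modules and then to show by hand that it induces a bijection on connected components. First I would recall the homology-sheaf construction of Toledo--Tong from note \ref{RMK: CohSh Homology}: from a $k$-simplex of $\CechSh{\IVB}(X)$---chain complexes $E_\bu$ over the opens $U_i$ of a cover together with the edge maps $g$, which are quasi-isomorphisms on overlaps---one forms the local homology sheaves $\mathcal H_i=H_\bu(\Gamma(E_i),g_i)$ and glues them along the isomorphisms induced on homology by the $g_{ij}$. This assembles into a map of simplicial presheaves $\CechSh{\IVB}\to \mathcal N(\text{Sh}\mathcal O^\bu)$ into the nerve of sheaves of graded $\mc O_X$-modules with isomorphisms. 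Restricting to $\CohSh$, whose local complexes have homology concentrated in degree zero, the glued sheaf is locally the cokernel of a map of vector bundles, hence coherent, so the image lands in $\mathcal N(\CohShO{X})$. Since the nerve of a groupoid has $\pi_0$ equal to the set of isomorphism classes of its objects, $\mc H$ descends to a map $\pi_0(\CechSh{\CohSh}(X))\to\{\text{iso. classes of coherent sheaves}\}$, and it remains to prove this is a bijection.

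For surjectivity, given a coherent sheaf $\mc F$ I would apply Lemma \ref{LEM: CohSh Stein} to obtain a cover by relatively compact Stein opens on which $\mc F$ is resolved by a finite chain complex of holomorphic vector bundles, and then, following \cite[Proposition 2.4]{TT78a}, upgrade these local resolutions to a twisting cochain: uniqueness of resolutions over Stein manifolds supplies the quasi-isomorphisms $g_{ij}$ on double overlaps, and the higher homotopies $g_{i_0\dots i_p}$ are produced by the inductive obstruction argument of \cite[Lemma 1.6]{OTT1}. As in the proof of theorem \ref{THM: OTT are vertices}, the twisting-cochain identity is exactly the Maurer--Cartan equation \eqref{EQ: MC for k simplex tot Perf}, so this datum is a vertex of $\CechSh{\CohSh}(X)$ which $\mc H$ sends, by construction, to $\mc F$.

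For injectivity, I would take vertices $x,y\in\CechSh{\CohSh}(X)_0$, represented after passing to a common Stein refinement $(U_\bu\to X)$, with $\mc H(x)\cong\mc H(y)$, and build an edge $z\in\CechSh{\CohSh}(X)_1$ joining them. Over each Stein $U_i$ the local complexes are projective resolutions of $\mathcal H_i$ (\cite[Cor. 2.4.5]{Fo}), so the chosen isomorphism on homology lifts to a chain map, unique up to homotopy by \cite[Theorem 4.1]{HS}; on $U_{ij}$ the two restricted lifts are then chain-homotopic, supplying the next layer of edge data, and one continues with the same inductive construction of \cite[Lemma 1.6]{OTT1}, whose output includes into $\CechSh{\CohSh}$ by theorem \ref{THM: OTT are vertices}, to produce all higher homotopies of $z$ satisfying \eqref{EQ: MC for k simplex tot Perf}. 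I expect the main obstacle to be precisely this organization of the infinite tower of higher homotopies---so that the Maurer--Cartan identities hold at every level and the data genuinely assembles into a simplex of the relevant totalization---but the obstruction-theoretic input is already available from Toledo--Tong, so the remaining work is chiefly to translate it into the present simplicial framework.
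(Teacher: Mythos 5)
Your proposal matches the paper's proof essentially step for step: the same homology-sheaf map $\mc H$, the same reduction to coherence via degree-zero concentration, surjectivity via Lemma~\ref{LEM: CohSh Stein} together with \cite[Proposition 2.4]{TT78a} and \cite[Lemma 1.6]{OTT1}, and injectivity via a common Stein refinement, projectivity of the local resolutions \cite[Cor.~2.4.5]{Fo}, the lifting/homotopy-uniqueness argument from \cite[Theorem 4.1]{HS}, and the inductive higher-homotopy construction of \cite[Lemma 1.6]{OTT1} interpreted through Theorem~\ref{THM: OTT are vertices}. The only cosmetic difference is the order of the two directions.
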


\subsection{\v{C}ech sheafification of the Chern map $\CH$}\label{SEC: cech CH}
This section continues the study of the \v{C}ech sheafified Chern character map $\CechSh{\CH}:\CechSh{\IVB} \to \CechSh{\OM}$, (where $ \CechSh{{\F}}(X) = \colim\limits_{( {U}_{\bu} \to X) \in \check{S}} Tot\left( {\F}(\CechNerve {U}_{\bu} ) \right)$ was defined in equation \eqref{EQ: cech sheafify}). In theorem \ref{THM: OTT are vertices} twisting cochains a la \cite{OTT1} were already interpreted as $0$-simplicies of $\CechSh{\IVB}$. Next, in note \ref{NOTE:Tot(Chern)} $\CechSh{\OM}$ is explicitly described as well as the map $\CechSh{\CH}$ for the case of $0$-simplicies. Comparing the formulas for the \v{C}ech sheafified Chern character map $\CechSh{\CH}$ with the Chern character map from \cite{OTT1} for a coherent sheaf (which is reviewed in \ref{NOTE:Chern-from-OTT}), shows, that these are given by precisely the same formulas. This result is stated in theorem \ref{THM:CH(IVB)=CH(TwCoch)}.

The following note reviews  $Tot( {\OM}(\CechNerve {U}_{\bu}))$.

\begin{note}\label{NOTE:Tot(Omega(NU))}
Fix a \v{C}ech cover $(U_\bu\to X)$. Then $Tot(\OM ( \CN   U_{\bu} ) )$ is the totalization of the cosimplicial simplicial set $\OM \left( \CN   U_{\bu} \right) =\DKSet(\Ohol^\bu(\CN U)\ul)$. Recall from note \ref{REM:DK-def}, that the $n$-simplices of Dold-Kan applied to the chain complex $\Ohol^\bu(V)\ul$ for some open set $V$, are decorations of the standard $n$-simplex, i.e., they assign to each $\ell$-simplex, polynomials $a\in \Ohol^\bu(V)\ul$ of total degree $-\ell$, 
\begin{equation}\label{EQU:a-for-OM(V)}
a=\begin{cases} 
\sum\limits_{j=0}^{\infty} a^{2j} \cdot u^{\frac \ell 2 +j},  &\text{when $\ell$ is even} \\
\sum\limits_{j=0}^{\infty} a^{2j+1} \cdot u^{\frac{\ell+1} 2+j},  &\text{when $\ell$ is odd}
\end{cases}
\end{equation}
where $a^{p} \in \Ohol^{p}(V)$. The condition  \eqref{EQU:DK-condition} imposed for these decorations is that the alternating sum of the faces of a $\ell$-simplex agree with applying the chain complex's differential to the data of the $\ell$-simplex:
\[0=d_{C}(a)= \sum_{j=0}^\ell (-1)^j d_j(a),\]
where $C$ is the complex $C=\Ohol^\bu(V)\ul$ with zero differential $d_C=0$, (see definition \ref{DEF:OM}).

Now, from appendix \ref{SUBSEC:totalization} and \ref{REM:Tot(K) simplices}, $0$-simplicies of the totalization, $\Tot( \OM (   \CN \mc U_{\bu}) )_0$, consist of coherent decorations of the standard $n$-simplex by data coming from  $\OM (   \CN \mc U_{n})$:
\begin{itemize}
\item on each $U_i$, a $0$-simplex in $\DKSet(\Ohol^\bu(U_i)\ul)$ , i.e., a polynomial $a_i$ as in \eqref{EQU:a-for-OM(V)} with $\ell=0$: $a_i =\sum\limits_{j=0}^\infty a^{2j}_i\cdot u^j$,
\item on each $U_{i_0,i_1}$, a $1$-simplex in $\DKSet(\Ohol^\bu(U_{i_0,i_1})\ul)$ , i.e., a polynomial $a_{i_0,i_1}$ as in \eqref{EQU:a-for-OM(V)} with $\ell=1$: $a_{i_0,i_1} =\sum\limits_{j=0}^\infty a^{2j+1}_{i_0,i_1}\cdot u^{j+1}$,
\item on each $U_{i_0,\dots, i_\ell}$, an $\ell$-simplex in $\DKSet(\Ohol^\bu(U_{i_0,\dots, i_\ell})\ul)$ , i.e., a polynomial $a_{i_0,\dots, i_\ell}$ as in \eqref{EQU:a-for-OM(V)}.
\end{itemize}
These polynomials satisfying the conditions:
\[ 0 = \sum\limits_{j=0}^\ell (-1)^j d_j \left(a_{i_0 \cdots, i_\ell}  \right) = \sum\limits_{j=0}^\ell \restr{a_{i_0 \dots, \widehat{i_j}, \dots, i_\ell}}{U_{i_0, \dots, i_\ell}}, \]
where the last equality follows from equation \eqref{EQU:coherence} and example \ref{EXA:Tot-open-cover}.
\end{note}

Recall from \cite{G} that the \emph{Hodge cohomology} $\bigoplus\limits_{p,q} H^p \left( X, \Omega^q\right)$ is given by a sum over the $p$-th sheaf cohomology of the sheaf of holomorphic $q$ forms (see also ``Hodge theory'' or ``Hodge decomposition'' \cite{Fro}). In \cite[section 4]{OTT1} the Chern character is defined as an element in $\bigoplus\limits_{k} H^k \left( X, \Omega^k\right)$. Below we see how our $\CechSh{\OM}$ relates to the Hodge cohomology.

\begin{proposition}\label{PROP: pi_0 OM Hodge}
The set of connected components of $\CechSh{\OM}(X)$ forms a ring which is isomorphic to the even part of the Hodge cohomology ring, 
\[ \pi_0 \left( \CechSh{\OM}(X) \right) \simeq \bigoplus\limits_{\substack{p,q\\ p+q \text{ even}}} H^p \left( X, \Omega^q\right).\]
\begin{proof}
The proof follows first from a direct observation that the vertices of $\Tot \left( \OM \left( \CN {U_{\bu}}\right) \right)$ are precisely (since the differentials are all zero) a direct sum of \v{C}ech $\ell$-cocycles of holomorphic forms (even degree forms for $\ell$ even and odd degree forms for $\ell$ odd), and then from the observation that edges in  $\Tot \left( \OM \left( \CN {U_{\bu}}\right) \right)$ correspond to \v{C}ech coboundaries.
\end{proof}
\end{proposition}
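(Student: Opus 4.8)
The plan is to unwind the colimit-of-totalizations description of $\CechSh{\OM}(X)$ explicitly, using note \ref{NOTE:Tot(Omega(NU))}, and match it term-by-term with the \v{C}ech double complex computing Hodge cohomology. First I would fix a \v{C}ech cover $(U_\bu \to X)$ and, following note \ref{NOTE:Tot(Omega(NU))}, observe that a $0$-simplex of $\Tot(\OM(\CN U_\bu))$ is exactly a collection $\{a_{i_0,\dots,i_\ell}\}_\ell$ where $a_{i_0,\dots,i_\ell}$ is a degree-$\ell$ polynomial in $u$ as in \eqref{EQU:a-for-OM(V)} with coefficients holomorphic forms of matching parity, subject to $\sum_j (-1)^j \restr{a_{i_0,\dots,\widehat{i_j},\dots,i_\ell}}{U_{i_0,\dots,i_\ell}} = 0$. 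Unpacking this coefficient-by-coefficient in $u$: the coefficient $a^{2j}_{i_0,\dots,i_\ell}$ (for $\ell$ even) or $a^{2j+1}_{i_0,\dots,i_\ell}$ (for $\ell$ odd) is, for each fixed degree $p$ of holomorphic form and each fixed $\ell$ with $p+\ell$ even, precisely a \v{C}ech $\ell$-cochain on $U_\bu$ valued in $\Omega^p$, and the closedness condition above says it is a \v{C}ech cocycle, i.e. $\check\delta a^p_\bu = 0$ where $\check\delta$ is the full alternating \v{C}ech differential. So $\Tot(\OM(\CN U_\bu))_0 \cong \bigoplus_{p} \bigoplus_{\ell : p+\ell \text{ even}} Z^\ell(U_\bu, \Omega^p)$.

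Next I would identify $\pi_0$. Two vertices lie in the same component of $\Tot(\OM(\CN U_\bu))$ iff they are connected by a $1$-simplex; by the appendix description of $\Tot$ (and the fact that all internal Dold--Kan differentials of $\Ohol^\bu(-)\ul$ vanish), a $1$-simplex of $\Tot(\OM(\CN U_\bu))$ between $\{a_{i_0,\dots,i_\ell}\}$ and $\{a'_{i_0,\dots,i_\ell}\}$ is precisely a collection $\{b_{i_0,\dots,i_\ell}\}$ of one-degree-higher Dold--Kan data whose boundary records $a - a' = \check\delta b$; unwinding in $u$ again, this is exactly the statement that $a^p_\bu$ and $(a')^p_\bu$ differ by a \v{C}ech coboundary in each $\Omega^p$. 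Hence $\pi_0\big(\Tot(\OM(\CN U_\bu))\big) \cong \bigoplus_{p}\bigoplus_{\ell:p+\ell\text{ even}} H^\ell\big(U_\bu, \Omega^p\big)$, the \v{C}ech cohomology with respect to the fixed cover. Then I would pass to the colimit over all \v{C}ech covers $\check S$: since $\pi_0$ commutes with filtered colimits of simplicial sets, $\pi_0(\CechSh{\OM}(X)) \cong \colim_{U_\bu} \pi_0(\Tot(\OM(\CN U_\bu)))$, and the colimit over refinements of \v{C}ech cohomology with holomorphic-form coefficients computes sheaf cohomology $H^\ell(X,\Omega^p)$ — here I would invoke the standard fact (true on any paracompact space, and in particular for the coherent sheaves $\Omega^p$ on a complex manifold) that \v{C}ech and sheaf cohomology agree, so the colimit stabilizes. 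Reindexing $(\ell,p) \mapsto (p,q) := (\ell, p)$ and noting $p+q$ even is the parity constraint, this yields the claimed isomorphism of graded groups.

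Finally I would address the ring structure. The product on $\pi_0(\CechSh{\OM}(X))$ comes from the cup/wedge product already present on $\OM$ (forms wedge, $u$ multiplies), which at the level of \v{C}ech cochains of forms is the Alexander--Whitney-style cup product; this is precisely the product computing the cup product in sheaf cohomology, and under the standard identification it agrees with the wedge product $H^p(\Omega^q)\otimes H^{p'}(\Omega^{q'}) \to H^{p+p'}(\Omega^{q+q'})$ that defines the Hodge cohomology ring. So I would just remark that the isomorphism constructed above is multiplicative. The main obstacle, and the only genuinely nontrivial input, is the identification of the filtered colimit of \v{C}ech cohomology over all covers with sheaf cohomology for the sheaves $\Omega^q$ — this is where paracompactness of the complex manifold (and coherence of $\Omega^q$) is used — together with keeping the bookkeeping of the $u$-grading and parity constraint straight so that the reindexing $(\ell, p)\leftrightarrow(p,q)$ lands exactly on the even part of the Hodge ring; everything else is a direct unwinding of note \ref{REM:DK-def}, note \ref{NOTE:Tot(Omega(NU))}, and the explicit description of $\Tot$ in the appendix.
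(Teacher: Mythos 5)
Your proof follows the same strategy as the paper's: identify vertices of $\Tot(\OM(\CN U_\bu))$ with \v{C}ech cocycles valued in $\Omega^p$ (with the parity constraint $p+\ell$ even enforced by the $u$-grading), identify edges with coboundaries, and pass to the colimit over covers. The paper's proof is more terse — it states only these two identifications and leaves implicit both the comparison of the filtered colimit of \v{C}ech cohomologies with sheaf cohomology and the ring structure, both of which you correctly spell out; the only very minor point is that for a fixed cover the vertices form a \emph{product} over $\ell$ rather than a direct sum (this is harmless after taking $\pi_0$ and the colimit, and matches the paper's own informal phrasing).
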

We next illustrate our sheafified Chern map $\CechSh{\CH}$.

\begin{note}\label{NOTE:Tot(Chern)}
Consider a \v{C}ech cover $(U_\bu\to X)$, and a vertex in $Tot \left( \IVB \left( \CechNerve \mathcal{U} \right) \right)_0$ as provided by note \ref{EX: PERF Tot description}, i.e., the data of holomorphic bundles $E_{0;i}$ with differentials $d=g_{\scalebox{0.6}{$\bmat{[c] 0  \\ 0 };i$}}$ from \eqref{ITEM:Es}, connections $\nabla_{0;i}$ from \eqref{ITEM:nablas}, and maps $g_{\scalebox{0.6}{$\bmat{[c|c|c]  0& \dots & 0  \\  \be_0 &  \dots & \be_q };i_0,\dots,i_\ell$}}$ from \eqref{ITEM:gs}. Then our sheafified Chern character map, $\CechSh{\IVB} \xrightarrow{\CechSh{\CH} } \CechSh{\OM}$, simply applies the Chern character, $\CH:\IVB\to \OM$ from Definition \ref{DEF:CH:PERF-to-OM}, locally to the data in our vertex by allowing the indices from that definition to be given by the indices of the open cover. To clarify, the vertex above gets mapped to the following vertex in $Tot\left( \OM \left( \CechNerve \mathcal{U} \right)  \right)_0$:
\begin{itemize}
\item
on each $U_i$, assign the Euler characteristic of $E_{0;i}$, denoted $\chi ( E_{0;i})\cdot u^0\in \Ohol^0(U_i)\ul$. 
\item
on each $U_{i_0, i_1}$, using $g=\{g_{\scalebox{0.6}{$\bmat{[c|c|c]  0& \dots & 0  \\  \be_0 &  \dots & \be_q };i_0,i_1$}}\}_{(\be_0,\dots, \be_q)\in \EZ^1}$, assign the monomial $Tr_g(\nabla (d+g))_{(0,1)}\cdot u\in \Ohol^1(U_{i_0, i_1})\ul$, and restrict the Euler characteristic above on the vertices (cf. definition \ref{DEF:CH:PERF-to-OM}),
\begin{equation*}
\begin{tikzpicture}[scale=0.5]
\node (E0) at (0, 0) {}; \fill (E0) circle (4pt) node[above] {$\restr{\chi(E_{0;i_0})}{U_{i_0, i_1}}$};
\node (E1) at (12, 0) {}; \fill (E1) circle (4pt) node[above] {$\restr{\chi(E_{0;i_1})}{U_{i_0, i_1}}$};
\draw [<-] (E0) -- node[above] {$ Tr_g( \nabla (d+g))_{(0,1)}\cdot u$}(E1);
\end{tikzpicture}
\end{equation*}
\item
For each $U_{i_0,i_1, \dots, i_{\ell}}$, using $g=\{g_{\scalebox{0.6}{$\bmat{[c|c|c]  0& \dots & 0  \\  \be_0 &  \dots & \be_q };i_0,i_1,\dots, i_\ell$}}\}_{(\be_0,\dots, \be_q)\in \EZ^\ell}$, assign the monomial 
\begin{equation}\label{EQ: Tr g nabla g} 
Tr_g((\nabla (d+g))^\ell)_{(0,1,\dots, \ell)}\cdot \frac{u^\ell}{\ell!} \in \Ohol^{\ell}(U_{i_0, i_1,\dots , i_{\ell}})\ul,\end{equation}
to the top cell and each face is assigned appropriate restrictions of the monomials defined for lower intersections.
\end{itemize}
\end{note}

The above formula is now compared to the one provided by O'Brian, Toledo, and Tong for the Chern character map of a coherent sheaf.
\begin{note}\label{NOTE:Chern-from-OTT}
In \cite{OTT1}, the construction of characteristic classes for coherent sheaves follows four steps:
\begin{enumerate}[(i)]
\item Given a coherent sheaf, a twisting cochain $a$ is constructed using \cite[below Lemma 1.6]{OTT1}. This construction is well-defined with respect to equivalences of twisting cochains; cf. \cite[Proposition 1.10]{OTT1}. 
\item Connection data is chosen for $a$ so that we obtain a twisting cochain with holomorphic connection data; cf. ; cf. \cite[above Proposition 4.4]{OTT1}. 
\item The Atiyah class is represented by the class $\nabla a$ in \cite[Proposition 4.4]{OTT1}.
\item The Chern character is defined \cite[above Proposition 4.5]{OTT1} using the trace map $\tau_a$ to be given by
\begin{equation}\label{EQU:Chern-char-a-la-OTT}
\text{ch}:=\sum_{k\geq 0} \text{ch}_k:=\sum_{k\geq 0} \frac 1 {k!} \tau_a((\nabla a)^k)
\end{equation}
Note that the trace map $\tau_a$ from \cite[above Proposition 3.2]{OTT1} is defined in the same way as our trace map $Tr_g$ in \ref{DEF:trace_g}.
\end{enumerate}
\end{note}

Comparing the formulas \eqref{EQ: Tr g nabla g} and \eqref{EQU:Chern-char-a-la-OTT} for the Chern character, these involve the same trace terms, and so we obtain the following theorem.
\begin{theorem}\label{THM:CH(IVB)=CH(TwCoch)}
For a given coherent sheaf, the formula for the Chern character \eqref{EQU:Chern-char-a-la-OTT} from \cite{OTT1} is given by the terms in the formula \eqref{EQ: Tr g nabla g} of the Chern character map 
\begin{equation}\label{EQ: pi_0 Chern}
\left\{\substack{ \text{Iso. classes of}\\  \text{coherent sheaves}}\right\}   \simeq \pi_0\left( \CechSh{\CohSh}\right) \xrightarrow{\pi_0\left(\CechSh{\CH}\right)}  \pi_0\left( \CechSh{\OM}\right)\simeq \bigoplus_{\substack{p,q\\p+q \text{ even}}} H^p\left( \Omega^q\right)
\end{equation}
 applied to the corresponding twisting cochain interpreted (by theorem \ref{THM: OTT are vertices}) as a $0$-simplex in $\CechSh{\CohSh}$.
\begin{proof}
A twisting cochain $a$ defines the Maurer Cartan element via \eqref{EQU:twist->IVB_0}. With this, the terms in the traces in \eqref{EQ: Tr g nabla g} and \eqref{EQU:Chern-char-a-la-OTT} coincide. (We note that the additional factor $u^\ell$ in equation \eqref{EQU:Chern-char-a-la-OTT} does not add any extra information, as the power $\ell$ is precisely the ``\v{C}ech degree'' given by the number of intersections in $U_{i_0,\dots, i_\ell}$.). Finally, the left and right isomorphisms in \eqref{EQ: pi_0 Chern} are given by propositions \ref{PROP:CohSh justify} and \ref{PROP: pi_0 OM Hodge}, respectively.
\end{proof}
\end{theorem}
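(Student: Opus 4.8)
The plan is to chase a single explicit representative through both outer identifications in \eqref{EQ: pi_0 Chern} and then observe that $\pi_0(\CechSh{\CH})$ is computed, on that representative, by the O'Brian--Toledo--Tong formula \eqref{EQU:Chern-char-a-la-OTT}. The two outer bijections are already available: $\{\text{iso. classes of coherent sheaves}\}\simeq\pi_0(\CechSh{\CohSh})$ is proposition \ref{PROP:CohSh justify} (via the homology-sheaf map $\mc H$), and $\pi_0(\CechSh{\OM})\simeq\bigoplus_{p+q\text{ even}}H^p(\Omega^q)$ is proposition \ref{PROP: pi_0 OM Hodge}, under which a vertex of $\Tot(\OM(\CN U_\bu))$ corresponds to the \v{C}ech cohomology class of the holomorphic-form cocycle it records. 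Hence it suffices to compute $\CechSh{\CH}$ on a single, conveniently chosen $0$-simplex of $\CechSh{\CohSh}(X)$.

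Given a coherent sheaf $\mc F$, I would choose---by lemma \ref{LEM: CohSh Stein} together with \cite[Proposition 2.4]{TT78a}---a twisting cochain $a=\sum_{j\geq0}a^{j,1-j}$ representing $\mc F$ in the sense of note \ref{REM:twisting-chains-a-al-OTT}, and use the assignment \eqref{EQU:twist->IVB_0} from the proof of theorem \ref{THM: OTT are vertices} to regard $a$ as a $0$-simplex of $\CechSh{\CohSh}(X)$. Under \eqref{EQU:twist->IVB_0} the internal differentials of the local complexes are the components $a^{0,1}=\{a_i\}$, and the Maurer--Cartan element is the $g$ with $g_{i_0\dots i_q}=a_{i_0\dots i_q}$ for $q\geq1$, so that the element $d+g$ of our formalism is \emph{literally} the full twisting cochain $a$, and the Atiyah class $A=\nabla(d+g)$ (the definition preceding proposition \ref{PROP:delta(Tr(A^k))=0}) is exactly OTT's $\nabla a$ of \cite[Proposition 4.4]{OTT1}. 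By note \ref{NOTE:Tot(Chern)}, $\CechSh{\CH}$ sends this $0$-simplex to the vertex of $\Tot(\OM(\CN U_\bu))$ whose decoration of each $\ell$-fold intersection $U_{i_0,\dots,i_\ell}$ is $Tr_g(A^\ell)_{(0,1,\dots,\ell)}\cdot\frac{u^\ell}{\ell!}$ (with faces the restrictions of the decorations of lower intersections), which is a genuine \v{C}ech cocycle by proposition \ref{PROP:delta(Tr(A^k))=0}.

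It then remains to match this decoration with \eqref{EQU:Chern-char-a-la-OTT} term by term. Since the trace $\tau_a$ of \cite[above Proposition 3.2]{OTT1} is defined by exactly the formula of our $Tr_g$ in definition \ref{DEF:trace_g}, and since $A=\nabla a$, the \v{C}ech $\ell$-cochain component of the degree-$\ell$ summand $\mathrm{ch}_\ell=\frac1{\ell!}\tau_a((\nabla a)^\ell)$ of the OTT Chern character is precisely $\frac1{\ell!}Tr_g(A^\ell)_{(0,\dots,\ell)}$, i.e. the decoration produced above; the formal variable $u^\ell$ attached to the $\ell$-fold intersection carries no additional data, it only records---via the Dold--Kan description of $\OM$ in note \ref{REM:DK-def}---that the de Rham degree and the \v{C}ech degree both equal $\ell$ there. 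Passing to $\pi_0$, using well-definedness of the two constructions (on the OTT side \cite[Proposition 1.10]{OTT1}; on ours, the colimit over covers that identifies equivalent twisting cochains, as in the proof of theorem \ref{THM: OTT are vertices}), and applying the two outer isomorphisms then gives the theorem. The one point that genuinely needs care is the triple-degree bookkeeping of definitions \ref{DEF:OmXHom} and \ref{DEF:trace_g}: one must check that on the simplex $(0,\dots,\ell)$ the surviving terms of $Tr_g(A^\ell)$ are exactly those of de Rham degree $\ell$ and vanishing total Hom-degree, so that the decoration indeed lies in $\Ohol^\ell(U_{i_0,\dots,i_\ell})$ and hence, under proposition \ref{PROP: pi_0 OM Hodge}, in $H^\ell(\Omega^\ell)$ matching OTT's $\mathrm{ch}_\ell$---and this is immediate from the degree counting once one uses that $g$ is a Maurer--Cartan element of total degree $1$.
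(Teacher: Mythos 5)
Your proof is correct and follows essentially the same route as the paper's: pick a twisting cochain via lemma \ref{LEM: CohSh Stein} and \cite[Prop.\ 2.4]{TT78a}, transport it to a $0$-simplex through \eqref{EQU:twist->IVB_0}, compute $\CechSh{\CH}$ on that vertex by note \ref{NOTE:Tot(Chern)}, and observe that $Tr_g$ with $A=\nabla(d+g)$ reproduces OTT's $\tau_a((\nabla a)^\ell)$ term by term, with the $u^\ell$ carrying only degree bookkeeping. You simply unpack the paper's two-sentence argument in more detail (notably the triple-degree check and the explicit identification $d+g=a$), but the content is the same.
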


Note, in particular, that our sheafified $\CechSh{\CH}$ provides not only a Chern character to coherent sheaves but also provides invariants for morphisms and higher homotopies between coherent sheaves.

\section{The Induced Map on Classifying Stacks}\label{SEC: Local Proj}
In this section we show that the previously considered sheafified Chern Map (definition \ref{DEF: sheafified Chern}) is a map of simplicial sheaves when we restrict $\CechSh{\IVB}$ (see definition \ref{DEF: cech sheafify} and example \ref{EX: PERF Tot description}) to the sub-simplicial sheaf $\nCechSh{\IVB}$ which considers complexes of vector bundles of a fixed length, $n$ (see definition \ref{DEF: restricted IVB}). Moreover, each of these simplicial pre-sheaves contains a sub-simplicial presheaf which considers complexes, $\CechSh{\CohSh}$ and $\nCechSh{\CohSh}$ respectively (see definition \ref{DEF: CohSh}), whose homology is concentrated in degree zero, yielding the  commutative diagram, 

\begin{equation*}
\adjustbox{scale=0.8}{\begin{tikzcd}[row sep = small, column sep=small]
\CohSh_{\le n} \arrow[hook]{rrr} \arrow[hook]{ddd} \arrow{rd} & & &  \CohSh \arrow[hook]{ddd} \arrow{dl} \\ 
& \nCechSh{\CohSh} \arrow[hook]{r} \arrow[hook]{d}  & \CechSh{\CohSh} \arrow[hook]{d} &  \\
& \nCechSh{\IVB} \arrow[hook]{r}& \CechSh{\IVB} &\\
\IVB_{\le n} \arrow{ur} \arrow[hook]{rrr}  & & & \IVB \arrow{ul}& 
\end{tikzcd} }
\end{equation*}
(see proposition \ref{PROP:CohSh justify} for a justification of our notation ``$\CohSh$''). As such we offer in theorem \ref{THM: OTT Chern Sheaves} an upgrade on the statement of theorem \ref{THM:CH(IVB)=CH(TwCoch)} to a statement about sheaves.

\subsection{Sheaves in the local projective model structure}
This section's main goal is to sort out which of the (maps of) presheaves in this paper are in fact (maps of) sheaves.

Given the Verdier site (see \cite[Section 9]{DHI}) of complex manifolds and holomorphic maps, $\CMan$, the category of simplicial presheaves $sPre(\CMan)$ has multiples model structures. One particular choice is the (global) projective model structure whose weak equivalences are object-wise weak equivalences of simplicial sets and whose fibrations are object-wise fibrations of simplicial sets \cite[Theorem 1.5]{Bl}. Further this model structure forms a (proper simplicial cellular) simplicial model category when we use the simplicial mapping space $\underline{sPre}(X,Y)_n:= sPre(X \otimes \Delta^n, Y)$. After localizing this simplicial category over the class of maps induced by hypercovers, we further obtain the local projective (proper simplicial cellular) model structure $sPre(\CMan)_{proj, loc}$ \cite[Theorem 1.6]{Bl}. The relevant criteria in this structure for us is that an object in $sPre(\CMan)_{proj,loc}$ is fibrant if it is fibrant in the projective model structure and satisfies descent with respect to any hypercover \cite{DHI}. Such an object is referred to below as a (hyper-)sheaf. 

In presenting a classifying stack (i.e. classifying hyper-sheaf) for coherent sheaves, one could produce a simplicial presheaf, $\F \in sPre(\CMan)$, and prove (at the very least) that for any manifold $X \in \CMan$, the set of equivalence classes of coherent sheaves coincides with the connected components of the derived mapping space, $\mathbb{R} Hom (X, \F)$. Since we are working with the local projective simpicial model category of simplicial presheaves this mapping space can be computed by cofibrantly approximating $X$ with $\tilde{X} \to X$ (which in this case is the identity since $X$ is representable and thus cofibrant), fibrantly approximating $\F$ by $\F \to \hat{\F}$, and defining the right derived mapping space (i.e. the homotopy function complex from \cite[section 17]{Hirs}) as the simplicial mapping space on the replacements: 
\begin{equation}\label{EQ: RHom} \mathbb{R} Hom (X, \F) := \underline{sPre}(\tilde{X}, \hat{\F}) =  \underline{sPre}(X, \hat{\F}) .\end{equation}
Thus $\hat{\F}$ would provide a more concrete description of this classifying stack and any map of simplicial presheaves $\F \to \OM$ provides cohomological invariants by inducing a map between fibrant replacements $\hat{\F} \to \hat{\OM}$; offering more explicit, cocycle-level cohomological invariants. 

It is not immediate that our \v{C}ech sheafification computes the fibrant replacement. Below we first show that if $\F$ is already a hyper sheaf then $\CechSh{\F}$ is again a hyper sheaf, even though this result is not used in this paper. 
\begin{proposition}
If $\F$ is a hyper sheaf then $\CechSh{\F}$ is a hyperesheaf and the natural map $\F \to \CechSh{\F}$ is an object-wise weak equivalence.
\begin{proof}
By construction, we have already shown in the proof of proposition \ref{Prop: Cech Fibrant} that \v{C}ech sheafification preserves object-wise fibrancy without any assumptions on the homotopy type of $\F$. To see that there is an object-wise weak equivalence, we compute: 
\begin{align*}
 \CechSh{\F}(X ) &= \colim\limits_{(\mathcal{W} \to X) \in S} \underline{sPre} ( \mathcal{W}, \F)\\
\intertext{but since $\F$ already satisfies descent,}
&\xleftarrow{\sim}  \colim\limits_{(\mathcal{W} \to X) \in S} \underline{sPre} ( X, \F) \xleftarrow{\sim} \underline{sPre} ( X, \F) = \F(X).
\end{align*}
Now to show that the \v{C}ech sheafification preserves hyperdescent we choose a hyper cover $\mathcal{U} \to X$ and argue that the nautral map $\underline{sPre}(X, \CechSh{\F}) \to\underline{sPre}(\mathcal{U}, \CechSh{\F})$ is a weak equivalence of simplicial sets. On the one hand we have, 
\[\underline{sPre}\left(X, \CechSh{\F} \right) = \CechSh{\F}(X ) \xleftarrow{\sim} \F(X).\]
while on the other hand we have, 
\begin{align*}
\underline{sPre}\left( \mathcal{U}, \CechSh{\F} \right) &\xrightarrow{\sim}  \underline{sPre}\left( \hocolim\limits_{i \in \Delta}\coprod_{i, \alpha_i} U_{i, \alpha_i}, \CechSh{\F} \right) \\
&=\holim\limits_{i \in \Delta}  \prod_{i, \alpha_i} \underline{sPre}\left(U_{i, \alpha_i}, \CechSh{\F} \right) =\holim\limits_{i \in \Delta}  \prod_{i, \alpha_i}\CechSh{\F} (U_{i, \alpha_i})  \\
& \xleftarrow{\sim} \holim\limits_{i \in \Delta}  \prod_{i, \alpha_i}\F (U_{i, \alpha_i})  = \underline{sPre}\left( \mathcal{U}, \F \right)
\intertext{and since $\F$ already satisfies descent, again we have,}
&\xleftarrow{\sim} \F(X).
\end{align*}
After repeated application of the 2-out-of-3 property for weak equivalences, we see that $\CechSh{\F}$ satisfies descent as well.
\end{proof}
\end{proposition}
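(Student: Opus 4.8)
The plan is to proceed in three stages: first observe that \v{C}ech sheafification preserves object-wise fibrancy; next prove that the natural map $\F \to \CechSh{\F}$ is an object-wise weak equivalence; and finally bootstrap from this to show that $\CechSh{\F}$ satisfies hyperdescent.

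For the first stage I would reuse verbatim the argument already given in the proof of Proposition~\ref{Prop: Cech Fibrant} (see also Proposition~\ref{PROP: Cech Sh F Kan}): for each \v{C}ech cover $U_\bu \to X$ the totalization $Tot(\F(\CN U_\bu))$ is a homotopy limit of a cosimplicial diagram of products of Kan complexes and hence is itself Kan, and the colimit defining $\CechSh{\F}(X)$ becomes directed once one passes to the simplicial presheaves $\CN U_\bu$, so it carries a diagram of projectively fibrant objects to a projectively fibrant one. No hypothesis on the homotopy type of $\F$ enters here.

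For the second stage the key input is the identification $Tot(\F(\CN U_\bu)) \simeq \underline{sPre}(\CN U_\bu, \F) = \holim_i \prod_{\alpha_0\dots\alpha_i} \F(U_{\alpha_0\dots\alpha_i})$ recorded in the introduction, which rewrites
\[
\CechSh{\F}(X) \;=\; \colim_{(\mathcal W \to X)\in \check S}\; \underline{sPre}(\mathcal W, \F).
\]
Since $\F$ is a hypersheaf it satisfies descent along every \v{C}ech cover, so for each $\mathcal W \to X$ the restriction map $\F(X) = \underline{sPre}(X,\F) \xrightarrow{\sim} \underline{sPre}(\mathcal W,\F)$ is a weak equivalence. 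Hence the $\check S$-indexed diagram $\mathcal W \mapsto \underline{sPre}(\mathcal W, \F)$ is level-wise weakly equivalent to the constant diagram at $\F(X)$; because the index category is directed (after passing to the $\CN U_\bu$) and filtered colimits of simplicial sets preserve weak equivalences between Kan complexes, the colimit is weakly equivalent to $\F(X)$, and one checks that the canonical structure map $\F(X) \to \CechSh{\F}(X)$ realizes this equivalence. For the third stage, fix a hypercover $\mathcal U \to X$; I want the restriction map $\underline{sPre}(X, \CechSh{\F}) \to \underline{sPre}(\mathcal U, \CechSh{\F})$ to be a weak equivalence. The source is $\CechSh{\F}(X) \simeq \F(X)$ by the second stage. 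For the target, write $\mathcal U \simeq \hocolim_{i\in\Del}\coprod_{\alpha_i} U_{i,\alpha_i}$, so that $\underline{sPre}(\mathcal U, \CechSh{\F}) \simeq \holim_{i\in\Del}\prod_{\alpha_i} \CechSh{\F}(U_{i,\alpha_i})$; applying the object-wise weak equivalence of the second stage on each $U_{i,\alpha_i}$ and using that $\holim$ preserves level-wise weak equivalences gives $\simeq \holim_{i\in\Del}\prod_{\alpha_i} \F(U_{i,\alpha_i}) = \underline{sPre}(\mathcal U, \F) \simeq \F(X)$, the last step because $\F$ already satisfies hyperdescent. Chasing the naturality square for $\F \to \CechSh{\F}$ and repeatedly applying the $2$-out-of-$3$ property then shows the restriction map for $\CechSh{\F}$ is a weak equivalence, i.e.\ $\CechSh{\F}$ is a hypersheaf.

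The step I expect to be the main obstacle is the bookkeeping around commuting the directed colimit over $\check S$ past the homotopy limits concealed inside $Tot$, together with the directedness statement that the poset of \v{C}ech covers behaves well after passing to \v{C}ech nerves. Both are standard facts about the simplicial model category $sPre(\CMan)_{proj,loc}$ and about the homotopical behaviour of filtered colimits of simplicial sets, but they must be invoked carefully; everything else is formal manipulation of homotopy function complexes and applications of $2$-out-of-$3$.
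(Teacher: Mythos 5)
Your proposal follows essentially the same three-stage argument as the paper's proof: reuse the fibrancy statement already established, deduce $\F(X)\xrightarrow{\sim}\CechSh{\F}(X)$ from \v{C}ech descent for $\F$ and filteredness of the colimit, and then verify hyperdescent by decomposing $\mathcal{U}$ as a homotopy colimit and chaining weak equivalences via $2$-out-of-$3$. The only difference is that you spell out the justification for commuting the filtered colimit past the weak equivalences a bit more explicitly; the underlying route is identical.
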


Under a modest boundedness condition on a simplicial presheaf $\F$ which takes values in Kan complexes, its \v{C}ech sheafification (Definition \ref{DEF: cech sheafify}) is a sheaf; this result is key to the rest of this paper.

\begin{proposition}\label{Prop: Cech Fibrant}
Let $\F \in sPre(\CMan)$ be a projectively fibrant simplicial presheaf whose homotopy groups are all trivial above level $n$. Then $\CechSh{\F}$ is a fibrant approximation of $\F$ in the local projective model structure of simplicial presheaves on complex manifolds. 
\begin{proof}
Given a projectively fibrant simplicial presheaf $\F \in sPre(\CMan)$ we can consider its fibrant replacement in the local projective model structure $\F \xrightarrow{\sim}  \F' \in sPre(\CMan)_{loc}$. In \cite[Remark 6.2.2.12] {Lurie} we see that in general we can compute this fibrant replacement on a test manifold $X \in \CMan$ with the \emph{hyper sheafification} of $\F$, written $\F^{\dagger}$, by taking a homotopy colimit of the simplicial mapping space $\underline{sPre}(\mathcal{U}, \F)$ over all hypercovers $(\mathcal{U} \to X)$. Below, as it is standard, we identify the manifold $X$ with its representable simplicial presheaf, i.e., with the functor $Y\mapsto \CMan(Y,X)$, post composed by the functor which sends sets to simplicially constant simplicial sets. Thus, if $S$ denotes the category of all hypercovers,
\[ \F^{\dagger}(X) := \hocolim_{(\mathcal{U} \to X) \in {S}} \underline{sPre}\left(\mathcal{U}, \F \right).\]
More formal references for this fact include \cite[Example 3.4.9]{AnSu} and \cite[Proposition 6.6]{Low}. We can now follow a series of steps to rewrite the above sheafification up to weak equivalence: 
\begin{align}
\F^{\dagger}(X) := &\hocolim_{(\mathcal{U} \to X) \in {S}} \underline{sPre}\left(\mathcal{U}, \F \right) = \hocolim_{( \mc U \to X) \in {S}} \underline{sPre}\left(\hocolim_{i \in \Del} {\mc U_{i}}, \F \right) \label{EQ: basal hypercover} \\
 \intertext{pulling the homotopy colimit out as a homotopy limit, and then using the fact that $\F$ is of bounded homotopy type so $ \F \xrightarrow{\sim} {\bcosk{n}}\F$ with both of these projectively fibrant,}
= &\hocolim_{( \mc U \to X) \in {S}} \holim_{i \in \Del} \underline{sPre}\left(\mc U_i, \F \right) \xrightarrow{\sim} \hocolim_{( \mc U \to X) \in {S}}\holim_{i \in \Del} \underline{sPre}\left(\mc U_i, {\bcosk{n} }\F \right) \nonumber\\ 
\intertext{now using the skeleton-coskeleton adjunction and then that we can change the indexing set of hypercovers to also be $n$-skeletal,} 
\xrightarrow{\sim} &  \hocolim_{( \mc U \to X) \in {S}} \holim_{i \in \Del}  \underline{sPre}\left({\bsk{n}}\mc U_i,  \F \right) =  \hocolim_{( \mc U \to X) \in {S_{\le n}}} \holim_{i \in \Del}  \underline{sPre}\left({\bsk{n}}\mc U_i,  \F \right)\nonumber\\\intertext{now since \v{C}ech covers are cofinal in bounded hypercovers on a paracompact manifold  \cite[Proposition 3.6.63]{Sc}, denoting by $\check{S}$ the category of \v{C}ech covers, this is weakly equivalent to} 
\nonumber\xrightarrow{\sim} &  \hocolim_{(\CechNerve U_{\bullet} \to X) \in {\check{S}}} \holim_{i \in \Del} \underline{sPre}\left({\bsk{n}}\CechNerve U_i,  \F \right)  = \hocolim_{(\CechNerve U_{\bullet} \to X) \in {\check{S}}}  \holim_{i \in \Del} \underline{sPre}\left(\CechNerve U_i,  {\bcosk{n}}\F \right) \\
\nonumber\xleftarrow{\sim} &  \hocolim_{(\CechNerve U_{\bullet} \to X) \in {\check{S}}}  \holim_{i \in \Del} \underline{sPre}\left(\CechNerve U_i,  \F \right) \\
\intertext{next we apply a simplicial Yoneda lemma and then use the fact that $\Tot$ computes $\holim$ when the cosimplicial simplicial set is Reedy fibrant \cite[Theorem 18.7.4]{Hirs},} 
\nonumber= & \hocolim_{(\CechNerve U_{\bullet} \to X) \in {\check{S}}}  \holim_i  {\prod\limits_{\alpha_0 \dots \alpha_i}}\F(U_{\alpha_0 \dots \alpha_i}) \xrightarrow{\sim}  \hocolim_{(\CechNerve U_{\bullet} \to X) \in {\check{S}}}   Tot \left( \F(\CechNerve U_\bullet) \right) \\
\intertext{and finally we use the fact that the colimit over \v{C}ech covers is a filtered colimit to compute $\hocolim$ with a $\colim$,}
\nonumber \xrightarrow{\sim} & \colim_{(\CechNerve U_{\bullet} \to X) \in {\check{S}}}  Tot \left( \F(\CechNerve U_\bullet) \right)= \CechSh{\F}(X)  
\end{align} 
By proposition \ref{PROP: Cech Sh F Kan}, $\CechSh{\F}$ is already globally projectively fibrant (i.e. takes values in Kan complexes). Now it remains to show that $\CechSh{\F}$ satisfies hyper-descent. Given a hypercover, $\mathcal{U} \to X$, we use the commutative square, 
\begin{equation} \begin{tikzcd}
\underline{sPre}(X, \F^{\dagger})= \F^{\dagger}(X)\arrow[d] \arrow[r] & \underline{sPre}(\mathcal{U}, \F^{\dagger}) \arrow[d]\\
 \underline{sPre}(X, \CechSh{\F} ) = \CechSh{\F}(X) \arrow[r] & \underline{sPre}(\mathcal{U},\CechSh{\F}) 
\end{tikzcd}\end{equation}
where the equalities are given by Yoneda. Since $\F^{\dagger}$ satisfies descent, the top horizontal map is a weak equivalence by definition of descent. The left vertical map was proven to be an equivalence above. With $\mathcal{U}$ projectively cofibrant it follows that the simplicial mapping spaces preserve the weak equivalence $\F^{\dagger} \xrightarrow{\sim} \CechSh{\F}$ between projectively fibrant objects and so the right vertical map is a weak equivalence. Thus by the two-out-of-three property afforded to our model category we have shown that the bottom horizontal map is a weak equivalence. Since we have shown that $\CechSh{\F}$ is projectively fibrant, satisfies hyper descent, and that $\F \xrightarrow{\sim} \CechSh{\F}$, then $\CechSh{\F}$ is a fibrant replacement of $\F$ in the local projective model structure.  
\end{proof}
\end{proposition}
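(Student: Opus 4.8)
The strategy is to show that $\CechSh{\F}$ is weakly equivalent to the hyper-sheafification $\F^\dagger$ of $\F$, and then to leverage the known model-categorical properties of $\F^\dagger$. I would begin by recalling the standard presentation of hyper-sheafification as a homotopy colimit over all hypercovers $(\mathcal U \to X)$ of the simplicial mapping space $\underline{sPre}(\mathcal U, \F)$, citing \cite{Lurie}, \cite{AnSu}, \cite{Low}, \cite{DHI}. Then I would run the chain of weak equivalences written out in the display: first write $\mathcal U = \hocolim_{i\in\Del}\mathcal U_i$ so the inner mapping space turns into a homotopy limit over $\Del$; then use the boundedness hypothesis — that $\F$ has trivial homotopy groups above level $n$ — to replace $\F$ by its $n$-coskeleton $\bcosk{n}\F$ up to weak equivalence (both projectively fibrant), which via the $\bsk{n} \dashv \bcosk{n}$ adjunction lets one restrict attention to $n$-skeletal hypercovers; then invoke \cite[Proposition 3.6.63]{Sc} that \v{C}ech covers are cofinal among bounded hypercovers on a paracompact manifold to replace the indexing category $S$ by $\check S$; then apply the simplicial Yoneda lemma to rewrite the mapping space into the product $\prod_{\alpha_0\cdots\alpha_i}\F(U_{\alpha_0\cdots\alpha_i})$, and \cite[Theorem 18.7.4]{Hirs} to identify the homotopy limit over $\Del$ with the totalization $\Tot(\F(\CechNerve U_\bullet))$ (using Reedy fibrancy of the relevant cosimplicial simplicial set); and finally use that the colimit over \v{C}ech covers is filtered, so $\hocolim$ is computed by the ordinary $\colim$, arriving at $\CechSh{\F}(X)$. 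This yields the object-wise weak equivalence $\F \xrightarrow{\sim} \F^\dagger \xleftarrow{\sim} \CechSh{\F}$.

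Next I would verify the two remaining conditions that make $\CechSh{\F}$ a fibrant object in $sPre(\CMan)_{proj,loc}$: projective fibrancy and hyperdescent. Projective fibrancy (i.e. $\CechSh{\F}$ takes values in Kan complexes) is already established in Proposition \ref{PROP: Cech Sh F Kan}, whose proof only needs that $\Tot$ of a \v{C}ech-cover diagram of Kan complexes is Kan and that the relevant colimit is filtered. For hyperdescent, I would fix a hypercover $\mathcal U \to X$ and form the commutative square comparing $\underline{sPre}(X,-)$ and $\underline{sPre}(\mathcal U,-)$ for the map $\F^\dagger \to \CechSh{\F}$. The top map $\F^\dagger(X) \to \underline{sPre}(\mathcal U, \F^\dagger)$ is a weak equivalence because $\F^\dagger$ satisfies descent by construction; the left vertical map is the object-wise equivalence just produced; and the right vertical map is a weak equivalence because $\mathcal U$ is projectively cofibrant and simplicial mapping out of a cofibrant object preserves weak equivalences between projectively fibrant objects. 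Two-out-of-three then forces the bottom map $\CechSh{\F}(X) \to \underline{sPre}(\mathcal U, \CechSh{\F})$ to be a weak equivalence, which is exactly hyperdescent for $\CechSh{\F}$. Combining projective fibrancy, hyperdescent, and the equivalence $\F \xrightarrow{\sim} \CechSh{\F}$ gives that $\CechSh{\F}$ is a fibrant replacement of $\F$ in the local projective model structure.

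\textbf{Main obstacle.} The delicate point is the chain of weak equivalences, specifically the interplay of three cofinality/approximation arguments that must be sequenced carefully: (a) replacing $\F$ by $\bcosk{n}\F$ and transferring this across the $\bsk{n}\dashv\bcosk{n}$ adjunction so that it is legitimate to restrict to $n$-bounded hypercovers without changing the homotopy type; (b) the cofinality of \v{C}ech covers inside $n$-bounded hypercovers, which requires the paracompactness of complex manifolds and the precise statement of \cite[Proposition 3.6.63]{Sc}; and (c) the identification of $\holim_{\Del}$ with $\Tot$, which needs the cosimplicial simplicial set $\F(\CechNerve U_\bullet)$ to be Reedy fibrant — a point that should be checked from the fact that $\F$ is object-wise Kan and the matching objects are finite products of Kan complexes. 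Each of these is known, but the bookkeeping to ensure the composite is a genuine zig-zag of weak equivalences between projectively fibrant objects (so that two-out-of-three applies at the end) is where the real work lies; the rest is formal model-category manipulation.
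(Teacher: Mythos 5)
Your proposal reproduces the paper's argument essentially verbatim: the same presentation of $\F^\dagger$ as a hocolim over hypercovers, the same chain of weak equivalences using the coskeleton reduction, cofinality of \v{C}ech covers, the Yoneda/Tot identification, and the filtered-colimit step, followed by the same commutative square to verify hyperdescent via two-out-of-three. The approach is correct and matches the paper's proof in both structure and the specific references invoked.
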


\begin{lemma}\label{LEM: Restricted height sheaves of complexes are 1-types}
Let $Ch( \mathcal{A})$ be the $dg$-category of non-positively graded chain complexes over some additive category, $\mathcal{A}$, where the hom-complex $Ch^{\bu}\left( E, E'\right))$ consists of chain maps and (higher) chain homotopies from $E$ to $E'$, and let $\mathcal{Q} \hookrightarrow Ch^{\le 0}$ be a full subcategory which only considers complexes of height at most $m$, for some fixed $m \in \mathbb{N}$. Then the simplicial set $\dgN (Q) \simeq \bcosk{m+1} \dgN (Q)$ is $(m+1)$-coskeletal. 
\begin{proof}
For any two objects in $Q$, and for an integer $k> m+1$, we have $\mathcal{Q}^{k}\left( E, E'\right)) = 0$ due to the restricted height of all complexes in our dg-category. Thus the only way to decorate $k > m+1$ simplex is to have the boundary data all satisfy the condition $\hat{\delta} g + g \cdot g = 0$ and then uniquely assign a $0$-homotopy to the $(m+1)$-simplex. But recall that whenever each decorated boundary simplex has a unique filler, this means the simplicial set is isomorphic to it's coskeleton, so in our case we have $\dgN (Q) \simeq \bcosk{m+1} \dgN (Q)$ as required.
\end{proof}
\end{lemma}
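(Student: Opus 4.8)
The plan is to deduce $(m+1)$-coskeletality of $\dgN(\mathcal{Q})$ from the degree bookkeeping that is built into the dg-nerve. Recall the standard criterion: a simplicial set $X$ is $(m+1)$-coskeletal precisely when, for every $n>m+1$, the restriction map $X_n=\sSet(\Delta^n,X)\to\sSet(\partial\Delta^n,X)$ is a bijection. So it suffices to show that for every $n\ge m+2$ each map $\partial\Delta^n\to\dgN(\mathcal{Q})$ admits a unique extension along $\partial\Delta^n\hookrightarrow\Delta^n$.

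First I would unwind both sides using Lemma \ref{LEM:sSet(X,dgN(Perf))} applied to the dg-category $\mathcal{Q}$. A map $\Delta^n\to\dgN(\mathcal{Q})$ is exactly the data of objects $\mc E_0,\dots,\mc E_n$ of $\mathcal{Q}$ together with morphisms $g_{i_0\dots i_k}\colon\mc E_{i_k}\to\mc E_{i_0}$ of degree $1-k$ for each subset $\{i_0<\dots<i_k\}\subseteq\{0,\dots,n\}$ with $k\ge1$, subject to the relations \eqref{EQU:dg-i0-...ik-relation}; a map $\partial\Delta^n\to\dgN(\mathcal{Q})$ carries the same data with the single top morphism $g_{0\dots n}$ omitted, together with precisely those relations among the remaining $g$'s (the instance of \eqref{EQU:dg-i0-...ik-relation} for the full sequence $0<1<\dots<n$ being the only relation that mentions $g_{0\dots n}$). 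Hence extending a given boundary means choosing one morphism $g_{0\dots n}\in\Hom^{1-n}_{\mathcal{Q}}(\mc E_n,\mc E_0)$ satisfying that relation, and I would record that both sides of it lie in $\Hom^{2-n}_{\mathcal{Q}}(\mc E_n,\mc E_0)$: on the left $D(g_{0\dots n})$, since $D$ raises internal degree by one; on the right every $g_{0\dots\widehat j\dots n}$ has degree $1-(n-1)=2-n$ and every composite $g_{0\dots j}\circ g_{j\dots n}$ has degree $(1-j)+(1-(n-j))=2-n$.

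The crucial input is the height restriction. Since $\mathcal{Q}\hookrightarrow Ch^{\le 0}$ consists of complexes of height at most $m$, every object of $\mathcal{Q}$ is supported in the fixed range of degrees $\{-(m-1),\dots,0\}$, and hence for any $E,E'\in\mathcal{Q}$ one has $\Hom^{p}_{\mathcal{Q}}(E,E')=0$ for all $p\le -m$. For $n\ge m+2$ both exponents $1-n$ and $2-n$ are $\le -m$, so $\Hom^{1-n}_{\mathcal{Q}}(\mc E_n,\mc E_0)=0$ and $\Hom^{2-n}_{\mathcal{Q}}(\mc E_n,\mc E_0)=0$. Consequently $g_{0\dots n}$ is forced to be $0$, which gives uniqueness of the extension; and with $g_{0\dots n}=0$ the relation reads $D(0)=0$ on the left and, on the right, an element of the zero group $\Hom^{2-n}_{\mathcal{Q}}(\mc E_n,\mc E_0)$, i.e.\ $0$, so it holds automatically, which gives existence. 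Therefore the canonical map $\dgN(\mathcal{Q})\to\bcosk{m+1}\dgN(\mathcal{Q})$ is an isomorphism.

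The one point that deserves care is the existence half of the last paragraph: a priori one might fear that a boundary of dimension $n\ge m+2$ fails to be fillable because its curvature on the top cell --- the right-hand side of the relation above --- is a nonzero, necessarily $D$-closed, class that would obstruct the choice $g_{0\dots n}=0$. This worry is dispelled purely by the degree count: that curvature term is an element of $\Hom^{2-n}_{\mathcal{Q}}(\mc E_n,\mc E_0)$, a group that the bounded height of the complexes annihilates, so no Maurer--Cartan or obstruction-theoretic argument is needed. The remaining bookkeeping --- that a map out of $\partial\Delta^n$ is precisely ``all $g$'s except $g_{0\dots n}$, with all relations not mentioning $g_{0\dots n}$'' --- is routine from Lemma \ref{LEM:sSet(X,dgN(Perf))}.
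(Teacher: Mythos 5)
Your proof is correct and follows the same underlying approach as the paper's: the bounded height of the complexes in $\mathcal{Q}$ forces the relevant Hom-groups to vanish, so for $n > m+1$ the top cell $g_{0\dots n}$ of a filler of $\Delta^n$ is forced to be zero and every map $\partial\Delta^n \to \dgN(\mathcal{Q})$ extends uniquely. Where you add value is in separating the uniqueness and existence halves cleanly and, in particular, spelling out the existence half: the paper says the boundary data ``must satisfy'' the Maurer--Cartan-type relation and then assigns the zero filler, but it does not record why the curvature term (the right-hand side of \eqref{EQU:dg-i0-...ik-relation} for the full index set, which lives in $\Hom^{2-n}$) automatically vanishes on an \emph{arbitrary} boundary. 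Your observation that $2-n\leq -m$ once $n\geq m+2$, so that this obstruction group is also zero, closes what is otherwise a small gap in the published argument. The identification of a map out of $\partial\Delta^n$ with ``all $g$'s except $g_{0\dots n}$'' via lemma \ref{LEM:sSet(X,dgN(Perf))}, and the use of the criterion that $(m+1)$-coskeletality is equivalent to $X_n\to\sSet(\partial\Delta^n,X)$ being a bijection for $n>m+1$, are both standard and agree with what the paper implicitly uses.
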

 \begin{definition}\label{DEF: restricted IVB}
Define $\perf_{\le n}:\CMan^{op}\to \dgCat$ by setting $\perf_{\le n}(U)$ to be the dg-category of finite chain complexes of holomorphic vector bundles just as in definition \ref{DEF:Perf}, but with the difference that we require the complexes are trivial above level $n$ . Analogously to $\IVB$ from definition \ref{DEF:PERF}, we then define  $\IVB_{\le n}:\CMan^{op}\to \sSet$ by setting $\IVB_{\le n}(U)_n:=\sSet(\EZ^n, \dgN(\perf_{\le n}(U))^\circ)$. 
 \end{definition}
\begin{corollary}\label{COR: Sheaf restricted IVB}
The fibrant replacement of $\IVB_{\le n}$ in the local projective model structure can be computed by its \v{C}ech sheafification, $\IVB_{\le n} \xrightarrow{\sim} \CechSh{\IVB_{\le n}}$. 
\begin{proof}
By construction, $\IVB_{\le n}$ is still (globally) projectively fibrant, while combining lemma \ref{LEM: Restricted height sheaves of complexes are 1-types} and proposition \ref{PROP:X=whX} gives us that $\IVB_{\le n}$ is (globally) a homotopy-$(n+1)$ type. 
\end{proof}
\end{corollary}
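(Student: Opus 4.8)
The plan is to show that $\IVB_{\le n}$ satisfies the two hypotheses of Proposition \ref{Prop: Cech Fibrant} — that it is fibrant in the global projective structure and that its homotopy presheaves vanish above a fixed level — and then to quote that proposition, whose output is precisely a local fibrant approximation of the shape $\IVB_{\le n}\xrightarrow{\ \sim\ }\CechSh{\IVB_{\le n}}$.

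\emph{Global projective fibrancy.} By Definition \ref{DEF: restricted IVB} one has $\IVB_{\le n}(U)=\sSet(\EZ^\bu,\dgN(\perf_{\le n}(U))^\circ)$, and $\dgN(\perf_{\le n}(U))^\circ$ is a Kan complex by construction. Proposition \ref{PROP:X=whX} (the tool used to prove Proposition \ref{PROP:PERF-EZ=PERF-Delta}) then shows both that mapping the cosimplicial simplicial set $\EZ$ into this Kan complex again yields a Kan complex and that the canonical map $\IVB_{\le n}(U)\xrightarrow{\ \sim\ }\dgN(\perf_{\le n}(U))^\circ$ is a weak equivalence. Since this holds for every $U\in\CMan$, $\IVB_{\le n}$ is projectively fibrant; I would then also use this weak equivalence to transport the homotopy bound below. (The analogue of Lemma \ref{LEM:perf=perf-Kan} applies to $\perf_{\le n}$ just as well, so the ``maximal Kan'' decoration on the left is automatic here.)

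\emph{Bounded homotopy type and conclusion.} The objects of $\perf_{\le n}(U)$ are chain complexes of holomorphic vector bundles concentrated in a fixed window of $n+1$ consecutive degrees, so Lemma \ref{LEM: Restricted height sheaves of complexes are 1-types} applies (with additive category the vector bundles over $U$) and gives $\dgN(\perf_{\le n}(U))\simeq\bcosk{n+1}\dgN(\perf_{\le n}(U))$, an $(n+1)$-type. Passing to the groupoid core changes only $\pi_0$ and $\pi_1$, so $\dgN(\perf_{\le n}(U))^\circ$ is again an $(n+1)$-type, hence so is $\IVB_{\le n}(U)$ by the weak equivalence of the previous paragraph; as this is uniform in $U$, the homotopy presheaves $\pi_k(\IVB_{\le n})$ vanish for $k>n+1$. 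Proposition \ref{Prop: Cech Fibrant}, applied with the bound $n+1$ in the role of the ``$n$'' appearing there, then yields that $\CechSh{\IVB_{\le n}}$ — which is again objectwise Kan by Proposition \ref{PROP: Cech Sh F Kan} — is a fibrant approximation of $\IVB_{\le n}$ in the local projective model structure, with the structural map $\IVB_{\le n}\to\CechSh{\IVB_{\le n}}$ the asserted local weak equivalence.

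\emph{Main obstacle.} Essentially all the content is already packaged in Proposition \ref{Prop: Cech Fibrant}; the one point requiring care is the boundedness step, where one must (i) match the ``trivial above level $n$'' convention of Definition \ref{DEF: restricted IVB} with the ``height at most $m$'' hypothesis of Lemma \ref{LEM: Restricted height sheaves of complexes are 1-types} so that the coskeletality statement is literally available, and (ii) check that passing from the dg-nerve to its maximal Kan subcomplex does not reintroduce unbounded homotopy — which it does not, since the homotopy groups of a quasi-category at an object coincide with those of its core in degrees $\ge 2$. Once these are settled, the remainder is a bookkeeping reduction to Proposition \ref{Prop: Cech Fibrant}.
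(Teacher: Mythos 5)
Your proof matches the paper's (quite terse) one: establish global projective fibrancy, get the $(n+1)$-type bound from Lemma \ref{LEM: Restricted height sheaves of complexes are 1-types} transported through the objectwise weak equivalence $\IVB_{\le n}(U)\simeq \dgN(\perf_{\le n}(U))^\circ$, and then invoke Proposition \ref{Prop: Cech Fibrant}. Two minor notes: objectwise Kan-ness is really Proposition \ref{PROP:whK-is-Kan} rather than Proposition \ref{PROP:X=whX} itself, and your explicit observation that passing to the maximal Kan subcomplex does not change $\pi_k$ for $k\ge 2$ (so the coskeletality of $\dgN(\perf_{\le n}(U))$ transfers to $\dgN(\perf_{\le n}(U))^\circ$) is a point the paper leaves implicit but that is genuinely needed, as is the reconciliation of ``trivial above level $n$'' with the ``height at most $m$'' hypothesis that you rightly flag.
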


\begin{lemma}\label{LEM: Resolution Sheaves are 1-types}
Let $Ch( \mathcal{A})$ be the $dg$-category of non-positively graded chain complexes over some additive category, $\mathcal{A}$, where the hom-complex $Ch^{\bu}\left( E, E'\right))$ consists of chain maps and (higher) chain homotopies from $E$ to $E'$, and let $\mathcal{Q} \hookrightarrow Ch^{\le 0}$ be a full subcategory which only considers complexes with homology concentrated in degree zero. Then the (Kan replacement of the) simplicial set $\dgN (Q)$ is a $1$-type. 
\begin{proof}
If necessary, first replace $\dgN (Q)$ with its maximal Kan subcomplex which only uses quasi-isomoprhisms on edges. We will prove that $\pi_n \left( \dgN (Q) \right)$ is trivial for $n \ge 2$. A class in $\pi_n$ consists of an $n$-simplex in $\dgN (Q)$ whose entire boundary is in the image of a single vertex. Thus the vertices are given by the same chain complex, $E_0 = E, \ldots, E_n = E$, the quasi-isomorphisms on the edges are the identity maps, and any homotopy decorating a $k<n$ face is the zero homotopy. By the definition of $\dgN (Q)$ this data satisfies the condition $\hat{\delta}(g) + Dg +  g \cdot g=0$ using the notation of definition \ref{DEF: MC}. Since in this case $\hat{\delta}(g) + g \cdot g$ is an alternating sum of compositions of $0$-homotopies and/or identity maps, one can show that the above condition reduces to $Dg = 0$. However, since $E$ is a complex whose homology is concentrated in degree zero and $g \in Q^{1-n}(E,E)$ with $n \ge 2$, then $g$ is exact. From here we can fill this $n$-sphere with a higher homotopy and kill the class representing $g$ in $\pi_n$.\end{proof}
\end{lemma}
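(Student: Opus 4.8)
The plan is to mirror the argument of Lemma~\ref{LEM: Restricted height sheaves of complexes are 1-types}, but now exploiting the homological (rather than length) constraint. First I would reduce to the maximal Kan subcomplex $\dgN(Q)^\circ$, whose edges are quasi-isomorphisms, since a $1$-type is a statement about the Kan replacement; by Lemma~\ref{LEM:simplicial-dgN(Perf(U))^circ} this just means all $g_{i_0 i_1}$ are homotopy equivalences, and in $Q$ homotopy equivalence of complexes of projectives is the same as quasi-isomorphism. Next I would unwind what an element of $\pi_n(\dgN(Q)^\circ)$ for $n\ge 2$ is: by the simplicial description in Note~\ref{REM:simplicial-dgN(Perf(U))} and the usual description of homotopy groups of a Kan complex as based $n$-spheres, a class is represented by an $n$-simplex all of whose proper faces are degenerate on a single vertex $E$. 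Degeneracy forces (by Note~\ref{REM:simplicial-dgN(Perf(U))}\eqref{ITEM:morphism-dgN(Perf(U))}) every $E_j=E$, every edge map $g_{i_0 i_1}=\mathrm{id}_E$, and every $g_{i_0\dots i_k}=0$ for $2\le k<n$; the only unconstrained datum is the top map $g:=g_{0\dots n}\in Q^{1-n}(E,E)$.

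Then I would plug this data into the defining relation \eqref{EQU:dg-i0-...ik-relation} (equivalently the Maurer--Cartan equation \eqref{EQU:MC-condition}) for the $n$-simplex. The deleted \v{C}ech term $\hat\delta g$ is an alternating sum of $g_{0\dots\widehat j\dots n}$, each of which is a lower $g$ and hence $0$ since $n\ge 3$ (and for $n=2$ the relevant faces are degenerate, hence again $0$ or identities that cancel), and the quadratic term $g\cdot g$ is an alternating sum of composites $g_{0\dots j}\circ g_{j\dots n}$ in which at least one factor is a $0$-homotopy or an identity, so it vanishes or telescopes; what survives is exactly $Dg=0$, i.e.\ $g$ is a cycle in the mapping complex $\mathrm{Hom}^{1-n}(E,E)$ with its internal differential $D$. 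Since $E$ is a complex with homology concentrated in degree $0$ and $1-n\le -1$, the hypercohomology group $\mathrm{Ext}^{1-n}(E,E)=H^{1-n}\big(\mathrm{Hom}^\bullet(E,E)\big)$ vanishes (here I use that over an additive category the hom-complex computes the derived hom and that $H_*(E)$ sits in a single degree, so $\mathrm{Ext}^{<0}$ between the corresponding sheaf vanishes), hence $g=D(h)$ for some $h\in Q^{-n}(E,E)$. Finally I would observe that such an $h$ provides a filler of the $n$-sphere: decorate the $(n+1)$-simplex whose faces are the given $n$-simplex together with degenerate faces by the $0$-homotopy in degree $-n$ coming from $h$, check the single nontrivial instance of \eqref{EQU:dg-i0-...ik-relation} reduces precisely to $Dh=g$, and conclude that the class of $g$ in $\pi_n$ is trivial. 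Thus $\pi_n(\dgN(Q)^\circ)=0$ for all $n\ge 2$, so it is a $1$-type.

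The main obstacle I anticipate is bookkeeping the signs and the collapse of \eqref{EQU:dg-i0-...ik-relation} in the cases $n=2$ and $n=3$ separately, where some "lower" faces are not literally $0$ but rather identity maps or $1$-homotopies that must be seen to cancel in pairs; the structural content (cycle $\Rightarrow$ boundary via vanishing of negative $\mathrm{Ext}$) is clean, but one has to be careful that the reduction of the Maurer--Cartan equation to $Dg=0$ really holds on the nose rather than up to a correcting homotopy. A secondary point to handle with care is the precise meaning of "$g$ is exact": one should state clearly that it is exactness in the hom-complex $(\mathrm{Hom}^\bullet(E,E),D)$, and justify the vanishing $H^{1-n}=0$ from the homology being concentrated in a single degree (e.g.\ by choosing, over the additive category, the complex to be a resolution so that $\mathrm{Hom}^\bullet(E,E)$ is quasi-isomorphic to $\mathrm{RHom}$ of the homology sheaf with itself, which has no cohomology in negative degrees).
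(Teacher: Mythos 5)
Your proposal follows the paper's proof essentially step for step: reduce to the maximal Kan subcomplex, note that a based $n$-sphere for $n\ge 2$ collapses everything to a single complex $E$ with identities on edges and zeros on lower faces, observe that the dg-nerve relation then degenerates to $Dg=0$ for the top datum $g\in\mathrm{Hom}^{1-n}(E,E)$, use the concentration of homology to conclude $g$ is a coboundary, and fill. The only differences are cosmetic: you spell out the vanishing as $\mathrm{Ext}^{1-n}(E,E)=0$ (implicitly assuming the complexes are h-projective, which is what the paper tacitly relies on in its Stein-cover application) and you flag the $n=2,3$ bookkeeping, but these are refinements of the same argument, not a different route.
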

By a similar argument for corollary \ref{COR: Sheaf restricted IVB} we can use the above lemma to see that $\CohSh$ is a $1$-type and thus $\CechSh{\CohSh}$ is a sheaf, but without needing to further restrict the height of any chain complexes.
\begin{corollary}\label{COR: CohSh 1 type}
The simplicial presheaf $\CohSh$ is a $1$-type and its fibrant replacement in the local projective model structure can be computed by its \v{C}ech sheafification, $\CohSh \xrightarrow{\sim} \CechSh{\CohSh}$. 
\end{corollary}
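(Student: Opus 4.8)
The plan is to mimic, essentially verbatim, the proof of Corollary \ref{COR: Sheaf restricted IVB}, replacing the bounded‑height input Lemma \ref{LEM: Restricted height sheaves of complexes are 1-types} with the ``homology concentrated in degree zero'' input Lemma \ref{LEM: Resolution Sheaves are 1-types}. First I would record that $\CohSh = \PERF_{\mathrm{coh}}^{\EZ}$ is assembled from the dg‑category $\perf_{\mathrm{coh}}(U)$, which by Definition \ref{DEF: CohSh} is a full dg‑subcategory of the dg‑category of non‑positively graded chain complexes of holomorphic vector bundles over $U$ (an additive category), with hom‑complexes given by chain maps and higher chain homotopies, consisting exactly of those complexes whose homology is concentrated in degree zero. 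This is precisely the hypothesis of Lemma \ref{LEM: Resolution Sheaves are 1-types}, so for every $U \in \CMan$ the Kan complex $\dgN(\perf_{\mathrm{coh}}(U))^\circ$ is a $1$‑type.

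Second, I would transfer this conclusion to $\CohSh(U)$ itself. Exactly as in the proof of Proposition \ref{PROP:PERF-EZ=PERF-Delta}, since $\dgN(\perf_{\mathrm{coh}}(U))^\circ$ is a Kan complex, Proposition \ref{PROP:X=whX} gives that the comparison map $\CohSh(U) = sSet(\EZ^\bu,\dgN(\perf_{\mathrm{coh}}(U))^\circ) \to sSet(\Delta^\bu,\dgN(\perf_{\mathrm{coh}}(U))^\circ) = \dgN(\perf_{\mathrm{coh}}(U))^\circ$ is a weak equivalence and that $\CohSh(U)$ is again Kan. Hence $\CohSh$ is objectwise Kan (globally projectively fibrant), just as $\IVB$ is, and its homotopy presheaves vanish above level $1$; that is, $\CohSh$ is a (global) homotopy $1$‑type. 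Note that, in contrast to the bounded‑height situation of Corollary \ref{COR: Sheaf restricted IVB}, one does \emph{not} get that $\dgN(\perf_{\mathrm{coh}}(U))$ is coskeletal — but this is not needed, since Lemma \ref{LEM: Resolution Sheaves are 1-types} supplies the vanishing of the higher homotopy groups directly.

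Third, I would simply invoke Proposition \ref{Prop: Cech Fibrant} with $n = 1$: a projectively fibrant simplicial presheaf whose homotopy groups are trivial above level $1$ has its local‑projective fibrant replacement computed by \v{C}ech sheafification, via the natural weak equivalence $\F \xrightarrow{\sim} \CechSh{\F}$. Applied to $\F = \CohSh$ this yields $\CohSh \xrightarrow{\sim} \CechSh{\CohSh}$ with $\CechSh{\CohSh}$ fibrant in $sPre(\CMan)_{proj,loc}$, which is the assertion (and in particular $\CechSh{\CohSh}$ is a (hyper‑)sheaf).

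The only genuinely non‑routine point — and the one I would be careful to spell out — is the verification that $\perf_{\mathrm{coh}}(U)$ really satisfies the hypotheses of Lemma \ref{LEM: Resolution Sheaves are 1-types}: that the complexes are non‑positively graded, that holomorphic vector bundles over $U$ form an additive category so that $\perf_{\mathrm{coh}}(U)$ sits inside $Ch^{\le 0}$ of an additive category, and that ``homology concentrated in degree zero'' is exactly the condition imposed in Definition \ref{DEF: CohSh} (which it is, this being the condition that such complexes are the local resolutions appearing in Lemma \ref{LEM: CohSh Stein}). Once that bookkeeping is in place, every remaining step is a direct transcription of the proofs of Corollary \ref{COR: Sheaf restricted IVB} and Proposition \ref{Prop: Cech Fibrant}.
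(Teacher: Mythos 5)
Your proof is correct and follows precisely the route the paper sketches in the sentence preceding the corollary: replace Lemma~\ref{LEM: Restricted height sheaves of complexes are 1-types} by Lemma~\ref{LEM: Resolution Sheaves are 1-types} in the argument for Corollary~\ref{COR: Sheaf restricted IVB}, transport the $1$-type conclusion through Proposition~\ref{PROP:X=whX}, and then invoke Proposition~\ref{Prop: Cech Fibrant}. Your remark that one does not need (and does not get) coskeletality here, since Lemma~\ref{LEM: Resolution Sheaves are 1-types} kills the higher homotopy groups directly, is a correct clarification of a point the paper leaves implicit.
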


\begin{remark}
Now that under the right circumstances the \v{C}ech sheafification can act as a fibrant replacement functor, we can briefly present a different argument for lemma \ref{LEM:IVB=IVBtilde} which makes use of equivalences being preserved under the various constructions we use to pass from the $\dgCat$-valued presheaf, $\Perf$, to the simplicial presheaf $\CechSh{\IVB}$. The main idea used in the proof for lemma \ref{LEM:IVB=IVBtilde} is that for a complex manifold $X$, and a point $x \in X$, there exists an (Stein) open subset $x \in U \subset X$ on which we have an equivalence of dg-categories, $\Perf(U) \xrightarrow{\sim} \wt{\Perf}(U)$, where the tilde again means we forget connection data. Since the dg-nerve construction preserves (weak) equivalences, we then obtain an equivalence of simplicial sets, $\IVB(U) \xrightarrow{\sim} \wt{\IVB}(U)$. We claim this then says that we have a weak equivalence for each stalk $\IVB_x \xrightarrow{\sim} \wt{\IVB}_x$ and thus a local weak equivalence of simplicial presheaves a la Jardine, $\IVB \xrightarrow{\sim} \wt{\IVB}$. The local weak equivalences for the local projective model structure happen to coincide with those of Jardine and thus we obtain a weak equivalence in the local projective model structure which is necessarily preserved under our (\v{C}ech) fibrant replacement functor if we restrict appropriately:  $\nCechSh{\IVB} \xrightarrow{\sim} \nCechSh{\wt{\IVB}}$.
\end{remark}
\begin{remark}
At this point, we'd like to take stock and summarize the relationships amongst some of different constructions involving $\IVB$. By the functoriality of our constructions, we obtain two commutative cubes of simplicial presheaves which actually fit together to form a commutative hypercube via the inclusion $\CohSh \hookrightarrow \IVB$: 

\begin{equation*}
\adjustbox{scale=0.8}{\begin{tikzcd}[row sep = small, column sep=small, /tikz/execute at end picture={
    \node (large) [rectangle, draw, gray!60, fit=(A)] {};
     \node (large) [rectangle, draw, gray!60, fit=(B)] {};
  }]
|[alias=A]|\nCechSh{\IVB} \arrow[hook]{rrr}{\sim} \arrow[hook]{ddd} & & &  |[alias=B]|\nCechSh{\wt{\IVB}} \arrow[hook]{ddd}\\ 
& \IVB_{\le n} \arrow[hook]{r}{\sim} \arrow[hook]{d} \arrow[sloped]{lu}{\sim_{loc}} & \wt{\IVB}_{\le n} \arrow[hook]{d} \arrow[sloped]{ru}{\sim_loc}&  \\
& \IVB \arrow[hook]{r}{\sim} \arrow{ld}& \wt{\IVB} \arrow{rd}&\\
\CechSh{\IVB} \arrow[hook]{rrr}{\sim}  & & & \CechSh{\wt{\IVB}}& 
\end{tikzcd} }\adjustbox{scale=0.8}{\begin{tikzcd}[row sep=small, column sep=small, /tikz/execute at end picture={
    \node (large) [rectangle, draw, gray!60, fit=(A)] {};
     \node (large) [rectangle, draw, gray!60, fit=(B)] {};
    \node (large) [rectangle, draw, gray!60, fit=(C)] {};
  \node (large) [rectangle, draw, gray!60, fit=(D)] {};
  }]
|[alias=A]|\nCechSh{\CohSh} \arrow[hook]{rrr}{\sim} \arrow[hook]{ddd} & & &  |[alias=B]|\nCechSh{\wt{\CohSh}} \arrow[hook]{ddd}\\ 
& \CohSh_{\le n} \arrow[hook]{r}{\sim} \arrow[hook]{d} \arrow[sloped]{lu}{\sim_{loc}} & \wt{\CohSh}_{\le n} \arrow[hook]{d} \arrow[sloped]{ru}{\sim_loc}&  \\
& \CohSh \arrow[hook]{r}{\sim} \arrow[sloped]{ld}{\sim_{loc}}& \wt{\CohSh} \arrow[sloped]{rd}{\sim_{loc}}&\\
|[alias=C]|\CechSh{\CohSh} \arrow[hook]{rrr}{\sim}  & & & |[alias=D]|\CechSh{\wt{\CohSh}}& 
\end{tikzcd} }
\end{equation*}
where the hypersheaves are highlighted with boxes, we used ``$\sim$'' to denote a global projective (i.e. object-wise) weak equivalence, and ``$\sim_{loc}$'' to denote a local projective weak equivalence. Recall that the global weak equivalences are preserved in the local model structure. 
\end{remark}

Recall that in proposition \ref{PROP:CohSh justify} we showed that $\CechSh{\CohSh}$ stands a chance of classifying coherent sheaves since the correspondence is bijective on connected components. We know, however that $\mathcal{N}(\ShgO{X})$ is a 1-type and so if we knew that $\CechSh{\CohSh}$ was also a $1$-type then it would only remain to prove the correspondence on $\pi_1$. 
\begin{lemma}\label{LEM: sheafify n type}
Given $\F \in sPre (\CMan)$ which is object-wise an $n$-type (i.e. $\F \xrightarrow{\sim} \bcosk{n}F$ for some $n$), $\CechSh{\F}$ is again an $n$-type. 
\begin{proof}
We begin by noting that if $\F \xrightarrow{\sim} \bcosk{n} \F$, then we have, 
\begin{align*}
\CechSh{\F}(X) &=  \colim_{( {U}_{\bu} \to X) \in \check{S}} \underline{sPre}\left(\CechNerve {U}_{\bu}  , \F\right) \xrightarrow{\sim} \colim_{( {U}_{\bu} \to X) \in \check{S}} \underline{sPre}\left(\CechNerve {U}_{\bu}  , \bcosk{n} \F\right)  \\
&\xrightarrow{\sim}  \colim_{( {U}_{\bu} \to X) \in \check{S}}\Tot \left( \bcosk{n} \F \left( \CechNerve {U}_{\bu}  \right) \right) \xrightarrow{\sim}  \colim_{( {U}_{\bu} \to X) \in \check{S}}  \bcosk{n} \Tot \left( \F \left( \CechNerve {U}_{\bu}  \right) \right) \\
\intertext{where we used that $\Tot$ computes the homotopy limit in this case and then we commuted the right adjoint $\bcosk{n}$ across this concrete limit,}
&\xleftarrow{\sim}   \colim_{( {U}_{\bu} \to X) \in \check{S}} \bcosk{n}\underline{sPre}\left(\CechNerve {U}_{\bu}  , \F\right) .
\end{align*}
 While we would love to commute this coskeleton across the colimit, we must proceed differently. Recall that filtered colimits commute with finite limits, and since each homotopy group can be written as a finite limit, then we have for $m >n$, 
\begin{align*} \pi_m \left( \CechSh{\F}(X)  \right) &\simeq \pi_m \left(  \colim_{( {U}_{\bu} \to X) \in \check{S}} \bcosk{n}\underline{sPre}\left(\CechNerve {U}_{\bu}  , \F\right) \right) \\
&\simeq  \colim_{( {U}_{\bu} \to X) \in \check{S}} \pi_m \left(  \bcosk{n}\underline{sPre}\left(\CechNerve {U}_{\bu}  , \F\right) \right) =  \colim_{( {U}_{\bu} \to X) \in \check{S}}0 = 0.
\end{align*}
\end{proof}
\end{lemma}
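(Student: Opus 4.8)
The plan is to rewrite $\CechSh{\F}(X)$, up to weak equivalence, as a filtered colimit of $n$-types, and then exploit that filtered colimits commute with the finite limits computing homotopy groups; the one manipulation \emph{not} available — commuting $\bcosk{n}$ past the colimit — is avoided by instead commuting it past a $\Tot$, which is a genuine limit. Concretely, I would first record the identification
\[
\CechSh{\F}(X)\;\simeq\;\colim_{(U_\bu\to X)\in\check{S}}\bcosk{n}\,\underline{sPre}(\CechNerve{U}_\bu,\F).
\]
Since $\F$ is object-wise an $n$-type, the map $\F\to\bcosk{n}\F$ is an object-wise (hence global projective) weak equivalence between object-wise Kan complexes, and $\bcosk{n}\F$ is again projectively fibrant, exactly as used in the proof of Proposition \ref{Prop: Cech Fibrant}. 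Each \v{C}ech nerve $\CechNerve{U}_\bu$ is projectively cofibrant, so $\underline{sPre}(\CechNerve{U}_\bu,-)$ preserves this equivalence; using that $\Tot$ computes the homotopy limit of the Reedy fibrant cosimplicial simplicial set $\F(\CechNerve{U}_\bu)$ (again as in Proposition \ref{Prop: Cech Fibrant}), we get $\underline{sPre}(\CechNerve{U}_\bu,\F)\simeq\Tot(\F(\CechNerve{U}_\bu))$, and similarly for $\bcosk{n}\F$. Because $\bcosk{n}$ is a right adjoint it commutes with the ordinary limit defining $\Tot$, so $\Tot(\bcosk{n}\F(\CechNerve{U}_\bu))\cong\bcosk{n}\Tot(\F(\CechNerve{U}_\bu))\simeq\bcosk{n}\,\underline{sPre}(\CechNerve{U}_\bu,\F)$, and composing these identifications gives the displayed weak equivalence.

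It then remains to pass to homotopy groups. The category $\check{S}$ of \v{C}ech covers becomes filtered once one passes to the associated \v{C}ech nerves — the same observation already used in Propositions \ref{PROP: Cech Sh F Kan} and \ref{Prop: Cech Fibrant} — so the colimit above is a filtered colimit of Kan complexes, hence itself Kan and a model for the homotopy colimit. Since each homotopy group functor $\pi_m$ is computed by finitely many simplices (equivalently, is a finite limit applied to the underlying simplicial set), filtered colimits commute with it, whence $\pi_m(\CechSh{\F}(X))\cong\colim_{(U_\bu\to X)}\pi_m(\bcosk{n}\,\underline{sPre}(\CechNerve{U}_\bu,\F))$. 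For $m>n$ every term on the right vanishes because $\bcosk{n}(\cdot)$ is an $n$-type, so the colimit is $0$; as $X$ was arbitrary, $\CechSh{\F}$ is object-wise an $n$-type.

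The step I expect to be the main obstacle — really the only subtle point — is the non-commutation of the right adjoint $\bcosk{n}$ with colimits: one must resist pushing it through the colimit directly and instead push it through $\Tot$ (a limit, where the interchange is legitimate) and only afterwards use the finite-limit description of $\pi_m$ to move past the filtered colimit. A secondary care point is confirming that $\Tot$ of $\F(\CechNerve{U}_\bu)$ really computes the homotopy limit and that $\bcosk{n}$ of an object-wise Kan, bounded-type presheaf remains projectively fibrant — both hold for the same reasons as in Proposition \ref{Prop: Cech Fibrant}.
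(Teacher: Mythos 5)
Your proposal is correct and follows essentially the same route as the paper's proof: you establish the chain of equivalences $\CechSh{\F}(X)\simeq\colim\bcosk{n}\,\underline{sPre}(\CechNerve{U}_\bu,\F)$ by routing $\bcosk{n}$ through the $\Tot$ (a genuine limit, where the right adjoint commutes) rather than through the colimit, and then finish by commuting $\pi_m$ with the filtered colimit and using vanishing of $\pi_m$ of an $n$-coskeleton for $m>n$. Your identification of the one subtle point — that $\bcosk{n}$ cannot be pushed past the colimit directly — is precisely the caution the paper itself flags.
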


\begin{theorem}\label{THM:IVBStack}
The simplicial presheaf $\CohSh$ is a classifying pre-stack for coherent sheaves.
\begin{proof}
Recall from \cite[Section 17]{Hirs} that the derived mapping space $\mathbb{R} Hom (A, B)$ in a simplicial model category $\mc C$ can be computed by considering the simplicial mapping space $\underline{\mc C} ( \tilde{A}, B' )$ where we use the cofibrant replacement $\tilde{A} \xrightarrow{\sim}$ of $A$ and the fibrant replacement $B \xrightarrow{\sim} B'$ of $B$. Then since corollary \ref{COR: CohSh 1 type} tells us that $\CohSh$ is a $1$-type whose (local projective) fibrant replacement is given by its \v{C}ech sheafification, we can compute the (local projective) derived mapping space from a manifold $X\in  \CMan$ (via its cofibrant representable presheaf) into $\CohSh$ as: 
\[ \mathbb{R} Hom (X, \CohSh) := \underline{sPre}(\tilde{X}, \CohSh') \simeq \underline{sPre}(X, \CechSh{\CohSh}) =\CechSh{\CohSh}(X).\]
After combining proposition \ref{PROP:CohSh justify} and the above lemma, it remains to be shown that the map $\mc H : \CechSh{\CohSh} (X) \to \mathcal{N}(\ShO{X})$ is an isomorphism of fundamental groups. The ideas used to prove this fact are analogous to those of proposition \ref{PROP:CohSh justify} but we will summarize them here for ease of reading. Given a vertex $\mathcal{E} = \left( U_{\bu}, E_{\bu}, g_{\bu}\right) \in \CechSh{\CohSh}(X)_0$ and the coherent sheaf $\mathcal{F} := \mathcal{H}\left( \mathcal{E} \right) \in  \mathcal{N} (\CohShO{X})_0$ we want to prove that there is an isomorphims of based homotopy groups, $\pi_1 \left(  \CechSh{\CohSh}(X), \mc E \right) \xrightarrow{\pi_1 \left( \mathcal{H} \right)} \pi_1 \left(  \mathcal{N} (\CohShO{X}), \mc F \right)$. To prove injectivity, if two loops in $\CechSh{\CohSh}(X)_1$ , $a_{\bu}, b_{\bu}:  \mc E \to \mc E$ have connected images in  $\mathcal{N} (\CohShO{X})$, then by definition of the nerve of a groupoid, we have a commutative square of isomorphisms in $\CohShO{X}$ where all four corners are the coherent sheaf $\mc F$. Lifting this commutative square to a homotopy in $\CechSh{\CohSh} (X)_1$ once again uses the fact that chain maps which induce the same map on homology are homotopic  \cite[Theorem 4.1]{HS} (and then the discussion near \cite[Lemma 1.6]{OTT1}). To prove surjectivity, a loop  $f: \mc F = \mc H ( \mc E) \to \mc F  =\mc H ( \mc E)$ in $ \mathcal{N} (\CohShO{X})_1$ is lifted to a loop in $\CechSh{\CohSh}(X)$ on $\mc E$ by using the fact that an isomorphism on homology lifts to a quasi-isomorphism of chain complexes \cite[Theorem 4.1]{HS} (and then, again, the discussion near \cite[Lemma 1.6]{OTT1}).
\end{proof}
\end{theorem}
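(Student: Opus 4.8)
The plan is to show that for every $X \in \CMan$ the derived mapping space $\mathbb{R} Hom(X, \CohSh)$ is weakly equivalent to the nerve $\mathcal{N}(\CohShO{X})$ of the groupoid of coherent sheaves on $X$ with isomorphisms; this is what ``classifying pre-stack'' should mean here.

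First I would reduce the derived mapping space to a concrete simplicial set. By Corollary \ref{COR: CohSh 1 type}, $\CohSh$ is object-wise a $1$-type and its fibrant replacement in the local projective model structure is its \v{C}ech sheafification; since $X$ is representable, hence cofibrant, we get
\[
\mathbb{R} Hom(X, \CohSh) \simeq \underline{sPre}(X, \CechSh{\CohSh}) = \CechSh{\CohSh}(X).
\]
By Lemma \ref{LEM: sheafify n type}, $\CechSh{\CohSh}(X)$ is again a $1$-type, and $\mathcal{N}(\CohShO{X})$ is a $1$-type because $\CohShO{X}$ is a groupoid. So it is enough to produce a map between the two that is a bijection on $\pi_0$ and an isomorphism on $\pi_1$ at every basepoint.

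For the comparison map I would use the homology-sheaf map $\mc H : \CechSh{\CohSh}(X) \to \mathcal{N}(\CohShO{X})$ from Note \ref{RMK: CohSh Homology}, restricted as in the proof of Proposition \ref{PROP:CohSh justify}. That proposition already gives that $\pi_0(\mc H)$ is a bijection, so only the $\pi_1$-statement remains. Fix a vertex $\mc E = (U_\bu, E_\bu, g_\bu) \in \CechSh{\CohSh}(X)_0$ and put $\mc F := \mc H(\mc E)$; I must show that $\pi_1(\mc H) : \pi_1(\CechSh{\CohSh}(X), \mc E) \to \mathrm{Aut}_{\CohShO{X}}(\mc F)$ is an isomorphism. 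For surjectivity I would lift an automorphism $f : \mc F \to \mc F$ to a quasi-isomorphism of the local chain-complex resolutions --- using that over Stein sets these are projective resolutions, so a map on homology lifts to a chain map by \cite[Theorem 4.1]{HS} --- and then build the higher coherence data on multiple intersections inductively by the argument around \cite[Lemma 1.6]{OTT1}, obtaining a loop in $\CechSh{\CohSh}(X)_1$ based at $\mc E$ whose image is $f$. For injectivity I would take two loops $a_\bu, b_\bu : \mc E \to \mc E$ whose images in $\mathcal{N}(\CohShO{X})$ are connected, i.e.\ bound a square all of whose corners are $\mc F$, and lift this square to a homotopy in $\CechSh{\CohSh}(X)_1$: the two lifted quasi-isomorphisms induce the same map on homology, hence are chain-homotopic (again \cite[Theorem 4.1]{HS}), and this chain homotopy is promoted to a full compatible system of higher homotopies by the same inductive Maurer-Cartan argument.

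The hard part will be the inductive construction of the higher $g$'s on $p$-fold intersections so that the Maurer-Cartan equation \eqref{EQ: MC for k simplex tot Perf} holds: this is exactly where one uses that the complexes have homology concentrated in degree zero, so that the relevant Hom-complexes are acyclic in the degrees where the obstructions live, mirroring Lemma \ref{LEM: Resolution Sheaves are 1-types} and \cite[Lemma 1.6]{OTT1}. A secondary subtlety is that the lifts and homotopies are produced locally on the open sets of a common Stein refinement, so I must arrange these choices to be compatible under restriction in order for them to assemble into genuine simplices of $\CechSh{\CohSh}(X)$; since any two representatives can be pulled back to a common refinement, this reduces to making the constructions functorial with respect to restriction.
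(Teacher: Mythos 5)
Your proof follows essentially the same route as the paper's: compute $\mathbb{R}Hom(X,\CohSh) \simeq \CechSh{\CohSh}(X)$ via Corollary \ref{COR: CohSh 1 type}, reduce to a $\pi_0$/$\pi_1$ comparison along the homology-sheaf map $\mc H$, invoke Proposition \ref{PROP:CohSh justify} for $\pi_0$, and prove the $\pi_1$-isomorphism (injectivity and surjectivity of $\pi_1(\mc H)$) using \cite[Theorem 4.1]{HS} together with the inductive argument near \cite[Lemma 1.6]{OTT1}. Your closing remarks correctly flag two points the paper treats only implicitly---namely that the degree-zero-homology hypothesis is what makes the obstruction Hom-complexes acyclic in the degrees where the higher $g$'s must be produced, and that the local choices of lifts and homotopies must be arranged to be compatible under restriction to a common refinement before they assemble into honest simplices of $\CechSh{\CohSh}(X)$.
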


If we knew that $\OM$ somehow used complexes of bounded height then our \v{C}ech sheafified Chern map from definition \ref{DEF: sheafified Chern} could be seen to restrict to a map of sheaves $\CechSh{\CH}:\nCechSh{\IVB} \to \CechSh{\OM}$ out of infinity vector bundles of bounded complex height. One way to resolve this is by restricting our site as recorded below 
\begin{proposition}\label{Prop: restricted Cech Ch sheaf map}
On the site $\CMan_{\le n}$ of complex manifolds of dimension at most $n$, the \v{C}ech sheafification of the restricted Chern map, 
\[ \CechSh{\Ch}: \nCechSh{\IVB} \to \CechSh{\OM} \]
is a map of hyper sheaves.
\begin{proof}
By corollary \ref{COR: Sheaf restricted IVB} $\nCechSh{\IVB}$ is already a sheaf. Now that we have restricted the site to $\CMan_{\le n}$, then $\OM$ only makes use of chain complexes of length at most $n$ and so it is coskeletal and by proposition \ref{Prop: Cech Fibrant} its sheafification is a hyper sheaf.
\end{proof}
\end{proposition}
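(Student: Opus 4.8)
The plan is to verify the two endpoints of the map are hypersheaves and that the map itself exists purely by functoriality of the \v{C}ech sheafification. For the source, I would simply invoke Corollary \ref{COR: Sheaf restricted IVB}: on any site $\IVB_{\le n}$ is globally projectively fibrant and, by Lemma \ref{LEM: Restricted height sheaves of complexes are 1-types} together with Proposition \ref{PROP:X=whX}, an object-wise homotopy $(n{+}1)$-type, so Proposition \ref{Prop: Cech Fibrant} identifies $\nCechSh{\IVB}$ with the local-projective fibrant replacement of $\IVB_{\le n}$; in particular it is a hypersheaf, and restricting the site to $\CMan_{\le n}$ changes nothing here.

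Next I would treat the target $\CechSh{\OM}$. The point is that on $\CMan_{\le n}$ the sheaf $\Ohol^q$ of holomorphic $q$-forms vanishes for $q>n$, so the cochain complex $\Ohol^\bu(U)$ feeding Definition \ref{DEF:OM} has length at most $n{+}1$. Consequently each homotopy group $\pi_k(\OM(U))=H^{-k}(\Ohol^\bu(U)\ul)$ involves only forms of degree $\le n$, and multiplication by $u$ identifies the degree $-k$ and degree $-(k{-}2)$ parts; thus, object-wise, $\OM$ decomposes up to weak equivalence as a product (finite in each simplicial degree) of Eilenberg--MacLane presheaves $K(\Ohol^q,k)$ with $0\le q\le n$. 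Each such factor is bounded, so Proposition \ref{Prop: Cech Fibrant} applies to it; $\Tot$ commutes with products; and, on a \emph{fixed} complex manifold $X$, a single good Stein cover is Leray for all the $\Ohol^q$ simultaneously, which is precisely what lets one interchange the filtered colimit over \v{C}ech covers with the product. Together these show $\CechSh{\OM}$ is a hypersheaf, consistent with the computation of $\pi_0$ in Proposition \ref{PROP: pi_0 OM Hodge}. (A more economical route, closer to the phrasing ``$\OM$ is coskeletal'', is to rerun the proof of Proposition \ref{Prop: Cech Fibrant} directly for $\OM$: one still only needs $n$-skeletal hypercovers, since the relevant homotopy data of $\OM$ on $(\ell{+}1)$-fold intersections is controlled by holomorphic forms of degree $\le n$, after which \v{C}ech covers are cofinal by \cite{Sc}.)

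Finally, I would assemble the map. The Chern character $\CH:\IVB\to\OM$ of Theorem \ref{THM:CH-is-map-of-simplicial-presheaves} restricts to a map of simplicial presheaves $\IVB_{\le n}\to\OM$ (complexes of length $\le n$ are still sent to forms, and the construction is natural), and \v{C}ech sheafification is an endofunctor on $sPre(\CMan_{\le n})$ by Definition \ref{DEF: cech sheafify}; applying it yields $\CechSh{\Ch}:\nCechSh{\IVB}\to\CechSh{\OM}$. By the previous two paragraphs both its source and its target are hypersheaves, so this is a morphism of hypersheaves, as claimed.

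The main obstacle is the middle step. Unlike $\IVB_{\le n}$, the presheaf $\OM$ is \emph{not} a bounded homotopy type: the $u$-tower keeps $\pi_{2m}(\OM(U))$ nonzero for all $m$, so one cannot quote Proposition \ref{Prop: Cech Fibrant} verbatim. The real work is to see that \v{C}ech sheafification nonetheless computes the hypersheafification of $\OM$ on $\CMan_{\le n}$ --- either, as above, via the product decomposition into bounded Eilenberg--MacLane pieces together with a careful interchange of the colimit over covers and the product (using Leray/good Stein covers), or by checking the hypercover-reduction argument of Proposition \ref{Prop: Cech Fibrant} degree by degree, exploiting that only holomorphic forms of degree $\le n$ occur. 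Everything else --- fibrancy of $\OM$ (it is a simplicial abelian group), naturality of $\CH$, and functoriality of $\CechSh{(-)}$ --- is routine.
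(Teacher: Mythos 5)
Your proof takes a genuinely different route from the paper, and in fact flags what looks like a real gap in the paper's own argument. The paper's proof asserts that on $\CMan_{\le n}$ the presheaf $\OM$ ``only makes use of chain complexes of length at most $n$ and so it is coskeletal,'' then invokes Proposition \ref{Prop: Cech Fibrant}, whose hypothesis is that homotopy groups vanish above a fixed level. As you observe, this fails: for every $m\geq 0$, $\pi_{2m}\bigl(\OM(U)\bigr)=H^{-2m}\bigl(\Ohol^\bu(U)\ul\bigr)$ contains the nonzero summand $\Ohol^0(U)\cdot u^m$, so $\OM\to\bcosk{n}\OM$ is not a weak equivalence even on $\CMan_{\le n}$ --- the holomorphic-forms complex $\Ohol^\bu(U)$ is bounded, but the $u$-tower in $\Ohol^\bu(U)\ul$ is not. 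Proposition \ref{Prop: Cech Fibrant} therefore does not apply verbatim, and the paper's phrase ``coskeletal'' glosses over exactly this.

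Your proposed repair --- splitting $\OM(U)=\DKSet(\Ohol^\bu(U)\ul)$ along the zero internal differential into a product of bounded Eilenberg--MacLane presheaves $K(\Ohol^q,k)$ with $0\le q\le n$, applying Proposition \ref{Prop: Cech Fibrant} factor by factor, and arguing that $\CechSh{(-)}$ commutes with the product --- is the right idea and more honest than the paper's two-line assertion. But the interchange step is precisely where the missing work lies, and it remains a sketch in your write-up: filtered colimits do not commute with infinite products in general, and invoking a single good Stein cover that is simultaneously Leray for all $\Ohol^q$ only helps if you also show that such covers are cofinal and that refining them induces weak equivalences on $\Tot\bigl(\OM(\CN U_\bu)\bigr)$, so that the colimit is, up to weak equivalence, the value on one good Stein cover (and likewise for each factor, so the two sides of the interchange agree). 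Your alternative --- rerunning the proof of Proposition \ref{Prop: Cech Fibrant} directly for $\OM$, using that only forms of degree $\le n$ occur to reduce to $n$-skeletal hypercovers --- is equally plausible but likewise not carried out. In short: you have correctly diagnosed that the paper's appeal to coskeletality is too quick, and you have outlined the right shape of a fix, but the fix still needs to be executed before the proof is complete.
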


By different application of the same ideas above, we end with an upgrade on theorem \ref{THM:CH(IVB)=CH(TwCoch)}:
\begin{theorem}\label{THM: OTT Chern Sheaves}
On the site $\CMan_{\le n}$ of complex manifolds of dimension at most $n$, the \v{C}ech sheafification of the Chern map restricted to coherent sheaves, 
\[ \CechSh{\Ch}: \CechSh{\CohSh} \to \CechSh{\OM} \]
is a map of hyper sheaves which restricts on $\pi_0$ to the Chern character \eqref{EQU:Chern-char-a-la-OTT} from \cite{OTT1}.
\begin{proof}
By theorem \ref{THM:IVBStack} $\CechSh{\CohSh}$ is already a sheaf. Now that we have restricted the site to $\CMan_{\le n}$, then $\OM$ only makes use of chain complexes of length at most $n$ and so it is coskeletal and by proposition \ref{Prop: Cech Fibrant} its sheafification is a hyper sheaf. The fact that on $\pi_0$ it recovers the Chern map from \cite{OTT1} was already recorded in theorem \ref{THM:CH(IVB)=CH(TwCoch)}.
\end{proof}
\end{theorem}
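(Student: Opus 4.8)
The plan is to assemble this from three facts already established in the paper. First, by Theorem \ref{THM:IVBStack} together with Corollary \ref{COR: CohSh 1 type}, the simplicial presheaf $\CohSh$ is a $1$-type whose local projective fibrant replacement is computed by its \v{C}ech sheafification, so $\CechSh{\CohSh}$ is a hypersheaf. Second, I would observe that on the restricted site $\CMan_{\le n}$ the presheaf $\OM = \DKSet(\Ohol^\bu(-)\ul)$ only involves holomorphic forms in degrees $0,\dots,n$ (since $\Ohol^q(U)=0$ for $q>\dim_{\C} U$), so the chain complex $\Ohol^\bu(U)\ul$ has bounded length; hence, by note \ref{REM:DK-def} and Lemma \ref{LEM: Restricted height sheaves of complexes are 1-types} (applied to Dold--Kan, which is the same coskeletality phenomenon), $\OM$ is object-wise an $(n+1)$-type, and Proposition \ref{Prop: Cech Fibrant} then gives that $\CechSh{\OM}$ is a hypersheaf and a fibrant replacement of $\OM$. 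Third, $\CechSh{\Ch}$ is a map of Kan complexes by the discussion following Definition \ref{DEF: sheafified Chern}, and a map between fibrant objects in the local projective model structure is in particular a map of hypersheaves.

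Concretely, the steps in order: (i) restrict the whole discussion to $\CMan_{\le n}$ and note that all the earlier results — in particular Theorem \ref{THM:IVBStack} and Theorem \ref{THM:CH(IVB)=CH(TwCoch)} — continue to hold on this smaller site, since their proofs only use Stein covers and the local structure of complex manifolds, which is unaffected by restricting dimension. (ii) Conclude $\CechSh{\CohSh}$ is a hypersheaf from Theorem \ref{THM:IVBStack}. (iii) Establish that $\OM$ restricted to $\CMan_{\le n}$ is coskeletal (an $(n+1)$-type), invoking the vanishing of high-degree holomorphic forms plus the coskeletality argument of Lemma \ref{LEM: Restricted height sheaves of complexes are 1-types}; then apply Proposition \ref{Prop: Cech Fibrant} to get that $\CechSh{\OM}$ is a hypersheaf. (iv) Since $\Ch : \IVB \to \OM$ restricts to $\CohSh \to \OM$ (by Definition \ref{DEF: CohSh}, which gives $\CohSh \hookrightarrow \IVB$) and \v{C}ech sheafification is functorial, $\CechSh{\Ch}: \CechSh{\CohSh}\to\CechSh{\OM}$ is a well-defined map of hypersheaves. (v) Finally, recall from Theorem \ref{THM:CH(IVB)=CH(TwCoch)} that applying $\pi_0$ recovers the Chern character \eqref{EQU:Chern-char-a-la-OTT} of \cite{OTT1}, which finishes the proof. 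In fact the proof in the excerpt already does essentially this, so my write-up would match it closely.

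I expect the only real obstacle to be step (iii): making the claim "$\OM$ is coskeletal on $\CMan_{\le n}$" precise. Lemma \ref{LEM: Restricted height sheaves of complexes are 1-types} is phrased for $\dgN$ of a dg-category of bounded-height complexes, not for $\DKSet$ of a bounded chain complex, so strictly one needs the (easy, standard) fact that for a chain complex $C$ concentrated in degrees $[-m,0]$ the simplicial abelian group $\DK(C)$ is $m$-coskeletal — equivalently, $\DK(C)_k \to (\bcosk{m}\DK(C))_k$ is an isomorphism for $k > m$, which follows directly from the description $\DK(C)_k = \mathcal{Chain}(N(\Z\Delta^k),C)$ in note \ref{REM:DK-def} since $N(\Z\Delta^k)$ is nonzero only in degrees $0,\dots,k$ and any labeling of cells above dimension $m$ is forced to be zero and uniquely determined by lower cells. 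Given that, everything else is a direct citation of Proposition \ref{Prop: Cech Fibrant}, Theorem \ref{THM:IVBStack}, and Theorem \ref{THM:CH(IVB)=CH(TwCoch)}, and the argument is short.
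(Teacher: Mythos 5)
Your argument is correct and follows the same three-step decomposition as the paper's own proof: Theorem \ref{THM:IVBStack} for $\CechSh{\CohSh}$, bounded homotopy type of $\OM$ on $\CMan_{\le n}$ plus Proposition \ref{Prop: Cech Fibrant} for $\CechSh{\OM}$, and Theorem \ref{THM:CH(IVB)=CH(TwCoch)} for the $\pi_0$ identification. Your added observation that the coskeletality of $\OM$ should be justified via the description $\DK(C)_k = \mathcal{C}h(N(\Z\Delta^k),C)$ from note \ref{REM:DK-def}, rather than by a literal appeal to Lemma \ref{LEM: Restricted height sheaves of complexes are 1-types} (which is stated for $\dgN$), is a fair and correct refinement of the paper's slightly terse phrasing.
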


\begin{remark}\label{REM: Generalized Chern Ch}
For an arbitrary stack (i.e. hyper sheaf), $\F$, recall as in \eqref{EQ: RHom} that the right derived mapping space 
\[  \mathbb{R} Hom (\F, \G) := \underline{sPre}(\tilde{\F}, \hat{\G}) \]
for a simplicial model category can be computed by taking the simplicial mapping space between a cofibrant replacement of $\F$ and a fibrant replacement of $\G$. Letting ${\bf G_1} = \IVB$ and ${\bf G_2}= \OM$, proposition \ref{Prop: restricted Cech Ch sheaf map} says that our pre-sheafified Chern map $\CH:\IVB\to \OM$ from definition \ref{DEF:CH:PERF-to-OM} induces a map of fibrant (ignoring the restrictions of sites and homotopy types for the moment) replacements $\CechSh{\Ch}: \CechSh{\IVB} \to \CechSh{\OM}$, and thus a map of right derived mapping spaces: 
\begin{equation}\label{EQ: generalized Chern-ch} \mathbb{R} Hom (\F, \IVB) = \underline{sPre}(\tilde{\F},  \CechSh{\IVB}) \xrightarrow{\CechSh{\Ch}}  \underline{sPre}(\tilde{\F},  \CechSh{\OM})=: \mathbb{R} Hom (\F, \OM) .\end{equation}
When $\F = X$ is the representable simplicial presheaf for a complex manifold, then the above is explicitly calculated using note \ref{NOTE:Tot(Chern)}. However, \eqref{EQ: generalized Chern-ch} suggests a reasonable definition for a \emph{generalized Chern character map}. In a sequel to this paper, we will study this map for the case when a Lie group $G$ acts on the complex manifold $X$ and $\F_n = X \times G ^{\times n}$ (see \cite[Definition 5.1]{GMTZ}), extending this paper to the equivariant setting.
\end{remark}

\appendix

\section{A Weak Equivalence $\sSet(\EZ^\bu,K)\to\sSet(\Delta^\bu,K)$}

In this appendix, we prove proposition \ref{PROP:X=whX}.

\begin{proposition}\label{PROP:X=whX}
If $K$ is a Kan complex, then there exists a weak equivalence $F^\sharp:\sSet(\EZ^\bu,K)\to \sSet(\Delta^\bu,K)$.
\end{proposition}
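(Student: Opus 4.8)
The plan is to construct the map $F^\sharp$ as precomposition with a particular map of cosimplicial simplicial sets $\Delta \to \EZ$ — namely the one $\Delta^n_k = \Del([k],[n]) \ni \phi \mapsto (\phi(0),\dots,\phi(k)) \in \EZ^n_k$ already introduced in the excerpt just before Proposition \ref{PROP:PERF-EZ=PERF-Delta} — and then show this precomposition map is a weak equivalence whenever $K$ is Kan. So $F^\sharp(K) = \sSet(f^\bu, K)$ where $f^n : \Delta^n \to \EZ^n$. Both $\sSet(\EZ^\bu, K)$ and $\sSet(\Delta^\bu, K)$ are simplicial sets (their $n$-simplices being $\sSet(\EZ^n, K)$ and $\sSet(\Delta^n, K) = K_n$ respectively), and $F^\sharp$ is levelwise given by restriction along $f^n$.

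The key observation is that for each fixed $n$, the map $f^n : \Delta^n \to \EZ^n$ is a weak equivalence of simplicial sets: indeed $\Delta^n$ is contractible, and $\EZ^n = E\Z_{n+1}$ is the nerve of a groupoid with one morphism between any two objects (a ``chaotic'' or ``codiscrete'' category on $n+1$ objects), hence also contractible — its geometric realization is contractible since the category is equivalent to the terminal category. In fact $f^n$ is even more: it is a trivial cofibration, because $\EZ^n$ is the nerve of a contractible groupoid and the inclusion of the spine / of $\Delta^n$ picks out a contractible subcomplex, but the cleanest route is to observe that $f^n$ is a monomorphism and a weak equivalence, hence a trivial cofibration in the Kan–Quillen model structure. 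Therefore, for $K$ Kan, $\sSet(f^n, K) : \sSet(\EZ^n, K) \to \sSet(\Delta^n, K) = K_n$ is a trivial fibration of simplicial sets (using that $K$ is fibrant and $\sSet(-, K)$ sends trivial cofibrations between cofibrant objects to trivial fibrations — this is the pushout-product / SM7 axiom of the simplicial model structure). In particular it is a weak equivalence, and moreover surjective on vertices, for every $n$.

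From here the plan is to assemble the levelwise statements into a statement about the map of bisimplicial-type data, or more directly: $F^\sharp$ is a map of simplicial sets which is, in each simplicial degree $n$, a trivial fibration $\sSet(\EZ^n,K) \twoheadrightarrow K_n$. One then invokes a standard realization/diagonal lemma — e.g. if $X_\bu \to Y_\bu$ is a map of simplicial objects in $\sSet$ which is a weak equivalence in each degree, then the induced map on diagonals (equivalently on ``$\sSet(\Delta^\bu,-)$'') is a weak equivalence — to conclude $F^\sharp$ is a weak equivalence. Alternatively, and perhaps more cleanly for this setting, one notes that both $\sSet(\EZ^\bu,K)$ and $\sSet(\Delta^\bu,K)$ are the ``mapping space'' functors $K \mapsto \mathrm{Map}(Q^\bu, K)$ for cosimplicial resolutions $Q = \EZ$ and $Q = \Delta$, that $\Delta^\bu$ is the canonical (Reedy-cofibrant) cosimplicial resolution of the point, and that $\EZ^\bu$ is also a Reedy-cofibrant cosimplicial resolution of the point (each $\EZ^n$ contractible, latching maps cofibrations); then the general theory of cosimplicial resolutions (Hirschhorn, Chapter 16) gives that the induced map on homotopy function complexes is a weak equivalence for $K$ fibrant.

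The main obstacle — and the step deserving the most care — is verifying that $\EZ^\bu$ is a Reedy-cofibrant cosimplicial object, i.e.\ that each latching map $L^n \EZ \to \EZ^n$ is a monomorphism (cofibration) of simplicial sets; equivalently, that the degeneracies among the $\EZ^n$ behave well. One must check that $f : \Delta \to \EZ$ is a Reedy-cofibration of cosimplicial objects, which amounts to checking that for each $n$ the relative latching map $\Delta^n \sqcup_{L^n\Delta} L^n\EZ \to \EZ^n$ is a monomorphism. This is a combinatorial check using the explicit description of $\EZ^n_k$ as sequences $(i_0,\dots,i_k)$ with $i_j \in \{0,\dots,n\}$ and the description of its degenerate simplices (adjacent repeats); the non-degenerate simplices of $\EZ^n$ are sequences with no two adjacent entries equal, and one verifies directly that the latching object injects. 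Once Reedy-cofibrancy and contractibility of each $\EZ^n$ are established, the conclusion is immediate from standard model-categorical machinery; I would phrase the final argument via the SM7/pushout-product axiom applied degreewise together with the realization lemma for degreewise weak equivalences of simplicial spaces, to keep the proof self-contained and avoid quoting the full resolution formalism.
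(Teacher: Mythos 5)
Your proposal takes a genuinely different route from the paper's. The paper argues elementarily: it first proves that $\sSet(\EZ^\bu,K)$ is itself Kan (Proposition \ref{PROP:whK-is-Kan}, by reducing horn filling to extending along the trivial cofibrations $\wh\Lambda^n_i \hookrightarrow \EZ^n$), then introduces the auxiliary subcomplexes $\wh\Theta^n = \Delta^n \cup \bigcup_j \delta_j\EZ^{n-1} \subset \EZ^n$, proves they are contractible by a Mayer--Vietoris induction, and finally checks surjectivity and injectivity on $\pi_0$ and on all $\pi_n$ directly by constructing lifts against the trivial cofibrations $\wh\Theta^n \hookrightarrow \EZ^n$ (and $\wh\Theta^{n+1}\hookrightarrow\EZ^{n+1}$). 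Your plan instead is to recognize $\Delta^\bu$ and $\EZ^\bu$ as two Reedy-cofibrant cosimplicial resolutions of the point, with $F : \Delta^\bu \to \EZ^\bu$ a map of resolutions, and invoke the general fact that mapping a fibrant object out of two cosimplicial resolutions of the same object yields weakly equivalent homotopy function complexes. That route is correct, and you rightly flag the one non-formal verification it requires: Reedy cofibrancy of $\EZ^\bu$, i.e.\ that each latching map $L_n\EZ \to \EZ^n$ is a monomorphism (it is: the latching object identifies with the subcomplex $\bigcup_j \delta_j\EZ^{n-1}\subset\EZ^n$ of sequences omitting at least one index in $\{0,\dots,n\}$). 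Your approach is shorter but leans on the full machinery of cosimplicial resolutions; the paper's is heavier combinatorially but self-contained.

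However, the ``cleanest route'' you advocate in the final sentence does not work as written, and should be dropped in favor of the Hirschhorn argument. You state that ``$F^\sharp$ is, in each simplicial degree $n$, a trivial fibration $\sSet(\EZ^n,K)\twoheadrightarrow K_n$'' — but $\sSet(\EZ^n,K)$ and $K_n$ are sets, not simplicial sets, so ``trivial fibration'' is a category error; the statement that SM7 actually gives is that the \emph{enriched} hom $\mathrm{Map}(\EZ^n,K)\to\mathrm{Map}(\Delta^n,K)$ is a trivial fibration of simplicial sets. And from this, the realization lemma for degreewise weak equivalences of bisimplicial sets does not yield your conclusion: that lemma compares \emph{diagonals}, and the diagonal of the bisimplicial set $\mathcal X_{n,m}:=\sSet(\EZ^n\times\Delta^m,K)$ is $n\mapsto\sSet(\EZ^n\times\Delta^n,K)$, not the zeroth column $\mathcal X_{\bu,0}=\sSet(\EZ^\bu,K)$ that you actually want. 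Relating $\mathcal X_{\bu,0}$ to $\mathrm{diag}(\mathcal X)$ would require precisely the kind of comparison of cosimplicial resolutions you are trying to avoid, so the shortcut is circular. Either carry out the Reedy-cofibrancy check and use the resolution formalism honestly, or follow the paper's direct argument via the contractible $\wh\Theta^n$.
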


In order to define $F^\sharp$, we first establish some notation. Recall from example \ref{EXA:Delta} that $\Delta^n$ is the simplicial set whose $k$-simplicies are non-decreasing sequences $(i_0\leq\dots\leq i_k)$ with $i_0,\dots, i_k\in \{0,\dots,n\}$, and recall from example \ref{EXA:EZ} that $\EZ^n$ is the simplicial set whose $k$-simplicies are any sequences $(i_0,\dots, i_k)$ with $i_0,\dots, i_k\in \{0,\dots,n\}$. Both $\Delta^n$ and $\EZ^n$ have face maps $d_j$ given by removing the $j$th index $i_j$, and degeneracy maps $s_j$ given by repeating the $j$th index $i_j$. Furthermore both $\Delta^\bu$ and $\EZ^\bu$ are cosimplicial simplicial sets so that for $\phi:[n]\to [m]$ in $\Del$ we get an induced map of $\phi_\bu:\both^n_\bu\to\both^m_\bu$ via $\phi_k:\both^n_k\to\both^m_k$, $\phi_k(i_0,\dots,i_k)=(\phi(i_0),\dots, \phi(i_k))$, where $\both^\bu$ is either $\Delta^\bu$ or $\EZ^\bu$. Thus, there is an induced map of cosimplicial simpicial sets $F^\bu:\Delta^\bu\to \EZ^\bu$, $(i_0\leq\dots\leq i_k)\mapsto (i_0,\dots, i_k)$. For any simplicial set $X$, both $X=\sSet(\Delta^\bu,X)$ and $\ov X:=\sSet(\EZ^\bu,X)$ are simplicial sets, and there is an induced map $F^\sharp:\ov X\to X$ by pre-composition with $F$. 

Our first step towards proving proposition \ref{PROP:X=whX} is to show that $\ov K$ is also a Kan complex. 
\begin{proposition}\label{PROP:whK-is-Kan}
If $K$ is a Kan complex, then $\ov K$ is a Kan complex.
\end{proposition}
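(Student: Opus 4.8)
The plan is to show directly that $\ov K = \sSet(\EZ^\bu, K)$ satisfies the horn-filling condition. A $k$-simplex of $\ov K$ is a simplicial set map $\EZ^k \to K$, so by the adjunction between $\times$ and the internal hom in $\sSet$, giving a horn $\Lambda^k_i \to \ov K$ is the same as giving a map from the pushout-product-type object built out of $\EZ^\bu$ restricted to the faces of $\Lambda^k_i$ into $K$, and filling the horn amounts to extending this over $\EZ^k$. Concretely, I would introduce the simplicial set $\EZ^{\Lambda^k_i}$ defined as the colimit $\colim_{[m]\to \Lambda^k_i} \EZ^m$ over the simplices of the horn (equivalently, the image of $\Lambda^k_i$ under the left Kan extension of $\EZ^\bu$ along the Yoneda embedding); then by the Yoneda/co-Yoneda calculus exactly as used in equation \eqref{EQU:Tot-for-Deltahat} of the paper, a horn $\Lambda^k_i \to \ov K$ corresponds to a map $\EZ^{\Lambda^k_i} \to K$, and a filler corresponds to an extension along the inclusion $\EZ^{\Lambda^k_i} \hookrightarrow \EZ^k$.

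The key step is then to show that the inclusion $\EZ^{\Lambda^k_i} \hookrightarrow \EZ^k$ is a trivial cofibration (an anodyne map, or at least a weak equivalence that is a monomorphism); given that, the extension exists because $K$ is a Kan complex and hence fibrant, so it has the right lifting property against this map. To see the inclusion is a weak equivalence, I would use that each $\EZ^m$ is contractible — indeed $\EZ^m = \mathcal N(E\Z_{m+1}^{\Cat})$ is the nerve of a category with a terminal (and initial) object, hence contractible — and more generally that $\EZ^\bu$ is a cofibrant replacement of $\Delta^\bu$ as a cosimplicial object, or equivalently that the map $\EZ^\bu \to \Delta^\bu$ (collapsing everything) is a Reedy-cofibrant resolution in the sense needed for these colimits to compute homotopy colimits. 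Then $\EZ^{\Lambda^k_i} = \hocolim$ over the poset of nondegenerate simplices of $\Lambda^k_i$ of contractible spaces, glued along the same combinatorics as $\Lambda^k_i$ itself, so $\EZ^{\Lambda^k_i} \simeq \Lambda^k_i \simeq \ast$ and $\EZ^k \simeq \Delta^k \simeq \ast$, and the inclusion is a weak equivalence between contractible complexes. Monomorphicity is clear since it is a colimit of the levelwise-injective maps $\EZ^m \hookrightarrow \EZ^k$.

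The main obstacle I anticipate is making the homotopy-colimit/cofibrancy argument precise enough that the resulting map $\EZ^{\Lambda^k_i} \hookrightarrow \EZ^k$ is not merely a weak equivalence of monomorphisms but genuinely has the left lifting property against all Kan fibrations — i.e. that it is a trivial cofibration in $\sSet$, which for the standard model structure is automatic once we know it is a mono and a weak equivalence. So in fact once the weak-equivalence statement is established, the lifting is formal. The real work, and the step most likely to require care, is therefore the identification $\EZ^{\Lambda^k_i} \simeq \ast$: one must check that the relevant colimit over the simplex category of $\Lambda^k_i$ is already homotopy-invariant, which follows because the cube-shaped diagrams assembling $\EZ^{\Lambda^k_i}$ out of the $\EZ^m$'s are Reedy cofibrant (each latching map $L_m \EZ \hookrightarrow \EZ^m$ is a monomorphism, by the explicit description of $\EZ^m$ in example \ref{EXA:EZ}). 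Alternatively, and perhaps more cleanly, one can avoid the homotopy-colimit language entirely: verify by a direct combinatorial argument, using the retraction-type structure on $\EZ^\bu$ coming from the constant maps $\llbracket i\rrbracket \to \llbracket i\rrbracket$, that $\EZ^{\Lambda^k_i} \hookrightarrow \EZ^k$ admits an explicit deformation retraction, which simultaneously gives both that it is a weak equivalence and a mono, hence a trivial cofibration, completing the proof.
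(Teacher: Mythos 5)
Your proposal follows essentially the same strategy as the paper: identify a horn $\Lambda^k_i \to \ov K$ with a map from the "$\EZ$-horn" $\wh\Lambda^k_i = \bigcup_{j\neq i}\delta_j\EZ^{k-1}$ (your $\EZ^{\Lambda^k_i}$) into $K$, reduce the filling problem to the right lifting property of $K$ against $\wh\Lambda^k_i \hookrightarrow \EZ^k$, and establish that this inclusion is a trivial cofibration by showing both sides are contractible. The only real divergence is in how contractibility of $\wh\Lambda^k_i$ is verified: you invoke a homotopy-colimit argument (relying on Reedy cofibrancy of $\EZ^\bu$) or alternatively gesture at a deformation retraction, whereas the paper gives a short hands-on argument — the extra degeneracy $s_{-1}(i_0,\dots,i_k)=(i,i_0,\dots,i_k)$ on $\EZ^k$ visibly preserves $\wh\Lambda^k_i$, making the retraction explicit with no model-categorical machinery. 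Both are correct; the paper's version is more self-contained, while your hocolim route is a standard and clean alternative once one accepts that the colimit defining $\EZ^{\Lambda^k_i}$ is homotopy-invariant.
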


To begin with, here is a useful lemma.
\begin{lemma}\label{LEM:whK-boundary-compat}
A map $c:\Delta^n\to\ov K=\sSet(\EZ^\bu,K)$ is determined by the element $\overline c=c(0 \leq\dots\leq n):\EZ^n\to K$. Then $\delta_i(c)=c\circ \delta_i:\Delta^{n-1}\cong \delta_i(\Delta^{n-1})\subset \Delta^n \stackrel c\to \ov K$ is determined by $\delta_i(\overline c)=c\circ\delta_i:\EZ^{n-1}\cong \delta_i(\EZ^{n-1})\subset \EZ^n \stackrel c\to K$.
\end{lemma}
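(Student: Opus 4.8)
The plan is to treat this as pure bookkeeping built on the Yoneda lemma for the cosimplicial simplicial set $\Delta^\bu$: I would unwind the definitions and keep careful track of the two relevant face operators. First I would record the tautological part. For any simplicial set $Y$ and any $m\geq 0$, evaluation on the non-degenerate top simplex $\iota_m=(0\leq\dots\leq m)\in\Delta^m_m$ is the Yoneda bijection $\sSet(\Delta^m,Y)\xrightarrow{\cong}Y_m$, $c\mapsto c_m(\iota_m)$. Applied to $Y=\ov K$, whose $m$-simplices are by definition $\ov K_m=\sSet(\EZ^m,K)$, this says that a map $c\colon\Delta^n\to\ov K$ is the same datum as the element $\overline c:=c_n(\iota_n)\in\sSet(\EZ^n,K)$, which is the first assertion of the lemma.

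Next I would pin down the two face operators. In $\Delta^n$, the face $d_i$ sends $\iota_n=\mathrm{id}_{[n]}$ to the coface $\delta_i\in\Del([n-1],[n])=\Delta^n_{n-1}$, and under the identification above this element is $\delta_i(\iota_{n-1})$, where $\delta_i\colon\Delta^{n-1}\to\Delta^n$ is the induced map of cosimplicial pieces; this map is levelwise injective (on $k$-simplices it postcomposes an index sequence with the injection $\delta_i\colon\{0,\dots,n-1\}\hookrightarrow\{0,\dots,n\}$), so it is an isomorphism onto the sub-simplicial set $\delta_i(\Delta^{n-1})\subset\Delta^n$. On the other side, by the definition of the simplicial structure on $\ov K=\sSet(\EZ^\bu,K)$ induced from the cosimplicial structure on $\EZ^\bu$, the face $d_i\colon\ov K_n\to\ov K_{n-1}$ is precomposition with the induced map $(\delta_i)_\bu\colon\EZ^{n-1}\to\EZ^n$, which for the same reason is a monomorphism and gives the identification $\EZ^{n-1}\cong\delta_i(\EZ^{n-1})\subset\EZ^n$.

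With these in place, the second assertion would follow from a short chain of equalities: $\delta_i(c)=c\circ\delta_i\colon\Delta^{n-1}\to\ov K$ corresponds under Yoneda to $(c\circ\delta_i)_{n-1}(\iota_{n-1})=c_{n-1}(\delta_i(\iota_{n-1}))=c_{n-1}(d_i\iota_n)=d_i(c_n\iota_n)=d_i(\overline c)$, where the penultimate step uses that $c$ is a map of simplicial sets and the other steps are from the previous paragraph; and by the description of $d_i$ on $\ov K$ this equals $\overline c\circ(\delta_i)_\bu$, i.e.\ the restriction of $\overline c\colon\EZ^n\to K$ along $\EZ^{n-1}\cong\delta_i(\EZ^{n-1})\subset\EZ^n$, which is exactly what the statement calls $\delta_i(\overline c)$ (absorbing the harmless abuse of writing ``$c$'' for ``$\overline c$'' in the displayed composition of the lemma). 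I do not expect any genuine obstacle here; the only point to be careful about is not to conflate the face operator $d_i$ of the simplicial set $\Delta^n$ with the face operator $d_i$ of the simplicial set $\ov K$, the two being intertwined precisely by $c\mapsto\overline c$. The lemma serves only to fix this notation for the proof of Proposition~\ref{PROP:whK-is-Kan}.
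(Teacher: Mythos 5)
Your proposal is correct and follows essentially the same route as the paper's proof: identify $c$ with $\overline c$ via Yoneda (evaluation on the top simplex), note that the face $\delta_i(\Delta^{n-1})\subset\Delta^n$ is generated by $d_i(0\leq\dots\leq n)$, and use simpliciality of $c$ together with the fact that $d_i$ on $\ov K$ is precomposition with $\EZ^{n-1}\hookrightarrow\EZ^n$ to get $c(d_i\iota_n)=d_i(\overline c)=\overline c\circ\delta_i$. Your write-up merely makes the Yoneda bookkeeping more explicit than the paper does.
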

\begin{proof}
Note that $\delta_i (\Delta^{n-1})\subset \Delta^n$ are sequences that do not include $i$, which are generated by the $n-1$ simplex $(0\leq\dots \leq i-1\leq i+1\leq \dots n)=d_i(0\leq \dots\leq n)\in \Delta^n_{n-1}$. Thus $\delta_i(c)$ is determined by the image of the simplex $d_i(0\leq \dots\leq n)$. Now $c(d_i(0\leq \dots\leq n))=d_i(c(0\leq \dots\leq n))=d_i(\overline c)=c\circ\delta_i$.
\end{proof}
\begin{proof}[Proof of proposition \ref{PROP:whK-is-Kan}]
Denote by $\Lambda^n_i:=\bigcup_{j\neq i} \delta_j \Delta^{n-1}$ the $i$th horn of $\Delta^n$, which is a sub-simplicial set of $\Delta^n$. Similarly, denote by $\wh \Lambda^n_i :=\bigcup_{j\neq i} \delta_j \wh\Delta^{n-1}$ the $i$th horn of $\EZ^n$, which is a sub-simplicial set of $\EZ^n$. As noted before, a simplicial set map $\Delta^n\to \ov K$ is the same as an element $\ov K_n$, i.e., a simplicial set map $\EZ^n\to K$. Similarly, a simplicial set map $\Lambda^n_i\to \ov K$ is given by $n$ maps $\delta_j \Delta^{n-1}\to \ov K$, i.e., $n$ maps $\EZ^{n-1}\to K$ (cf. lemma \ref{LEM:whK-boundary-compat}), which are compatible at their common boundary, i.e., whose induced common boundary maps $\EZ^{n-2}\to K$ coincide, and thus this is the same as a simplicial set map $\wh\Lambda^n_i\to K$. Thus, the Kan condition for $\ov K$ (left side of \eqref{EQU:Kan-for-whK}) becomes equivalent to lifting a horn $\wh\Lambda^n_i\to X$ to a map $\wh\Delta^n\to X$ (right side of \eqref{EQU:Kan-for-whK}):
\begin{equation}\label{EQU:Kan-for-whK}
\xymatrix{\Lambda^n_i \ar[r] \ar[d] & \ov K \ar[d]\\
\Delta^n \ar[r] \ar@{-->}[ru] & {*}}
\hspace{.6cm}
\begin{matrix} \\ \\ \iff \end{matrix}
\hspace{.6cm}
\xymatrix{\wh\Lambda^n_i \ar[r] \ar[d] &  K \ar[d]\\
\wh\Delta^n \ar[r]  \ar@{-->}[ru] & {*}}
\end{equation}
Since $K$ is a Kan complex, we have such a lift if $\wh\Lambda^n_i\to\wh\Delta^n$ is an trivial cofibration, i.e., if this map is injective and a weak equivalence. Clearly, $\wh\Lambda^n_i\to\EZ^n$ is injective, and the weak equivalence follows since both $\wh\Lambda^n_i$ and $\wh\Delta^n$ are contractible, i.e., they have zero homotopy groups. First, it is well-known that $EG$ for any group $G$ is contractible, since it has an extra degeneracy $s_{-1}(g_0,\dots, g_k)=(e,g_0,\dots, g_k)$, see, for example, \cite[lemma III.5.1 and example III.5.2]{GJ}. Thus, $\EZ^n=E\Z_{n+1}$ is contractible, and, from the explicit extra degeneracy, we can see that it preserves $\wh \Lambda^n_0$. Thus, $\wh \Lambda^n_0$ is contractible as well. Now, there is a $\Z_{n+1}$-action on $E\Z_{n+1}$, which, in particular, can be used to map $\wh \Lambda^n_0$ isomorphically to any other $\wh \Lambda^n_i$, showing that indeed all $\wh \Lambda^n_i$ are contractible. (Or, alternatively, one obtains that the extra degeneracy $s_{-1}(i_0,\dots, i_k)=(i,i_0,\dots, i_k)$ of $\EZ^n$ preserves $\wh\Lambda^n_i$.)
\end{proof}

In order to prove proposition \ref{PROP:X=whX}, we need one more ingredient. Denote by $\wh \Theta^n:=(\bigcup_{\text{all }j}\delta_j\wh\Delta^{n-1})\cup \Delta^n$ the subsimplicial set of $\EZ^n$ generated by \emph{all} $\wh\Delta^{n-1}$ boundary components, together with $\Delta^n\cong F^n(\Delta^n)\subset \EZ^n$.
\begin{lemma}
$\wh \Theta^n$ is contractible.
\end{lemma}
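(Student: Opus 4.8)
The statement is that $\wh\Theta^n$, the subcomplex of $\EZ^n$ generated by the small simplex $\Delta^n\cong F^n(\Delta^n)\subseteq\EZ^n$ together with all codimension‑one faces $\delta_j\wh\Delta^{n-1}$ ($j=0,\dots,n$), is contractible. The plan is to prove, by a double induction, the more flexible statement that for all $m\ge 0$ and all $0\le k\le m+1$ the partial complex
\[
\wh\Theta^m_k:=\Delta^m\ \cup\ \bigcup_{0\le j<k}\delta_j\wh\Delta^{m-1}\ \subseteq\ \EZ^m
\]
is contractible; since $\EZ^m$ has exactly the $m+1$ codimension‑one faces $\delta_0\wh\Delta^{m-1},\dots,\delta_m\wh\Delta^{m-1}$, the lemma is the case $(m,k)=(n,n+1)$, while $\wh\Theta^m_0=\Delta^m$ furnishes the base of the inner induction and $\EZ^0=\Delta^0$ that of the outer one.

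For the inductive step I would write $\wh\Theta^m_k=\wh\Theta^m_{k-1}\cup F$ with $F:=\delta_{k-1}\wh\Delta^{m-1}$, so that $F\cong\EZ^{m-1}=E\Z_m$ is contractible (exactly the fact used in the proof of Proposition \ref{PROP:whK-is-Kan}, via the extra degeneracy of $EG$), and $\wh\Theta^m_{k-1}$ is contractible by the inner induction hypothesis. The heart of the argument is the identification of the intersection $\wh\Theta^m_{k-1}\cap F$: using that the coface $\delta_j$ omits exactly the value $j$, one gets $\Delta^m\cap F=\delta_{k-1}\Delta^{m-1}$, and using the cosimplicial identity $\delta_{k-1}\delta_j=\delta_j\delta_{k-2}$ for $j<k-1$ one gets $\delta_j\wh\Delta^{m-1}\cap F=\delta_{k-1}\bigl(\delta_j\wh\Delta^{m-2}\bigr)$. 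Hence, transporting along the isomorphism $F\cong\EZ^{m-1}$, the intersection $\wh\Theta^m_{k-1}\cap F$ is carried precisely to $\Delta^{m-1}\cup\bigcup_{0\le j<k-1}\delta_j\wh\Delta^{m-2}=\wh\Theta^{m-1}_{k-1}$, which is contractible by the outer induction hypothesis (note $k-1\le m$). Then I would conclude with the gluing lemma: the pushout $\wh\Theta^m_{k-1}\cup_{\wh\Theta^m_{k-1}\cap F}F$ is a homotopy pushout (it is a pushout along a monomorphism in the left proper model category $\sSet$), and a homotopy pushout of contractible simplicial sets is contractible.

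The step I expect to be the main obstacle is the bookkeeping in this intersection computation — pinning down the correct indices in $\delta_{k-1}\delta_j=\delta_j\delta_{k-2}$ and verifying that, after restricting to $F\cong\EZ^{m-1}$, the $\Delta$‑part and the various face parts of $\wh\Theta^m_{k-1}$ reassemble into exactly one lower partial complex $\wh\Theta^{m-1}_{k-1}$, and not some other union of faces. The remaining ingredients (contractibility of $\EZ^m$, contractibility of standard simplices, the gluing lemma) are routine. If one prefers to avoid the double induction, an alternative is to note that $\wh\Theta^n=\big(\bigcup_j\delta_j\wh\Delta^{n-1}\big)\cup_{\partial\Delta^n}\Delta^n$ is a homotopy pushout over the inclusion $\partial\Delta^n\hookrightarrow\bigcup_j\delta_j\wh\Delta^{n-1}$, so it suffices to show this inclusion is a weak equivalence; this follows from the nerve theorem applied to the covers $\{\delta_j\Delta^{n-1}\}_j$ and $\{\delta_j\wh\Delta^{n-1}\}_j$, whose every nonempty finite intersection is contractible and whose nerves are both $\partial\Delta^n$ compatibly — but invoking the nerve theorem is a heavier hammer than the elementary double induction.
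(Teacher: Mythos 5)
Your proof is correct and follows essentially the same double-induction strategy as the paper: peel off one thickened face $\delta_{k-1}\wh\Delta^{m-1}$ at a time, identify the intersection with what remains as a lower-dimensional instance of the same construction via $\delta_{k-1}\delta_j=\delta_j\delta_{k-2}$, and glue. The paper tracks arbitrary subsets $A\subseteq\{0,\dots,n\}$ of face indices (writing $\Ups^n_A$) rather than your consecutive prefixes $\{0,\dots,k-1\}$, and concludes the gluing step via Mayer--Vietoris, van Kampen, and Hurewicz on geometric realizations rather than your homotopy-pushout formulation, but these are cosmetic variations on the same argument.
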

\begin{proof}
For a subset $A\subset \{0,\dots, n\}$, denote by $\Ups^n_A:=(\bigcup_{j\in A}\delta_j\wh\Delta^{n-1})\cup \Delta^n$ the subsimplicial set $\Ups^n_A\subset\EZ^n$, given by $\Delta^n$ with ``thickened'' boundary components determined by $A$. In particular, $\Ups^n_{\{\}}=\Delta^n$ and $\Ups^n_{\{0,\dots, n\}}=\wh\Theta^n$. (Note, that $\Ups^n_A$ may be explicitly described to have $p$-simplicies given by sequences $(i_0,\dots, i_p)\in \{0,\dots,n\}^p$ such that either $i_0\leq\dots \leq i_p$, or there exists an element $i\in A$ so that $i_0\neq i, \dots, i_p\neq i$, or both.) We show that the $|\Ups^n_A|$ are contractible for all $n$ and $A$. Since all $|\Ups^n_A|$ are CW-complexes, this is equivalent to showing that the $|\Ups^n_A|$ are connected and have zero homotopy groups.

We will repeatedly use the fact, if $X$, $Y$, $X\cap Y$ and $X\cup Y$ are CW-complexes, and $X$, $Y$, $X\cap Y$ are contractible, then $X\cup Y$ is also contractible (which follows, since $X\cup Y$ is certainly connected, has vanishing $\pi_1$ due to van Kampen, vanishing homology groups due to Mayer-Vietoris, and thus vanishing homotopy groups due to Hurewicz).

When $n=1$, using that $\EZ^0=\Delta^0$, we have for any $A\subset \{0,1\}$ that $\Ups^1_{A}=\Delta^1$, and $|\Delta^1|$ is contractible.

Now, for $n>1$, assume by induction, that the $|\Ups^k_B|$ are contractible for all $k<n$ and all $B\sub\{0,\dots,k\}$. We perform a second induction on the number of elements of $A\sub \{0,\dots, n\}$. First, note that $\Ups^n_{\{\}}=\Delta^n$, and $|\Delta^n|$ is contractible. Thus, assume by induction that all $|\Ups^n_{A}|$ with $|A|<\ell$ are contractible. Now, let $A=\{i_1,\dots, i_\ell\}\sub \{0,\dots, n\}$ be an $\ell$-element set with, say, $i_1<\dots<i_\ell$. Writing $\Ups^n_{\{i_1,\dots, i_\ell\}}=\Ups^n_{\{i_1,\dots, i_{\ell-1}\}}\cup \delta_{i_\ell}\EZ^{n-1}$, we know by induction that $|\Ups^n_{\{i_1,\dots, i_{\ell-1}\}}|$ is contractible, and also that $|\delta_{i_\ell}\EZ^{n-1}|\approx |\EZ^{n-1}|$ is contractible (which was reviewed in the proof of proposition \ref{PROP:whK-is-Kan}). Furthermore, $\Ups^n_{\{i_1,\dots, i_{\ell-1}\}}\cap \delta_{i_\ell}\EZ^{n-1}=\delta_{i_\ell}\Ups^{n-1}_{\{i_1,\dots, i_{\ell-1}\}}\cong\Ups^{n-1}_{\{i_1,\dots, i_{\ell-1}\}}$, and we know by the first induction that $|\Ups^n_{\{i_1,\dots, i_{\ell-1}\}}|\cap |\delta_{i_\ell}\EZ^{n-1}|=|\Ups^{n-1}_{\{i_1,\dots, i_{\ell-1}\}}|$ is contractible as well. Thus, by the above fact, we see that $|\Ups^n_{\{i_1,\dots, i_\ell\}}|=|\Ups^n_{\{i_1,\dots, i_{\ell-1}\}}|\cup |\delta_{i_\ell}\EZ^{n-1}|$ is also contractible.
\end{proof}
We are now ready to prove proposition \ref{PROP:X=whX}.
\begin{proof}[Proof of proposition \ref{PROP:X=whX}]
Since both $K$ and $\ov K$ are Kan complexes, it suffices to show that $F^\sharp:\ov K\to K$ induces isomorphisms on all simplicial homotopy groups (since these coincide with the homotopy groups of their geometric realizations; see \cite[theorems 16.1, 16.6]{May}).

First, for $n=0$, $F$ induces a map $ \pi_0(\ov K)\to \pi_0(K)$ which is onto since $\wh \Delta^0=\Delta^0$ and thus $\ov K_0=K_0$. To see that the induced map $ \pi_0(\ov K)\to \pi_0(K)$ is one-to-one, assume $a, b\in K_0$ are equivalent $a\sim b$ in $\pi_0(K)$. Since $K$ is a Kan complex, this means that (instead of a sequence of $1$-simplicies) there exists a single $c\in K_1$ so that $d_0(c)=a$ and $d_1(c)=b$. We need to check that $a\sim b$ in $\pi_0(\ov K)$, i.e., there exists a $\ov c\in\ov K_1$ with $d_0(\ov c)=a$ and $d_1(\ov c)=b$. Thus we need a simplicial set map $\Delta^1\to \ov K$, i.e., a map $\EZ^1\to K$ making the following diagram commute
\[
\xymatrix{\wh\Theta^1=\Delta^1\cup \delta_0\wh \Delta^0\cup \delta_1\wh\Delta^0 \ar[rr]^{\hspace{1.5cm} c\cup a\cup b} \ar[d] &&  K \ar[d]\\
\wh\Delta^1 \ar[rr]  \ar@{-->}[rru] && {*}}
\]
Note that the top arrow is well defined, and, since the left map is a trivial cofibration (i.e.  injective and a weak equivalence) and $K$ is a Kan complex, it follows that it lifts to a map $\wh\Delta^1\to K$, as needed.

Now, for $n\geq 1$, $F$ induces a map $\pi_n(\ov K,*)\to  \pi_n(K,*)$ which is onto: if $c\in K_n$ with $d_i(c)=*$ for all $i$, represents an element of $\pi_n(K,*)$, then we want to produce a $\ov c\in \ov K_n$, i.e., $\ov c:\EZ^n\to K$, with $d_i(\ov c)=*$ for all $i$ and which restricts to $c$ under $F$. Thus, we need to find a lift making the following diagram commute
\[
\xymatrix{\wh\Theta^n=\Delta^n\cup \delta_0\wh \Delta^{n-1}\cup\dots\cup \delta_n\wh\Delta^{n-1} \ar[rrr]^{\hspace{2.5cm} c\cup *\cup \dots\cup *} \ar[d] &&&  K \ar[d]\\
\EZ^n \ar[rrr]  \ar@{-->}[rrru] &&& {*}}
\]
Again, the top arrow is well defined, since $c$ restricts trivially to its boundaries. Just as before, we can find a lift, because $\wh\Theta^n\to \EZ^n$ is a trivial cofibration and $K$ is a Kan complex. Finally, we need to check that $F$ induces a map $\pi_n(\ov K,*)\to  \pi_n(K,*)$, which is one-to-one. Since this map is a map of groups, it suffices to check that the kernel is trivial. More explicitly, we need to show that if $\ov c\in K_n$ with $d_i(\ov c)=*$ for all $i$ represents a class of $\pi_n(\ov K,*)$, which maps to $c=\ov c\circ F^n:\Delta^n\stackrel{F^n}\to \EZ^n\stackrel{\ov c}\to K$ which is trivial in $\pi_n(K,*)$, then $\ov c$ is trivial in $\pi_n(\ov K,*)$. For $c$ to be trivial in $\pi_n(K,*)$ means that there is an $n+1$ simplex $q\in K_{n+1}$ so that $d_0(q)=c$ and $d_i(q)=*$ for all $i\geq 1$. We thus have the setup for the following diagram:
\[
\xymatrix{\wh\Theta^{n+1}=\Delta^{n+1}\cup \delta_0\wh \Delta^{n}\cup\delta_1\wh \Delta^{n}\cup\dots\cup \delta_n\wh\Delta^{n} \ar[rrr]^{\hspace{3cm} q\cup \ov c\cup *\cup \dots\cup *} \ar[d] &&&  K \ar[d]\\
\EZ^{n+1} \ar[rrr]  \ar@{-->}[rrru] &&& {*}}
\]
Since $\wh\Theta^{n+1}\to \EZ^{n+1}$ is a trivial cofibration and $K$ is a Kan complex, there exists a lift $\ov q\in \ov K_{n+1}$ with $d_0(\ov q)=\ov c$ and $d_i(\ov q)=*$ for all $i\geq 1$. This shows that $\ov c$ does indeed represent the trivial class in $\pi_n(\ov K,*)$.
\end{proof}

\section{Explicit Description of Totalization}\label{SEC: explicit tot}

We now review the notion of totalization of a cosimplicial simplicial set.
\subsection{\bf Totalization}\label{SUBSEC:totalization}

We recall from \cite[Definition D.1]{GMTZ} and \cite[Definition 18.6.3]{Hirs} the definition of totalization. Let $K^\bu:\Del\to \sSet$ be a cosimplicial simplicial set, i.e., $K^\ell:=K([\ell])$ is a simplicial set $K^\ell=K^\ell_\bu$. Then, the totalization $\Tot (K^\bu_\bu)$ of $K$ is defined as the simplicial set, which is the equalizer of the maps
\begin{equation}\label{DEF:Tot}
\Tot (K^\bu_\bu)\to \prod_{[\ell]\in {\bf Obj}(\Del)} (K^\ell)^{\Delta^{\ell}} \stackrel[\psi]{\phi}{\rightrightarrows} \prod_{\rho:[n]\to [m]} (K^m)^{\Delta^{n}}
\end{equation}
Here, by definition, $(K^p)^{\Delta^q}$ is the simplicial set whose $n$-simplicies are simplical set maps $((K^p)^{\Delta^q})_n=\sSet((\Delta^n\times \Delta^q)_\bu, K^p_\bu)$. Then a $k$-simplex in the totalization is given by some collection 
\begin{equation}\label{EQU:k-simplex-in-Tot}
\{x^{(k,\ell)}\}_{\ell\geq 0}, \text{ where } x^{(k,\ell)} \in \sSet(\Delta^k \times \Delta^{\ell} , K^{\ell}),
\end{equation}
satisfying the coherence condition that they are in the above equalizer. Explicitly, for a fixed $j=0,\dots, \ell+1$ the map $\delta_j:[\ell]\to [\ell+1]$ which skips $j$, induces the maps
\begin{align}\label{EQU:d_j}
x^{(k,\ell+1)}\in && \sSet(\Delta^k\times \Delta^{\ell+1}, K^{\ell+1})&\stackrel {d_j}\to \sSet(\Delta^k\times \Delta^{\ell}, K^{\ell+1})\\ 
\label{EQU:d^j}
x^{(k,\ell)}\in &&\sSet(\Delta^k\times \Delta^{\ell}, K^{\ell})&\stackrel {d^j}\to \sSet(\Delta^k\times \Delta^{\ell}, K^{\ell+1})
\end{align}
Then, for $x^{(k,\ell+1)}$ and $x^{(k,\ell)}$ as above,
\begin{equation}\label{EQU:coherence}
d_j(x^{(k,\ell+1)})=d^j(x^{(k,\ell)}).
\end{equation}

Thus, a $k$-simplex, $\{x^{(k, \ell)}\}_{\ell=0,1,\dots}$, in the totalization of a cosimplicial simplicial set, $Tot\left( K^{\bullet}_{\bullet} \right)$ is given by maps $x^{(k,\ell)} \in \sSet(( \Delta^k \times \Delta^{\ell})_\bu , K^{\ell}_{\bullet})$ for each $\ell=0,1,\dots$, which can be thought of as a coherent ``decoration'' of the simplicial sets $\Delta^k \times \Delta^{\ell}$, for $\ell=0,1,\dots$, by simplices in $K^{\ell}_{\bullet}$.

\subsection{\bf Simplicies of $\Delta^k\times \Delta^\ell$}\label{REM:Tot(K) simplices}
We now recall that there is a nice book-keeping device for the simplicies of $\Delta^k \times \Delta^\ell$.  In fact, the $p$-simplicies of $\Delta^k \times \Delta^\ell$ can be described by non-decreasing paths with $p+1$ vertices in a $(k+1)\times (\ell+1)$ grid; we also call this a $p$-path. For example, the maximally non-degenerate $(4+7)$-simplicies of $\Delta^4 \times \Delta^7$ can be labeled by paths\footnote{Informally, this path might be referred to as a ``taxi-cab''path as it only moves in a rectangular fashion.} through a $(4+1)\times (7+1)$ grid, necessarily starting from  \scalebox{0.8}{$\bmat{0 \\ 0}$} and ending at \scalebox{0.8}{$\bmat{4\\ 7}$}. For example, the following path of labels, which we denote by \scalebox{0.8}{$\bmat{[c|c|c|c|c|c|c|c|c|c|c|c]
   0 & 1 & 1 & 1 & 2 & 3 & 4 & 4 & 4 & 4 & 4 & 4\\ 0 & 0 & 1 & 2 & 2 & 2 & 2 & 3 & 4 & 5 & 6 & 7}$} labels an element of $(\Delta^4\times \Delta^7)_{11}$:
\begin{equation}\label{EQ: Delta Grid} \begin{tikzpicture}
\foreach \x in {0,1,...,7}
\foreach \y in {0, 1, ..., 4}
{
{ 
\node[draw, rectangle, fill=white!30] at ($({\x},{4-\y})$) {\scalebox{.6}{$\bmat{{\y}\\ {\x}}$}};}
  \coordinate (A) at (0, 4);
  \coordinate (B) at (2, 3);
  \coordinate (C) at (7,0);
   \begin{scope}[on background layer]
        \draw (A) |- (B) |-(C);
    \end{scope}
}
\end{tikzpicture} \end{equation}
We can apply $x^{(4,7)} \in \sSet( \Delta^4 \times \Delta^7, K^7)$ to this path, which will give an element $x^{(4,7)}_{\scalebox{0.6}{$\bmat{[c|c|c|c|c|c|c|c|c|c|c|c]   0 & 1 & 1 & 1 & 2 & 3 & 4 & 4 & 4 & 4 & 4 & 4\\ 0 & 0 & 1 & 2 & 2 & 2 & 2 & 3 & 4 & 5 & 6 & 7}$}}\in K^7_{11}$ (note the simplicial degree $11$ comes from the $11$-path with $12$ vertices). Note that, just as the simplices of the standard $n$-simplex have direction, these paths must be non-decreasing in both directions. Additionally, the faces of a $p$-simplex of $\Delta^k\times \Delta^\ell$ given by a path would consist of sub-sequences of that path, e.g., \scalebox{0.8}{$\bmat{[c|c|c|c|c|c]  1 & 1 & 2  & 4 & 4 & 4\\ 0 &  2 & 2 & 3 & 4 & 6}$} describes a $5$-simplex in $(\Delta^4\times\Delta^7)_5$ which is a lower face of  the above $11$-simplex. Degenerate simplicies are described by paths where at least one of the indices is repeated, e.g., \scalebox{0.8}{$\bmat{[c|c|c|c|c]  1 & 2  & 4 & 4 & 4 \\ 2 & 2 & 3 & 3 & 6 }$}.

Using this notation, the coherence condition \eqref{EQU:coherence} can be stated more precisely as follows. Let $K^{\bullet}_{\bullet}$ be a cosimplicial simplicial set and let $\delta_j:[\ell]\to [\ell+1]$ be the map that skips $j$. We have the coface maps, $d^j: K^{\ell}_{\bullet} \to K^{\ell+1}_{\bullet}$, as well as the maps $d_j$ in \eqref{EQU:d_j} given by precomposition with $\Delta^{\ell}_{\bu} \to \Delta^{\ell+1}_{\bu}$. Then we can explicitly describe the $k$-simplices of the {totalization}, $Tot\left( K^{\bullet}_{\bullet} \right)_k$, as collections $\{x^{(k, \ell)} \in \sSet(\Delta^{k} \times \Delta^{\ell}, K^{\ell}_{\bullet})\}_{\ell=0,1,\dots}$, which, applied to $p$-simplices of $\Delta^{k} \times \Delta^{\ell}$ labeled by the paths \scalebox{0.8}{$\bmat{[c|c|c]  \al_0& \dots & \al_p\\ \be_0 &  \dots & \be_p }$} with $0\leq \al_0\leq \dots\leq \al_p\leq k$ and $0\leq \be_0\leq\dots \leq \be_p\leq \ell$ as described above, assign elements $x_{\scalebox{0.6}{$\bmat{[c|c|c]    \al_0& \dots & \al_p\\ \be_0 &  \dots & \be_p }$}}^{(k, \ell)} \in  K^{\ell}_{p} $, satisfying 
\begin{equation}\label{EQ:Tot explicit coherence}  x_{\scalebox{0.6}{$\bmat{[c|c|c] \al_0 &  \dots & \al_p  \\  \delta_j(\be_0)& \dots & \delta_j(\be_p) }$}}^{(k, \ell+1)}  =d^j\left( x_{\scalebox{0.6}{$\bmat{[c|c|c]\al_0& \dots & \al_p \\ \be_0 &  \dots & \be_p  }$}}^{(k, \ell)}\right) \quad\in K^{\ell+1}_p
\end{equation}
For example, for $k=2$, we have the following assignments for $\ell = 0,1$

\begin{equation*}
 \resizebox{12cm}{!}{\begin{tikzpicture}[baseline={([yshift=-.5ex]current bounding box.center)},vertex/.style={anchor=base,
     circle,fill=black!25,minimum size=18pt,inner sep=2pt}]   
 \def\top{6};
 \def\bot{0};
 \def\mid{3};
 \def\dep{2.5};
   \foreach \t in {2,10}
    {
        \path[shorten >=0.2cm,shorten <=0.2cm,<-]  (\t,\top)       edge (\t,\bot);   
        \fill (\t, \bot)circle (1pt);
        \fill (\t, \top)circle (1pt);
        \path[shorten >=0.2cm,shorten <=0.2cm,<-]  (\t,\top)       edge (\t- \dep,\mid);   
        \path[shorten >=0.2cm,shorten <=0.2cm,<-]  (\t- \dep,\mid)       edge (\t,\bot); 
        \fill (\t- \dep,\mid)circle (1pt);
        }
   \foreach \t in {6}
    {
                \path[gray!60, shorten >=0.2cm,shorten <=0.2cm,<-]  (\t,\top)       edge (\t,\bot);   
        \fill (\t, \bot)circle (1pt);
        \fill (\t, \top)circle (1pt);
        \path[shorten >=0.2cm,shorten <=0.2cm,<-]  (\t,\top)       edge (\t- \dep,\mid);   
        \path[shorten >=0.2cm,shorten <=0.2cm,<-]  (\t- \dep,\mid)       edge (\t,\bot); 
        \fill (\t- \dep,\mid)circle (1pt);
        }
        
                \fill[white] (6, \mid)circle (3 pt);
     \path[shorten >=0.2cm,shorten <=0.2cm,<-]  (6,\top)       edge (10,\top);   
     \path[shorten >=0.2cm,shorten <=0.2cm,<-]  (6-\dep,\mid)       edge (10-\dep,\mid);   
     \path[shorten >=0.2cm,shorten <=0.2cm,<-]  (6,\bot)       edge (10,\bot);       
     
\node[above right] at (2,\top) {$x_{\scalebox{0.5}{$\bmat{[c] 0\\ 0   }$}}^{(2, 0)}$};
\node[above left, xshift=0.3cm] at (2- \dep,\mid) {$x_{\scalebox{0.5}{$\bmat{[c] 1\\ 0   }$}}^{(2, 0)}$};
\node[below right] at (2,\bot) {$x_{\scalebox{0.5}{$\bmat{[c] 2\\ 0   }$}}^{(2, 0)}$};

\node at (1.25,4.8)  {\contour{white}{$x_{\scalebox{0.5}{$\bmat{[c|c] 1&0\\ 0&0   }$}}^{(2, 0)}$}};
\node at (1.25,1.2)  {\contour{white}{$x_{\scalebox{0.5}{$\bmat{[c|c] 1&2\\ 0&0   }$}}^{(2, 0)}$}};
\node at (2.2,3)  {\contour{white}{$x_{\scalebox{0.5}{$\bmat{[c|c] 0&2\\ 0&0   }$}}^{(2, 0)}$}};

\node at (1,3)  {\contour{white}{$x_{\scalebox{0.5}{$\bmat{[c|c|c] 0& 1&2\\ 0 & 0&0   }$}}^{(2, 0)}$}};

     \path[shorten >=0.2cm,shorten <=0.2cm,<-]  (6,\top)       edge (10 - \dep,\mid);   
     \path[shorten >=0.2cm,shorten <=0.3cm,<-]  (6 - \dep,\mid)       edge (10,\bot);

\node[above right] at (6,\top) {$x_{\scalebox{0.5}{$\bmat{[c] 0\\ 0   }$}}^{(2, 1)}$};
\node[above left, xshift=0.3cm] at (6- \dep,\mid) {$x_{\scalebox{0.5}{$\bmat{[c] 1\\ 0   }$}}^{(2, 1)}$};
\node[below right] at (6,\bot) {$x_{\scalebox{0.5}{$\bmat{[c] 2\\ 0   }$}}^{(2, 1)}$};

\node[above right] at (10,\top) {$x_{\scalebox{0.5}{$\bmat{[c] 0\\ 1   }$}}^{(2, 1)}$};
\node[right] at (10- \dep,\mid) {$x_{\scalebox{0.5}{$\bmat{[c] 1\\ 1   }$}}^{(2, 1)}$};
\node[below right] at (10,\bot) {$x_{\scalebox{0.5}{$\bmat{[c] 2\\ 1   }$}}^{(2, 1)}$};

\node[gray!60] at (9,\top -0.5) {$x_{\scalebox{0.5}{$\bmat{[c|c|c] 0 & 0 & 2\\0 & 1&  1   }$}}^{(2, 1)}$};
\node[gray!60] at (5,\mid) {$x_{\scalebox{0.5}{$\bmat{[c|c|c] 0 & 1 & 2\\0 & 0&  0   }$}}^{(2, 1)}$};
\node[gray!60] at (6,\top-1.5)  {\contour{white}{$x_{\scalebox{0.5}{$\bmat{[c|c] 0&2\\  0&0   }$}}^{(2, 1)}$}};

\node at (6.7,\mid-2.25)  {$x_{\scalebox{0.5}{$\bmat{[c|c|c] 1&2&2\\  0&0&1   }$}}^{(2, 1)}$};
\node at (7,\mid-1)  {$x_{\scalebox{0.5}{$\bmat{[c|c|c] 1&1&2\\  0&1&1   }$}}^{(2, 1)}$};
\node at (7.5,\top-1)  {$x_{\scalebox{0.5}{$\bmat{[c|c|c] 0&0&1\\  0&1&1   }$}}^{(2, 1)}$};
\node at (5.5,\mid+0.75)  {$x_{\scalebox{0.5}{$\bmat{[c|c|c] 0&1&1\\  0&0&1   }$}}^{(2, 1)}$};
\node at (9.25,\mid+0.5)  {$x_{\scalebox{0.5}{$\bmat{[c|c|c] 0&1&2\\  1&1&1   }$}}^{(2, 1)}$};

\node at (6- \dep + 1,\top-1.5)  {\contour{white}{$x_{\scalebox{0.5}{$\bmat{[c|c] 0&1\\  0&0   }$}}^{(2, 1)}$}};
\node at (6- \dep + 1,\bot+1.5)  {\contour{white}{$x_{\scalebox{0.5}{$\bmat{[c|c] 1&2\\  0&0   }$}}^{(2, 1)}$}};
\node at (8,\top)  {\contour{white}{$x_{\scalebox{0.5}{$\bmat{[c|c] 0&0\\  0&1   }$}}^{(2, 1)}$}};
\node at (8.5,\bot)  {\contour{white}{$x_{\scalebox{0.5}{$\bmat{[c|c] 2&2\\  0&1   }$}}^{(2, 1)}$}};
\node[gray!60] at (8,\bot +0.5) {$x_{\scalebox{0.5}{$\bmat{[c|c|c] 0& 2 & 2\\ 0 & 0 & 1   }$}}^{(2, 1)}$};

\node at (10,\mid-1)  {\contour{white}{$x_{\scalebox{0.5}{$\bmat{[c|c] 0&2\\  1&1   }$}}^{(2, 1)}$}};
\node at (5.45,\mid-1)  {\contour{white}{$x_{\scalebox{0.5}{$\bmat{[c|c] 1&2\\  0&1   }$}}^{(2, 1)}$}};
\node at (6.25,\mid)  {\contour{white}{$x_{\scalebox{0.5}{$\bmat{[c|c] 1&1\\  0&1   }$}}^{(2, 1)}$}};
\node at (7,\mid+1)  {\contour{white}{$x_{\scalebox{0.5}{$\bmat{[c|c] 0&1\\  0&1   }$}}^{(2, 1)}$}};
\node at (9,\mid+1.5)  {\contour{white}{$x_{\scalebox{0.5}{$\bmat{[c|c] 0&1\\  1&1   }$}}^{(2, 1)}$}};
\node at (9,\mid-1.5)  {\contour{white}{$x_{\scalebox{0.5}{$\bmat{[c|c] 1&2\\  1&1   }$}}^{(2, 1)}$}};
 \path[gray!60, shorten >=0.2cm,shorten <=0.3cm,<-]  (6, \top)       edge (10,\bot);   
\node[gray!60, xshift = 0.1cm] at (8.5,\mid-0.5)  {\contour{white}{$x_{\scalebox{0.5}{$\bmat{[c|c] 0&2\\  0&1   }$}}^{(2, 1)}$}};
\end{tikzpicture}}
\end{equation*}   
 
As an example, for $\delta_0:[0]\to [1]$, equation \eqref{EQ:Tot explicit coherence} yields $x_{\scalebox{0.6}{$\bmat{[c|c|c] 0& 1 & 2\\ 1 &  1 & 1   }$}}^{(2, 1)}  =d^0\left( x_{\scalebox{0.6}{$\bmat{[c|c|c]  0& 1 & 2\\ 0 &  0 & 0   }$}}^{(2, 0)}\right)$, which relates the cells for different $\ell$'s.

Note, that for a fixed $k$ and $\ell$, the $x_{\scalebox{0.6}{$\bmat{[c|c|c]  \al_0& \dots & \al_p\\ \be_0 &  \dots & \be_p   }$}}^{(k, \ell)} \in  K^{\ell}_{p}$ are in fact determined by the maximal paths $x_{\scalebox{0.6}{$\bmat{[c|c|c]  \al_0& \dots & \al_{k+\ell}\\ \be_0 &  \dots & \be_{k+\ell}  }$}}^{(k, \ell)} \in  K^{\ell}_{k+\ell}$, since each $p$-path is a subpath of a maximal path and so the $p$-cell is in the image of some face map $K^\ell_{k+\ell}\to K^\ell_p$ for some map $[p]\to [k+\ell]$.

\begin{example}\label{EXA:Tot-open-cover}
For example, for a simplicial presheaf $\F:\CMan^{op}\to\sSet$, and an open cover $\mc U=\{U_i\}_{i\in \mc I}$ of $X\in \CMan$, and we take
$$K^\ell_p=\F_p(\CN \mc U_\ell)=\prod_{i_0,\dots,i_\ell \in \mc I}\F_p(U_{i_0,\dots, i_\ell}).$$
In this case a $p$-cell in $x\in K^\ell_p$ is given by $x=\{x_{i_0,\dots, i_\ell}\}$, where for each $(\ell+1)$-fold intersection $U_{i_0,\dots, i_\ell}$, $x_{i_0,\dots, i_\ell} \in \F_p(U_{i_0,\dots, i_\ell})$ is a $p$-cell.  Note, that the map $d^j:K^\ell \to K^{\ell+1}$ in \eqref{EQU:d^j} and \eqref{EQ:Tot explicit coherence} is induced by the inclusions $incl: U_{i_0,\dots,i_{\ell+1}}\hookrightarrow U_{i_0,\dots,\widehat{i_j},\dots,i_{\ell+1}}$ as $\F_p(incl):\F_p(U_{i_0,\dots,\widehat{i_j},\dots,i_{\ell+1}})\to \F_p(U_{i_0,\dots,i_{\ell+1}})$. In particular, continuing the example from the figure above, $x_{\scalebox{0.6}{$\bmat{[c|c|c]  0& 1 & 2\\ 0 &  0 & 0   }$}}^{(2, 0)}$ and $x_{\scalebox{0.6}{$\bmat{[c|c|c] 0& 1 & 2\\ 1 &  1 & 1   }$}}^{(2, 1)}$ have components, 
\[ x_{\scalebox{0.6}{$\bmat{[c|c|c]  0& 1 & 2\\ 0 &  0 & 0   }$};i}^{(2, 0)} \in  \F_2(U_{i}) \quad  x_{\scalebox{0.6}{$\bmat{[c|c|c] 0& 1 & 2\\ 1 &  1 & 1   }$};i_0i_1}^{(2, 1)} \in  \F_2(U_{i_0i_1}) 
\] 
respectively and the compatibility of equation \eqref{EQ:Tot explicit coherence} now yields, 
\[x_{\scalebox{0.6}{$\bmat{[c|c|c] 0& 1 & 2\\ 1 &  1 & 1   }$};i_0i_1}^{(2, 1)}  =d^0\left( x_{\scalebox{0.6}{$\bmat{[c|c|c]  0& 1 & 2\\ 0 &  0 & 0   }$}}^{(2, 0)}\right)= \restr{x_{\scalebox{0.6}{$\bmat{[c|c|c]  0& 1 & 2\\ 0 &  0 & 0   }$}; i_1}^{(2, 0) }}{U_{i_0i_1}}
\]

\end{example}

\subsection{\bf Totalization for the case $K=\sSet(\EZ,\tilde K)$}\label{SUBSEC:Tot(sSet(EZ,Ktilde)}
We are interested in the totalization of $K_{\bullet}^{\bullet}=\PERF^\EZ(\CN U)=\sSet(\EZ,\PERF(\CN U))$. Thus, assume now, that we have a cosimplicial simplicial set $K_{\bullet}^{\bullet}$, which is of the form $K^\ell_p:=\sSet ( \EZ^p, \tilde{K}^{\ell} )$ for some other cosimplicial simplicial set $\tilde{K}^\bu_\bu$.  By rewriting simplicial sets as colimits of their simplices, and using continuity of the hom-functor in the category $\sSet$, we see that, 
\begin{align}\label{EQU:Tot-for-Deltahat}
\sSet \left( \Delta^k \times \Delta^{\ell}, K^{\ell} \right) &= \sSet \left( \colim\limits_{\Delta^p \to  \Delta^k \times \Delta^{\ell}} \Delta^p, K^{\ell}\right)=  \lim\limits_{\Delta^p \to  \Delta^k \times \Delta^{\ell}} \sSet \left( \Delta^p, K^{\ell} \right)\\
\nonumber &= \lim\limits_{\Delta^p \to  \Delta^k \times \Delta^{\ell}} K^{\ell}_{p} = \lim\limits_{\Delta^p \to  \Delta^k \times \Delta^{\ell}} \sSet \left( \EZ^{p} , \tilde{K}^{\ell} \right)\\
\nonumber
&=\sSet \left(   \colim\limits_{\Delta^p \to  \Delta^k \times \Delta^{\ell}}  \EZ^{p} , \tilde{K}^{\ell} \right)
\end{align} 
We see from the above identification that decorations of simplicial sets $ \Delta^k \times \Delta^{\ell}$ by simplices in  $K^{\ell}_{\bullet}$ is equivalent to first glueing the simplicial sets $\EZ^{n}$ along the corresponding $\Delta^n$ sitting inside $ \Delta^k \times \Delta^{\ell}$, and then decorating this colimit made of various $\EZ^{n}$ by simplices in $\tilde{K}^{\ell}$. Using the description of $\EZ$ from example \ref{EXA:EZ}, it now follows that the $k$-simplices of $\Tot \left( K_{\bullet}^{\bullet} \right)$ are in fact given by\label{PAGE:Tot-for-Deltahat-map} $x_{\scalebox{0.6}{$\bmat{[c|c|c] \al_0& \dots & \al_p\\ \be_0 &  \dots & \be_p    }$}}^{(k, \ell)} \in  \tilde{K}^{\ell}_{p}$, where this time the path described by \scalebox{0.8}{$\bmat{[c|c|c]   \al_0& \dots & \al_p \\ \be_0 &  \dots & \be_p }$}  is now permitted to move horizontally and vertically in each direction in the grid, i.e., possibly decreasing, but within the indices of a non-decreasing path. For example, in the $(2+1)\times(3+1)$ grid of vertices, take the $5$-cell given the map $\Delta^5 \hookrightarrow  \Delta^2 \times \Delta^{3}$ whose non-decreasing path is \scalebox{0.8}{$\bmat{[c|c|c|c|c|c]  0 & 0 & 0  & 1 & 1 & 2\\  0 &  1 & 2 & 2 & 3 & 3 }$}. Then, for the corresponding $\EZ^{5}$, there is a non-degenerate $9$-simplex
\begin{equation}\label{EQU:x-23-in-ktilde}
x^{(2,3)}_{\scalebox{0.6}{$\bmat{[c|c|c|c|c|c|c|c|c|c]     0 & 0 & 1 & 0 & 0 & 1 & 0 & 2 & 1 & 2  \\ 0 & 1 & 3 & 1 & 2 & 2 & 1 & 3 & 3 & 3}$}} \in \tilde K^3_9,
\end{equation}
which is both increasing and decreasing using the indices of the $5$-path \scalebox{0.8}{$\bmat{[c|c|c|c|c|c]  0 & 0 & 0  & 1 & 1 & 2\\  0 &  1 & 2 & 2 & 3 & 3 }$} in $\Delta^2\times\Delta^3$. Thus, in the totalization $Tot(K)$, a $2$-simplex $x=\{x^{(2,\ell)}\}$ needs to assign such an element in $\tilde K^3_9$ to the $9$-path from  \eqref{EQU:x-23-in-ktilde}. However, note that there is no assignment to the path \scalebox{0.8}{$\bmat{[c|c|c|c]   0 & 1 & 0  & 1\\  0 &  0 & 1 & 1 }$}, because every map $\Delta^n\to \Delta^k\times \Delta^\ell$ is necessarily non-decreasing in both components and so one can never obtain both \scalebox{0.8}{$\bmat{0\\ 1}$} and \scalebox{0.8}{$\bmat{1\\0}$} in the same path. To summarize, a cell in $Tot(K)$ has to assign elements in $\tilde K$ exactly to any path which uses the indices of a non-decreasing path.

Finally, note that the coherence condition on these simplices of the totalization is the same as expressed in \eqref{EQ:Tot explicit coherence}.

\section{Totalization and Fibrant Objects}
The purpose of this appendix is to prove proposition \ref{PROP: tot cech kan}. 
\begin{proposition}\label{PROP: tot cech kan}
If $\F$ is a projectively fibrant simplicial presheaf (such as, e.g., $\F=\IVB$) then $Tot(\F(\CN U_{\bu}))$ is a Kan complex.
\end{proposition}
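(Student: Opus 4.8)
The plan is to show that $Tot(\F(\CN U_\bu))$ satisfies the right lifting property against all horn inclusions $\Lambda^n_i \hookrightarrow \Delta^n$. Since $\F$ is projectively fibrant, each simplicial set $\F(U_{i_0,\dots,i_\ell})$ is a Kan complex, and hence each cosimplicial level $\F(\CN U_\bu)^\ell = \prod_{i_0,\dots,i_\ell}\F(U_{i_0,\dots,i_\ell})$ is a Kan complex (products of Kan complexes are Kan). The key general fact I would invoke is that if $K^\bu$ is a \emph{Reedy fibrant} cosimplicial simplicial set, then $Tot(K^\bu)$ is a Kan complex; this is standard (e.g. \cite[Theorem 18.7.4, Ch.~18]{Hirs}, together with the fact that $Tot$ is the right derived functor of the totalization in the Reedy structure, or directly from $Tot$ being a weighted limit along the cofibrant cosimplicial object $\Delta^\bu$). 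So the real content is to verify that $\F(\CN U_\bu)$ is Reedy fibrant.

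Reedy fibrancy of a cosimplicial simplicial set $K^\bu$ means that for every $n$ the matching map $K^n \to M^n K$ is a Kan fibration, where $M^n K = \lim_{[n]\twoheadrightarrow[k], k<n} K^k$ is the $n$-th matching object built from the codegeneracies. For the \v{C}ech cosimplicial object $\CN U_\bu$ the codegeneracy maps $s^j : \CN U_{\ell+1}\to \CN U_\ell$ are induced by the diagonal-type inclusions $U_{i_0,\dots,i_\ell}\hookrightarrow U_{i_0,\dots,i_j,i_j,\dots,i_\ell}$ (repeating an index), which on the nose send $U_{i_0,\dots, i_{\ell+1}}$ with $i_j = i_{j+1}$ isomorphically to $U_{i_0,\dots,\widehat{i_{j+1}},\dots,i_{\ell+1}}$. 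I would first observe that the matching object $M^n\big(\F(\CN U_\bu)\big)$ is the product, over \emph{all} tuples $(i_0,\dots,i_n)$, of $\F$ evaluated at the intersection indexed by the tuple \emph{with repeats collapsed}, i.e. the map $\F(\CN U_\bu)^n \to M^n(\F(\CN U_\bu))$ is the product over tuples $(i_0,\dots,i_n)$ of: the identity on $\F(U_{i_0,\dots,i_n})$ when all $i_j$ are distinct (these coordinates do not appear in $M^n$ at all), and an isomorphism $\F(U_{i_0,\dots,i_n})\xrightarrow{\cong}\F(U_{\text{distinct part}})$ when there are repeats (since in the \v{C}ech nerve the degenerate intersection literally equals the non-degenerate one after removing repeated indices). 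Hence the matching map is a product of identity maps and isomorphisms of Kan complexes, in particular a Kan fibration. This is the step I expect to be the main obstacle — not because it is deep, but because one must carefully unwind the definition of $M^n K$ as a limit over the subcategory of surjections in $\Del$, identify that limit concretely for $\CN U_\bu$, and check that the coherence data makes it a product over all $n$-tuples as claimed; getting the bookkeeping of which coordinates survive into the matching object exactly right is the fiddly part.

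With Reedy fibrancy of $\F(\CN U_\bu)$ in hand, the proof concludes: $Tot$ of a Reedy fibrant cosimplicial simplicial set is a homotopy limit and in particular fibrant, so $Tot(\F(\CN U_\bu))$ is a Kan complex. I would also remark that $\F = \IVB$ qualifies because it was shown (following Definition~\ref{DEF:PERF}, and since $\dgN(\perf(U))^\circ$ is a Kan complex by construction, with $\IVB(U)_n = \sSet(\EZ^n,\dgN(\perf(U))^\circ)$ a mapping space out of a simplicial set into a Kan complex, hence Kan) that $\IVB$ takes values in Kan complexes, so it is projectively fibrant; the same applies verbatim to $\IVB_{\le n}$, $\CohSh$, and to $\OM$. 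Alternatively, for readers who prefer to avoid the Reedy machinery, one can give the direct argument: a horn $\Lambda^k_i \to Tot(\F(\CN U_\bu))$ is, by the explicit description in Appendix~\ref{SEC: explicit tot}, a compatible family of partial decorations of the grids $\Delta^k\times\Delta^\ell$ by simplices of the Kan complexes $\F(\CN U_\bu)^\ell$, and one fills it level by level in $\ell$, at each stage choosing fillers in $\F(\CN U_\bu)^\ell$ of the horns determined by the already-constructed lower-$\ell$ data together with the boundary faces coming from the original horn, using that these matching conditions are exactly a horn-filling problem in a Kan complex; the Reedy fibrancy statement is precisely the assertion that this inductive filling never obstructs. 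I would present the Reedy-fibrancy route as the main proof and mention the direct route in a sentence.
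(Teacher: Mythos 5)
Your overall route is the same one the paper takes: reduce to showing $\F(\CN U_\bullet)$ is Reedy fibrant and then invoke that $\Tot$ sends Reedy fibrant cosimplicial simplicial sets to Kan complexes (the paper gets this latter fact by exhibiting $\Tot$ as a right Quillen adjoint for the Reedy structure, with left adjoint $X\mapsto X\times\Delta^\bullet$). Your analysis of the matching map also uses the same decomposition as the paper's Lemma~\ref{LEM: F CN Reedy}: split the indexing tuples $(i_0,\dots,i_n)$ into those with no consecutive repeats (not seen by $M^n$) and those with at least one consecutive repeat (which assemble to $M^n$, using that the \v{C}ech nerve literally collapses repeated indices).

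The one place your write-up is off is the summary sentence ``the matching map is a product of identity maps and isomorphisms of Kan complexes, in particular a Kan fibration.'' That is not the right description, and as stated it doesn't give you the fibration for the right reason: since the distinct-index coordinates genuinely do not appear in $M^n$ (as you yourself say just before), the matching map is not an isomorphism-on-each-factor. Rather, $\F(\CN U_\bullet)^n$ splits as $A\times M^n$ with $A=\prod_{i_j\neq i_{j+1}}\F(U_{i_0\cdots i_n})$, and the matching map is the projection $A\times M^n\to M^n$. A projection like this is a Kan fibration precisely because the complementary factor $A$ is a Kan complex (a product of Kan complexes), which is the point the paper makes explicit and which your conclusion sentence elides. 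So: same approach, correct conclusion, but the final inference should be repaired to ``the matching map is a projection onto the degenerate factor with Kan complementary factor, hence a Kan fibration'' rather than ``a product of identities and isos.''
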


We start with the following lemma.
\begin{lemma}
The totalization functor (see appendix \ref{SEC: explicit tot})   $\Tot: (Set^{\Delta^{op}})^\Delta \rightarrow \Set^{\Delta^{op}}$ is a right adjoint. 
\end{lemma}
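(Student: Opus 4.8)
The plan is to exhibit $\Tot$ as the right adjoint to a concrete ``product with the cosimplicial simplicial set $\Delta^\bullet$'' functor, so that it automatically preserves limits and, more to the point, detects the lifting properties we need. First I would recall the description of $\Tot$ from Appendix~\ref{SEC: explicit tot}: for a cosimplicial simplicial set $K^\bullet_\bullet$ the totalization is the equalizer \eqref{DEF:Tot}, i.e. the end $\int_{[\ell]\in\Del}(K^\ell)^{\Delta^\ell}$. The key observation is that this end computes a \emph{cotensor} (a.k.a. mapping object) for the simplicial enrichment of $(\sSet)^{\Del}$: writing $\Del^\bullet:\Del\to(\sSet)^{\Del}$ for the Yoneda-type cosimplicial object whose value at $[\ell]$ is the represented cosimplicial simplicial set $[m]\mapsto \Delta^\ell$ (more precisely $\Del\ni[\ell]\mapsto \Del(-,[\ell])$ viewed with simplicially-constant fibers, equivalently the image of the standard $\Delta^\ell$ under the constant-cosimplicial functor composed with the appropriate reindexing), one has a natural isomorphism
\[
\sSet\bigl(S,\ \Tot(K^\bullet_\bullet)\bigr)\ \cong\ (\sSet)^{\Del}\bigl(S\cdot\Del^\bullet,\ K^\bullet_\bullet\bigr),
\]
where $S\cdot\Del^\bullet$ denotes the cosimplicial simplicial set $[\ell]\mapsto S\times\Delta^\ell$. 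Thus $\Tot=\mathrm{R}$ is right adjoint to the functor $\mathrm{L}\colon \sSet\to(\sSet)^{\Del}$, $S\mapsto S\times\Delta^\bullet$ (the ``$\Delta$-thickening'' functor, which is the evident left Kan extension / copower construction).

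The main steps, in order, would be: (1)~unwind the equalizer \eqref{DEF:Tot} as an end over $\Del$, using that the two parallel arrows $\phi,\psi$ in \eqref{DEF:Tot} are precisely the two structure maps of the end diagram of the bifunctor $([\ell],[\ell'])\mapsto (K^{\ell'})^{\Delta^{\ell}}$; (2)~use the standard adjunction $\sSet(S,Y^{Z})\cong\sSet(S\times Z,Y)$ objectwise, so that an $n$-simplex of $\Tot(K)$, namely a compatible family $\{x^{(n,\ell)}\in\sSet(\Delta^n\times\Delta^\ell,K^\ell)\}_\ell$ as in \eqref{EQU:k-simplex-in-Tot}, is exactly a natural transformation $\Delta^n\times\Delta^\bullet\Rightarrow K^\bullet$ of cosimplicial simplicial sets, with the coherence condition \eqref{EQU:coherence} being exactly naturality in the cosimplicial variable; (3)~observe that the assignment $S\mapsto S\times\Delta^\bullet$ is visibly functorial and that the bijection $\sSet(S,\Tot K)\cong(\sSet)^{\Del}(S\times\Delta^\bullet,K)$ is natural in both $S$ and $K$, which is the definition of an adjunction; (4)~conclude that $\Tot$, being a right adjoint, preserves all limits — in particular it preserves the products and equalizers appearing later — and record this. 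For the immediate downstream use (Proposition~\ref{PROP: tot cech kan}) I would also note that $\mathrm{L}(S)=S\times\Delta^\bullet$ sends monomorphisms to Reedy cofibrations (levelwise monos, which for $S\times\Delta^\bullet$ is clear since $-\times\Delta^\ell$ preserves monos) and trivial cofibrations to Reedy trivial cofibrations, so that its right adjoint $\Tot$ sends Reedy fibrant objects to Kan complexes; but strictly the lemma as stated only asks for the adjunction.

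The step I expect to be the main obstacle is bookkeeping step~(2): making the identification ``a point of the end $=$ a natural transformation of cosimplicial objects'' fully precise, because the end \eqref{DEF:Tot} is taken after transposing through the cotensor $(-)^{\Delta^\ell}$, and one must check that the two maps $\phi$ and $\psi$ transpose to the two composites $K^\ell\times\Delta^n\times\Delta^{\ell'}\rightrightarrows K^{\ell'}$ coming from a morphism $\rho\colon[\ell]\to[\ell']$ in $\Del$ acting either on $K$ or on $\Delta^{(-)}$. This is a diagram chase through the definitions \eqref{EQU:d_j}, \eqref{EQU:d^j}, \eqref{EQU:coherence}; it is routine but is the only place where one can easily make a variance error. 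Everything else — the adjunction $\sSet(S,Y^Z)\cong\sSet(S\times Z,Y)$, continuity of $\sSet(S,-)$, and the fact that a right adjoint is determined by a natural hom-isomorphism — is standard and can be cited or stated without detailed verification.
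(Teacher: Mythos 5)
Your proof is correct and takes essentially the same route as the paper: both exhibit the left adjoint as the $\Delta$-thickening functor $S \mapsto S \times \Delta^\bullet$ and derive the hom-set bijection from the levelwise simplicial cotensor adjunction $\sSet(S \times \Delta^\ell, K^\ell) \cong \sSet(S, (K^\ell)^{\Delta^\ell})$ together with the identification of $\Tot(K)$ with the end $\int_\ell (K^\ell)^{\Delta^\ell}$. The paper's proof is terser (it simply cites that $\sSet$ is a simplicial model category and that $\Tot(K)=K^\Delta$), whereas you make the end/coend bookkeeping in step~(2) explicit; that extra detail is fine but not a different argument.
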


\begin{proof}
We prove this directly by defining the left adjoint $L$.  For any simplicial set $X^\bullet$, let $L(X^\bullet)$ be the cosimplicial simplicial set $n \mapsto X^\bullet \times \Delta^n$, where $\Delta^n$ is the standard $n$-simplex.  

To show that these functors form an adjoint pair, let $X^\bullet$ be a simplicial set and $Y^\bullet_\bullet$ be a cosimplicial simplicial set.  Since $Set^{\Delta^{op}}$ is a simplicial model category (under the usual Quillen structure), $Set^{\Delta^{op}}(X \times \Delta^n, Y^n_\bullet)$ is in bijection with $Set^{\Delta^{op}}(X, (Y^n_\bullet)^{\Delta^n})$.  Since $\Tot (Y^\bullet_\bullet) = (Y^\bullet_\bullet)^\Delta$, we have our bijection.    
\end{proof}

\begin{lemma}
The functors $(L, \Tot)$ form a Quillen adjunction between the Reedy model structure \cite[section 15]{Hirs} of cosimplicial simplicial sets and the usual Quillen model structure on simplicial sets.
\end{lemma}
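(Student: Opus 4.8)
The plan is to show that the left adjoint $L$ (sending a simplicial set $X^\bullet$ to the cosimplicial simplicial set $n \mapsto X^\bullet \times \Delta^n$) is a left Quillen functor, i.e., that it preserves cofibrations and trivial cofibrations, with respect to the Reedy model structure on cosimplicial simplicial sets and the usual Quillen model structure on $\sSet$. Since $\Tot$ is its right adjoint (by the previous lemma), this gives that $(L,\Tot)$ is a Quillen adjunction. First I would recall that a map $f: X^\bullet \to Y^\bullet$ of simplicial sets is a cofibration (resp. trivial cofibration) in $\sSet$ exactly when it is a monomorphism (resp. a monomorphism and a weak equivalence). I then need to check that $L(f): L(X^\bullet) \to L(Y^\bullet)$ is a Reedy cofibration (resp. a Reedy trivial cofibration).

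For Reedy cofibrations: a map $g: A^\bullet_\bullet \to B^\bullet_\bullet$ of cosimplicial simplicial sets is a Reedy cofibration if for every $n$ the relative latching map $A^n \cup_{L_n A} L_n B \to B^n$ is a cofibration (monomorphism) in $\sSet$, where $L_n$ denotes the $n$-th latching object. The key computation is that the latching objects of $L(X^\bullet) = X^\bullet \times \Delta^\bullet$ are computed by $L_n(X^\bullet \times \Delta^\bullet) = X^\bullet \times L_n(\Delta^\bullet) = X^\bullet \times \partial\Delta^n$, since $L_n \Delta^\bullet = \partial\Delta^n$ and the product with the fixed simplicial set $X^\bullet$ commutes with the colimit defining the latching object. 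Hence the relative latching map for $L(f)$ at level $n$ is the pushout-product map
\[
(X^\bullet \times \Delta^n) \cup_{X^\bullet \times \partial\Delta^n} (Y^\bullet \times \partial\Delta^n) \longrightarrow Y^\bullet \times \Delta^n,
\]
which is the pushout-product $f \,\square\, (\partial\Delta^n \hookrightarrow \Delta^n)$. Since $\sSet$ is a monoidal model category (with the cartesian product), the pushout-product of a cofibration $f$ with the cofibration $\partial\Delta^n \hookrightarrow \Delta^n$ is a cofibration, and it is moreover a trivial cofibration as soon as $f$ is a trivial cofibration. This handles both halves at once.

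The one point requiring a little care — and what I expect to be the main (though still routine) obstacle — is verifying the identification $L_n(X^\bullet \times \Delta^\bullet) \cong X^\bullet \times \partial\Delta^n$, i.e., that the functor $(-) \times X^\bullet$ commutes with the latching colimit; this follows because colimits in $\sSet$ are computed levelwise and $(-)\times X^\bullet$ preserves colimits (it is a left adjoint, having internal hom as right adjoint), but one should state it cleanly. Once this is in place, the pushout-product argument is a direct invocation of the monoidal model category structure on $\sSet$, so I would write: since $L$ preserves cofibrations and trivial cofibrations, it is a left Quillen functor, and therefore $(L,\Tot)$ is a Quillen adjunction, as claimed.
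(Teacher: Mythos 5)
Your proof is correct, but it takes a genuinely different route from the paper's. You identify the $n$th latching object of $L(X^\bullet) = X^\bullet \times \Delta^\bullet$ as $X^\bullet \times \partial\Delta^n$ (using that $(-)\times X^\bullet$ preserves colimits and that $L_n\Delta^\bullet = \partial\Delta^n$), observe that the relative latching map of $L(f)$ is the pushout-product $f \,\square\, (\partial\Delta^n \hookrightarrow \Delta^n)$, and then invoke the pushout-product axiom for the cartesian monoidal model structure on $\sSet$ to conclude in one stroke that $L$ sends (trivial) cofibrations to Reedy (trivial) cofibrations. The paper instead takes a shortcut through Hirschhorn's Theorem 15.9.9, which characterizes Reedy cofibrations of cosimplicial simplicial sets as monomorphisms that induce an isomorphism on maximal augmentations; since $L(f)=f\times\mathrm{Id}$ is a monomorphism and both maximal augmentations are empty, this handles the cofibration case without touching latching objects, and the trivial case is finished by noting $L(f)$ is a levelwise weak equivalence. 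Your argument is more explicit and portable (it would work for cosimplicial objects in any cartesian-closed simplicial monoidal model category, since it only uses the pushout-product axiom and the computation $L_n\Delta^\bullet=\partial\Delta^n$), while the paper's is shorter because it leans on a ready-made characterization specific to cosimplicial simplicial sets. Both are sound.
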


\begin{proof}
It is enough to show that $L$ preserves cofibrations and trivial cofibrations.  Suppose $f: X^\bullet \rightarrow Y^\bullet$ is a cofibration of simplicial sets, i.e., a levelwise monomorphism.  By \cite[Theorem 15.9.9]{Hirs}, to show that $L(f)$ is a Reedy cofibration, it is enough to show that $L(f)$  is a monomorphism that takes the maximal augmentation of $L(X^\bullet)$ isomorphically onto the maximal augmentation of $L(Y^\bullet)$.  Since $L(f) = f \times Id$ and $f$ is a levelwise monomorphism, $L(f)$ is a monomorphism.  The maximal augmentation of $L(X^\bullet)$ and $L(Y^\bullet)$ are empty.  So $L$ preserves cofibrations.

Suppose $f: X^\bullet \rightarrow Y^\bullet$ is a trivial cofibration.  We need to show that $L(f): L(X^\bullet) \rightarrow L(Y^\bullet)$ is a Reedy weak equivalence. Since $L(f) = f \times Id$,  then $L_nF: X^\bullet \times \Delta^n  \rightarrow Y^\bullet \times \Delta^n$ is a weak equivalence.   
\end{proof}

\begin{lemma}\label{LEM: Tot Kan}
Let $X$ be a Reedy fibrant cosimplicial simplicial set.  Then $\Tot(X)$ is a Kan complex.
\end{lemma}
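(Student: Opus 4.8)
The plan is to exploit the Quillen adjunction $(L,\Tot)$ established in the preceding lemmas, together with the standard fact that the standard $n$-simplex $\Delta^n$ is cofibrant in the Quillen model structure on simplicial sets. First I would recall that $\Tot(X)$, as a simplicial set, is characterized by the natural bijection $\sSet(Y,\Tot(X))\cong (\sSet^{\Del})(L(Y),X)$ for any simplicial set $Y$; this is exactly the adjunction from the first lemma above. To check that $\Tot(X)$ is a Kan complex, I would verify the right lifting property against every horn inclusion $\Lambda^n_i\hookrightarrow\Delta^n$, i.e.\ that every diagram with a solid horn into $\Tot(X)$ admits a dashed filler.

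The key step is the transposition: a lifting problem of $\Lambda^n_i\hookrightarrow\Delta^n$ against $\Tot(X)\to *$ is, by the adjunction, equivalent to a lifting problem of $L(\Lambda^n_i)\hookrightarrow L(\Delta^n)$ against $X\to *$ (the terminal cosimplicial simplicial set). Since $L$ is left Quillen (by the second lemma), it preserves trivial cofibrations; and $\Lambda^n_i\hookrightarrow\Delta^n$ is a trivial cofibration of simplicial sets, so $L(\Lambda^n_i)\hookrightarrow L(\Delta^n)$ is a trivial cofibration in the Reedy model structure on cosimplicial simplicial sets. Because $X$ is assumed Reedy fibrant, $X\to *$ is a Reedy fibration, and hence the lifting problem $L(\Lambda^n_i)\to X$, $L(\Delta^n)\to *$ has a solution. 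Transposing back across the adjunction yields the desired filler $\Delta^n\to\Tot(X)$, which completes the proof that $\Tot(X)$ is Kan.

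I expect the main subtlety — though it is a routine one given the earlier lemmas — to be bookkeeping the direction of the adjunction correctly: one must transpose a lifting problem against $\Tot(X)$ into a lifting problem against $X$ using the \emph{left} adjoint $L$ on the cofibration side, rather than mistakenly trying to push $X$ across. Once the transposition is set up, the argument is purely formal: left Quillen functors preserve (trivial) cofibrations, Reedy fibrant objects have the right lifting property against Reedy trivial cofibrations, and adjunctions preserve solutions to lifting problems. No explicit simplicial computation is needed. This lemma then feeds directly into Proposition \ref{PROP: tot cech kan}, since for $\F$ projectively fibrant the cosimplicial simplicial set $\F(\CN U_\bullet)$ is Reedy fibrant.
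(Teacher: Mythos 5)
Your proof is correct and takes essentially the same approach as the paper: the paper simply invokes the fact that the right adjoint in the Quillen adjunction $(L,\Tot)$ preserves fibrations and the terminal object (hence fibrant objects), while you unpack that standard fact by transposing the horn-filling problem across the adjunction. The extra detail you supply is exactly the proof of "right Quillen functors preserve fibrant objects," so the two arguments coincide in substance.
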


\begin{proof}
Since $\Tot$ is a right adjoint, it preserves fibrations and terminal objects.  So $\Tot$ preserves fibrant objects.
\end{proof}

\begin{lemma}\label{LEM: F CN Reedy}
Let $V$ be a manifold and $U_{\bu}$ be an open cover of $V$.  Let $\F$ be a simplicial presheaf that takes values in Kan complexes.  Then $\F(\CechNerve U_{\bu}): \Delta \rightarrow Set^{\Delta^{op}}$ (see \eqref{EQ: F of CN}) is a Reedy fibrant cosimplicial simplicial set.
\end{lemma}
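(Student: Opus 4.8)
The plan is to verify the Reedy fibrancy of the cosimplicial simplicial set $\F(\CechNerve U_\bu)$ by checking that each matching map is a Kan fibration, and to exploit the fact that the matching objects in this particular cosimplicial diagram are products of values of $\F$ on open sets of the cover. Recall that a cosimplicial object $Z^\bu$ in a model category is Reedy fibrant precisely when, for every $n \geq 0$, the relative matching map $Z^n \to M^n Z$ is a fibration, where $M^n Z$ is the $n$th matching object (the limit of $Z$ over the non-identity codegeneracies out of $[n]$); see \cite[Section 15]{Hirs}. For $n = 0$ the matching object is the terminal object, so the condition there is just that $\F(\CechNerve U_0) = \prod_{i_0} \F(U_{i_0})$ is a Kan complex, which holds since $\F$ takes values in Kan complexes and products of Kan complexes are Kan.

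For the general step, I would first give an explicit description of the matching object $M^n \F(\CechNerve U_\bu)$. Since $\F(\CechNerve U_\bu)^\ell = \prod_{i_0,\dots,i_\ell} \F(U_{i_0,\dots,i_\ell})$, and the cosimplicial structure maps are induced by the inclusions of intersections, the matching object $M^n$ is computed as a limit indexed by surjections $[n] \twoheadrightarrow [k]$ with $k < n$. Concretely, a point of $M^n$ is a compatible family of sections of $\F$ over the $(n+1)$-fold intersections $U_{i_0,\dots,i_n}$ that are ``pulled back'' from lower intersections along every degeneracy, i.e. whenever two consecutive indices agree. The relative matching map then forgets, for each multi-index $(i_0,\dots,i_n)$ with all indices distinct (more precisely: with no degeneracy relation forcing it), the unconstrained factor. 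Thus the matching map is, up to isomorphism, a projection of a product of Kan complexes onto a sub-product, which is a Kan fibration. The key point to articulate carefully is that the matching object only imposes conditions on the factors indexed by ``degenerate'' multi-indices and leaves the ``non-degenerate'' factors free, so the relative matching map splits as a product of identity maps (on constrained factors) and projection-to-point maps $\F(U_{i_0,\dots,i_n}) \to \ast$ (on free factors); a product of Kan fibrations is a Kan fibration.

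The main obstacle will be bookkeeping: correctly identifying which factors of $\prod_{i_0,\dots,i_n}\F(U_{i_0,\dots,i_n})$ are constrained by the matching object and which are free, and checking that the constraint is exactly an isomorphism condition (so that the matching map restricted to those factors is an isomorphism, hence a fibration) rather than something that could fail fibrancy. This amounts to the observation that the $\CechNerve U_\bu$ is a \emph{coskeletal-in-the-degeneracy-direction} simplicial presheaf built from inclusions: the degeneracies $s_j \colon \CechNerve U_n \to \CechNerve U_{n-1}$ (read cosimplicially, these are the maps $\F(\CechNerve U_{n-1}) \to \F(\CechNerve U_n)$ induced by the equality $U_{i_0,\dots,i_j,i_j,\dots,i_n} = U_{i_0,\dots,i_j,\dots,i_n}$) realize $\F(\CechNerve U_n)$ with its degenerate factors literally equal (via pullback along an identity morphism) to factors of $\F(\CechNerve U_{n-1})$. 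Hence the matching map is split by the evident inclusion of the free factors, exhibiting it as a projection. Once this structural description is in place, the conclusion that $\F(\CechNerve U_\bu)$ is Reedy fibrant — and therefore, by Lemma \ref{LEM: Tot Kan}, that $\Tot(\F(\CechNerve U_\bu))$ is a Kan complex — is immediate. I would also remark that this is where the hypothesis ``$\F$ takes values in Kan complexes'' is used, via the fact that a product of Kan complexes is a Kan complex and a projection off such a product is a Kan fibration.
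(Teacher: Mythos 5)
Your proof is essentially the same as the paper's: both identify the matching object $M_n\F(\CechNerve U_\bu)$ with the sub-product of $\F(\CechNerve U_n)=\prod_{i_0\cdots i_n}\F(U_{i_0\cdots i_n})$ indexed by ``degenerate'' multi-indices (those with some consecutive equality $i_j=i_{j+1}$), and then observe that the relative matching map is the projection onto this sub-product, which is a Kan fibration because the complementary ``non-degenerate'' factors are Kan complexes. The one place where the paper is a bit more careful is in actually verifying the identification of the matching object: it exhibits the claimed sub-product as the terminal cone under the restriction of $\F(\CechNerve U_\bu)$ to the matching category, whereas you assert informally that the compatibility conditions cut out exactly that sub-product. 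This is where the bookkeeping you flag as the main obstacle actually lives — a degenerate multi-index can repeat an index at several positions, giving multiple surjections $[n]\twoheadrightarrow[n-1]$ whose images must agree — and the paper's universal-property check dispatches it cleanly; in a fully written version of your argument you would want to spell this out rather than leave it as ``this amounts to the observation that.''
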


\begin{proof}
This proof uses some conventions from \cite[section 15]{Hirs} for the Reedy model structure and is analogous to \cite[proposition 4.3]{BHW}. We need to show that the matching map $\F(\CechNerve U_n) \rightarrow M_n( \F(\CechNerve U_{\bu}))$ is a fibration for each $n$, where 
\[ \F(\CechNerve U_n):= \underline{sPre}\left( {\coprod_{i_0 \cdots i_n}}y U_{i_0 \cdots i_n}, \F\right) = {\prod_{i_0 \cdots i_n}}\F\left( U_{i_0 \cdots i_n}\right).\]

Write $\CechNerve U_n$ as the coproduct
\begin{eqnarray*}
 \CechNerve U_n &=& \underset{i_j \neq i_{j+1}}  {\coprod_{i_0 \cdots i_n}} yU_{i_0 \cdots i_n} \coprod \left(  \coprod_{k= 1}^n  \underset {i_{j_1} = i_{j_1+1}, \cdots, i_{j_k} = i_{j_k+1}} { \coprod_{i_0 \cdots i_n}} yU_{i_0 \cdots i_n} \right) 
 \end{eqnarray*}
and apply $\F$ to get 
 \begin{eqnarray*}
\prod_{\substack{i_0 \cdots i_n \\ i_j \neq i_{j+1} }}   \F  \left(  U_{i_0 \cdots i_n}  \right) \times  \prod_{k= 1}^n  \left(  \prod_{\substack{i_0 \cdots i_n \\ i_{j_1} = i_{j_1+1}, \cdots, i_{j_k} = i_{j_k+1} }}   \F \left(U_{i_0 \cdots i_n} \right)\right)
\end{eqnarray*}

First, note that the right side of this cartesian product is the matching object at $n$, $M_n \F ( \CechNerve U)$.  This is seen directly by showing that this product is the terminal object in the category of cones under $\F( \CechNerve U)$ restricted to the matching category $\partial ([n] \downarrow \overset{\leftarrow}{\Delta} )$ (see \cite[definition 15.2.3.2]{Hirs}).  The product 
\begin{eqnarray*}
\xymatrix{
  & &\overset{n}{ \underset{k=1} {\prod }}  \left( \prod\limits_{ \substack{i_0 \cdots i_n \\ i_{j_1} = i_{j_1+1}, \cdots, i_{j_k} = i_{j_k+1} }} \F \left( U_{i_0 \cdots i_n} \right)\right) \ar[rrd] \ar[rd] \ar[ld] \ar[lld]  & &  \\
\F (\CechNerve U_{n-1})  \ar[r] &  \F (\CechNerve U_{n-2}) \ar[r]& \cdots \ar[r] & \F (\CechNerve U_1) \ar[r]&  \F(\CechNerve U_0)& 
}
\end{eqnarray*}
is a cone under under $\F(\CechNerve U)$, where $\F (\CechNerve U_{n-j}) =  \prod\limits_{i_0 \cdots i_{n-j}} \F \left(U_{i_0 \cdots i_{n-j}}\right) $ and the vertical maps are projections.

Now, suppose we have a cone under $\F (\CechNerve U)$.  
\begin{eqnarray}
\xymatrix{
  & & Y \ar^{f_n}[rrd] \ar^{f_{n-1}}[rd] \ar_{f_{2}}[ld]  \ar_{f_{1}} [lld]  & &  \\
\F (\CechNerve U_{n-1})  \ar[r] &  \F (\CechNerve_{n-2}) \ar[r]& \cdots \ar[r] & \F (\CechNerve U_1) \ar[r]&  \F(\CechNerve U_0)& 
}
\end{eqnarray}
Then to define the map $Y$ into the product, send $y$ to $(f_1 (y), f_2(y), \cdots f_n(y))$.  

Finally, we see that the matching map $\F(\CechNerve U_n) \times M_n(\CechNerve U) \rightarrow M_n(\CechNerve U)$ is the projection onto the second factor.  Since $\F(\CechNerve U_n)$ is a Kan complex, the projection is a fibration. 
\end{proof}
Applying lemma \ref{LEM: Tot Kan} to lemma \ref{LEM: F CN Reedy} proves proposition \ref{PROP: tot cech kan}.

\end{document}